\newcommand{\N}{\mathbb{N}}
\newcommand{\R}{\mathbb{R}}
\newcommand{\E}{\mathbb{E}}
\renewcommand{\Pr}{\mathbb{P}}
\newcommand{\Ind}{\mathbbm{1}}
\newcommand{\floor}[1]{\lfloor #1 \rfloor}
\newcommand{\norm}[1]{\|{#1}\|}
\newcommand{\ipr}[2]{\langle #1, #2 \rangle}
\newcommand{\iid}{\overset{\mathrm{iid}}{\sim}}
\newcommand{\cvp}{\overset{p}{\to}}
\newcommand{\cvd}{\overset{d}{\to}}
\newcommand{\eqd}{\overset{d}{=}}
\DeclareMathOperator\Exp{Exp}
\DeclareMathOperator\Var{Var}
\DeclareMathOperator\Cov{Cov}
\DeclareMathOperator\tr{tr}
\DeclareMathOperator{\supp}{supp}
\DeclareMathOperator{\Int}{Int}
\DeclareMathOperator{\sgn}{sgn}
\DeclareMathOperator*{\argmin}{argmin}
\DeclareMathOperator*{\argmax}{argmax}
\newtheorem{theorem}{Theorem}
\newtheorem{lemma}[theorem]{Lemma}
\newtheorem{proposition}[theorem]{Proposition}
\newtheorem{corollary}[theorem]{Corollary}
\theoremstyle{definition}
\newtheorem{example}[theorem]{Example}
\newtheorem{remark}[theorem]{Remark}
\newtheorem*{remark*}{Remark}
\title{Optimal convex $M$-estimation via score matching}
\date{\today}
\author{Oliver Y. Feng$^{*,\ddagger}$, Yu-Chun Kao$^\dagger$, Min Xu$^\dagger$ and Richard J. Samworth$^\ddagger$ \\ \\ $^*$Department of Mathematical Sciences, University of Bath \\ $^\dagger$Department of Statistics, Rutgers University \\ $^\ddagger$Statistical Laboratory, University of Cambridge}
\begin{document}

\maketitle

\begin{abstract}
In the context of linear regression, we construct a data-driven convex loss function with respect to which empirical risk minimisation yields optimal asymptotic variance in the downstream estimation of the regression coefficients.  At the population level, the negative derivative of the optimal convex loss is the best decreasing approximation of the derivative of the log-density of the noise distribution.  This motivates a fitting process via a nonparametric extension of score matching, corresponding to a log-concave projection of the noise distribution with respect to the Fisher divergence.  At the sample level, our semiparametric estimator is computationally efficient, and we prove that it attains the minimal asymptotic covariance among all convex $M$-estimators. As an example of a non-log-concave setting, the optimal convex loss function for Cauchy errors is Huber-like, and our procedure yields asymptotic efficiency greater than $0.87$ relative to the 
% parametric
maximum likelihood estimator of the regression coefficients that uses oracle knowledge of this error distribution.  In this sense, we provide robustness and facilitate computation without sacrificing much statistical 
efficiency. Numerical experiments using our accompanying \texttt{R} package \texttt{asm} confirm the practical merits of our proposal.
\end{abstract}

% natbib with citations listing at most 3 (rather than 2) authors?
\section{Introduction}

In linear models, the Gauss--Markov theorem is the primary justification for the use of ordinary least squares (OLS) in settings where the Gaussianity of our error distribution may be in doubt.  It states that, provided the errors have a finite second moment, OLS attains the minimal covariance among all linear unbiased estimators; recent papers on this topic include~\citet{hansen2022modern},~\citet{potscher2024comment} and~\citet{lei2022estimators}. On the other hand, it is now understood that biased, non-linear estimators can achieve lower mean squared error than OLS~\citep{stein1956inadmissibility,hoerl1970ridge}, especially when the noise distribution is appreciably non-Gaussian~\citep{zou2008composite,dumbgen2011approximation}. However, it remains unclear how best to fit linear models in a computationally efficient and adaptive fashion, i.e.~without knowledge of the error distribution. %indeed, our empirical results (\green{to be added}) demonstrate that considerable improvements in estimation accuracy are possible when the noise distribution is appreciably non-Gaussian.

Consider a linear model where $Y_i = X_i^\top\beta_0 + \varepsilon_i$ for $i = 1,\dotsc,n$. Recall that an \textit{$M$-estimator} of $\beta_0 \in \R^d$ based on a loss function $\ell \colon \R \to \R$ is defined as an empirical risk minimiser
\begin{equation}
\label{eq:linreg-M-est}
\hat{\beta} \in \argmin_{\beta \in \R^d} \frac{1}{n}\sum_{i=1}^n \ell(Y_i - X_i^\top \beta), 
\end{equation}
provided that this exists. If $\ell$ is differentiable on $\R$ with negative derivative $\psi = -\ell'$, then $\hat{\beta} \equiv \hat{\beta}_\psi$ solves the corresponding estimating equations 
\begin{equation}
\label{eq:linreg-Z-est}
\frac{1}{n}\sum_{i=1}^n X_i\psi(Y_i - X_i^\top\hat{\beta}_\psi) = 0    
\end{equation}
and is referred to as a \textit{$Z$-estimator}. We study a random design setting in which $(X_1,Y_1),\dotsc,(X_n,Y_n)$ are independent and identically distributed, with $X_1,\dotsc,X_n$ being $\R^d$-valued covariates that are independent of real-valued errors $\varepsilon_1,\dotsc,\varepsilon_n$ with density $p_0$. Suppose further that $\E\{X_1\psi(\varepsilon_1)\} = 0$. This means that~$\hat{\beta}_\psi$ is \emph{Fisher consistent} in the sense that the population analogue of~\eqref{eq:linreg-Z-est} is satisfied by the true parameter $\beta_0$, i.e.~$\E\{X_1\psi(Y_1 - X_1^\top\beta_0)\} = 0$. Then under suitable regularity conditions, including $\psi$ being differentiable and $\E(X_1 X_1^\top) \in \R^{d \times d}$ being invertible, we have
\begin{equation}
\label{eq:M-estimator-asymp}
\sqrt{n}(\hat{\beta}_\psi - \beta_0) \cvd N_d\bigl(0, V_{p_0}(\psi) \cdot \{\E(X_1 X_1^\top)\}^{-1}\bigr) \quad \text{as }n \to \infty, \quad\text{where }V_{p_0}(\psi) := \frac{\E\psi^2(\varepsilon_1)}{\{\E\psi'(\varepsilon_1)\}^2}
\end{equation}
\citep[e.g.][Theorems~5.21,~5.23 and~5.41]{vdV1998asymptotic}. Since the covariates and errors are assumed to be independent, they contribute separately to the limiting covariance above (a special case of the `sandwich' formula~\citep{huber1967behavior,young2023sandwich}): the matrix $\{\E(X_1 X_1^\top)\}^{-1}$ depends only on the covariate distribution, whereas the scalar $V_{p_0}(\psi)$ depends on the loss function $\ell$ (through $\psi = -\ell'$) and on the error distribution.

If the errors $\varepsilon_1,\varepsilon_2,\dotsc$ have a known absolutely continuous density $p_0$ on $\R$, then we can define the maximum likelihood estimator $\hat{\beta}^{\mathrm{MLE}}$ by taking $\ell = -\log p_0$ in~\eqref{eq:linreg-M-est}. In this case, $\psi = -\ell'$ is the \emph{score function (for location)}\footnote{The score is usually defined as a function of a parameter $\theta \in \R$ as the derivative of the log-likelihood; the link with our terminology comes from considering the location model $\{p_0(\cdot + \theta):\theta \in \mathbb{R}\}$, and evaluating the score at the origin.} $\psi_0 := (p_0'/p_0)\Ind_{\{p_0 > 0\}}$.  Under appropriate regularity conditions~\citep[e.g.][Theorem~5.39]{vdV1998asymptotic}, including that the \emph{Fisher information (for location)} $i(p_0) := \int_\R \psi_0^2\,p_0 = \int_{\{p_0 > 0\}}(p_0')^2/p_0$ is finite, we have
\begin{equation}
\label{eq:betahat-MLE-asymp}
\sqrt{n}\,(\hat{\beta}^{\mathrm{MLE}} - \beta_0) \cvd N_d\biggl(0, \frac{\{\E(X_1 X_1^\top)\}^{-1}}{i(p_0)}\biggr)
\end{equation}
as $n \to \infty$. The limiting covariance matrix $\{\E(X_1 X_1^\top)\}^{-1}/i(p_0)$ constitutes the usual efficiency lower bound~\citep[Chapter~8]{vdV1998asymptotic}. In fact, it can be seen directly that $1/i(p_0)$ is the smallest possible value of the \textit{asymptotic variance factor} $V_{p_0}(\psi)$ in the limiting covariance of $\sqrt{n}(\hat{\beta}_\psi - \beta_0)$ in~\eqref{eq:M-estimator-asymp}. Indeed, by the Cauchy--Schwarz inequality,
\begin{equation}
\label{eq:Vp0-Fisher}
V_{p_0}(\psi) = \frac{\int_\R\psi^2 p_0}{\bigl(\int_\R \psi' p_0\bigr)^2} = \frac{\int_\R\psi^2 p_0}{\bigl(\int_\R \psi p_0'\bigr)^2} \geq \frac{1}{\int_{\{p_0 > 0\}}(p_0')^2/p_0} = \frac{1}{i(p_0)} \in (0,\infty)
\end{equation}
whenever the integration by parts in the second step is justified, and equality holds if and only if there exists $\lambda \neq 0$ such that $\psi(\varepsilon_1) = \lambda\psi_0(\varepsilon_1)$ almost surely. This leads to an equivalent variational definition of the Fisher information; see~\citet[][Theorem~4.2]{huber2009robust}, which we restate as Proposition~\ref{prop:fisher-inf-variational} in Section~\ref{sec:auxiliary}. Thus, when~\eqref{eq:betahat-MLE-asymp} holds, $\hat{\beta}^{\mathrm{MLE}}$ has minimal asymptotic covariance among all $Z$-estimators~$\hat{\beta}_\psi$ for which~\eqref{eq:M-estimator-asymp} is valid, with the score function $\psi_0$ being the optimal choice of $\psi$.

Our goal in this work is to choose $\psi$ in a data-driven manner, such that the corresponding loss function~$\ell$ in~\eqref{eq:linreg-M-est} is convex, and such that the scale factor $V_{p_0}(\psi)$ in the asymptotic covariance~\eqref{eq:M-estimator-asymp} of the downstream estimator of $\beta_0$ is minimised.  Convexity is a particularly convenient property for a loss function, since for the purpose of $M$-estimation, it leads to more tractable theory and computation.  Indeed, the empirical risk in~\eqref{eq:linreg-M-est} becomes convex in $\beta$, so its local minimisers are global minimisers. In particular, when $\ell$ is also differentiable, $\hat{\beta}_\psi$ is a $Z$-estimator satisfying~\eqref{eq:linreg-Z-est} if and only if it is an $M$-estimator satisfying~\eqref{eq:linreg-M-est}. The existence, uniqueness and $\sqrt{n}$-consistency of $\hat{\beta}_\psi$ are then guaranteed under milder conditions on $\ell$ than for generic loss functions~\citep{yohai1979asymptotic,maronna1981asymptotic,portnoy1985asymptotic,mammen1989asymptotics,arcones1998asymptotic,he2000parameters}; see Proposition~\ref{prop:cvx-M-est-asymp}. Furthermore, an important practical advantage is that we can compute $\hat{\beta}_\psi$ efficiently using convex optimisation algorithms with guaranteed convergence~\citep[Chapter~9]{boyd2004convex}.

In view of the discussion above, our first main contribution in Section~\ref{sec:antitonic-proj} is to determine the optimal population-level convex loss function in the sense described in the previous paragraph.  For a uniformly continuous error density $p_0$, this amounts to finding
\begin{equation}
\label{eq:Vp0-minimiser}
\psi_0^* \in \argmin_{\psi \in \Psi_\downarrow(p_0)} V_{p_0}(\psi),
\end{equation}
where $\Psi_\downarrow(p_0)$ denotes the set of decreasing, right-continuous functions $\psi$ satisfying $\int_{\mathbb{R}} \psi^2 p_0 < \infty$. We will actually define the ratio $V_{p_0}(\psi)$ in a slightly more general way than in~\eqref{eq:Vp0-Fisher} to allow us to handle non-differentiable functions~$\psi$.  This turns out to be convenient because, for instance, the robust Huber loss $\ell_K$ given by
\begin{equation}
\label{eq:huber-loss}
\ell_K(z) :=
\begin{cases}
z^2/2 \quad &\text{if }|z| \leq K\\
K|z| - K^2/2 \quad &\text{if }|z| > K
\end{cases}
\end{equation}
for $K \in (0,\infty)$ has a non-differentiable negative derivative $\psi_K := -\ell_K'$ satisfying $\psi_K(z) = (-K) \vee (-z) \wedge K$ for $z \in \R$. 

In Section~\ref{subsec:antitonic-score-proj}, we show that minimising $V_{p_0}(\cdot)$ over $\Psi_\downarrow(p_0)$ is equivalent to minimising the \emph{score matching} objective
\begin{equation}
\label{eq:score-matching-objective}
D_{p_0}(\psi) := \mathbb{E}\bigl\{\psi^2(\varepsilon_1) + 2\psi'(\varepsilon_1)\bigr\},
\end{equation}
over $\psi \in \Psi_\downarrow(p_0)$, provided that we take appropriate care in defining this expression when $\psi$ is not absolutely continuous. This observation allows us to obtain an explicit characterisation of the solution to the optimisation problem as a `projected' score function $\psi_0^*$ in terms of $p_0$ and its distribution function $F_0$. Indeed, to obtain $\psi_0^*$ at $z \in \R$, we can first consider $p_0 \circ F_0^{-1}$ (whose domain is $[0,1]$), then compute the right derivative of its least concave majorant, before finally applying the resulting function to $F_0(z)$.  The negative antiderivative~$\ell_0^*$ of~$\psi_0^*$ is then the optimal convex loss function we seek.  An important property is that $\E\psi_0^*(Y_1 - X_1^\top \beta_0) = 0$, which ensures that $\ell_0^*$ correctly identifies the estimand $\beta_0$ on the population level; equivalently,~$\hat{\beta}_{\psi_0^*}$ is Fisher consistent.

\begin{figure}[htb!]
\centering
\includegraphics[width=0.32\textwidth,trim={4.65cm 9.55cm 4.45cm 10cm},clip]{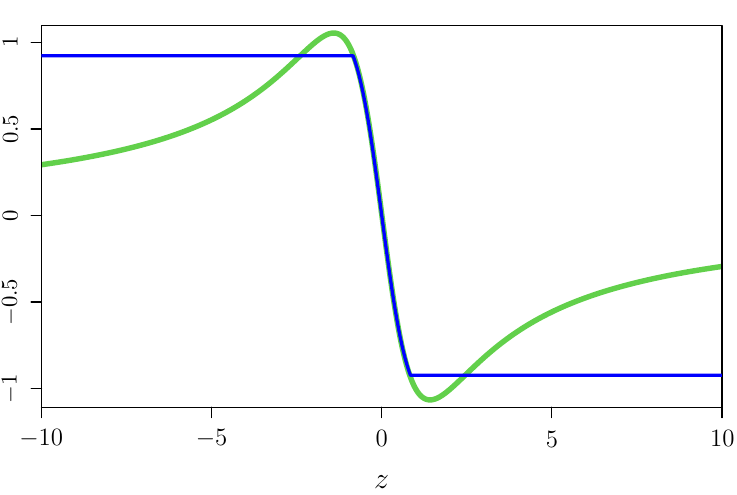}
\hfill
\includegraphics[width=0.32\textwidth,trim={4.65cm 9.55cm 4.45cm 10cm},clip]{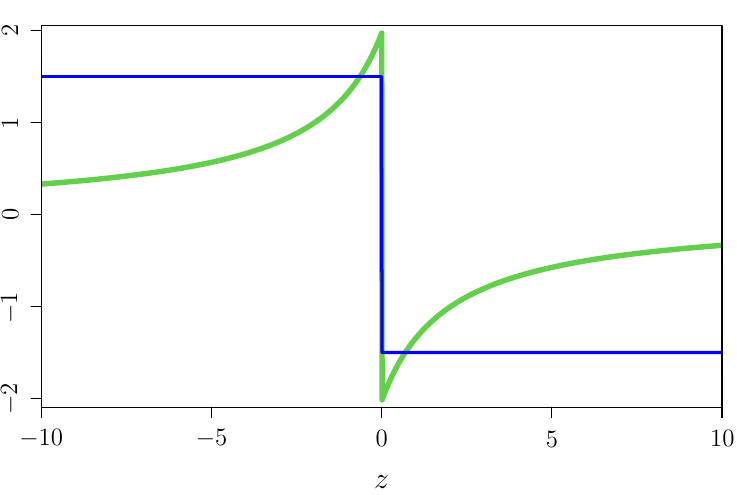}
\hfill
\includegraphics[width=0.32\textwidth,trim={4.65cm 9.55cm 4.45cm 10cm},clip]{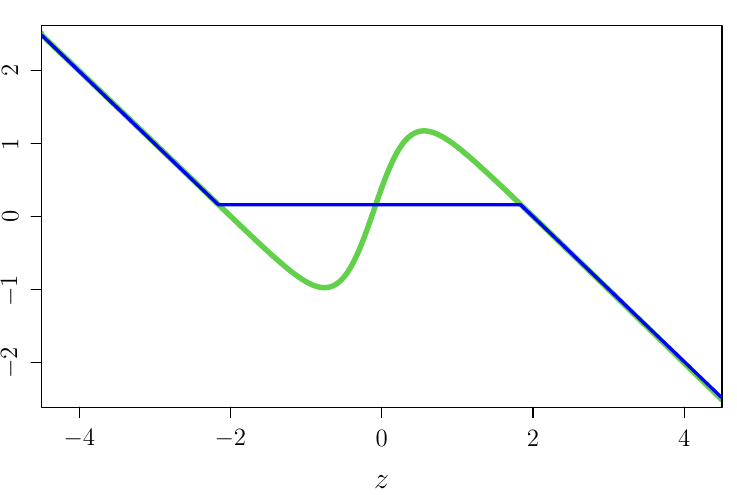}

\includegraphics[width=0.32\textwidth,trim={4.65cm 9.55cm 4.45cm 10cm},clip]{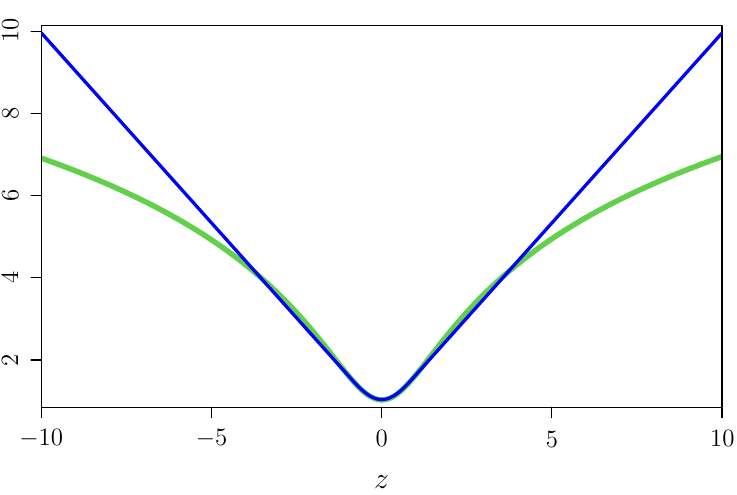}
\hfill
\includegraphics[width=0.32\textwidth,trim={4.65cm 9.55cm 4.45cm 10cm},clip]{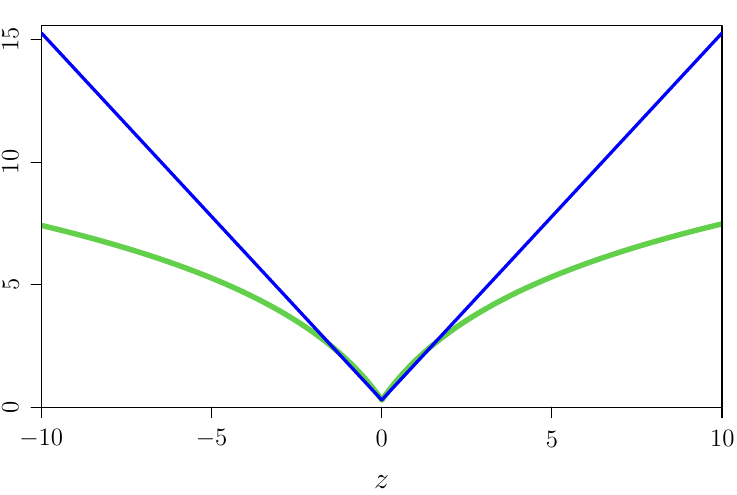}
\hfill
\includegraphics[width=0.32\textwidth,trim={4.65cm 9.55cm 4.45cm 10cm},clip]{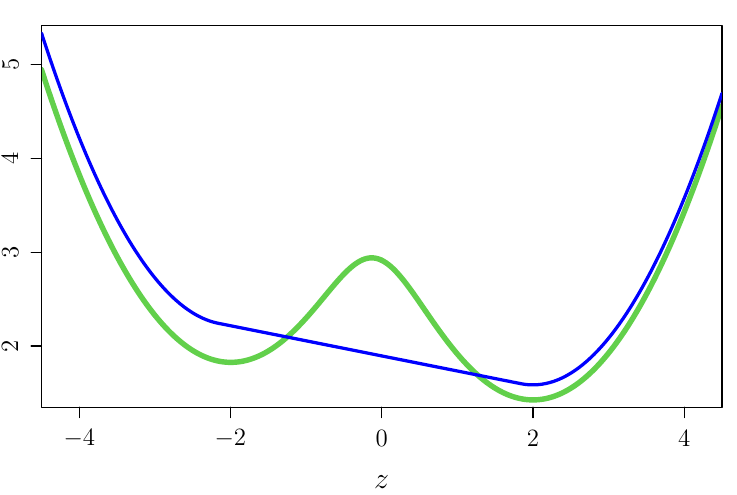}

\vspace{-0.3cm}
\caption{\textit{Top row}: Plots of the score function $\psi_0$ (\textit{green}) and projected score function $\psi_0^*$ (\textit{blue}); \textit{Bottom row}: their respective negative antiderivatives, namely the negative log-density $-\log p_0$ (\textit{green}) and optimal convex loss function $\ell_0^*$ (\textit{blue}), for each of the following non-log-concave distributions \textit{(from left to right)}: \textit{(a)} Student's $t_2$; \textit{(b)} symmetrised Pareto~\eqref{eq:pareto-sym} with $\sigma = 2$ and $\alpha = 3$; \textit{(c)} Gaussian mixture $0.4 N(-2,1) + 0.6 N(2,1)$.}
\label{fig:intro-plots}
\end{figure}

Note that $\hat{\beta}^{\mathrm{MLE}}$ is a convex $M$-estimator if and only if $\ell = -\log p_0$ is convex, i.e.\ $p_0$ is log-concave, in which case $\psi_0^* = \psi_0$ by~\eqref{eq:Vp0-Fisher}. We will be especially interested in error densities $p_0$ that are not log-concave, for which the efficiency lower bound in~\eqref{eq:Vp0-Fisher} cannot be achieved by a convex $M$-estimator corresponding to a decreasing function $\psi$.  We interpret the minimum ratio $V_{p_0}(\psi_0^*)$ as an analogue of the inverse Fisher information, serving as the crucial part of the efficiency lower bound for convex $M$-estimators.  To reinforce the link with score matching, we will see in Section~\ref{subsec:fisher-divergence-proj} that the density proportional to $e^{-\ell_0^*}$ is the best log-concave approximation to $p_0$ with respect to the Fisher divergence defined formally in~\eqref{eq:fisher-div} below.  This is typically different from the well-studied log-concave projection with respect to Kullback--Leibler divergence, and indeed the latter may yield considerably suboptimal covariance for the resulting convex $M$-estimator; see Proposition~\ref{prop:Vp0-MLE}. In concrete examples where $p_0$ has heavy tails (e.g.~a Cauchy density) or is multimodal (e.g.~a mixture density), we compute closed-form expressions for the projected score function and the optimal convex loss function in Section~\ref{subsec:examples}. In particular, $\ell_0^*$ turns out to be a robust Huber-like loss function in the Cauchy case. More generally, when the errors are heavy-tailed in the sense that their (two-sided) hazard function is bounded, Lemma~\ref{lem:hazard} shows that the projected score function $\psi_0^*$ is bounded, in which case the corresponding convex loss $\ell_0^*$ grows at most linearly in the tails and hence is robust to outliers; see the discussion immediately preceding Lemma~\ref{lem:hazard}. A major advantage of our framework over the use of the Huber loss is that it does not require the choice of a transition point $K$ (see~\eqref{eq:huber-loss} above) between quadratic and linear regimes (which in a regression context amounts to a choice of scale for the error distribution).  In fact, our antitonic\footnote{Antitonic means decreasing, in contrast to isotonic (increasing)~\citep[Section~2.1]{groeneboom14nonparametric}.} score projection, and hence the Fisher divergence projection, is affine equivariant (Remark~\ref{rem:affine-equivariance}), which reflects the fact that we optimise $V_{p_0}(\cdot)$ in~\eqref{eq:Vp0-minimiser} over a class $\Psi_\downarrow(p_0)$ that is closed under multiplication by non-negative scalars.

In Section~\ref{sec:regression}, we turn our attention to a linear regression setting where the error density $p_0$ is unknown. 
% To ensure that adaptive estimation of $\beta_0$ is possible in the sense of~\citet{bickel1982adaptive}, we assume either that $p_0$ is symmetric (Section~\ref{subsec:linreg-sym}), or that the model contains an explicit intercept term and the errors are appropriately centred (Section~\ref{subsec:linreg-intercept}).
We aim to construct a semiparametric $M$-estimator of~$\beta_0$ that achieves minimal covariance among all convex $M$-estimators, but since the optimal loss function $\ell_0^*$ is unknown, we seek to estimate $\beta_0$ and $\psi_0^*$ simultaneously. Our alternating optimisation procedure starts with a non-adaptive initialiser $\bar{\beta}_n$ and computes a kernel density estimate of the error distribution based on the residuals.  We can then apply the linear-time Pool Adjacent Violators Algorithm (PAVA) to obtain the projected score function of the density estimate, before minimising its negative antiderivative using Newton-type optimisation techniques to yield an updated estimator.  This process could then be iterated to convergence, but if we initialise with a $\sqrt{n}$-consistent pilot estimator $\bar{\beta}_n$, then one iteration of the alternating algorithm above suffices for our theoretical guarantees, and moreover it ensures that the procedure is computationally efficient. When~$p_0$ is symmetric, we prove that a three-fold cross-fitting version of our algorithm (with the different steps computed on different folds) yields an estimator $\hat{\beta}_n$ that is $\sqrt{n}$-consistent and asymptotically normal, with limiting covariance attaining our efficiency lower bound for convex $M$-estimators; see Theorem~\ref{thm:linreg-score-sym} in Section~\ref{subsec:linreg-sym}. We develop analogous methodology and theory for the setting where an explicit intercept is present in the linear model, and where the errors are appropriately centred though not necessarily symmetric; see Theorem~\ref{thm:linreg-score-intercept} in Section~\ref{subsec:linreg-intercept}. Consistent estimation of the limiting covariance matrices is straightforward using our nonparametric score matching procedure, so combining this with our asymptotic distributional results for $\hat{\beta}_n$, we can then perform inference for $\beta_0$ (Section~\ref{subsec:linreg-inference}).    

Section~\ref{sec:experiments} is devoted to a numerical study of the empirical performance and computational efficiency of our antitonic score matching estimator, which is implemented in the \texttt{R} package \texttt{asm}~\citep{kao24asm}, and whose output is designed to mimic that of the standard existing \texttt{lm} function in several aspects so as to appear familiar to practitioners.  These corroborate our theoretical findings: our proposed approach achieves (sometimes dramatically) smaller estimation error  compared with alternatives such as OLS, the least absolute deviation (LAD) estimator, a semiparametric one-step estimator, and a semiparametric $M$-estimator based on the log-concave MLE of the noise distribution. Moreover, the corresponding confidence sets for $\beta_0$ are smaller, while retaining nominal coverage. Finally, we perform a runtime analysis to show that the improved statistical performance comes without sacrificing computational scalability. 

The proofs of all results in Sections~\ref{sec:antitonic-proj} and~\ref{sec:regression} are given in the appendix in Sections~\ref{sec:antitonic-proj-proofs} and~\ref{subsec:regression-proofs} respectively.  The appendix also contains additional examples for Section~\ref{sec:antitonic-proj} (Section~\ref{subsec:appendix-examples}) and auxiliary results (Sections~\ref{sec:auxiliary} and~\ref{sec:isoproj}).

\subsection{Related work}

\textit{Score matching}~\citep{hyvarinen05score,lyu2012interpretation} is an estimation method designed for statistical models where the likelihood is only known up to a normalisation constant (e.g.~a partition function) that may be infeasible to compute; see the recent tutorial by~\citet{song2021train} on `energy-based' models. Instead of maximising an approximation to the likelihood, score matching circumvents this issue altogether by estimating the derivative of a log-density, i.e.~the score function. More precisely, given a differentiable density $p_0$ on $\R^d$ with score function $\psi_0 := (\nabla p_0 / p_0)\Ind_{\{p_0 > 0\}}$, the population version of the procedure aims to minimise
\begin{equation}
\label{eq:score-matching-1}
\E\bigl(\norm{\psi(\varepsilon) - \psi_0(\varepsilon)}^2\bigr)
\end{equation}
over a suitable class $\Psi$ of differentiable functions $\psi \equiv (\psi_1,\dotsc,\psi_d) \colon \R^d \to \R^d$, where $\varepsilon \sim p_0$.~\citet{hyvarinen05score} used integration by parts to show that it is equivalent to minimise 
\begin{equation}
\label{eq:score-matching-2}
D_{p_0}(\psi) := \E\bigl\{\norm{\psi(\varepsilon)}^2 + 2(\nabla \cdot \psi)(\varepsilon)\bigr\}
\end{equation}
over $\psi \in \Psi$, where $\nabla \cdot \psi := \sum_{j=1}^d \partial \psi_j/\partial x_j$. The score matching estimator based on data $\varepsilon_1,\dotsc,\varepsilon_n$ in~$\R^d$ is then defined as a minimiser of the empirical analogue $\hat{D}_n(\psi) := n^{-1}\sum_{i=1}^n\{\norm{\psi(\varepsilon_i)}^2 + 2(\nabla \cdot \psi)(\varepsilon_i)\}$ over $\psi \in \Psi$; see also~\citet{cox1985penalty}.  Such estimators are important in the context of Langevin Monte Carlo~\citep{parisi1981correlation,roberts1996exponential,betancourt2017geometric,cheng2018underdamped} and diffusion models~\citep{li2024towards}. The appearance of the score function in the (reverse-time) stochastic differential equations governing the Langevin and diffusion model dynamics can be related to Tweedie's formula, which underpins empirical Bayes denoising~\citep{efron2011tweedie,derenski23empirical}.

Likelihood maximisation corresponds to distributional approximation with respect to the Kullback--Leibler divergence.  On the other hand, score matching seeks to minimise the \textit{Fisher divergence}~\citep[Section~1.3]{johnson2004information} from a class of densities to the target $p_0$, in view of the equivalence between the optimisation objectives~\eqref{eq:score-matching-1} and~\eqref{eq:score-matching-2}; see~\eqref{eq:Dp0-L2P0} below.~\citet{sriperumbudur2017density} studied infinite-dimensional exponential families indexed by reproducing kernel Hilbert spaces, and proposed and analysed a density estimator that minimises a penalised empirical Fisher divergence. \citet{koehler2022statistical} used isoperimetric inequalities to investigate the statistical efficiency of score matching relative to maximum likelihood, thereby quantifying the effect of eliminating normalisation factors. \citet{lyu2012interpretation}
% ~\citet[Proposition~B.1]{sriperumbudur2017density}
observed that Fisher divergence and Kullback--Leibler divergence are related by an analogue of de Bruijn's identity~(\citealp[Appendix~C]{johnson2004information};~\citealp[Section~17.7]{cover2006elements}), which links Fisher information and Shannon entropy. From an information-theoretic perspective,~\citet{johnson2004fisher} proved central limit theorems that establish convergence in Fisher divergence to a limiting Gaussian distribution.~\citet{ley2013stein} extended Stein's method to derive information inequalities that bound a variety of integral probability distances in terms of the Fisher divergence.

Score matching has been generalised in different directions and applied to 
% including graphical modelling
a variety of statistical problems~\citep[e.g.][]{hyvarinen2007extensions,vincent2011connection,lyu2012interpretation,mardia2016score,song2020sliced,yu2020simultaneous,yu2022generalized,lederer2023extremes,benton2024denoising,ghosh2025stein}, where it exhibits excellent empirical performance while being computationally superior to full likelihood approaches. In particular, score-based algorithms for generative modelling, via Langevin dynamics~\citep{song19generative} and diffusion models~\citep{song21score}, have achieved remarkable success in machine learning tasks such as the reconstruction, inpainting and artificial generation of images; see e.g.~\citet{jolicoeur2020adversarial},~\citet{bortoli2022riemannian} and many other references therein. In these applications, score matching is applied to a class of functions parametrised by the weights of a deep neural network. On the other hand, different statistical considerations lead us to develop a nonparametric extension of score matching in Section~\ref{sec:antitonic-proj}, which we use to construct data-driven convex loss functions for efficient semiparametric estimation. We see that it is by minimising the Fisher divergence instead of the Kullback--Leibler divergence to the error distribution that one obtains a convex $M$-estimator with minimal asymptotic variance.

The framework in Section~\ref{subsec:linreg-sym} includes as a special case the classical location model in which we observe $Y_i = \theta_0 + \varepsilon_i$ for $i = 1,\dotsc,n$, where $\theta_0 \in \R$ is the parameter of interest and $\varepsilon_1,\dotsc,\varepsilon_n$ are independent errors with an unknown density $p_0$ that is symmetric about 0. Starting from the seminal paper of~\citet{stein1956efficient}, a series of works~\citep[e.g.][]{van1970efficiency,stone1975adaptive,beran1978efficient,bickel1982adaptive,schick1986asymptotically,faraway1992smoothing,dalalyan2006penalized,vdV2021stein,gupta2023finite} showed that adaptive, asymptotically efficient estimators of $\theta_0$ can be constructed; see also~\citet{doss2019univariate} and~\citet{laha2021adaptive} for approaches based on the further assumption that $p_0$ is log-concave. Many of these traditional semiparametric procedures have drawbacks that limit their practical utility. In particular, the estimated likelihood may have multiple local optima and it may be difficult to guarantee convergence of an optimisation algorithm to a global maximum~\citep[Example~5.50]{vdV1998asymptotic}. This is one of the reasons why prior works often study a one-step estimator resulting from a single iteration of Newton’s method~\citep{bickel1975one,jin1990empirical,mammen1997optimal,laha2021adaptive}, rather than full likelihood maximisation, though finite-sample performance may remain poor and sensitive to tuning (see Section~\ref{sec:experiments}).  By contrast, our focus is not on classical semiparametric adaptive efficiency per se; instead, we directly study the theoretical properties of a minimiser of the empirical risk with respect to an estimated loss function, whose convexity ensures that the estimator can be computed efficiently by iterating gradient descent or Newton's method to convergence.

Recently,~\citet{kao24choosing} constructed a location $M$-estimator that can adaptively attain rates of convergence faster than $n^{-1/2}$ when the symmetric error density is compactly supported and suitably irregular (e.g.\ discontinuous at the boundary of its support). They considered $\ell^q$-location estimators $\hat{\theta}_q := \argmin_{\theta \in \R} \sum_{i=1}^n |Y_i - \theta|^q$ based on univariate observations $Y_1,\dotsc,Y_n$, and used Lepski's method to select an exponent $\hat{q} \in [2,\infty)$ that minimises a proxy for the asymptotic variance of $\hat{\theta}_q$. The resulting estimator $\hat{\theta}_{\hat{q}}$ is shown to be minimax optimal up to poly-logarithmic factors, and the procedure is extended to linear regression models with unknown symmetric errors. By comparison with~\citet{kao24choosing}, we study `regular' regression models where the Fisher information is finite and minimax rates faster than $n^{-1/2}$ are impossible to achieve.  We aim to minimise the asymptotic variance as an end in itself, over the entire nonparametric class of convex loss functions rather than $\ell^q$-loss functions specifically.

Our work has connections with robust statistics, which deals with heavy-tailed noise distributions and data that may be contaminated by random or adversarial outliers. As mentioned previously, robust loss functions are designed to be tolerant to such data corruption; examples include the Huber loss~\eqref{eq:huber-loss}, a two-parameter family of loss functions considered by~\citet{barron2019general}, and the antiderivatives of Catoni's influence functions~\citep{catoni2012challenging}.  The Huber loss functions $\ell_K$ originally arose because they have the minimax asymptotic variance property that for every $\epsilon \in (0,1)$, there exists a unique $K \equiv K_\epsilon > 0$ such that
\[
\psi_K \equiv -\ell_K' = \argmin_{\psi \in \Psi} \sup_{P \in \mathcal{P}_\epsilon^{\mathrm{sym}}(\Phi)} \underbrace{\frac{\int_\R \psi^2\,dP}{\int_\R \psi'\,dP}}_{=:\,V_P(\psi)},
\]
where $\Psi$ consists of all `sufficiently regular' $\psi \colon \R \to \R$, and the symmetric \textit{$\epsilon$-contamination neighbourhood} $\mathcal{P}_\epsilon^{\mathrm{sym}}(\Phi)$ contains all univariate distributions of the form $P = (1 - \epsilon)N(0,1) + \epsilon Q$ for some symmetric distribution $Q$. The pioneering paper of~\citet{huber1964robust} also developed variational theory for minimising $\sup_{P \in \mathcal{P}}V_P(\cdot)$ more generally when $\mathcal{P}$ is a convex class of distributions, such as an $\epsilon$-contamination or a Kolmogorov neighbourhood of a symmetric log-concave density~\citep[Section~4.5]{huber2009robust}. See~\citet{donoho2015variance} for a high-dimensional extension of this line of work. An alternative to the Huber loss that seeks robustness without serious efficiency loss relative to OLS is the composite quantile regression (CQR) estimator \citep{zou2008composite,yang2024multiple}; in fact, our approach is always at least as efficient as CQR (see Lemma~\ref{lem:CQR-suboptimal}).  Other recent papers on robust convex $M$-estimation include~\citet{chinot2020robust} and~\citet{brunel2023geodesically}; see also the notes on robust statistical learning theory by~\citet{lerasle2019selected}.

More closely related to our optimisation problem~\eqref{eq:Vp0-minimiser} is the work of~\citet{hampel1974influence} on \textit{optimal $B$-robust estimators}, which have minimal asymptotic variance subject to an upper bound on the \textit{gross error sensitivity}~\citep[Section~2.4]{hampel2011robust}. 
% See also the very helpful table on page 176 in Section~2.7 that relates Huber's and Hampel's approaches
In our linear regression setting with $\varepsilon_1 \sim p_0$, this amounts to 
\begin{equation}
\label{eq:Vp0-hampel}
\text{minimising }V_{p_0}(\psi) \text{ over all `regular' $\psi$ such that }\int_\R \psi\,p_0 = 0 \text{ and }\sup_{z \in \R}|\psi(z)| \leq b
\end{equation}
for some suitable $b > 0$~(\citealp[p.~121 and Section~2.5d]{hampel2011robust}). 
% Not the same as \citet[Example~5.29]{vdV1998asymptotic}
In particular, when $p_0$ is a standard Gaussian density, $\psi_K$ is again optimal for some $K \equiv K_b > 0$ that depends non-linearly on $b$. By contrast with~\eqref{eq:Vp0-minimiser} however, the Fisher consistency condition $\E\psi(\varepsilon_1) = 0$ must be explicitly included as a constraint in~\eqref{eq:Vp0-hampel}, and moreover the $L^\infty$ bound on $\psi$ means that the set of feasible~$\psi$ is not closed under non-negative scalar multiplication. Consequently, the resulting optimal location $M$-estimators are generally not scale invariant~\citep[p.~105]{hampel2011robust}. In robust regression, adaptive selection of scale parameters is a non-trivial problem~(e.g.~\citealp[Section~5.4]{vdV1998asymptotic};~\citealp[Section~7.7]{huber2009robust};~\citealp{loh2021scale}); see also Figure~\ref{fig:cauchy-huber} below. 

Finally, we mention a different line of work on the performance of linear regression $M$-estimators in a proportional asymptotic regime where $n/d \to \kappa \in (1,\infty)$ and the covariates are Gaussian. \citet{bean2013optimal} derived the unpenalised convex $M$-estimator with minimal expected out-of-sample prediction error when the errors are log-concave.  Here, the optimisation objective is no longer $V_{p_0}(\psi)$ but instead the solution to a pair of non-linear equations involving the proximal operator of the convex loss function~\citep{elkaroui2013robust,elkaroui2018impact}.  For general error distributions with finite variance, \cite{donoho2016high} established exact asymptotics for convex $M$-estimators by means of an approximate message passing algorithm. Under prior structural information about the entries of~$\beta_0$, \citet{celentano2022fundamental} obtained precise characterisations of the asymptotic $\ell_2$-estimation error of convex-regularised least squared estimators, Bayes-optimal approximate message passing and the Bayes risk, quantifying the gaps between computational feasible and statistically optimal estimators.

\subsection{Notation}
\label{sec:notation}
Throughout this paper, we will adopt the convention $0/0 := 0$ and write $[n] := \{1,\dotsc,n\}$ for $n \in \N$. For a function $f \colon \R \to \R$, let $\norm{f}_\infty := \sup_{z \in \R}|f(z)|$. Recall that $f$ is \emph{symmetric} (i.e.~\textit{even}) if $f(z) = f(-z)$ for all $z \in \R$, and \emph{antisymmetric} (i.e.~\textit{odd}) if $f(z) = -f(-z)$ for all $z \in \R$. For an open set $U \subseteq \R$, we say that $f \colon U \to \R$ is \emph{locally absolutely continuous} on $U$ if it is absolutely continuous on every compact interval $I \subseteq U$, or equivalently if there exists a measurable function $g \colon U \to \R$ such that for every compact subinterval $I \subseteq U$, we have $\int_I |g| < \infty$ and $f(z_2) = f(z_1) + \int_{z_1}^{z_2} g$ for all $z_1,z_2 \in I$. In this case, $f$ is differentiable Lebesgue almost everywhere on $U$, with $f' = g$ almost everywhere.

Given a Borel probability measure $P$ on $\R$, we write $L^2(P)$ for the set of all Lebesgue measurable functions $f$ on $\R$ such that $\norm{f}_{L^2(P)} := \bigl(\int_\R f^2\,dP\bigr)^{1/2} < \infty$. Denote by $\ipr{f}{g}_{L^2(P)} := \int_\R fg\,dP$ the $L^2(P)$-inner product of $f,g \in L^2(P)$.
% For measures $\mu,\nu$ on a general measurable space $(\mathcal{X},\mathcal{A})$, we say that $\mu$ is \textit{absolutely continuous} with respect to~$\nu$, and write $\mu \ll \nu$, if $\mu(A) = 0$ whenever $\nu(A) = 0$ for $A \in \mathcal{A}$. The notation $\mu \nll \nu$ indicates that $\mu$ is not absolutely continuous with respect to $\nu$.

For a function $F \colon [0,1] \to \R$, we write $\hat{F}$ for its least concave majorant on $[0,1]$. Denote by $F^{(\mathrm{L})}(u)$ and $F^{(\mathrm{R})}(u)$ respectively the left and right derivatives of $F$ at $u \in [0,1]$, whenever these are well-defined.  Given an integrable function $f \colon (0,1) \to \R$ with antiderivative $F \colon [0,1] \to \R$ given by $F(u) := \int_0^u f$, define $\widehat{\mathcal{M}}_\mathrm{R}f \colon [0,1] \to [-\infty,\infty]$ by
\[
(\widehat{\mathcal{M}}_\mathrm{R}f)(u) := 
\begin{cases}
\hat{F}^{(\mathrm{R})}(u) \;&\text{for }u \in [0,1) \\
\hat{F}^{(\mathrm{L})}(1) \;&\text{for }u = 1,
\end{cases}
\]
so that $(\widehat{\mathcal{M}}_\mathrm{R}f)(1) = \lim_{u \nearrow 1} (\widehat{\mathcal{M}}_\mathrm{R}f)(u)$ by~\citet[Theorem~24.1]{rockafellar97convex}. Furthermore, define $\widehat{\mathcal{M}}_\mathrm{L}f \colon [0,1] \to [-\infty,\infty]$ by $(\widehat{\mathcal{M}}_\mathrm{L}f)(u) := (\widehat{\mathcal{M}}_\mathrm{R}g)(1 - u)$ for $u \in [0,1]$, where $g(u) := f(1 - u)$ for all such $u$.

\section{The antitonic score projection}
\label{sec:antitonic-proj}

\subsection{Construction and basic properties}
\label{subsec:antitonic-score-proj}

The aim of this section is to define formally and solve the optimisation problem~\eqref{eq:Vp0-minimiser} that yields the minimal asymptotic covariance of a convex $M$-estimator of $\beta_0$. Let $P_0$ be a probability measure on $\R$ with a uniformly continuous density $p_0$, which necessarily satisfies $p_0(\pm\infty) := \lim_{z \to \pm\infty}p_0(z) = 0$. 
% A density $p_0$ is uniformly continuous on $\R$ if and only if it is continuous on $\R$ and $p_0(\pm\infty) = 0$.
Letting $\supp p_0 := \{z \in \R : p_0(z) > 0\}$, define $\mathcal{S}_0 \equiv \mathcal{S}(p_0) := \bigl(\inf(\supp p_0), \sup(\supp p_0)\bigr)$, which is the smallest open interval that contains $\supp p_0$. We write $\Psi_\downarrow(p_0)$ for the set of all $\psi \in L^2(P_0)$ that are decreasing and right-continuous. Observe that $\Psi_\downarrow(p_0)$ is a convex cone, i.e.~$c_1\psi_1 + c_2\psi_2 \in \Psi_\downarrow(p_0)$ whenever $\psi_1,\psi_2 \in \Psi_\downarrow(p_0)$ and $c_1,c_2 \geq 0$. Moreover, every $\psi \in \Psi_\downarrow(p_0)$ is necessarily finite-valued on $\mathcal{S}_0$, so the corresponding Lebesgue--Stieltjes integral $\int_{\mathcal{S}_0}p_0\,d\psi \in [-\infty,0]$ is well-defined. 

For $\psi \in \Psi_\downarrow(p_0)$ with $\int_\R \psi^2\,dP_0 > 0$, let
\begin{equation}
\label{eq:Vp0}
V_{p_0}(\psi) := \frac{\int_\R\psi^2\,dP_0}{\bigl(\int_{\mathcal{S}_0}p_0\,d\psi\bigr)^2} \in [0,\infty],
\end{equation}
where we have modified the denominator in~\eqref{eq:Vp0-Fisher} to extend the original definition to non-differentiable functions in $\Psi_\downarrow(p_0)$ such as $z \mapsto -\sgn(z)$. That $V_{p_0}(\psi)$ is indeed the asymptotic variance factor for the corresponding convex $M$-estimator is justified formally by Proposition~\ref{prop:cvx-M-est-asymp}.  As a first step towards minimising $V_{p_0}(\psi)$ over $\psi \in \Psi_\downarrow(p_0)$, note that $V_{p_0}(c\psi) = V_{p_0}(\psi)$ for every $c > 0$, so any minimiser is at best unique up to a positive scalar.  Ignoring unimportant edge cases where the denominator in~\eqref{eq:Vp0} is zero or infinity, our optimisation problem can therefore be formulated as a constrained minimisation of the numerator in~\eqref{eq:Vp0} subject to the denominator being equal to 1. This motivates the definition of
\begin{equation}
\label{eq:score-matching-obj}
D_{p_0}(\psi) := \int_\R \psi^2\,dP_0 + 2\int_{\mathcal{S}_0} p_0\,d\psi \in [-\infty,\infty)
\end{equation}
for $\psi \in \Psi_\downarrow(p_0)$, which resembles a Lagrangian, though analogously to, e.g.~\citet[p.~798]{silverman82estimation} and \citet[][p.~705]{dumbgen2011approximation}, there is no need to introduce a Lagrange multiplier. If $\psi$ is locally absolutely continuous on $\mathcal{S}_0$ with derivative $\psi'$ Lebesgue almost everywhere, then
\begin{equation}
\label{eq:Dp0-deriv}
D_{p_0}(\psi) = \int_\R \psi^2\,dP_0 + 2\int_{\mathcal{S}_0} \psi'p_0 = \int_\R (\psi^2 + 2\psi')\,dP_0 = \E\bigl(\psi^2(\varepsilon_1) + 2\psi'(\varepsilon_1)\bigr)
\end{equation}
when $\varepsilon_1 \sim P_0$, which we recognise as the score matching objective~\eqref{eq:score-matching-objective} in the introduction.

The formal link between $V_{p_0}(\cdot)$ and $D_{p_0}(\cdot)$ is that for $\psi \in \Psi_\downarrow(p_0)$ with $\int_\R \psi^2\,dP_0 > 0$, we have $\int_{\mathcal{S}_0}p_0\,d\psi \leq 0$ and $c\psi \in \Psi_\downarrow(p_0)$ for all $c \geq 0$, so
\begin{equation}
\label{eq:V-D-equivalence}
\inf_{c \geq 0}D_{p_0}(c\psi) = \inf_{c \geq 0}\,\Bigl(c^2\int_\R\psi^2\,dP_0 + 2c\int_{\mathcal{S}_0}p_0\,d\psi\Bigr) = -\frac{\bigl(\int_{\mathcal{S}_0}p_0\,d\psi\bigr)^2}{\int_\R\psi^2\,dP_0} = -\frac{1}{V_{p_0}(\psi)}.
\end{equation}
Thus, minimising $V_{p_0}(\cdot)$ over $\Psi_\downarrow(p_0)$ is equivalent to minimising $D_{p_0}(\cdot)$ up to a scalar multiple, but $D_{p_0}(\cdot)$ is a convex function that is more tractable than $V_{p_0}(\cdot)$.

By exploiting this connection with score matching together with ideas from monotone function estimation, we prove in Theorem~\ref{thm:antitonic-score-proj}  below that the solution to our asymptotic variance minimisation problem is the function $\psi_0^*$ that we construct explicitly in the following lemma.

\begin{lemma}
\label{lem:psi0-star}
Let $P_0$ be a distribution with a uniformly continuous density $p_0$ on $\R$. Let $F_0 \colon [-\infty,\infty] \to [0,1]$ be the corresponding distribution function, and for $u \in [0,1]$, define
\[
F_0^{-1}(u) := \inf\{z \in [-\infty,\infty] : F_0(z) \geq u\} \quad\text{and}\quad J_0(u) := (p_0 \circ F_0^{-1})(u).
\] 
% $Q^0$ is right-continuous with $\lim_{v \nearrow u}Q^_0(v) = F_0^{-1}(u)$, so $J_0$ is right-continuous with left limits given by $J_0(u-) := \lim_{v \nearrow u}J_0(v) = (p_0 \circ F_0^{-1})(u)$ for $u \in (0,1]$.
Then both $J_0$ and its least concave majorant $\hat{J}_0$ on $[0,1]$ are continuous, with $p_0 = J_0 \circ F_0$ on $\R$, and
\[
\psi_0^* := \hat{J}_0^{(\mathrm{R})} \circ F_0
\]
is decreasing and right-continuous as a function from $\R$ to $[-\infty,\infty]$, provided that we set $\hat{J}_0^{(\mathrm{R})}(1) := \lim_{u \nearrow 1}\hat{J}_0^{(\mathrm{R})}(u)$.
% $= \hat{J}_0^{(\mathrm{L})}(1)$
Moreover, $\psi_0^*(z) \in \R$ if and only if $z \in \mathcal{S}_0$.
\end{lemma}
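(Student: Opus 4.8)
The plan is to establish the pointwise identity $p_0 = J_0 \circ F_0$ first, to deduce the continuity of $J_0$ and then of $\hat{J}_0$ from it, and finally to read off the asserted properties of $\psi_0^*$ from standard facts about concave functions together with the continuity of $F_0$. Throughout I would use the following consequences of $p_0$ being uniformly continuous with $p_0(\pm\infty)=0$: $F_0$ extends to a continuous nondecreasing surjection $[-\infty,\infty]\to[0,1]$; it is constant exactly on the maximal closed intervals $I$ with $\int_I p_0=0$, which by continuity and nonnegativity of $p_0$ are precisely the maximal closed intervals on which $p_0\equiv 0$; and $F_0^{-1}(F_0(z))\le z$ for every $z$. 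For the identity, fix $z\in\R$ and set $z_*:=F_0^{-1}(F_0(z))$, so $F_0(z_*)=F_0(z)$ by continuity and hence $F_0$ is flat on $[z_*,z]$; if $z_*<z$ this forces $p_0\equiv 0$ there, so $p_0(z)=0=p_0(z_*)$, while if $z_*=z$ then $p_0(z)=p_0(z_*)$ trivially. Either way $p_0(z)=p_0(z_*)=(p_0\circ F_0^{-1})(F_0(z))=J_0(F_0(z))$, the degenerate cases $z_*=-\infty$ (i.e.\ $F_0(z)=0$) and $F_0(z)=1$ being handled by $J_0(0)=p_0(-\infty)=0=p_0(z)$ and $J_0(1)=0=p_0(z)$.

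For continuity, $F_0$ is continuous on the compact space $[-\infty,\infty]$, hence a closed and therefore a quotient map onto $[0,1]$; since $J_0\circ F_0=p_0$ is continuous, so is $J_0$. (Alternatively one argues directly: $F_0^{-1}$ is increasing and continuous off a countable set, $p_0$ vanishes at both endpoints of every jump of $F_0^{-1}$, so $J_0=p_0\circ F_0^{-1}$ is continuous there too, and $J_0(0)=J_0(1)=0$ handles the endpoints of $[0,1]$.) The least concave majorant $\hat{J}_0$ satisfies $0\le J_0\le\hat{J}_0\le\sup_{[0,1]}J_0<\infty$, so it is finite and concave, hence continuous on $(0,1)$; at the endpoints, lowering the value at $0$ of any concave majorant of $J_0$ down to $J_0(0)$ preserves concavity, so minimality of $\hat{J}_0$ forces $\hat{J}_0(0)=J_0(0)$, and upper semicontinuity there follows from the contact-point/chord description of the least concave majorant together with continuity of $J_0$; the endpoint $1$ is symmetric. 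This is the familiar statement that the least concave majorant of a continuous function on a compact interval is continuous.

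Concavity of $\hat{J}_0$ ensures that $\hat{J}_0^{(\mathrm{R})}$ exists on $[0,1)$, is decreasing, and is right-continuous there, taking values in $(-\infty,\infty]$ with the value $+\infty$ possible only at $0$; with the stated convention $\hat{J}_0^{(\mathrm{R})}(1)=\lim_{u\nearrow 1}\hat{J}_0^{(\mathrm{R})}(u)=\inf_{u<1}\hat{J}_0^{(\mathrm{R})}(u)$, the map $\hat{J}_0^{(\mathrm{R})}\colon[0,1]\to[-\infty,\infty]$ is decreasing and right-continuous. Since $F_0\colon\R\to[0,1]$ is nondecreasing and continuous, $\psi_0^*=\hat{J}_0^{(\mathrm{R})}\circ F_0$ is decreasing; and if $z_n\downarrow z$ then $F_0(z_n)\downarrow F_0(z)$, so right-continuity of $\hat{J}_0^{(\mathrm{R})}$ at $F_0(z)$ (when $F_0(z)<1$; the case $F_0(z)=1$ forces $F_0(z_n)=1$ and is immediate) gives $\psi_0^*(z_n)\to\psi_0^*(z)$, so $\psi_0^*$ is right-continuous.

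For the final assertion, write $\mathcal{S}_0=(a,b)$. One checks at once that $F_0(z)\in(0,1)$ exactly for $z\in\mathcal{S}_0$, whereas $F_0\equiv 0$ on $\{z\le a\}$ when $a>-\infty$ and $F_0\equiv 1$ on $\{z\ge b\}$ when $b<\infty$. On $(0,1)$ the finite concave function $\hat{J}_0$ has finite one-sided derivatives, so $\psi_0^*$ is real-valued on $\mathcal{S}_0$. The substantive part is the converse: I claim $\hat{J}_0^{(\mathrm{R})}(0)=+\infty$ when $a>-\infty$ (and, symmetrically, $\hat{J}_0^{(\mathrm{R})}(1)=-\infty$ when $b<\infty$), which then makes $\psi_0^*\equiv+\infty$ on $\{z\le a\}$ and $\psi_0^*\equiv-\infty$ on $\{z\ge b\}$. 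Since $\hat{J}_0\ge J_0$ and $\hat{J}_0(0)=J_0(0)=0$, it suffices to show $\sup_{h>0}J_0(h)/h=\infty$; writing $z_h:=F_0^{-1}(h)$ gives $\int_a^{z_h}p_0=F_0(z_h)=h$ and hence $J_0(h)/h=p_0(z_h)/\int_a^{z_h}p_0$, so a finite bound $p_0(z)\le C\int_a^z p_0$ for $z$ near $a$ would yield $(\log F_0)'\le C$ on some interval $(a,a+\delta)$ and, on integrating towards $a$, contradict $F_0(z)\to 0=F_0(a)$ as $z\downarrow a$. I expect this Grönwall-type step — that the projected score necessarily blows up at a finite endpoint of $\supp p_0$ — to be the main obstacle, together with the bookkeeping needed to keep the four configurations of $(a,b)$ (each finite or infinite) consistent with the stated equivalence.
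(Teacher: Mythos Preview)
Your proposal is correct and follows essentially the same route as the paper's proof: the identity $p_0=J_0\circ F_0$ via flat intervals of $F_0$, continuity of $J_0$ and $\hat{J}_0$, monotonicity and right-continuity of $\hat{J}_0^{(\mathrm{R})}$ from concavity, and the Gr\"onwall/mean-value argument that $p_0/F_0$ must blow up at a finite left endpoint (the paper phrases the last step via the mean value theorem and packages the inequality $\hat{J}_0^{(\mathrm{R})}(0)=\sup_{u}J_0(u)/u$ into a separate lemma, while you derive the needed $\ge$ direction directly). Your quotient-map argument for the continuity of $J_0$ is a clean alternative to the paper's direct computation with one-sided limits of $F_0^{-1}$, but otherwise the arguments coincide.
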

We refer to $J_0$ as the \emph{density quantile function} \citep{parzen1979nonparametric,jones1992estimating}. In the case where $p_0$ is a standard Cauchy density, Figure~\ref{fig:psi0-star-cauchy} presents a visualisation of $J_0$ and its least concave majorant $\hat{J}_0$, as well as the corresponding score functions $\psi_0 = p_0'/p_0$ and $\psi_0^*$.

\begin{figure}[htb]
\centering
\includegraphics[width=0.485\textwidth,trim={5.3cm 10cm 5.3cm 10cm},clip]{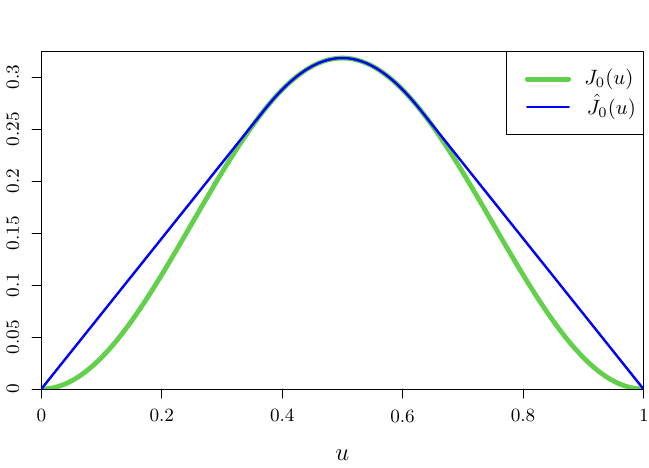}
\hfill
\includegraphics[width=0.485\textwidth,trim={5cm 9.82cm 5cm 10cm},clip]{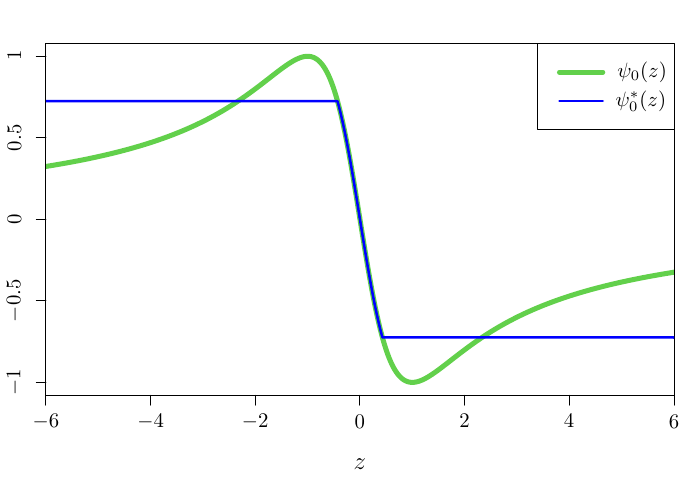}
\vspace{-0.3cm}
\caption{\emph{Left}: The density quantile function $J_0$ and its least concave majorant $\hat{J}_0$ for a standard Cauchy density. \emph{Right}: The corresponding score functions $\psi_0$ and $\psi_0^*$.}
\label{fig:psi0-star-cauchy}
\end{figure}

\begin{theorem}
\label{thm:antitonic-score-proj}
In the setting of Lemma~\ref{lem:psi0-star}, the following statements hold.
\begin{enumerate}[label=(\alph*)]
\item $\int_\R\psi_0^*\,dP_0 = 0$.
\item Let $i^*(p_0) := \int_\R(\psi_0^*)^2\,dP_0$. Then $\inf_{\psi \in \Psi_\downarrow(p_0)}D_{p_0}(\psi) = -i^*(p_0)$.
\item Suppose that $i^*(p_0) < \infty$. Then $\psi_0^*$ is the unique minimiser of $D_{p_0}(\cdot)$ over $\Psi_\downarrow(p_0)$. Moreover, for every $\psi \in \Psi_\downarrow(p_0)$ such that $\int_\R\psi^2\,dP_0 > 0$, we have 
\begin{equation}
\label{eq:antitonic-fisher-LB}
V_{p_0}(\psi) \geq V_{p_0}(\psi_0^*) = \frac{1}{i^*(p_0)} \in (0,\infty),
\end{equation}
with equality if and only if $\psi = \lambda\psi_0^*$ for some $\lambda > 0$.
\item Assume further that $p_0$ is absolutely continuous on $\R$ with derivative $p_0'$ Lebesgue almost everywhere, corresponding score function\footnote{Our convention $0/0 = 0$ means that $\psi_0 = (p_0'/p_0)\Ind_{\{p_0 > 0\}}$.} $\psi_0 := p_0'/p_0$ and Fisher information $i(p_0) := \int_\R \psi_0^2\,p_0$. Then
\begin{equation}
\label{eq:psi0-star}
\psi_0^* = \widehat{\mathcal{M}}_\mathrm{R}(\psi_0 \circ F_0^{-1}) \circ F_0
\end{equation}
and $0 < i^*(p_0) \leq i(p_0)$, with equality if and only if $p_0$ is log-concave.  In particular, if $i(p_0) < \infty$, then the conclusions of \textit{(c)} hold.
\end{enumerate}
\end{theorem}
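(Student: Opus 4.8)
The plan is to carry the minimisation over to the unit interval via the quantile transform. Writing $g := \psi\circ F_0^{-1}$ and $\psi = g\circ F_0$ sets up a correspondence between $\Psi_\downarrow(p_0)$ and the cone of decreasing, right-continuous $g$ on $(0,1)$ with $\int_0^1 g^2 < \infty$. The two identities I would establish first are
\[
\int_\R\psi^2\,dP_0 = \int_0^1 g^2\,du \qquad\text{and}\qquad \int_{\mathcal{S}_0}p_0\,d\psi = \int_{(0,1)}J_0\,dg,
\]
the first being an ordinary change of variables and the second a Lebesgue--Stieltjes version of it; this is precisely where the density quantile function $J_0 = p_0\circ F_0^{-1}$ enters, mirroring the identity $\int_{\mathcal{S}_0}\psi'p_0 = \int_0^1 g'J_0$ valid in the locally absolutely continuous case (since then $\psi' = (g'\circ F_0)\,p_0$). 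Hence $D_{p_0}(\psi,\lambda)$ equals $\tilde D(g,\lambda) := \int_0^1 g^2\,du + 2\lambda\int_{(0,1)}J_0\,dg$, and it remains to minimise $\tilde D(\cdot,\lambda)$ over decreasing right-continuous $g\in L^2(0,1)$, the claimed minimiser corresponding to $g = \lambda\hat{J}_0^{(\mathrm{R})}$.

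For the heart of parts (b) and (c): since $g$ is decreasing, $dg$ is a non-positive measure, so $J_0\le\hat{J}_0$ gives $\int_{(0,1)}J_0\,dg \ge \int_{(0,1)}\hat{J}_0\,dg$; and since $\hat{J}_0$ is concave, continuous and vanishes at $0$ and $1$ (Lemma~\ref{lem:psi0-star}, using $p_0(\pm\infty)=0$), integration by parts gives $\int_{(0,1)}\hat{J}_0\,dg = -\int_0^1 g\,\hat{J}_0^{(\mathrm{R})}\,du$. Substituting and completing the square,
\[
\tilde D(g,\lambda) \;\ge\; \int_0^1\bigl(g - \lambda\hat{J}_0^{(\mathrm{R})}\bigr)^2\,du \;-\; \lambda^2\int_0^1\bigl(\hat{J}_0^{(\mathrm{R})}\bigr)^2\,du \;=\; \int_0^1\bigl(g - \lambda\hat{J}_0^{(\mathrm{R})}\bigr)^2\,du \;-\; \lambda^2 i^*(p_0),
\]
which already yields the lower bound in (b). When $i^*(p_0)<\infty$, equality requires $g = \lambda\hat{J}_0^{(\mathrm{R})}$ almost everywhere, hence everywhere (two right-continuous monotone functions agreeing a.e.\ agree), so $\psi=\lambda\psi_0^*$; and conversely this $g$ does attain the bound, because $d(\hat{J}_0^{(\mathrm{R})})$ charges only the contact set $\{J_0=\hat{J}_0\}$ — on each component of $\{\hat{J}_0 > J_0\}$ the majorant is affine, so $\hat{J}_0^{(\mathrm{R})}$ is locally constant there — which makes $\int_{(0,1)}J_0\,dg=\int_{(0,1)}\hat{J}_0\,dg$. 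This gives (b) and the uniqueness in (c) in the finite-information case; when $i^*(p_0)=\infty$ I would instead truncate $\psi_0^*$ at heights $\pm n$ and show, via the same change of variables, that suitably rescaled truncations drive $D_{p_0}(\cdot,\lambda)$ to $-\infty$.

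The remaining assertions follow quickly. Part (a): $\int_\R\psi_0^*\,dP_0 = \int_0^1\hat{J}_0^{(\mathrm{R})}\,du = \hat{J}_0(1)-\hat{J}_0(0) = 0$. For \eqref{eq:antitonic-fisher-LB} I combine (b) with the Lagrangian identity \eqref{eq:V-D-equivalence}: for any $\psi\in\Psi_\downarrow(p_0)$ with $\int\psi^2\,dP_0 > 0$, $-\lambda^2/V_{p_0}(\psi) = \inf_{c\ge 0}D_{p_0}(c\psi,\lambda)\ge -\lambda^2 i^*(p_0)$, so $V_{p_0}(\psi)\ge 1/i^*(p_0)$; attainment of $\tilde D(\lambda\hat{J}_0^{(\mathrm{R})},\lambda)=-\lambda^2 i^*(p_0)$ moreover forces $\int_{\mathcal{S}_0}p_0\,d\psi_0^* = -i^*(p_0)$ and hence $V_{p_0}(\psi_0^*)=i^*(p_0)/i^*(p_0)^2 = 1/i^*(p_0)$, with the equality case in \eqref{eq:antitonic-fisher-LB} coming from uniqueness of the $D_{p_0}(\cdot,\lambda)$-minimiser. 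Part (d): with $f := \psi_0\circ F_0^{-1}$, absolute continuity of $p_0$ and $p_0(-\infty)=0$ give $\int_0^u f = \int_{(-\infty,\,F_0^{-1}(u)]}p_0' = p_0(F_0^{-1}(u)) = J_0(u)$, i.e.\ the antiderivative of $f$ is exactly $J_0$, so $\widehat{\mathcal{M}}_{\mathrm{R}}f = \hat{J}_0^{(\mathrm{R})}$ and \eqref{eq:psi0-star} follows. Finally, $\hat{J}_0^{(\mathrm{R})}$ is the $L^2(0,1)$-projection of $f$ onto the closed convex cone of decreasing functions — the continuum analogue of the pool-adjacent-violators construction, which I would cite from Section~\ref{sec:isoproj} — so $i^*(p_0)$ equals the squared norm of this projection, which is at most $\int_0^1 f^2\,du = i(p_0)$, with equality iff $f$ is (a.e.\ equal to) a decreasing function, i.e.\ iff $p_0$ is log-concave (equivalently $\psi_0$ decreasing on $\supp p_0$); positivity $i^*(p_0) > 0$ holds because $\hat{J}_0\ge J_0\ge 0$ with $\hat{J}_0(0)=\hat{J}_0(1)=0$ but $J_0\not\equiv 0$, so $\hat{J}_0$ is non-constant; and $i(p_0)<\infty$ then forces $i^*(p_0)<\infty$, so (c) applies.

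The step I expect to be the main obstacle is the measure-theoretic bookkeeping underpinning the change of variables and the integrations by parts: making $\int_{\mathcal{S}_0}p_0\,d\psi = \int_{(0,1)}J_0\,dg$ precise when $p_0$ vanishes on subintervals of $\mathcal{S}_0$ (so $F_0$ has flat stretches and $F_0^{-1}$ has jumps) and when $\mathcal{S}_0$ is unbounded (so $F_0^{-1}(0)=-\infty$ and $\psi_0^*$ may take the value $\pm\infty$ there), and checking that the boundary terms in the integration by parts with $\hat{J}_0$ vanish — which rests on the bound $0\le \hat{J}_0(a)\hat{J}_0^{(\mathrm{R})}(a)\le 2\int_0^a(\hat{J}_0^{(\mathrm{R})})^2$ near $0$ coming from Cauchy--Schwarz and monotonicity, and symmetrically near $1$. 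The separate handling of the case $i^*(p_0)=\infty$ in part (b) is in the same spirit and also needs care.
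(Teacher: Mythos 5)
Your argument is correct in its essentials and takes a genuinely different technical route from the paper's. The paper works directly on $\R$: it establishes the key pointwise inequality $\int_\R \psi_0^*\chi_t\,dP_0 \ge p_0(t)$ for $\chi_t = \Ind_{(-\infty,t)}$, then upgrades this via Fubini's theorem to the first-order stationarity condition $-\int_{\mathcal{S}_0}p_0\,d\psi \le \langle\psi_0^*,\psi\rangle_{L^2(P_0)}$, and completes the square in $L^2(P_0)$. You instead push the whole problem onto $(0,1)$ via the quantile transform, turning $D_{p_0}(\psi,\lambda)$ into $\int_0^1 g^2 + 2\lambda\int_{(0,1)}J_0\,dg$, and then the single pointwise inequality $J_0 \le \hat{J}_0$ (paired with $dg \le 0$) plus integration by parts against $\hat{J}_0$ does all the heavy lifting, with the equality case resting on the fact that $d(\hat{J}_0^{(\mathrm{R})})$ charges only the contact set $\{J_0 = \hat{J}_0\}$ — a fact the paper also uses, packaged in Lemmas~\ref{lem:J0-J0hat} and~\ref{lem:lcm-affine}. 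Your treatment of the $i^*(p_0)=\infty$ case by truncation and of part (d) via $J_0(u) = \int_0^u(\psi_0\circ F_0^{-1})$ parallels the paper's Case~2 and display~\eqref{eq:psi0-J0} exactly.

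What your route buys is transparency: once $\int_\R\psi^2\,dP_0 = \int_0^1 g^2$ and $\int_{\mathcal{S}_0}p_0\,d\psi = \int_{(0,1)}J_0\,dg$ are in hand, the optimisation on $[0,1]$ is almost mechanical, and you don't need the Fubini gymnastics with $g(z,t) = \Ind_{\{z<t\le t_0\}} - \Ind_{\{t_0<t\le z\}}$. What the paper's route buys is that it sidesteps two subtleties you have correctly flagged but only sketched. First, the vanishing boundary terms in $\int_{(0,1)}\hat{J}_0\,dg = -\int_0^1 g\,\hat{J}_0^{(\mathrm{R})}$ need the two-sided Cauchy--Schwarz bound $\hat{J}_0(a)|g(a)| \le \|\hat{J}_0^{(\mathrm{R})}\|_{L^2(0,a)}\|g\|_{L^2(0,a)}$ near $0$ (and its mirror near $1$), which is fine when $i^*(p_0)<\infty$ but is another reason the infinite-information case must be handled separately. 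Second, and more substantive: from $g = \lambda\hat{J}_0^{(\mathrm{R})}$ Lebesgue-a.e.\ you only recover $\psi = \lambda\psi_0^*$ on $\mathcal{S}_0$ up to $P_0$-null modifications — the map $\psi\mapsto g = \psi\circ F_0^{-1}$ collapses any variation of $\psi$ across flat stretches of $F_0$ and loses the behaviour of $\psi$ outside $\mathcal{S}_0$. Promoting $P_0$-a.e.\ equality to equality on all of $\R$ is exactly the content of the paper's Lemma~\ref{lem:psi-unique}, which exploits that $\psi_0^*$ is locally constant around any flat stretch of $F_0$ (so a decreasing $\psi$ agreeing with it on $\mathcal{T}$ is squeezed); your phrase "two right-continuous monotone functions agreeing a.e.\ agree" applies to $g$ on $(0,1)$ but does not by itself close this gap for $\psi$. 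Both points are fillable, and you have correctly identified them as the load-bearing bookkeeping.
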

Some remarks on the proof and implications of this result are in order.  The least concave majorant construction of $\psi_0^*$ in Lemma~\ref{lem:psi0-star} is reminiscent of isotonic regression, where the least squares estimator is the projection $\widehat{\theta} = \argmin_{\theta \in \mathcal{M}} \sum_{i=1}^n (Y_i - \theta_i)^2$ of the response vector $Y = (Y_1,\dotsc,Y_n) \in \R^n$ onto the monotone cone $\mathcal{M} := \bigl\{\theta = (\theta_1,\dotsc,\theta_n) : \theta_1 \leq \cdots \leq \theta_n\bigr\} \subseteq \R^n$. It is well-known that the entries of $\widehat{\theta}$ are left derivatives of the greatest convex minorant of a cumulative sum diagram, and other monotonicity-constrained estimators (such as the Grenander estimator of a decreasing density) have similar explicit representations (e.g.~\citealp[Chapter~1]{robertson1988order}; \citealp[Chapter~2]{groeneboom14nonparametric}; \citealp[Chapter~9]{samworth24modern}). An equivalent characterisation of the projection $\widehat{\theta}$ of $Y$ onto the convex cone $\mathcal{M}$ is that
\begin{equation}
\label{eq:cvx-proj}
(Y - \widehat{\theta})^\top\theta \leq 0 \;\;\text{for all }\theta \in \mathcal{M} \quad\text{and}\quad
(Y - \widehat{\theta})^\top\widehat{\theta} = 0.
\end{equation}
To minimise the score matching objective $D_{p_0}(\cdot)$ over our convex cone $\Psi_\downarrow(p_0)$ of antitonic functions, the key step of the proof of Theorem~\ref{thm:antitonic-score-proj} is to establish a similar first-order condition on the population level, namely
\begin{equation}
\label{eq:antiproj}
-\int_{\mathcal{S}_0} p_0\,d\psi \leq \int_\R\psi_0^*\psi\,dP_0 = \ipr{\psi_0^*}{\psi}_{L^2(P_0)}
\end{equation}
for $\psi \in \Psi_\downarrow(p_0)$, with equality when $\psi = \psi_0^*$. Since both sides of~\eqref{eq:antiproj} are linear in $\psi$, 
% so is preserved under non-negative linear combinations
it suffices to prove it for indicator functions of the form $\psi = \Ind_{(-\infty,t]}$ for $t \in \R$, which generate the cone $\Psi_\downarrow(p_0)$; see~\eqref{eq:KKT-ind}. This relies on key properties of the least concave majorant. Taking $\psi \equiv 1$ and $\psi \equiv -1$ in~\eqref{eq:antiproj} yields $\E\psi_0^*(\varepsilon_1) = \ipr{\psi_0^*}{1}_{L^2(P_0)} = 0$. This is part \textit{(a)} of the theorem, and
% This reflects the fact that $\psi + c \in \Psi_\downarrow(p_0)$ whenever $\psi \in \Psi_\downarrow(p_0)$ and $c \in \R$
ensures the Fisher consistency of the regression $Z$-estimator $\hat{\beta}_{\psi_0^*}$ defined in~\eqref{eq:linreg-Z-est}.  
% including the intuitively obvious Lemma~\ref{lem:lcm-affine} for the equality case
The optimality properties of $\psi_0^*$ in parts \textit{(b)} and \textit{(c)} follow readily from~\eqref{eq:antiproj}. In particular, combining this with the Cauchy--Schwarz inequality shows that~$\psi_0^*$ minimises the asymptotic variance factor over $\Psi_\downarrow(p_0)$, similarly to~\eqref{eq:Vp0-Fisher} in the introduction for the usual inverse Fisher information lower bound.

The parallels between~\eqref{eq:cvx-proj} and~\eqref{eq:antiproj} can be seen when $p_0$ is absolutely continuous with score function~$\psi_0$ satisfying $i(p_0) = \norm{\psi_0}_{L^2(P_0)}^2 < \infty$. In this case, integration by parts 
% Fubini's theorem strictly speaking
yields $-\int_{\mathcal{S}_0} p_0\,d\psi = \int_\R \psi\psi_0 p_0$,
and hence~\eqref{eq:antiproj} states that $\ipr{\psi_0 - \psi_0^*}{\psi}_{L^2(P_0)} \leq 0$ for $\psi \in \Psi_\downarrow(p_0)$ and $\ipr{\psi_0 - \psi_0^*}{\psi_0^*}_{L^2(P_0)} = 0$.
Consequently,
\[
D_{p_0}(\psi) = \int_\R (\psi - \psi_0)^2\,dP_0 - \int_\R \psi_0^2\,dP_0 = \norm{\psi - \psi_0}_{L^2(P_0)}^2 - i(p_0)
\]
for all $\psi \in \Psi_\downarrow(p_0)$, so if $i(p_0) < \infty$, then
\begin{equation}
\label{eq:Dp0-L2P0}
\psi_0^* \in \argmin_{\psi \in \Psi_\downarrow(p_0)}D_{p_0}(\psi) = \argmin_{\psi \in \Psi_\downarrow(p_0)}\norm{\psi - \psi_0}_{L^2(P_0)}^2.
\end{equation}
Thus, in the terminology of Section~\ref{sec:isoproj}, $\psi_0^*$ is a version of the \textit{$L^2(P_0)$-antitonic projection} of $\psi_0$ onto $\Psi_\downarrow(p_0)$. Indeed, the explicit representation~\eqref{eq:psi0-star} of $\psi_0^*$ as a `monotonisation' of $\psi_0$ (see the right panel of Figure~\ref{fig:psi0-star-cauchy}) is consistent with that given in Proposition~\ref{prop:isoproj} for a general $L^2(P)$-antitonic projection, where~$P$ is a univariate probability measure with a continuous distribution function. 

The inequality $i^*(p_0) = \int_\R (\psi_0^*)^2\,dP_0 \leq \int_\R \psi_0^2\,dP_0 = i(p_0)$ in Theorem~\ref{thm:antitonic-score-proj}\emph{(d)} follows from the fact that the $L^2(P_0)$-antitonic projection onto the convex cone $\Psi_\downarrow(p_0)$ is 1-Lipschitz with respect to $\norm{{\cdot}}_{L^2(P_0)}$; see~\eqref{eq:L2-proj-contraction} in Lemma~\ref{lem:L2-proj-ineq}. A statistical explanation of this information inequality arises from the fact that $1/i(p_0)$ is the infimum of the asymptotic variance factor $V_{p_0}(\psi)$ in~\eqref{eq:Vp0-Fisher} over all sufficiently regular $\psi \colon \R \to \R$; see Proposition~\ref{prop:fisher-inf-variational}. On the other hand, by~\eqref{eq:antitonic-fisher-LB}, $1/i^*(p_0)$ is the minimum value of $V_{p_0}(\cdot)$ over the restricted class $\Psi_\downarrow(p_0)$, so in view of our discussion in the introduction, it can be interpreted as an information lower bound for convex $M$-estimators. When $i^*(p_0) < \infty$, the \textit{antitonic relative efficiency} 
\[
\mathrm{ARE}^*(p_0) := \frac{i^*(p_0)}{i(p_0)}
\]
therefore quantifies the price we pay in statistical efficiency for insisting that our loss function be convex. By Theorem~\ref{thm:antitonic-score-proj}\emph{(d)}, $\mathrm{ARE}^*(p_0) \leq 1$ with equality if and only if $p_0$ is log-concave, so we can regard $1 - \mathrm{ARE}^*(p_0)$ as a measure of departure from log-concavity; see Section~\ref{subsec:fisher-divergence-proj} below. Example~\ref{ex:cauchy} shows that $\mathrm{ARE}^*(p_0) \approx 0.878$ when $p_0$ is the Cauchy density, whereas Example~\ref{rem:ARE-infinity} in Section~\ref{subsec:appendix-examples} yields a density $p_0$ for which $\mathrm{ARE}^*(p_0) = 0$. More generally, in Lemma~\ref{lem:ARE-lower-bound} below, we provide a simple lower bound on $\mathrm{ARE}^*(p_0)$ that is reasonably tight for heavy-tailed densities~$p_0$.

When $p_0$ is only uniformly continuous but not absolutely continuous, the score function and Fisher information cannot be defined as above, but we nevertheless refer to $\psi_0^*$ and $i^*(p_0)$ as the \emph{antitonic projected score function} (see Lemma~\ref{lem:p0-star} below) and \emph{antitonic information (for location)} respectively. 
% For general $\psi \in \Psi_\downarrow(p_0)$, it follows from~\eqref{eq:antiproj} that
% \[
% D_{p_0}(\psi) \geq \norm{\psi}_{L^2(P_0)}^2 - 2\ipr{\psi_0^*}{\psi}_{L^2(P_0)} = \norm{\psi - \psi_0^*}_{L^2(P_0)}^2 - \norm{\psi_0^*}_{L^2(P_0)}^2,
% \]
% which establishes Theorem~\ref{thm:antitonic-score-proj}\textit{(b)}. By applying the Cauchy--Schwarz inequality similarly to~\eqref{eq:Vp0-hampel} in the introduction, we can deduce that
% \[
% V_{p_0}(\psi) \geq \frac{\norm{\psi}_{L^2(P_0)}^2}{\ipr{\psi_0^*}{\psi}_{L^2(P_0)}^2} \geq \frac{1}{\norm{\psi_0^*}_{L^2(P_0)}^2} = \frac{1}{i^*(p_0)},
% \]
% to establish~\textit{(c)}.

\begin{remark}
\label{rem:fisher-J}
Since $F_0$ is continuous, we have $(F_0 \circ F_0^{-1})(u) = u$ for all $u \in (0,1)$. The concave function $\hat{J}_0$ is therefore absolutely continuous on $[0,1]$ with derivative $\hat{J}_0^{(\mathrm{R})} = \psi_0^* \circ F_0^{-1}$ Lebesgue almost everywhere~\citep[Corollary~24.2.1]{rockafellar97convex}, so
\[
i^*(p_0) = \int_\R (\psi_0^*)^2\,dP_0 = \int_0^1 (\psi_0^* \circ F_0^{-1})^2 = \int_0^1 \bigl(\hat{J}_0^{(\mathrm{R})}\bigr)^2.
\]
When $p_0$ is absolutely continuous on $\R$, a straightforward calculation (e.g.~\eqref{eq:psi0-J0} in the proof of Theorem~\ref{thm:antitonic-score-proj}) shows that the density quantile function $J_0 = p_0 \circ F_0^{-1}$ is absolutely continuous on $[0,1]$ with derivative $J_0' = \psi_0 \circ F_0^{-1}$ Lebesgue almost everywhere. Therefore, $\psi_0^* \circ F_0^{-1} = \hat{J}_0^{(\mathrm{R})} = \widehat{\mathcal{M}}_\mathrm{R}(\psi_0 \circ F_0^{-1})$ almost everywhere and
\[
i(p_0) = \int_\R \psi_0^2\,dP_0 = \int_0^1 (\psi_0 \circ F_0^{-1})^2 = \int_0^1 (J_0')^2.
\]
\end{remark}
It is well-known that the map $p_0 \mapsto i(p_0)$ is convex on the space of absolutely continuous densities on~$\R$~\citep[e.g.][Section~4.4, p.~78]{huber2009robust}. 
% For fixed $\theta \in (0,1)$ and any $\psi \colon \R \to \R$, we have
% \[
% 0 \leq (1 - \theta)(\psi - \psi_0)^2 p_0 + \theta(\psi - \psi_1)^2 p_1 = \bigl((1 - \theta)p_0 + \theta p_1\bigr)\psi^2 - 2\bigl((1 - \theta)p_0' + \theta p_1'\bigr)\psi + (1 - \theta)\psi_0^2 p_0 + \theta\psi_1^2 p_1.
% \]
% Taking $\psi := \frac{(1 - \theta)p_0' + \theta p_1'}{(1 - \theta)p_0 + \theta p_1}$ and integrating over $\R$, we obtain $i\bigl((1 - \theta)p_0 + \theta p_1\bigr) \leq (1 - \theta)i(p_0) + \theta i(p_1)$.
The following corollary of Theorem~\ref{thm:antitonic-score-proj}\textit{(b)} establishes the analogous property holds for antitonic information.
\begin{corollary}
\label{cor:istar-cvx}
The map $p_0 \mapsto i^*(p_0)$ is convex on the space of uniformly continuous densities on $\R$.
\end{corollary}

Finally in this subsection, we define the \emph{two-sided hazard function} $h_0 \colon \mathbb{R} \rightarrow [0,\infty)$ of $p_0$ by
\begin{equation}
\label{eq:two-sided-hazard}
h_0(z) := \frac{p_0(z)}{F_0(z) \wedge \bigl(1 - F_0(z)\bigr)} =
\begin{cases}
p_0(z)/F_0(z) & \text{if }F_0(z) \leq 1/2 \\
p_0(z)/\bigl(1 - F_0(z)\bigr) & \text{if }F_0(z) > 1/2,
\end{cases}
\end{equation}
where in accordance with our convention $0/0 = 0$, we have $h_0(z) = 0$ whenever $F_0(z) \in \{0,1\}$. The following simple lemma provides a necessary and sufficient condition on the two-sided hazard function for $\psi_0^*$ to be appropriately bounded, which means that any negative antiderivative $\ell_0^*$ grows at most linearly in the tails.  This is relevant because $\psi_0^*$ is bounded if and only if the corresponding $M$-estimator is robust in the sense of having positive \emph{finite-sample breakdown point} and uniformly bounded influence function, i.e.~finite gross error sensitivity~\citep[Sections~2.2 and~2.3]{hampel2011robust}.
\begin{lemma}
\label{lem:hazard}
In the setting of Lemma~\ref{lem:psi0-star}, define $z_{\min} := \inf(\supp p_0)$ and $z_{\max} := \sup(\supp p_0)$. Then
\begin{enumerate}[label=(\alph*)]
\item $\lim_{z \to -\infty}\psi_0^*(z) < \infty$ if and only if $\limsup_{z \searrow z_{\min}} h_0(z) < \infty$, in which case $z_{\min} = -\infty$;
\item $\lim_{z \to \infty}\psi_0^*(z) > -\infty$ if and only if $\limsup_{z \nearrow z_{\max}} h_0(z) < \infty$, in which case $z_{\max} = \infty$.
\end{enumerate}
\end{lemma}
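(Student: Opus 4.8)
The plan is to transfer everything to the quantile scale, where the statement reduces to the behaviour of the density quantile function $J_0$ near the endpoint $0$. First, part (b) follows from part (a) by the reflection $z \mapsto -z$: replacing $p_0$ by $p_0(-\cdot)$ interchanges $z_{\min}$ with $-z_{\max}$, turns $h_0$ into $h_0(-\cdot)$, and turns $\psi_0^*$ into $z \mapsto -\hat{J}_0^{(\mathrm{L})}(F_0(-z))$, and the distinction between left and right derivatives is irrelevant for one-sided limits; so I only treat (a). For (a), the first step is to identify the limit. Since $\psi_0^* = \hat{J}_0^{(\mathrm{R})} \circ F_0$ is decreasing by Lemma~\ref{lem:psi0-star}, the limit $\lim_{z \to -\infty} \psi_0^*(z)$ exists in $(-\infty, \infty]$; and since $F_0(z) \to 0$ as $z \to -\infty$ (indeed $F_0 \equiv 0$ on $(-\infty, z_{\min}]$ when $z_{\min} > -\infty$), this limit equals $\hat{J}_0^{(\mathrm{R})}(0)$, using that the right derivative of the concave function $\hat{J}_0$ is right-continuous at $0$ (monotonicity of chord slopes). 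Hence it suffices to show that $\hat{J}_0^{(\mathrm{R})}(0) < \infty$ if and only if $\limsup_{z \searrow z_{\min}} h_0(z) < \infty$, and that either condition forces $z_{\min} = -\infty$.

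The heart of the matter is the equivalence $\hat{J}_0^{(\mathrm{R})}(0) < \infty \iff \limsup_{u \searrow 0} J_0(u)/u < \infty$. Here I use that $\hat{J}_0(0) = J_0(0) = p_0(F_0^{-1}(0)) = p_0(-\infty) = 0$, and that $J_0 \le \norm{p_0}_\infty < \infty$ because a uniformly continuous density with $p_0(\pm\infty) = 0$ is bounded. If $\hat{J}_0^{(\mathrm{R})}(0) = M < \infty$ then concavity gives $J_0(u) \le \hat{J}_0(u) \le Mu$, so $J_0(u)/u \le M$. Conversely, if $J_0(u) \le Lu$ for $u \in (0, \delta]$, then with $M := \max\{L, \norm{p_0}_\infty/\delta\}$ the linear map $u \mapsto Mu$ is a concave majorant of $J_0$ on all of $[0,1]$, so $\hat{J}_0(u) \le Mu$ and therefore $\hat{J}_0^{(\mathrm{R})}(0) \le M < \infty$. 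This elementary linear-majorant observation is the crux of the argument.

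It remains to translate $J_0(u)/u$ back into the hazard function and to handle the endpoint claim. On the range $\{F_0 \le 1/2\}$ one has $h_0 = p_0/F_0$; taking $z = F_0^{-1}(u)$ for $u \in (0, 1/2]$ gives $F_0(z) = u$ and $p_0(z) = J_0(u)$, hence $h_0(F_0^{-1}(u)) = J_0(u)/u$, and since $F_0^{-1}(u) \searrow z_{\min}$ as $u \searrow 0$ this gives $\limsup_{z \searrow z_{\min}} h_0(z) \ge \limsup_{u \searrow 0} J_0(u)/u$. For the reverse inequality, if $z$ is near $z_{\min}$ with $p_0(z) > 0$, then continuity of $p_0$ prevents $F_0$ from being constant immediately to the left of $z$, so $F_0^{-1}(F_0(z)) = z$ and again $h_0(z) = J_0(F_0(z))/F_0(z)$; points with $p_0(z) = 0$ contribute $h_0(z) = 0$ and are harmless. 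Combining these three steps proves the stated equivalence in (a). Finally, if $\hat{J}_0^{(\mathrm{R})}(0) < \infty$ and $z_{\min}$ were finite, then $\psi_0^*(z_{\min}) = \hat{J}_0^{(\mathrm{R})}(F_0(z_{\min})) = \hat{J}_0^{(\mathrm{R})}(0) \in \R$, contradicting Lemma~\ref{lem:psi0-star} since $z_{\min} \notin \mathcal{S}_0$; hence $z_{\min} = -\infty$. (Alternatively: $h_0 = p_0/F_0 = (\log F_0)'$ being bounded near a finite $z_{\min}$ would force $\log F_0$, hence $F_0$, to be bounded away from $0$ there, contradicting $F_0(z_{\min}) = 0$.) The only point requiring genuine care is the quantile change of variables in this last step — accommodating flat stretches of $F_0$ and treating the cases $z_{\min}$ finite and $z_{\min} = -\infty$ on the same footing — but this is routine bookkeeping rather than a serious obstacle.
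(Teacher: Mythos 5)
Your proof is correct and proceeds on essentially the same quantile-scale route as the paper: identify $\lim_{z\to-\infty}\psi_0^*(z) = \hat{J}_0^{(\mathrm{R})}(0)$, relate this to the behaviour of $J_0(u)/u$ near zero, and then translate back to $h_0$. Two implementation details differ. First, where the paper invokes the exact identity $\hat{J}_0^{(\mathrm{R})}(0) = \sup_{u\in(0,1)} J_0(u)/u$ from Lemma~\ref{lem:lcm-deriv-0} (combined with continuity of $J_0$ to pass from the supremum to the $\limsup$), you establish the weaker but sufficient equivalence $\hat{J}_0^{(\mathrm{R})}(0) < \infty \Leftrightarrow \limsup_{u\searrow 0} J_0(u)/u < \infty$ directly via the linear-majorant construction with slope $\max\{L,\norm{p_0}_\infty/\delta\}$; this is a clean, self-contained piece of convexity bookkeeping that avoids the auxiliary lemma, at the cost of not recording the exact formula for the limit (which the statement does not need). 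Second, for the forced conclusion $z_{\min}=-\infty$, the paper derives $\limsup_{z\searrow z_{\min}}h_0(z)=\infty$ for finite $z_{\min}$ via the mean-value-theorem estimate on $(\log F_0)'$ in~\eqref{eq:left-hazard}, whereas your primary argument observes directly that $\psi_0^*(z_{\min})\in\R$ would contradict the finiteness characterisation $\{\psi_0^*\in\R\}=\mathcal{S}_0$ in Lemma~\ref{lem:psi0-star}; your parenthetical alternative is the paper's argument. One small simplification: the care you take establishing $F_0^{-1}(F_0(z))=z$ in the reverse direction of the hazard identity is unnecessary, since $p_0 = J_0\circ F_0$ holds identically on $\R$ by Lemma~\ref{lem:psi0-star}, so $h_0(z)=J_0(F_0(z))/F_0(z)$ for every $z$ with $F_0(z)\in(0,1/2]$ regardless of injectivity of $F_0$.
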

Recall that a Laplace density has a constant two-sided hazard function, as well as a score function whose absolute value is constant. Roughly speaking, the conditions on $h_0$ in Lemma~\ref{lem:hazard} are satisfied by densities whose tails are heavier than those of the Laplace density~\citep{samworth2004convergence}, for which it is particularly attractive to have bounded projected score functions.

\subsection{The log-concave Fisher divergence projection}
\label{subsec:fisher-divergence-proj}

Let $P_0$ and $P_1$ be Borel probability measures on $\mathbb{R}$ such that $P_0$ is absolutely continuous with respect to $P_1$.  Write $\supp P_0$ for the support of $P_0$ (i.e.~the smallest closed set $S$ satisfying $P_0(S) = 1$), and $\Int(\supp P_0)$ for its interior. Suppose that there exists a Radon--Nikodym derivative $dP_0/dP_1$ that is continuous on $\Int(\supp P_0)$, and also strictly positive and differentiable on some subset $E \subseteq \Int(\supp P_0)$ such that $P_0(E^c) = 0$.\footnote{This condition precludes $P_0$ from having any isolated atoms, so in particular, $P_0$ cannot be a discrete measure.} The \textit{Fisher divergence} (also known as the \textit{Fisher information distance}\footnote{This is not to be confused with the \textit{Fisher information} (or \textit{Fisher--Rao}) \textit{metric}~\citep[Chapter~2]{amari2000methods}, a Riemannian metric on a manifold of probability distributions.}) from $P_1$ to $P_0$ is defined to be
\begin{equation}
\label{eq:fisher-div}
I(P_0,P_1) := \int_E \biggl(\Bigl(\log\frac{dP_0}{dP_1}\Bigr)'\biggr)^2\,dP_0.
\end{equation}
If $P_0,P_1$ do not satisfy the assumptions above, then we define $I(P_0,P_1) := \infty$. In the case where $P_0,P_1$ have Lebesgue densities $p_0,p_1$ respectively that are both locally absolutely continuous on $\R$, we have
\[
I(p_0,p_1) \equiv I(P_0,P_1) = 
\begin{cases}
\displaystyle \int_{\{p_0 > 0\}} \biggl(\Bigl(\log\frac{p_0}{p_1}\Bigr)'\biggr)^2\,p_0 = \int_\R (\psi_0 - \psi_1)^2\,dP_0
\;&\text{if }\supp p_0 \subseteq \supp p_1 \\
\infty \;&\text{otherwise},
\end{cases}
\]
where we denote by $\psi_j := (\log p_j)' \Ind_{\{p_j > 0\}} = p_j'/p_j$ the corresponding score functions for $j \in \{0,1\}$. For further background on the Fisher divergence, see~\citet[Definition~1.13]{johnson2004information},~\citet[Section~2]{yang2019variational} and references therein. 

The following lemma establishes the connection between the projected score function and the Fisher divergence.
\begin{lemma}
\label{lem:p0-star}
In the setting of Lemma~\ref{lem:psi0-star}, there is a unique continuous log-concave density $p_0^*$ on $\R$ such that $\supp p_0^* = \mathcal{S}_0$ and $\log p_0^*$ has right derivative $\psi_0^*$ on $\mathcal{S}_0$. In particular, $\psi_0^* = (\log p_0^*)'$ Lebesgue almost everywhere on $\mathcal{S}_0$.  Furthermore, if $p_0$ is absolutely continuous, then $p_0^*$ minimises $I(p_0,p)$ over the class $\mathcal{P}_{\mathrm{LC}}$ of all univariate log-concave densities $p$, and if $p_0 \in \mathcal{P}_{\mathrm{LC}}$, then $p_0^* = p_0$.
\end{lemma}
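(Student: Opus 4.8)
The plan is to construct $p_0^*$ explicitly, as a normalised log-concave function whose logarithm has right derivative $\psi_0^*$, and then to verify each assertion in turn. By Lemma~\ref{lem:psi0-star}, the function $\psi_0^*$ is decreasing, right-continuous, and real-valued precisely on $\mathcal{S}_0$, which I write as $(a,b)$; combining this with $\int_\R \psi_0^*\,dP_0 = 0$ from Theorem~\ref{thm:antitonic-score-proj}\textit{(a)} and with the observation that $\psi_0^*$ is not identically zero (since $\hat{J}_0$ is not), I get that $\psi_0^*$ is strictly positive on a left portion of $\mathcal{S}_0$ and strictly negative on a right portion, with $\psi_0^*(a^+) \in (0,\infty]$ and $\psi_0^*(b^-) \in [-\infty, 0)$. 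I would then fix any reference point $m \in \mathcal{S}_0$ and set $\phi_0^*(z) := \int_m^z \psi_0^*(t)\,dt$ for $z \in \mathcal{S}_0$; this is finite (because $\psi_0^*$ is locally bounded on $\mathcal{S}_0$), concave (being an antiderivative of a decreasing function), and has right derivative equal to $\psi_0^*$ throughout $\mathcal{S}_0$ by right-continuity of $\psi_0^*$. The candidate is $p_0^* := c^{-1} e^{\phi_0^*}\Ind_{\mathcal{S}_0}$, where $c := \int_{\mathcal{S}_0} e^{\phi_0^*}$.

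The heart of the argument --- and the step I expect to be the main obstacle --- is to show that $\phi_0^*(z) \to -\infty$ as $z$ tends to either endpoint of $\mathcal{S}_0$. This single fact gives at once that $c \in (0,\infty)$ (so $p_0^*$ is a genuine probability density), that $e^{\phi_0^*(z)} \to 0$ as $z$ approaches $\partial\mathcal{S}_0$ (so $p_0^*$, which vanishes off $\mathcal{S}_0$, is continuous on the whole of $\R$), and that $p_0^* > 0$ exactly on $\mathcal{S}_0$, i.e.\ $\supp p_0^* = \mathcal{S}_0$. If an endpoint is infinite this is routine, since then $\psi_0^*$ is bounded away from $0$ near it (by the sign analysis above) and so $\phi_0^*$ decays at least linearly there. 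Suppose instead that, say, $b < \infty$. Because $F_0$ is continuous it pushes $P_0$ forward to the uniform law on $(0,1)$, so the substitution $u = F_0(z)$ lets me rewrite $\int_z^b \psi_0^*$, for $z$ close to $b$, as a bounded correction term (coming from any gaps of $\supp p_0$, on which $\psi_0^*$ is bounded) plus $\int_{F_0(z)}^{1} \hat{J}_0^{(\mathrm{R})}(u)/J_0(u)\,du$. Since $\hat{J}_0^{(\mathrm{R})} \le 0$ and $0 < J_0 \le \hat{J}_0$ on a left neighbourhood of $1$, this last integral is at most $\int^1 \hat{J}_0^{(\mathrm{R})}(u)/\hat{J}_0(u)\,du = \int^1 (\log\hat{J}_0)^{(\mathrm{R})}$, which equals $-\infty$ because $\hat{J}_0(1) = 0$. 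Here I use that $\hat{J}_0(0) = \hat{J}_0(1) = 0$, which holds because $J_0(0) = p_0(-\infty) = 0$ and $J_0(1) = p_0(+\infty) = 0$ by uniform continuity of $p_0$, together with the elementary fact that the least concave majorant of a continuous function on $[0,1]$ agrees with that function at both endpoints. The limit as $z \searrow a$ is handled symmetrically, using $\int_0 (\log\hat{J}_0)^{(\mathrm{R})} = +\infty$ and $\psi_0^* > 0$ near $a$ to get $\phi_0^*(z) \to -\infty$.

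Three short points then finish the proof. First, $\log p_0^* = \phi_0^* - \log c$ on $\mathcal{S}_0$, so its right derivative there is $\psi_0^*$ by construction, and the a.e.\ identity $(\log p_0^*)' = \psi_0^*$ follows since a monotone function is differentiable off a countable set. Secondly, for uniqueness: if $\tilde{p}$ is any continuous log-concave density with $\supp\tilde{p} = \mathcal{S}_0$ such that $\log\tilde{p}$ has right derivative $\psi_0^*$ on $\mathcal{S}_0$, then $\log\tilde{p} - \log p_0^*$ is continuous on $\mathcal{S}_0$ with right derivative identically $0$, hence constant; since $\tilde{p}$ and $p_0^*$ both vanish outside $\mathcal{S}_0$ and integrate to $1$, this forces $\tilde{p} = p_0^*$. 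Finally, if $p_0$ is itself log-concave, then (being also uniformly continuous) it is absolutely continuous on $\R$ with $\supp p_0 = \mathcal{S}_0$ and with score function $\psi_0 := p_0'/p_0$ decreasing on $\mathcal{S}_0$; so by Remark~\ref{rem:fisher-J} the density quantile function satisfies $J_0' = \psi_0 \circ F_0^{-1}$ a.e., which is decreasing (a decreasing function composed with an increasing one), whence $J_0$ is already concave, $\hat{J}_0 = J_0$, and therefore $\psi_0^* = \hat{J}_0^{(\mathrm{R})} \circ F_0 = \psi_0$ a.e.\ on $\mathcal{S}_0$. The uniqueness statement applied with $\tilde{p} = p_0$ then gives $p_0^* = p_0$.
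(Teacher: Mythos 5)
Your proof is correct and follows essentially the same route as the paper's: define $\phi_0^*$ as an antiderivative of $\psi_0^*$, show $\phi_0^*(z) \to -\infty$ at the endpoints of $\mathcal{S}_0$ by comparing against $(\log\hat{J}_0)^{(\mathrm{R})}$ (the paper packages this comparison as Lemma~\ref{lem:p0-psi0-ineq}), deduce integrability and continuity of $e^{\phi_0^*}$ and normalise, then handle uniqueness via the right-derivative identity and the log-concave case via concavity of $J_0$.

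Two small points of hygiene, neither fatal. First, the "bounded correction term" coming from the gaps of $\supp p_0$ is not in fact bounded in general --- gaps may accumulate near a finite endpoint $b$, so $\int_z^b\psi_0^*\Ind_{\{p_0 = 0\}}$ need not remain bounded as $z \nearrow b$. What you actually have, and what suffices, is that it is \emph{nonpositive} there (since $\psi_0^* < 0$ near $b$ and $\psi_0^*$ is constant across each gap), so it can only push $\phi_0^*$ further towards $-\infty$. Second, the parenthetical "(being also uniformly continuous) it is absolutely continuous" reads as if uniform continuity alone implied absolute continuity, which is false; the true (and needed) implication is that a log-concave density that is continuous on $\R$ is locally Lipschitz on $\mathcal{S}_0$ and monotone near $\partial\mathcal{S}_0$, hence absolutely continuous. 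The paper sidesteps this entirely by working with $\phi_0^{(\mathrm{R})}$ and $J_0^{(\mathrm{R})}$ directly, showing $J_0^{(\mathrm{R})} = \psi_0 \circ F_0^{-1}$ is decreasing so that $J_0$ is concave, without ever invoking absolute continuity of $p_0$ or Remark~\ref{rem:fisher-J}.
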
 
Even when $p_0$ is only uniformly continuous, we refer to $p_0^*$ as the \textit{log-concave Fisher divergence projection} of $p_0$. In contrast, the log-concave maximum likelihood projection $p_0^{\mathrm{ML}}$ of the distribution $P_0$~\citep{dumbgen2011approximation,barber2021local} can be interpreted as a minimiser of Kullback--Leibler divergence rather than Fisher divergence over the class of upper semi-continuous log-concave densities. By~\citet[Theorem~2.2]{dumbgen2011approximation}, $p_0^{\mathrm{ML}}$ exists and is unique if and only if~$P_0$ is non-degenerate and has a finite mean (but not necessarily a Lebesgue density). On the other hand, moment conditions are not required for $p_0^*$ to exist and be unique, but $p_0^*$ is only defined in Lemma~\ref{lem:p0-star} when~$P_0$ has a uniformly continuous density on $\R$. As we will discuss in Section~\ref{subsec:score-estimation}, the non-existence of the Fisher divergence projection for discrete measures $P_0$ has consequences for our statistical methodology. 
% $p_0^{\mathrm{ML}}$ does not have an explicit characterisation, whereas $p_0^*$ does

When $p_0$ is not log-concave, $p_0^{\mathrm{ML}}$ usually does not coincide with $p_0^*$ even when both exist, and moreover the associated regression $M$-estimators 
\begin{equation}
\label{eq:betahat-ML-fisher}
\hat{\beta}_{\psi_0^{\mathrm{ML}}} \in \argmax_{\beta \in \R^d} \sum_{i=1}^n \log p_0^{\mathrm{ML}}(Y_i - X_i^\top\beta) \quad\text{and}\quad \hat{\beta}_{\psi_0^*} \in \argmax_{\beta \in \R^d} \sum_{i=1}^n \log p_0^*(Y_i - X_i^\top\beta)
\end{equation}
are generally different; see Examples~\ref{ex:t2} and~\ref{ex:laplace-mixture} in Section~\ref{subsec:appendix-examples}. In fact, the following result shows that there exist error distributions $P_0$ for which the asymptotic covariance of $\hat{\beta}_{\psi_0^{\mathrm{ML}}}$ is arbitrarily large compared with that of the optimal convex $M$-estimator $\hat{\beta}_{\psi_0^*}$, even when the latter is close to being asymptotically efficient in the sense of~\eqref{eq:betahat-MLE-asymp}.

\begin{figure}
\centering
\begin{tikzpicture}[xscale=1.6, yscale=2]
% Define constants
\def\a{3} % a = 3
\def\b{1} % b = 1
\def\epsilon{1} % epsilon = 1
\def\delta{0.15} % delta = 0.1
\def\constML{\epsilon * \a * exp(-\a * \delta) / 2} % p_0(1 + delta)

\def\constFish{\epsilon * \a / 2} % p_0(1 + delta)

% Define the piecewise function
\draw[domain=-2:-1, smooth, samples=100] 
plot (\x, {\epsilon * \a * exp(-\a * (abs(\x) - 1)) / 2});
\draw[domain=-1:1, smooth, samples=100] 
plot (\x, {\epsilon * \a * exp(-\b * (1 - abs(\x))) / 2});
\draw[domain=1:2, smooth, samples=100] 
plot (\x, {\epsilon * \a * exp(-\a * (abs(\x) - 1)) / 2});

% Modified density p_0^{\mathrm{ML}}(z)
\draw[domain=-2:-(1+\delta), smooth, thick, samples=100, red] 
plot (\x, {\epsilon * \a * exp(-\a * (abs(\x) - 1)) / 2});
\draw[domain=-(1+\delta):(1+\delta), smooth, thick, samples=100, red] 
plot (\x, {\constML});
\draw[domain=(1+\delta):2, smooth, thick, samples=100, red] 
plot (\x, {\epsilon * \a * exp(-\a * (abs(\x) - 1)) / 2});

\draw[domain=-2:-1, smooth, thick, samples=100, blue] 
plot (\x, {0.8 * \epsilon * \a * exp(-\a * (abs(\x) - 1)) / 2});
\draw[domain=-1:1, smooth, thick, samples=100, blue] 
plot (\x, {0.8 * \constFish});
\draw[domain=1:2, smooth, thick, samples=100, blue] 
plot (\x, {0.8 * \epsilon * \a * exp(-\a * (abs(\x) - 1)) / 2});

\draw (-1.5, 1.2) node [anchor=south, black] {$p_0(z)$};
\node[anchor=north, red] at (1.85, 0.95) {$p_0^{\mathrm{ML}}(z)$};
\node[anchor=north, blue] at (0.45, 1.65) {$p_0^*(z)$};

% Axes
\draw[->] (-2.2, 0) -- (2.5, 0) node[anchor=north] {$z$};
\draw[->] (0, 0) -- (0, 1.8);

% Labels for key points
\node[anchor=north] at (-1, 0) {$-1$};
\node[anchor=north] at (1, 0) {$1$};
\node[anchor=north] at (0, 0) {$0$};

% Optional: Dashed lines for key points
\draw[dashed] (-1, 0) -- (-1, 1.5);
\draw[dashed] (1, 0) -- (1, 1.5);
\end{tikzpicture}
\hspace{10pt}
\begin{tikzpicture}[xscale=1.3, yscale=0.5]
\def\a{3}
\def\b{1.2}
\def\delta{0.25}

% Axes
\draw[->] (-2.5, 0) -- (2.5, 0) node[anchor=north] {$z$};
\draw[->] (0, -4) -- (0, 4);

\draw[dashed] (-1, -4) -- (-1, 4);
\draw[dashed] (1, -4) -- (1, 4);

% Horizontal blue and black line at y = b
\draw[black] (0, \b) -- (1, \b);

% Horizontal blue and black line at y = -b
\draw[black] (-1, -\b) -- (0, -\b);

% Label functions
\node[anchor=west, blue] at (-2, \a + 0.65) {$\psi_0^*(z)$};
\node[anchor=west, red] at (-2.1, \a - 0.7) {$\psi_0^{\mathrm{ML}}(z)$};
\node[anchor=west, black] at (0.1, \b + 0.6) {$\psi_0(z)$};

% Horizontal line at y = a and -a
\draw[black] (-2.3, \a + 0.1) -- (-1, \a + 0.1);
\draw[black] (1, -\a + 0.1) -- (2.3, -\a + 0.1);

\draw[blue, thick] (-2.3, \a + 0.03) -- (-1, \a + 0.03);
\draw[blue, thick] (1, -\a + 0.03) -- (2.3, -\a + 0.03);

\draw[red, thick] (-2.3, \a - 0.05) -- (-1 - \delta, \a - 0.05);
\draw[red, thick] (1 + \delta, -\a - 0.05) -- (2.3, -\a - 0.05);

\draw[red, thick] (-1 - \delta, -0.05) -- (1 + \delta, -0.05);

\draw[blue, thick] (-1, 0.05) -- (1, 0.05);

% Origin and labels
\node[circle, fill, inner sep=1pt] at (0, 0) {};
\node[anchor=north west] at (0, 0) {0};

% Additional numeric labels
\node[anchor=north] at (-1 - 0.3, 0) {$-1$};
\node[anchor=north] at (1 + 0.2, 0) {1};
\end{tikzpicture}

\caption{Illustration of the construction in the proof of Proposition~\ref{prop:Vp0-MLE}. \textit{Left}: Plot of the density~$p_0$ (black) together with its log-concave maximum likelihood projection $p_0^{\mathrm{ML}}$ (red) and Fisher divergence projection $p_0^*$ (blue). \textit{Right}: Plot of the corresponding score functions.}
\label{fig:Vp0-MLE}
\end{figure}

\begin{proposition}
\label{prop:Vp0-MLE}
For every $\epsilon \in (0,1)$, there exists a distribution $P_0$ with a finite mean and an absolutely continuous density $p_0$ such that $i(p_0) < \infty$, and the log-concave maximum likelihood projection $q_0 \equiv p_0^{\mathrm{ML}}$ has corresponding score function $\psi_0^{\mathrm{ML}} := q_0^{(\mathrm{R})}/q_0 \in \Psi_\downarrow(p_0)$ satisfying
\begin{equation}
\label{eq:Vp0-MLE}
\frac{V_{p_0}(\psi_0^*)}{V_{p_0}(\psi_0^{\mathrm{ML}})} \leq \epsilon \quad\text{and}\quad \mathrm{ARE}^*(p_0) \geq 1 - \epsilon.
\end{equation}
\end{proposition}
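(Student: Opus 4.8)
The plan is to exhibit, for each $\epsilon\in(0,1)$, an explicit perturbation of a Gaussian. Concretely, I would take $p_0=p_{0,M}$ to be the Gaussian scale mixture with density $p_0(z):=\tfrac12\varphi(z)+\tfrac1{2M}\varphi(z/M)$, where $\varphi$ is the standard normal density and $M>1$ is large (to be fixed last). Then $p_0$ is symmetric, infinitely differentiable, uniformly continuous and strictly positive on $\R$ (so $\mathcal S_0=\R$), with mean $0$, second moment $\sigma_M^2:=\tfrac12(1+M^2)$, finite Fisher information, and (for $M$ large) is not log-concave, since its score $\psi_0$ is strictly increasing on short intervals around each of $\pm z_M$, where $z_M:=\sqrt{2\log M}$ is the point at which the two components have equal density. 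The proof then rests on three facts, after which one fixes $M$ large enough that both conclusions hold and checks that $\psi_0^{\mathrm{ML}}\in\Psi_\downarrow(p_0)$ (it is decreasing and, being the score of a log-concave density with exponential-type tails, square-integrable against $P_0$):
(I) $i(p_0)\le 1$; (II) $\mathrm{ARE}^*(p_0)\to 1$ and $V_{p_0}(\psi_0^*)$ stays bounded as $M\to\infty$; (III) $V_{p_0}(\psi_0^{\mathrm{ML}})\to\infty$ as $M\to\infty$.

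For (I), $p\mapsto\int_\R(p')^2/p$ is jointly convex and the Fisher information of $z\mapsto c^{-1}\varphi(z/c)$ is $c^{-2}$, so $i(p_0)\le\tfrac12\cdot1+\tfrac12\cdot M^{-2}\le1$. For (II), by~\eqref{eq:Dp0-L2P0} the function $\psi_0^*$ is the $L^2(P_0)$-projection of $\psi_0$ onto the convex cone $\Psi_\downarrow(p_0)$, whence $\ipr{\psi_0-\psi_0^*}{\psi_0^*}_{L^2(P_0)}=0$ and $i(p_0)-i^*(p_0)=\norm{\psi_0-\psi_0^*}_{L^2(P_0)}^2$. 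Using the representation $\psi_0^*=\widehat{\mathcal M}_{\mathrm R}(\psi_0\circ F_0^{-1})\circ F_0$ from Theorem~\ref{thm:antitonic-score-proj}\emph{(d)}, one sees that the monotonisation modifies $\psi_0$ only near $\pm z_M$ (where $\psi_0$ transitions \emph{upward} from $\approx -z$ to $\approx -z/M^2$ over a width $O(1/z_M)$), replacing it there by values between its one-sided limits; these transition intervals carry $P_0$-mass $O\bigl(1/(Mz_M)\bigr)$ and $|\psi_0-\psi_0^*|=O(z_M)$ on them, so $i(p_0)-i^*(p_0)=O(z_M/M)\to0$. A direct computation (splitting at $|z|=z_M$) gives $i(p_0)\to\tfrac12$, so $\mathrm{ARE}^*(p_0)=i^*(p_0)/i(p_0)\to1$ and $V_{p_0}(\psi_0^*)=1/i^*(p_0)\to2$.

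Step (III) is the crux. Write $q_0:=p_0^{\mathrm{ML}}$ and let $Q_0$ be the probability measure with density $q_0$; it is symmetric, and since $\mathrm{KL}(P_0\|Q_0)<\infty$ forces $\supp Q_0=\R$, it is fully supported with strictly concave logarithm. Integration by parts (the boundary term vanishes as $p_0$ has Gaussian-type tails) gives $\int_{\mathcal S_0}p_0\,d\psi_0^{\mathrm{ML}}=-\int_\R\psi_0\psi_0^{\mathrm{ML}}\,dP_0$, so $V_{p_0}(\psi_0^{\mathrm{ML}})=\norm{\psi_0^{\mathrm{ML}}}_{L^2(P_0)}^2\big/\ipr{\psi_0}{\psi_0^{\mathrm{ML}}}_{L^2(P_0)}^2$; thus it suffices to show that the unit vector $\psi_0^{\mathrm{ML}}/\norm{\psi_0^{\mathrm{ML}}}_{L^2(P_0)}$ becomes asymptotically $L^2(P_0)$-orthogonal to $\psi_0$. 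The mechanism is that $q_0$ is forced to be extremely spread out: comparing likelihoods with $N(0,\sigma_M^2)$ gives $\E_{P_0}\log q_0\ge-\tfrac12\log(2\pi\sigma_M^2)-\tfrac12$, whereas Jensen's inequality (concavity of $\log q_0$, applied to each mixture component) together with the elementary bound $\log q_0(z)\le\log\norm{q_0}_\infty+1-\tfrac12\norm{q_0}_\infty|z|$, valid for every symmetric log-concave density, forces $\norm{q_0}_\infty=O(1/M)$; an analogous argument gives $q_0(cM)\asymp_c 1/M$ for each fixed $c$, so by log-concavity $q_0$ is essentially flat at height $\asymp1/M$ over $[-cM,cM]$ and $\psi_0^{\mathrm{ML}}=(\log q_0)'$ is $O(1/M)$ on any interval of length $O(M)$. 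Heuristically $q_0$ therefore behaves like $N(0,\sigma_M^2)$, giving $\psi_0^{\mathrm{ML}}(z)\approx -z/\sigma_M^2$ and $\psi_0^{\mathrm{ML}}/\norm{\psi_0^{\mathrm{ML}}}_{L^2(P_0)}\approx\bigl(z\mapsto -z/\sigma_M\bigr)$, whose inner product with $\psi_0$ is $-\sigma_M^{-1}\int_\R z\,p_0'(z)\,dz=\sigma_M^{-1}\to0$, so $V_{p_0}(\psi_0^{\mathrm{ML}})\to\sigma_M^2=\infty$.

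The main obstacle is making the last approximation in (III) rigorous: the estimates above pin $\psi_0^{\mathrm{ML}}$ down only on a large-but-bounded central region, and one needs quantitative control of the non-explicit projection $q_0$ in the tails — in particular of its decay rate and of the curvature measure $d\psi_0^{\mathrm{ML}}$ — in order to genuinely lower-bound $V_{p_0}(\psi_0^{\mathrm{ML}})$. Two routes look viable: (a) a stability/continuity estimate for the log-concave Kullback--Leibler projection near the Gaussian family; or (b) replacing the Gaussian contamination by one for which $p_0^{\mathrm{ML}}$ is piecewise log-linear with an explicitly solvable breakpoint structure — along the lines of the computable instances referenced above (Examples~\ref{ex:laplace-mixture} and~\ref{ex:t2}) — which would reduce (III) to elementary calculations. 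I would pursue route (b).
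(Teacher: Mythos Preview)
The main gap is Step~(III), as you acknowledge. For the Gaussian scale mixture, the log-concave maximum-likelihood projection $q_0$ is not explicit, and your heuristic that $q_0\approx N(0,\sigma_M^2)$ with quantitative tail control is unproven. The Kullback--Leibler comparison with a Gaussian competitor gives a lower bound on $\E_{P_0}\log q_0$, and combined with a pointwise upper bound on $\log q_0$ this constrains $\norm{q_0}_\infty$; but it yields no direct control on the Lebesgue--Stieltjes measure $d\psi_0^{\mathrm{ML}}$ in the tails, which is exactly what enters the denominator $\bigl(\int_\R p_0\,d\psi_0^{\mathrm{ML}}\bigr)^2$ of $V_{p_0}(\psi_0^{\mathrm{ML}})$. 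Step~(II) also contains an error: the region where $\psi_0'>0$ has width of order $z_M$ (roughly $[z_M,\sqrt{3}\,z_M]$), not $O(1/z_M)$, and the least-concave-majorant bridge may extend further still; the corrected $P_0$-mass is $\Theta(z_M/M)$ rather than $O(1/(Mz_M))$. The conclusion $i(p_0)-i^*(p_0)\to0$ is probably still salvageable, but it requires a genuine computation of the bridge endpoints, not just the width of the transition.

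The paper follows precisely your route~(b). It takes a symmetric \emph{bimodal} piecewise log-affine density: $p_0(z)=\tfrac{\epsilon a}{2}e^{-a(|z|-1)}$ for $|z|\ge1$ and $p_0(z)=\tfrac{\epsilon a}{2}e^{b(|z|-1)}$ for $|z|\le1$, with $a\ge(1-\epsilon^2)/\epsilon^2$ and $b$ fixed by normalisation. Because $\log p_0$ is convex on $[-1,1]$ and affine outside, \emph{both} projections are explicit: by the characterisation in \citet{dumbgen2011approximation}, $p_0^{\mathrm{ML}}$ is constant on $[-(1+\delta),1+\delta]$ and equals $p_0$ elsewhere, for the unique $\delta>0$ making it a density; and $\psi_0^*(z)=a\Ind_{\{z<-1\}}-a\Ind_{\{z\ge1\}}$. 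One then reads off $i^*(p_0)=a^2\epsilon$, $1/V_{p_0}(\psi_0^{\mathrm{ML}})=a^2\epsilon\,e^{-a\delta}$ and $i(p_0)=b^2(1-\epsilon)+a^2\epsilon$, and the two inequalities follow from short algebra using $e^{-a\delta}=1/\{\epsilon(a(1+\delta)+1)\}$ and $b\le a\epsilon/(1-\epsilon)$. Your unimodal scale mixture lacks the piecewise log-affine structure that makes both projections tractable.
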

The idea for the proof of Proposition~\ref{prop:Vp0-MLE} is to construct an absolutely continuous density $p_0$ whose score function is constant on each of $(-\infty,-1)$, $(-1,0)$, $(0,1)$ and $(1,\infty)$; see Figure~\ref{fig:Vp0-MLE}. The key point is that the log-concave maximum likelihood projection is constant on $[-1,1]$ and exactly matches the true density in the tails, so in order for $p_0^{\mathrm{ML}}$ to integrate to 1,  the densities and score functions can only agree on $\bigl(-\infty,-(1+\delta)\bigr)$ and $(1+\delta,\infty)$ for some $\delta > 0$. On the other hand, the log-concave Fisher divergence projection matches the score $\psi_0$ on the whole of $[-1,1]^c$ by merely being \emph{proportional} to the true density in the tails. The main contribution to the Fisher information of $p_0$ arises from the region $\bigl(-(1+\delta), -1\bigr) \cup (1, 1+\delta)$, on which the antitonic score function approximation is exact but $\psi_0^{\mathrm{ML}}$ is equal to 0. This means that the antitonic relative efficiency is close to~1, while the log-concave maximum likelihood projection incurs a considerable relative efficiency loss over this region.

Our next result provides a simple lower bound on the antitonic information.
\begin{lemma}
\label{lem:ARE-lower-bound}
Suppose that $p_0$ is an absolutely continuous density on $\R$ with $i(p_0) < \infty$. Then $p_0$ is bounded with $i^*(p_0) \geq 4\norm{p_0}_\infty^2$, so
\[
\mathrm{ARE}^*(p_0) \geq \frac{4\norm{p_0}_\infty^2}{i(p_0)},
\]
with equality if and only if $p_0^*$ is a Laplace density, i.e.~there exist $\mu \in \R$ and $\sigma > 0$ such that $p_0^*(z) = (2\sigma)^{-1}\exp(-|z - \mu|/\sigma)$ for all $z \in \R$.
\end{lemma}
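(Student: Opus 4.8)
The plan is to pass to the \emph{density quantile} picture of Lemma~\ref{lem:psi0-star} and Remark~\ref{rem:fisher-J}, reduce the inequality to an elementary estimate for a concave function on $[0,1]$, and then read off the equality case. As a preliminary: since $p_0$ is absolutely continuous with $i(p_0)<\infty$, the function $\sqrt{p_0}$ is locally absolutely continuous with $(\sqrt{p_0})' = p_0'/(2\sqrt{p_0})$ a.e., so $\int_\R ((\sqrt{p_0})')^2 = i(p_0)/4 < \infty$ while $\int_\R p_0 = 1$; hence $\sqrt{p_0}\in H^1(\R)$, so $p_0$ is bounded (indeed $\|p_0\|_\infty \le \sqrt{i(p_0)}$ by Cauchy--Schwarz), uniformly continuous, and $p_0(\pm\infty)=0$. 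This puts us in the setting of Lemma~\ref{lem:psi0-star}, and by Remark~\ref{rem:fisher-J}, $i^*(p_0) = \int_0^1(\hat J_0^{(\mathrm R)})^2$, where $J_0 = p_0\circ F_0^{-1}$ and $\hat J_0$ is its least concave majorant on $[0,1]$. Two facts about $\hat J_0$ drive the argument: \emph{(i)} $\hat J_0(0)=J_0(0)=0$ and $\hat J_0(1)=J_0(1)=0$, since $J_0(0)=p_0(\inf\supp p_0)=0$ and $J_0(1)=p_0(\sup\supp p_0)=0$ (by continuity of $p_0$ and $p_0(\pm\infty)=0$), while a least concave majorant agrees with its function at the endpoints; and \emph{(ii)} $\max_{[0,1]}\hat J_0 = \sup_{[0,1]}J_0 = \|p_0\|_\infty =: M$, where $\sup_{[0,1]}J_0=\|p_0\|_\infty$ because $F_0$ is continuous (so every value $p_0(z)$ with $p_0(z)>0$ equals $J_0(F_0(z))$), and $\max\hat J_0 = \sup J_0$ because $\hat J_0$ is affine on any interval where it strictly exceeds $J_0$, so a maximiser of $\hat J_0$ may be taken at a point where $\hat J_0 = J_0$ or at an endpoint (where $\hat J_0 = 0 \le M$).

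For the inequality, pick $u^*\in(0,1)$ with $\hat J_0(u^*)=M$ (possible as $M>0$). The concave, hence absolutely continuous, function $\hat J_0$ increases by $M$ on $[0,u^*]$ and decreases by $M$ on $[u^*,1]$, so $\int_0^{u^*}\hat J_0^{(\mathrm R)} = M$ and $\int_{u^*}^1\hat J_0^{(\mathrm R)} = -M$; Cauchy--Schwarz then gives $\int_0^{u^*}(\hat J_0^{(\mathrm R)})^2 \ge M^2/u^*$ and $\int_{u^*}^1(\hat J_0^{(\mathrm R)})^2 \ge M^2/(1-u^*)$. Adding, $i^*(p_0) \ge M^2/\bigl(u^*(1-u^*)\bigr) \ge 4M^2 = 4\|p_0\|_\infty^2$, since $u^*(1-u^*)\le 1/4$; dividing by $i(p_0)$ yields the claimed bound on $\mathrm{ARE}^*(p_0)$.

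For the equality case, equality in the last chain forces $u^*=1/2$ and forces $\hat J_0^{(\mathrm R)}$ to be a.e.\ constant on each of $[0,1/2]$ and $[1/2,1]$; with \emph{(i)} and \emph{(ii)} this gives $\hat J_0(u) = 2M\min\{u,1-u\}$. Writing $\mu := F_0^{-1}(1/2)$, the identity $\psi_0^* = \hat J_0^{(\mathrm R)}\circ F_0$ yields $\psi_0^* = -2M\,\sgn(\cdot-\mu)$ on $\mathcal S_0$, so by Lemma~\ref{lem:p0-star} the density $p_0^*$ satisfies $p_0^*(z) = C e^{-2M|z-\mu|}$ on $\mathcal S_0$ for some $C>0$; since $p_0^*$ is continuous with positivity set $\mathcal S_0$, it cannot jump at a finite endpoint of $\mathcal S_0$ (equivalently, invoke Lemma~\ref{lem:hazard}), so $\mathcal S_0 = \R$ and normalisation gives $C = M$, i.e.\ $p_0^*$ is the Laplace density with location $\mu$ and scale $1/(2M)$. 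Conversely, if $p_0^*$ is the Laplace density with location $\mu$ and scale $\sigma$, then $\psi_0^* = -\sigma^{-1}\sgn(\cdot-\mu)$ a.e.\ $[P_0]$, so $i^*(p_0) = \int_\R(\psi_0^*)^2\,dP_0 = \sigma^{-2}$; running the identity $\hat J_0^{(\mathrm R)}\circ F_0 = -\sigma^{-1}\sgn(\cdot-\mu)$ backwards, and using $\int_\R\psi_0^*\,dP_0 = 0$ from Theorem~\ref{thm:antitonic-score-proj}\emph{(a)} to fix the median at $F_0(\mu)=1/2$, gives $\hat J_0(u) = \sigma^{-1}\min\{u,1-u\}$, of height $1/(2\sigma)$; hence $\|p_0\|_\infty = \max_{[0,1]}\hat J_0 = 1/(2\sigma)$ and $4\|p_0\|_\infty^2 = \sigma^{-2} = i^*(p_0)$, so equality holds.

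The main obstacle I anticipate is not the inequality, which is elementary once one is in the density quantile picture, but the careful treatment of degenerate shapes of $p_0$: when $p_0$ has interior zeros, $F_0^{-1}$ is only a generalised inverse, and fact \emph{(ii)} needs the continuity of $J_0$ and $\hat J_0$ supplied by Lemma~\ref{lem:psi0-star}; and when $\mathcal S_0 \neq \R$ the passage from ``$p_0^* = C e^{-2M|\cdot-\mu|}$ on $\mathcal S_0$'' to ``$\mathcal S_0 = \R$'' must use that $p_0^*$ is a genuine continuous density vanishing off $\mathcal S_0$ (or Lemma~\ref{lem:hazard}). These points are routine but are where the bookkeeping lives.
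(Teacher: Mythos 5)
Your proof is correct and follows essentially the same route as the paper's: pass to the density quantile $J_0$, locate a point $u^*$ where $\hat J_0$ attains its maximum $\|p_0\|_\infty$, apply Cauchy--Schwarz on $[0,u^*]$ and $[u^*,1]$, and sum using $u^*(1-u^*)\le 1/4$; equality forces $u^*=1/2$ and $\hat J_0$ piecewise affine, whence $p_0^*$ is Laplace. Your preliminary step deducing that $p_0$ is bounded, uniformly continuous and vanishes at $\pm\infty$ from $i(p_0)<\infty$ is a genuine (if routine) completion of the paper's argument, which invokes Lemma~\ref{lem:psi0-star} without checking that an absolutely continuous $p_0$ with finite Fisher information satisfies its hypotheses; likewise your explicit converse direction for the equality case makes the ``if and only if'' fully self-contained where the paper treats it as immediate.
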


\begin{remark}
\label{rem:affine-equivariance}
A reassuring property of the antitonic projection is its affine equivariance: if $p_0$ is a uniformly continuous density, then for $a > 0$ and $b \in \R$, the density $z \mapsto a p_0(az + b) =: p_{a,b}(z)$ has antitonic projected score function and log-concave Fisher divergence projection given by 
\[
\psi_{a,b}^*(z) := a \psi_0^*(az + b) \quad\text{and}\quad p_{a,b}^*(z) := a p_0^*(az + b)
\]
respectively for $z \in \R$. It follows that $1/V_{p_{a,b}}(\psi_{a,b}^*) = i^*(p_{a,b}) = \int_\R (\psi_{a,b}^*)^2\,p_{a,b} = a^2 i^*(p_0)$, so because $\norm{p_{a,b}}_\infty = a\norm{p_0}_\infty$, both the antitonic relative efficiency and the lower bound in Lemma~\ref{lem:ARE-lower-bound} are affine invariant in the sense that they remain unchanged if we replace $p_0$ with $p_{a,b}$.

Similarly, if $P_0$ has a finite mean, then by the affine equivariance of the log-concave maximum likelihood projection~\citep[Remark~2.4]{dumbgen2011approximation}, $p_{a,b}^{\mathrm{ML}}(z) = a p_0^{\mathrm{ML}}(az + b)$ and hence $\psi_{a,b}^{\mathrm{ML}} = a\psi_0^{\mathrm{ML}}(az + b)$ for $z \in \R$. Thus, $V_{p_{a,b}}(\psi_{a,b}^{\mathrm{ML}}) = V_{p_0}(\psi_0^{\mathrm{ML}})/a^2$, so the first ratio in~\eqref{eq:Vp0-MLE} is also affine invariant. Consequently, for any $C \in (0,\infty)$, $\mu \in \R$ and $\epsilon \in (0,1)$, there exists a density $p_0$ satisfying~\eqref{eq:Vp0-MLE} with $i(p_0) = C$ and $\int_\R z p_0(z)\,dz = \mu$.
\end{remark}

The final result in this subsection relates properties of densities and their log-concave Fisher divergence projections.  
\begin{proposition}
\label{prop:p0-star-fisher}
For a uniformly continuous density $p_0 \colon \R \to \R$, the log-concave Fisher divergence projection $p_0^*$ and its corresponding distribution function $F_0^* \colon [-\infty,\infty] \to \R$ have the following properties.
\begin{enumerate}[label=(\alph*)]
\item Denote by $\mathcal{T}$ the set of $z \in \mathcal{S}_0$ such that $\psi_0^*$ is non-constant on every open interval containing~$z$. 
For $z \in \mathcal{T}$, we have
\begin{equation}
\label{eq:p0-star-hazard}
\frac{p_0^*(z)}{F_0^*(z)} \leq \frac{p_0(z)}{F_0(z)} \quad\text{and}\quad \frac{p_0^*(z)}{1 - F_0^*(z)} \leq \frac{p_0(z)}{1 - F_0(z)},
\end{equation}
whence $p_0^*(z) \leq p_0(z)$.
\item $\norm{p_0^*}_\infty \leq \norm{p_0}_\infty$ and $i(p_0^*) = -\int_\R p_0^*\,d\psi_0^* \leq -\int_\R p_0\,d\psi_0^* = i^*(p_0)$.
% Question/conjecture: Similarly to Corollary~\ref{cor:istar-cvx}, does this hold for any information quantity defined by minimising $V_{p_0}(\psi)$ over a convex cone of functions $\psi$?
\end{enumerate}
\end{proposition}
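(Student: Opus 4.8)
\emph{Proof proposal.} The plan is to establish~(a) first and then read off~(b), working throughout with $W(z) := F_0^*(z)/p_0^*(z)$ for $z \in \mathcal{S}_0$, which is finite and positive since $p_0^* > 0$ there. Because the antitonic construction commutes with the reflection $z \mapsto -z$ — which sends $p_0, \psi_0^*, p_0^*, F_0, F_0^*, \mathcal{T}$ to $p_0(-\cdot), -\psi_0^*(-\cdot), p_0^*(-\cdot), 1 - F_0(-\cdot), 1 - F_0^*(-\cdot), -\mathcal{T}$ — the second inequality in~\eqref{eq:p0-star-hazard} follows from the first applied to $p_0(-\cdot)$; and adding the reciprocals of the two inequalities gives $1/p_0^*(z) \ge 1/p_0(z)$, i.e.\ $p_0^*(z) \le p_0(z)$. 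So it suffices to prove that $W(z) \ge F_0(z)/p_0(z)$ for $z \in \mathcal{T}$.

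First I would record two facts. (i) $W$ is non-decreasing on $\mathcal{S}_0$: by Lemma~\ref{lem:p0-star}, $\log p_0^*$ is concave with right-derivative $\psi_0^*$, so $p_0^*(t) \le p_0^*(z)\exp\{\psi_0^*(z)(t-z)\}$ for $t \le z$; integrating over $t$ gives $F_0^*(z) \le p_0^*(z)/\psi_0^*(z)$ whenever $\psi_0^*(z) > 0$, so $\psi_0^*(z) \le 1/W(z)$ in all cases, and since $W$ is locally absolutely continuous on $\mathcal{S}_0$ with $W' = 1 - \psi_0^* W$ a.e., we get $W' \ge 0$ a.e. (ii) $\mathcal{T} \subseteq \{z \in \mathcal{S}_0 : \hat{J}_0(F_0(z)) = J_0(F_0(z))\}$: if $\hat{J}_0(F_0(z)) > J_0(F_0(z))$, then $F_0(z)$ lies in a maximal open interval on which $\hat{J}_0$ is affine, so $\psi_0^* = \hat{J}_0^{(\mathrm{R})} \circ F_0$ is constant on the (open) preimage of that interval, forcing $z \notin \mathcal{T}$; since $p_0 = J_0 \circ F_0$ (Lemma~\ref{lem:psi0-star}), this gives $p_0(z) = \hat{J}_0(F_0(z))$ for $z \in \mathcal{T}$.

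The heart of the argument is to prove the uniform bound $W(z)\,\hat{J}_0(F_0(z)) \ge F_0(z)$ for all $z \in \mathcal{S}_0$; by~(ii) and the positivity of $\hat{J}_0$ on $(0,1)$ this specialises to the required inequality on $\mathcal{T}$, while adding it to its reflection yields $p_0^*(z) \le \hat{J}_0(F_0(z))$ for all $z$, whence $\norm{p_0^*}_\infty \le \sup_{[0,1]}\hat{J}_0 = \sup_{[0,1]}J_0 \le \norm{p_0}_\infty$ (the constant $\sup J_0$ being a concave majorant of $J_0$), giving the first part of~(b). To get the bound, I would set $\Xi(z) := W(z)\,\hat{J}_0(F_0(z)) - F_0(z)$, locally absolutely continuous on $\mathcal{S}_0$; using $F_0' = p_0$, $\tfrac{d}{dz}\hat{J}_0(F_0(z)) = \hat{J}_0^{(\mathrm{R})}(F_0(z))\,p_0(z) = \psi_0^*(z)p_0(z)$ a.e., and $W' = 1 - \psi_0^* W$, a short computation gives
\[
\Xi'(z) = \bigl(\hat{J}_0(F_0(z)) - p_0(z)\bigr)\bigl(1 - \psi_0^*(z)W(z)\bigr) = \bigl(\hat{J}_0(F_0(z)) - J_0(F_0(z))\bigr)\,W'(z) \ge 0
\]
for a.e.\ $z$, since $\hat{J}_0 \ge J_0$ and $W' \ge 0$, so $\Xi$ is non-decreasing on $\mathcal{S}_0$. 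Finally the least concave majorant of a continuous function on $[0,1]$ agrees with it at the endpoints, so $\hat{J}_0(0) = J_0(0) = \lim_{z \to -\infty} p_0(z) = 0$; combined with $W$ being bounded above near the left end of $\mathcal{S}_0$, this forces $\lim_{z \downarrow \inf \mathcal{S}_0} \Xi(z) = 0$, hence $\Xi \ge 0$ throughout $\mathcal{S}_0$, completing~(a).

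For the information inequality in~(b), integration by parts on compact subintervals of $\mathcal{S}_0$ (using $(p_0^*)' = \psi_0^* p_0^*$ a.e.\ and the vanishing of $p_0^*$, hence of the boundary terms, at the ends of $\mathcal{S}_0$) gives $i(p_0^*) = \int_{\mathcal{S}_0}(\psi_0^*)^2 p_0^* = -\int_{\mathbb{R}} p_0^*\,d\psi_0^*$, while Theorem~\ref{thm:antitonic-score-proj}(b)--(c) with $\lambda = 1$ gives $-i^*(p_0) = D_{p_0}(\psi_0^*) = i^*(p_0) + 2\int_{\mathbb{R}} p_0\,d\psi_0^*$, so $-\int_{\mathbb{R}} p_0\,d\psi_0^* = i^*(p_0)$; since $\supp(d\psi_0^*) \cap \mathcal{S}_0 = \mathcal{T}$ (a point lies outside the support of the non-positive measure $d\psi_0^*$ iff $\psi_0^*$ is locally constant there iff it is not in $\mathcal{T}$) and $p_0 \ge p_0^*$ on $\mathcal{T}$ by~(a) (with $p_0 = p_0^* = 0$ at any finite endpoint of $\mathcal{S}_0$), we get $\int_{\mathbb{R}}(p_0 - p_0^*)\,d\psi_0^* \le 0$ from $d\psi_0^* \le 0$, hence $i(p_0^*) \le i^*(p_0)$. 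The main obstacle is the $\Xi$-monotonicity step: one must justify the a.e.\ chain/product rules (noting $\tfrac{d}{dz}\hat{J}_0(F_0(z)) = \psi_0^*(z)p_0(z)$ still holds a.e.\ even where $F_0$ passes through the countably many kinks of $\hat{J}_0$, since $p_0$ vanishes there) and the vanishing of $\Xi$ at $\inf\mathcal{S}_0$, and handle the degenerate cases where $\mathcal{S}_0$ is bounded, where $p_0$ vanishes on a subinterval of $\mathcal{S}_0$, or where $i^*(p_0) = \infty$ (the identities in~(b) then being read in $[0,\infty]$); the bookkeeping linking $\mathcal{T}$, the contact set and $\supp(d\psi_0^*)$ is routine but needs care.
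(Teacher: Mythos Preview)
Your proof is correct and takes a genuinely different route from the paper's. The paper proves~(a) via an elaborate inductive construction: it writes $\mathcal{T}^c$ as a countable union of open intervals $(s_k,t_k)$, defines a sequence $(p_k)_{k\ge 0}$ by successively replacing $p_{k-1}$ on each $(s_k,t_k)$ by a log-affine piece with slope $\psi_0^*(s_k)$, and proves by induction (its Lemma~\ref{lem:pk-induction}) that $p_0(z)\int_s^t p_k \ge p_k(z)\int_s^t p_0$ for all $s,t\in\mathcal{T}\cup\{\pm\infty\}$ and $z\in\{s,t\}$. A separate lemma (Lemma~\ref{lem:pk-limit}) shows $p_k \to c\,p_0^*$ pointwise and in the relevant integrals, and passing to the limit yields the stronger two-sided inequality $p_0(z)\bigl(F_0^*(t)-F_0^*(s)\bigr) \ge p_0^*(z)\bigl(F_0(t)-F_0(s)\bigr)$, of which~\eqref{eq:p0-star-hazard} is the special case $s=-\infty$ or $t=\infty$.

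Your argument replaces all of this by the single observation that $\Xi(z)=W(z)\,\hat{J}_0\!\bigl(F_0(z)\bigr)-F_0(z)$ satisfies $\Xi'=(\hat{J}_0\circ F_0 - J_0\circ F_0)\,W'\ge 0$ with $\Xi\to 0$ at the left endpoint, which is considerably shorter and more transparent; the reflection trick then gives the second inequality and, by addition, the pointwise bound $p_0^*\le \hat{J}_0\circ F_0$ (stronger than the paper's sup-norm argument, which only uses $p_0^*(s_0)\le p_0(s_0)$ at the mode $s_0\in\mathcal{T}$). What the paper's approach buys is the genuinely stronger conclusion over arbitrary intervals $[s,t]$ with endpoints in $\mathcal{T}$, together with an explicit interpolating sequence between $p_0$ and $p_0^*$; your approach trades this generality for a much cleaner proof of exactly what the proposition states. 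For~(b) the two proofs coincide: both identify $\operatorname{supp}(d\psi_0^*)\cap\mathcal{S}_0=\mathcal{T}$, use $p_0^*\le p_0$ on $\mathcal{T}$ from~(a), and invoke the equality case of the first-order condition in Theorem~\ref{thm:antitonic-score-proj} to get $-\!\int p_0\,d\psi_0^* = i^*(p_0)$.
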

We can define the two-sided hazard function $h_0^*$ of the log-concave Fisher divergence projection $p_0^*$ analogously to $h_0$ in~\eqref{eq:two-sided-hazard}. Since $p_0^*$ is log-concave, its density quantile function $J_0^* := p_0^* \circ (F_0^*)^{-1}$ has decreasing right derivative $(J_0^*)^{(\mathrm{R})} = \psi_0^* \circ (F_0^*)^{-1}$ by (the proof of) Lemma~\ref{lem:p0-star}, so $J_0^*$ is concave on $[0,1]$. Thus,
\[
0 = J_0^*(0) \leq J_0^*(u) - u(J_0^*)^{(\mathrm{R})}(u) \quad\text{and}\quad 0 = J_0^*(1) \leq J_0^*(u) + (1 - u)(J_0^*)^{(\mathrm{R})}(u)
\]
for all $u \in [0,1]$, so for $z \in \mathcal{T}$, Lemma~\ref{lem:psi0-star} and~\eqref{eq:p0-star-hazard} in Proposition~\ref{prop:p0-star-fisher}\emph{(a)} imply that
\begin{align*}
|\psi_0^*(z)| = \bigl|(J_0^*)^{(\mathrm{R})}\bigl(F_0^*(z)\bigr)\bigr| \leq \frac{J_0^*\bigl(F_0^*(z)\bigr)}{F_0^*(z) \wedge \bigl(1 - F_0^*(z)\bigr)} &= \frac{p_0^*(z)}{F_0^*(z) \wedge \bigl(1 - F_0^*(z)\bigr)} = h_0^*(z) \\
&\leq \frac{p_0(z)}{F_0(z) \wedge \bigl(1 - F_0(z)\bigr)} = h_0(z).
\end{align*}
Proposition~\ref{prop:p0-star-fisher}\emph{(b)} provides inequalities on the supremum norm and antitonic information of the log-concave Fisher divergence projection.  In particular, the Fisher information of the projected density is at most the antitonic information of the original density.

\subsection{Examples}
\label{subsec:examples}

\begin{example}
Let $p_0$ be the $\mathrm{Beta}(a,b)$ density given by $p_0(z) = z^{a-1}(1 - z)^{b-1}\mathbbm{1}_{\{z \in (0,1)\}}/\mathrm{B}(a,b)$ for $a,b > 1$, where $\mathrm{B}$ denotes the beta function. Then $p_0$ is uniformly continuous and log-concave on $\R$, so 
\[
\psi_0^*(z) = \psi_0(z) = (\log p_0)'(z) = \frac{a - 1}{z} - \frac{b - 1}{1 - z}
\]
for all $z \in (0,1)$, while $\psi_0^*(z) = \infty$ for $z \leq 0$ and $\psi_0^*(z) = -\infty$ for $z \geq 1$. We have $i^*(p_0) = i(p_0) = \int_0^1 \psi_0^2\,p_0 < \infty$ if and only if $a,b > 2$, in which case the conclusions of Theorem~\ref{thm:antitonic-score-proj}\textit{(c,\,d)} hold.
\end{example}

\begin{figure}[htb]
\centering
\includegraphics[width=0.49\textwidth,trim={5cm 9.82cm 5cm 10cm},clip]{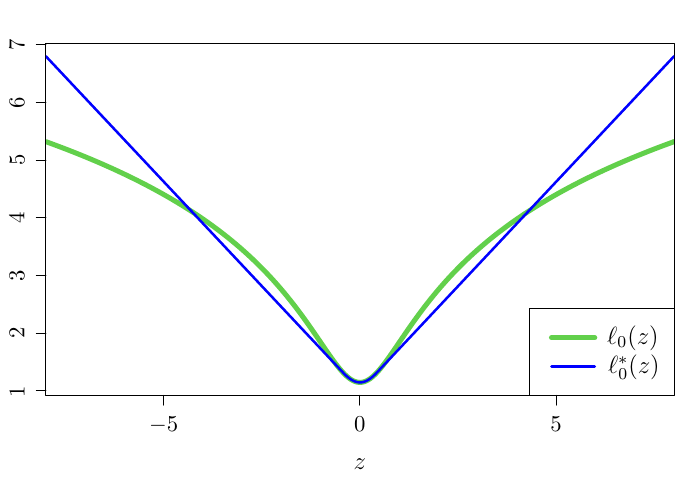}
\hfill
\includegraphics[width=0.49\textwidth,trim={5cm 9.82cm 5cm 10cm},clip]{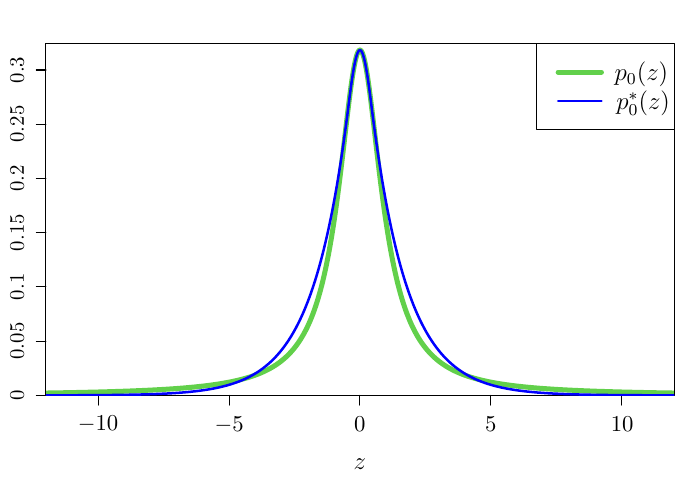}

\vspace{-0.3cm}
\caption{\emph{Left}: The negative log-density $\ell_0 = -\log p_0$ and the optimal convex loss function $\ell_0^* = -\log p_0^*$ when $p_0$ is the standard Cauchy density. \emph{Right}: The corresponding densities $p_0$ and $p_0^*$.}
\label{fig:cauchy}
\end{figure}

\begin{example}
\label{ex:cauchy}
Let $p_0$ be the standard Cauchy density given by $p_0(z) = 1/\bigl(\pi(1 + z^2)\bigr)$ for $z \in \R$. Then $p_0$ is absolutely continuous on $\R$ with $i(p_0) = \int_\R (p_0')^2/p_0 = 1/2$ and $\lim_{z \to \pm\infty} h_0(z) = 0$. We will derive an explicit expression for $\psi_0^*$, which is necessarily bounded by Lemma~\ref{lem:hazard}. In contrast to the previous example, $p_0$ is not log-concave, so $\psi_0^*$ does not coincide with $\psi_0 = p_0'/p_0 \colon z \mapsto -2z/(1 + z^2)$. Indeed, 
\begin{align*}
F_0(z) &= \frac{1}{2} + \frac{\arctan(z)}{\pi} \;\;\text{for }z \in \R, \qquad F_0^{-1}(u) = \tan\biggl(\pi\Bigl(u - \frac{1}{2}\Bigr)\biggr) = -\cot(\pi u) \;\;\text{for }u \in (0,1), \\
J_0(u) &= \frac{1}{\pi\bigl(1 + \cot^2(\pi u)\bigr)} = \frac{\sin^2(\pi u)}{\pi} = \frac{1 - \cos(2\pi u)}{2\pi} \;\;\text{for }u \in [0,1].
\end{align*}

Let $t_0 \approx 2.33$ be the unique $t \in (0,\pi)$ satisfying $t = \tan(t/2)$,
% $t\sin t + \cos t = 1$, 
and define $u_0 := t_0/(2\pi) \in (0,1/2)$. Then we can verify that $\hat{J}_0$ is linear on $[0,u_0]$ and on $[1 - u_0,1]$, with $\hat{J}_0(u) = J_0(u)$ for $u \in [u_0, 1 - u_0] \cup \{0,1\}$; see the left panel of Figure~\ref{fig:psi0-star-cauchy}. It follows that
\begin{align*}
\hat{J}_0^{(\mathrm{R})}(u) &= 
\begin{cases}
\,\sin t_0 &\text{for }u \in [0,u_0] \\
\,\sin(2\pi u) &\text{for }u \in [u_0,1 - u_0] \\
\,-\sin t_0 &\text{for }u \in [1 - u_0,1];
\end{cases}\\
\psi_0^*(z) = \hat{J}_0^{(\mathrm{R})}\bigl(F_0(z)\bigr) &= 
\begin{cases}
\,\sin t_0 = -2z_0/(1 + z_0^2) &\text{for }z \in (-\infty,-z_0] \\
\,-\sin(2\arctan z) = -2z/(1 + z^2) = \psi_0(z) &\text{for }z \in [-z_0,z_0] \\
\,-\sin t_0 &\text{for }z \in [z_0,\infty),
\end{cases}
\end{align*}
where $z_0 := \cot(\pi u_0) = \cot(t_0/2)\approx 0.43\in (0,1)$ satisfies $z_0\arctan(1/z_0) = 1/2$. Thus, $\psi_0^*(z) = \psi_0\bigl((z \wedge z_0) \vee (-z_0)\bigr)$ for $z \in \R$; see the right panel of Figure~\ref{fig:psi0-star-cauchy}. An antiderivative $\phi_0^*$ of $\psi_0^*$ is given by
\[
\phi_0^*(z) := \int_0^z \psi_0^* - \log\pi = 
\begin{cases}
\,-\log\bigl(\pi(1 + z^2)\bigr) &\text{for }z \in [-z_0,z_0] \\
\,-(|z| - z_0)\sin t_0 - \log\bigl(\pi(1 + z_0^2)\bigr) &\text{for }z \in \R \setminus [-z_0,z_0].
\end{cases}
\]
As illustrated in the left panel of Figure~\ref{fig:cauchy}, $\ell_0^* := -\phi_0^*$ is a symmetric convex function that is approximately quadratic on $[-z_0,z_0]$ and linear outside this interval, so in this respect, it resembles the Huber loss function~\eqref{eq:huber-loss}. This is significant as far as $M$-estimation is concerned, since Huber-like loss functions are designed precisely to be robust to outliers, such as those that arise in regression problems with heavy-tailed Cauchy errors. As discussed in the introduction, $\ell_0^*$ is optimal in the sense that the resulting regression $M$-estimator $\hat{\beta}_{\psi_0^*} \in \argmin_{\beta \in \R^d}\sum_{i=1}^n \ell_0^*(Y_i - X_i^\top\beta)$ has minimal asymptotic covariance among all convex $M$-estimators. By direct computation, 
\[
\frac{1}{V_{p_0}(\psi_0^*)} = i^*(p_0) = \frac{1}{2} - \frac{2t_0\cos(2t_0) - \sin(2t_0)}{4\pi} \approx 0.439, \;\;\text{so}\;\;\mathrm{ARE}^*(p_0) = \frac{i^*(p_0)}{i(p_0)} \approx 0.878
\]
in this case, meaning that the restriction to convex loss functions results in only a small loss of efficiency relative to the maximum likelihood estimator. This may well be outweighed by the increased computational convenience of optimising a convex empirical risk function as opposed to a Cauchy likelihood function, which typically has several local extrema; see~\citet[Example~5.50]{vdV1998asymptotic} for a discussion of the difficulties involved. Lemma~\ref{lem:ARE-lower-bound} yields the bound $\mathrm{ARE}^*(p_0) \geq 8\norm{p_0}_\infty^2 = 8/\pi^2 \approx 0.811$.

\begin{figure}[htb]
\centering
\includegraphics[width=0.6\textwidth,trim={4cm 9.8cm 5cm 10.5cm},clip]{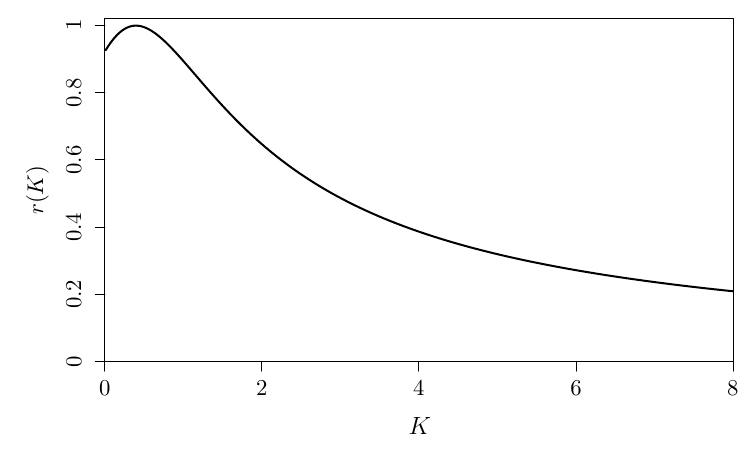}

\vspace{-0.3cm}
\caption{Plot of the asymptotic relative efficiency $r(K)$ of the Huber $M$-estimator $\hat{\beta}_{\psi_K}$ compared with the optimal convex $M$-estimator.}
\label{fig:cauchy-huber}
\end{figure}

For $K > 0$, the Huber regression $M$-estimator $\hat{\beta}_{\psi_K} \in \argmin_{\beta \in \R^d}\sum_{i=1}^n \ell_K(Y_i - X_i^\top\beta)$ defined with respect to~\eqref{eq:huber-loss} has asymptotic relative efficiency
\[
r(K) := \frac{V_{p_0}(\psi_0^*)}{V_{p_0}(\psi_K)} = \frac{4\arctan^2 K}{\pi\bigl(\pi K^2 + 2K - 2(1 + K^2)\arctan K\bigr)i^*(p_0)}
\]
compared with the optimal convex $M$-estimator $\hat{\beta}_{\psi_0^*}$; see Figure~\ref{fig:cauchy-huber}. The maximum value $\sup_{K > 0}r(K) \approx 0.9998$ is attained at $K^* \approx 0.394$. Moreover,
\[
\lim_{K \to 0} r(K) = \frac{4}{\pi^2 i^*(p_0)} \approx 0.922
\]
is the asymptotic relative efficiency $V_{p_0}(\psi_0^*)/V_{p_0}(\psi)$ of the least absolute deviation (LAD) estimator $\hat{\beta}_\psi \in \argmin_{\beta \in \R^d}\sum_{i=1}^n |Y_i - X_i^\top\beta|$, for which $\psi(\cdot) := -\sgn(\cdot)$ and $V_{p_0}(\psi) = 1/\bigl(4p_0(0)^2\bigr) = \pi^2/4$. On the other hand, the Huber loss $\ell_K$ in~\eqref{eq:huber-loss} converges pointwise to the squared error loss as $K \to \infty$, so $\lim_{K \to \infty} V_{p_0}(\psi_K) = \int_\R z^2\,p_0(z)\,dz = \infty$ and hence $\lim_{K \to \infty} r(K) = 0$. Recall from the discussion in the introduction the difficulties of choosing $K$, and its connection to the choice of scale. 

The Cauchy density $p_0$ and its log-concave Fisher divergence projection $p_0^* := e^{\phi_0^*}$ are plotted in the right panel of Figure~\ref{fig:cauchy}. Since $p_0 = p_0^*$ on $[-z_0,z_0]$ and $\psi_0^*$ is constant on $\R \setminus [-z_0,z_0]$, it turns out that $i^*(p_0) =  -\int_\R p_0\,d\psi_0^* = -\int_\R p_0^*\,d\psi_0^* = i(p_0^*)$, so both inequalities in Proposition~\ref{prop:p0-star-fisher}\textit{(b)} are in fact equalities in this example.
\end{example}

Section~\ref{subsec:appendix-examples} characterises the antitonic score projection for a variety of other densities $p_0$. In Example~\ref{ex:t2}, we take $P_0$ to be a scaled $t_2$ distribution, which has a finite first moment (unlike the Cauchy distribution in Example~\ref{ex:cauchy}), and verify that $\hat{\beta}_{\psi_0^{\mathrm{ML}}}$ and $\hat{\beta}_{\psi_0^*}$ in~\eqref{eq:betahat-ML-fisher} are different convex $M$-estimators. Example~\ref{ex:pareto} features a symmetrised Pareto density with polynomially decaying tails, where the optimal convex loss function $\ell_0^*$ is a scale transformation of the robust absolute error loss $z \mapsto |z|$. Moving on from heavy-tailed distributions, Example~\ref{ex:laplace-mixture} considers a density $p_0$ that fails to be log-concave because it is not unimodal, while Proposition~\ref{prop:gaussian-mixture} is a general result about the log-concave maximum likelihood and Fisher divergence projections of Gaussian mixtures.

\section{Semiparametric \texorpdfstring{$M$-estimation}{M-estimation} via antitonic score matching}
\label{sec:regression}

\subsection{Warm-up: Estimation of the projected score function from direct observations}
\label{subsec:score-estimation}

Section~\ref{sec:antitonic-proj} was concerned with the properties of the antitonic score projection on the population level.  Ultimately, our goal is to be able to incorporate these insights into a linear regression setting, but in this subsection we address an intermediate aim.  Specifically, we consider the nonparametric estimation of the antitonic projected score function $\psi_0^*$ based on a sample $\varepsilon_1,\dotsc,\varepsilon_n \iid p_0$.  This is an interesting problem in its own right, but an idealised one for the purposes of the regression setting that we have in mind, since there we do not observe the regression errors $\varepsilon_1,\dotsc,\varepsilon_n$ directly, and instead will need to rely on residuals from a pilot fit as proxies. 

We first explain why a naive approach to antitonic score matching fails before describing our alternative solution. For a locally absolutely continuous function $\psi \colon \R \to \R$ with derivative $\psi'$, the empirical analogue of $D_{p_0}(\psi) = \E\bigl(\psi^2(\varepsilon_1) + 2\psi'(\varepsilon_1)\bigr)$ in~\eqref{eq:Dp0-deriv} is 
\begin{equation}
\label{eq:score-matching-empirical}
\hat{D}_n(\psi) \equiv \hat{D}_n(\psi; \varepsilon_1,\dotsc,\varepsilon_n) := \frac{1}{n}\sum_{i=1}^n \{\psi^2(\varepsilon_i) + 2\psi'(\varepsilon_i)\}. 
\end{equation}
Recall from the introduction that score matching estimates the score function of a locally absolutely continuous density by an empirical risk minimiser $\hat{\psi}_n \in \argmin_{\psi \in \Psi}\hat{D}_n(\psi)$ over an appropriate class of functions $\Psi$. 

However, to obtain a monotone score estimate, we cannot minimise $\psi \mapsto \hat{D}_n(\psi)$ directly over the class $\Psi_\downarrow^{\mathrm{ac}}$ of all decreasing, locally absolutely continuous $\psi \colon \R \to \R$. Indeed, $\inf_{\psi \in \Psi_\downarrow^{\mathrm{ac}}}\hat{D}_n(\psi) = -\infty$, as can be seen by constructing differentiable approximations to a decreasing step function whose jumps are at the data points $\varepsilon_1,\dotsc,\varepsilon_n$. To circumvent this issue, we instead propose the following estimation strategy.

\medskip
\noindent
\textbf{Antitonic projected score estimation}:
Consider smoothing the empirical distribution of $\varepsilon_1,\dotsc,\varepsilon_n$, for example by convolving it with an  absolutely continuous kernel $K \colon \R \to \R$ to obtain a kernel density estimator $z \mapsto \tilde{p}_n(z) := n^{-1}\sum_{i=1}^n K_h(z - \varepsilon_i)$, where $h > 0$ is a suitable bandwidth and $K_h(\cdot) := h^{-1}K(\cdot/h)$. We can then define the smoothed empirical score matching objective
\[
\tilde{D}_n(\psi) := D_{\tilde{p}_n}(\psi) = \int_\R \psi^2 \,\tilde{p}_n + 2\int_{\mathcal{S}_0} \tilde{p}_n\,d\psi
\]
for $\psi \in \Psi_\downarrow(\tilde{p}_n)$, which approximates the population expectation in the definition of $D_{p_0}(\psi)$. Then by Theorem~\ref{thm:antitonic-score-proj},
\begin{equation}
\label{eq:kernel-proj-score}
\hat{J}_n^{(\mathrm{R})} \circ \tilde{F}_n \in \argmin_{\psi \in \Psi_\downarrow(\tilde{p}_n)}\tilde{D}_n(\psi),
\end{equation}
where $\tilde{F}_n$ denotes the distribution function corresponding to $\tilde{p}_n$, and $J_n := \tilde{p}_n \circ \tilde{F}_n^{-1}$. This is reminiscent of maximum smoothed likelihood estimation of a density or distribution function (e.g.~\citealp{eggermont2000maximum};~\citealp[Sections~8.2 and~8.5]{groeneboom14nonparametric}). By evaluating the antiderivative of $J_n$ on a suitably fine grid, we may obtain a piecewise affine approximation to $\hat{J}_n$ (and hence a piecewise constant approximation to its derivative) using PAVA, whose space and time complexities scale linearly with the size of the grid~\citep[Section~9.3.1]{samworth24modern}. 

\begin{figure}[htb]
\centering
\includegraphics[width=0.485\textwidth,trim={5cm 9.8cm 5cm 10cm},clip]{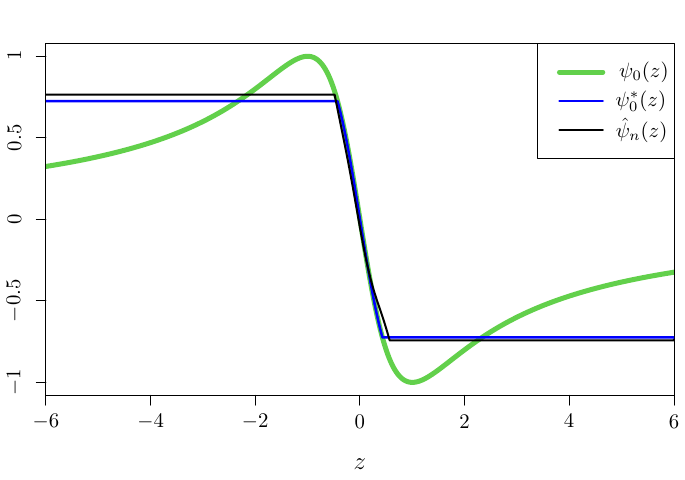}
\hfill
\includegraphics[width=0.485\textwidth,trim={5cm 9.8cm 5cm 10cm},clip]{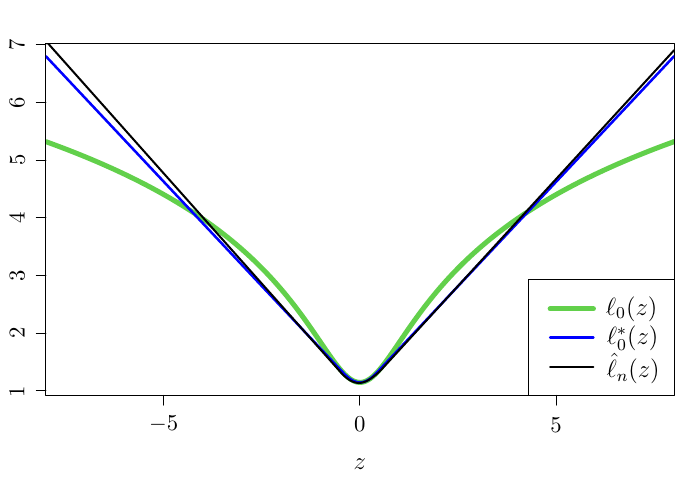}

\vspace{-0.3cm}
\caption{Kernel-based estimates of the projected score function and optimal convex loss function based on a sample of size $n = 2000$ from the Cauchy distribution.}
\label{fig:cauchy-kernel-score}
\end{figure}

More generally, we can use $\varepsilon_1,\dotsc,\varepsilon_n$ to construct a generic (not necessarily monotone) score estimator~$\tilde{\psi}_n$ and an estimate $\hat{F}_n$ of the distribution function $F_0$ corresponding to the density $p_0$. By analogy with the explicit representation~\eqref{eq:psi0-star} of $\psi_0^*$, we then define the decreasing score estimate
\begin{equation}
\label{eq:monotone-score}
\hat{\psi}_n := \widehat{\mathcal{M}}_\mathrm{R}(\tilde{\psi}_n \circ \hat{F}_n^{-1}) \circ \hat{F}_n.
\end{equation}
As explained above, (an approximation to) $\hat{\psi}_n$ can be computed efficiently using isotonic regression algorithms. In particular, if $\hat{F}_n$ is taken to be the empirical distribution function of $\varepsilon_1,\dotsc,\varepsilon_n$, then by Proposition~\ref{prop:isoproj}, $\hat{\psi}_n^{(\mathrm{L})} := \widehat{\mathcal{M}}_\mathrm{L}(\tilde{\psi}_n \circ \hat{F}_n^{-1}) \circ \hat{F}_n$ is an antitonic least squares estimator based on $\bigl\{\bigl(\varepsilon_i, \tilde{\psi}_n(\varepsilon_i)\bigr) : i \in [n]\bigr\}$. Our decreasing score estimate can be taken to be either $\hat{\psi}_n^{(\mathrm{L})}$ or the closely related $\hat{\psi}_n$: for every $z \in \R$, we have $\hat{\psi}_n(z) = \hat{\psi}_n^{(\mathrm{L})}\bigl(\varepsilon(z)\bigr)$, where $\varepsilon(z)$ is the smallest element of $\{\varepsilon_1,\dotsc,\varepsilon_n\}$ that is strictly greater than $z$ (where such an element exists) or equal to $\max_{i \in [n]} \varepsilon_i$ otherwise.

\begin{figure}[htb]
\centering
\includegraphics[width=0.485\textwidth,trim={5cm 9.8cm 5cm 10cm},clip]{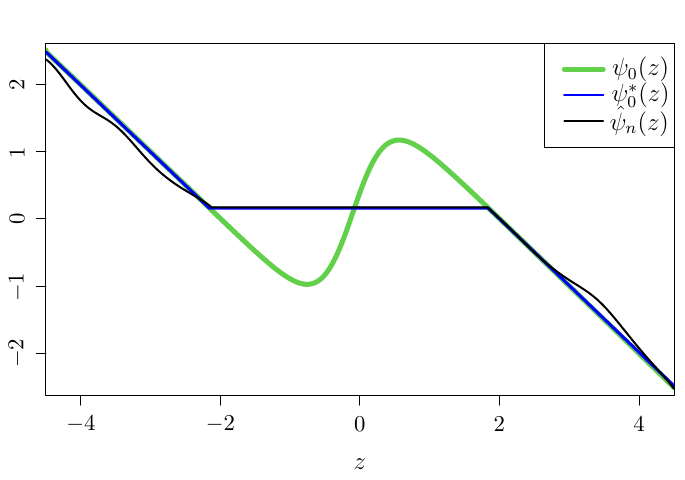}
\hfill
\includegraphics[width=0.485\textwidth,trim={5cm 9.8cm 5cm 10cm},clip]{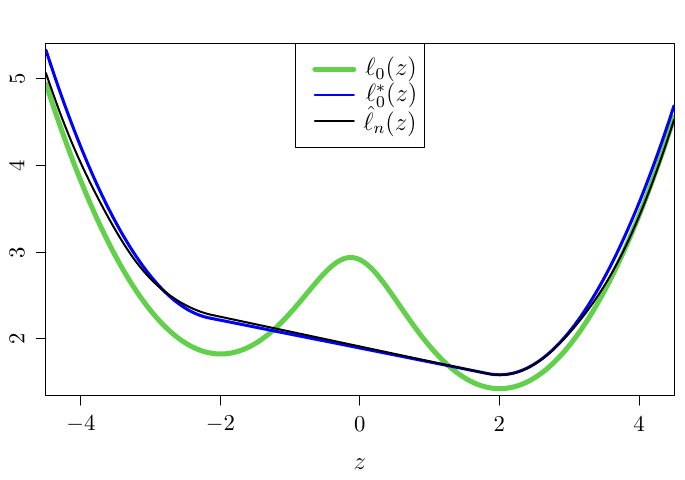}

\vspace{-0.3cm}
\caption{Kernel-based estimates of the projected score function and optimal convex loss function based on a sample of size $n = 10^4$ from the Gaussian mixture distribution $0.4 N(-2,1) + 0.6 N(2,1)$.}
\label{fig:gaussian-kernel-score}
\end{figure}

The transformation~\eqref{eq:monotone-score} may be applied to any appropriate initial score estimator $\tilde{\psi}_n$. Since a misspecified parametric method may introduce significant error at the outset, we seek a nonparametric estimator. For instance, we may take $\tilde{\psi}_n$ to be a ratio of kernel density estimates of $p_0'$ and $p_0$, which may be truncated for theoretical and practical convenience to avoid instability in low-density regions; see~\eqref{eq:psi-kernel} in Section~\ref{subsec:linreg-sym}. Observe that~\eqref{eq:kernel-proj-score} is a special case of~\eqref{eq:monotone-score} with $\tilde{\psi}_n = \tilde{p}_n'/\tilde{p}_n$ and $\hat{F}_n = \tilde{F}_n$.

\subsection{Linear regression: alternating algorithm outline}
\label{subsec:lin_reg_id_alt}

Suppose that we observe independent and identically distributed pairs $(X_1,Y_1),\dotsc,(X_n,Y_n)$ satisfying
\begin{equation}
\label{eq:linear-model}
Y_i = X_i^\top\beta_0 + \varepsilon_i
\end{equation}
for $i \in [n]$, where $X_1,\dotsc,X_n$ are $\R^d$-valued covariates that are independent of errors $\varepsilon_1,\dotsc,\varepsilon_n$ with an unknown absolutely continuous (Lebesgue) density $p_0$ on $\R$. A necessary condition for $\beta_0$ to be identifiable is that $\E(X_1 X_1^\top)$ is positive definite, since otherwise there exists $v \in \R^d \setminus \{0\}$ such that $X_i^\top v = 0$ for all $i$ almost surely. In this case, the joint distribution of our observed data is unchanged if we replace $\beta_0$ with $\beta_0 + v$. To accommodate heavy-tailed (e.g.~Cauchy) errors, we do not necessarily insist that $\E(\varepsilon_1) = 0$, or even suppose that~$\varepsilon_1$ is integrable, though see also the discussion at the start of Section~\ref{subsec:linreg-intercept}.

As the regression coefficients $\beta_0$, the error density $p_0$ and hence the antitonic score projected score $\psi_0^*$ are unknown, a natural estimation strategy on the population level is to alternate between the following two steps:
\begin{enumerate}[label=\Roman*.]
\item For a fixed $\beta$, minimise the (convex) score matching objective $D_{q_\beta}(\psi)$ based on the density $q_\beta$ of $Y_1 - X_1^\top\beta$.
\item For a fixed decreasing and right-continuous $\psi$, minimise the convex function $\beta \mapsto \E\ell(Y_1 - X_1^\top \beta)$, where $\ell$ is a negative antiderivative of $\psi$.  
\end{enumerate}
This approach can be motivated by a joint optimisation problem over a set of pairs $(\beta,\psi)$; see Section~\ref{subsec:joint-opt}. In an empirical version of this alternating algorithm based on $(X_1,Y_1),\dotsc,(X_n,Y_n)$, we can estimate $\psi_0^*$ in Step~I by minimising a sample analogue of $D_{q_\beta}(\psi)$ over $\psi$. As discussed in Section~\ref{subsec:score-estimation}, the unknown density of $Y_1 - X_1^\top\beta$ can be approximated by smoothing the empirical distribution of the residuals $(Y_i - X_i^\top\beta)_{i=1}^n$ from the current estimate of $\beta_0$. Step II then involves finding an $M$-estimator~\eqref{eq:linreg-M-est} of $\beta_0$ based on the convex loss function induced by the current estimate of $\psi_0^*$. In practice, we can apply any suitable convex optimisation algorithm such as gradient descent, with Newton's method being a faster alternative when the score estimate is differentiable. Steps I and II can then be iterated to convergence.

The following two subsections focus in turn on linear regression with symmetric errors and with an explicit intercept term. We will analyse a specific version of the above procedure that is initialised with a pilot estimator $\bar{\beta}_n$ of $\beta_0$. Provided that $(\bar{\beta}_n)$ is $\sqrt{n}$-consistent, we show that a single iteration of Steps~I and~II yields a semiparametric convex $M$-estimator of $\beta_0$ that achieves `antitonic efficiency' as $n \to \infty$.

\subsection{Linear regression with symmetric errors}
\label{subsec:linreg-sym}

Under the assumption that $p_0$ is symmetric, we first approximate $\psi_0^*$ via antitonic projection of kernel-based score estimators. We do not observe the errors $\varepsilon_1,\ldots,\varepsilon_n$ directly, so in view of the discussion above, in the algorithm below we use the residuals from the pilot regression estimator to construct our initial score estimators. Assume throughout that $n \geq 3$.

\begin{enumerate}[leftmargin=0.45cm]
\item \textbf{Sample splitting}: Partition the observations into three folds indexed by disjoint $I_1,I_2,I_3 \subseteq [n]$ such that $|I_1| = |I_2| = \floor{n/3}$ and $|I_3| = n - |I_1| - |I_2|$ respectively. For notational convenience, let $I_{j+3} := I_j$ for $j \in \{1,2\}$.
\item \textbf{Pilot estimators}: Fix a convex function $L \colon \R \to \R$, and for $j \in \{1,2,3\}$, let $\bar{\beta}_n^{(j)}$ be an $M$-estimator
\begin{equation}
\label{eq:pilot-est}
\bar{\beta}_n^{(j)} \in \argmin_{\beta \in \R^d} \sum_{i \in I_j} L(Y_i - X_i^\top\beta).
\end{equation}
\item \textbf{Antitonic projected score estimation}: For $j \in \{1,2,3\}$ and $i \in I_{j+1}$, define out-of-sample residuals $\hat{\varepsilon}_i := Y_i - X_i^\top\bar{\beta}_n^{(j)}$. Letting $K \colon \R \to [0,\infty)$ be a differentiable kernel and $h \equiv h_n > 0$ be a (deterministic) bandwidth, define a kernel density estimator $\tilde{p}_{n,j}$ of $p_0$ by
\[
\tilde{p}_{n,j}(z) := \frac{1}{|I_{j+1}|}\sum_{i \in I_{j+1}} K_h(z - \hat{\varepsilon}_i)
\]
for $z \in \R$, where $K_h(\cdot) = h^{-1}K(\cdot/h)$. In addition, let $\tilde{S}_{n,j} := \bigl\{z \in \R : |\tilde{p}_{n,j}'(z)| \leq \alpha_n,\, \tilde{p}_{n,j}(z) \geq \gamma_n\bigr\}$, where $\alpha_n \in (0,\infty]$ and $\gamma_n \in (0,\infty)$ are truncation parameters, and define $\tilde{\psi}_{n,j} \colon \R \to \R$ by
\begin{equation}
\label{eq:psi-kernel}
\tilde{\psi}_{n,j}(z) := \frac{\tilde{p}_{n,j}'(z)}{\tilde{p}_{n,j}(z)}\Ind_{\{z \in \tilde{S}_{n,j}\}}.
\end{equation}
Writing $\tilde{F}_{n,j}$ for the distribution function corresponding to $\tilde{p}_{n,j}$, let $\hat{\psi}_{n,j} := \widehat{\mathcal{M}}_\mathrm{R}(\tilde{\psi}_{n,j} \circ \tilde{F}_{n,j}^{-1}) \circ \tilde{F}_{n,j}$ be an antitonic projected score estimate, in accordance with~\eqref{eq:monotone-score}. Finally, define an estimator $\hat{\psi}_{n,j}^{\mathrm{anti}} \in \Psi_\downarrow^{\mathrm{anti}}(p_0)$ of $\psi_0^*$ by
\[
\hat{\psi}_{n,j}^{\mathrm{anti}}(z) := \frac{\hat{\psi}_{n,j}(z) - \hat{\psi}_{n,j}(-z)}{2}
\]
for $z \in \R$.
\item \textbf{Plug-in cross-fitted convex $M$-estimator}: For $j \in \{1,2,3\}$, let $\hat{\ell}_{n,j}^{\mathrm{sym}} \colon \R \to \R$ be the induced convex loss function given by $\hat{\ell}_{n,j}^{\mathrm{sym}}(z) := -\int_0^z \hat{\psi}_{n,j}^{\mathrm{anti}}$, and define
\begin{equation}
\label{eq:betahat-sym}
\hat{\beta}_n^{(j)} \in \argmin_{\beta \in \R^d} \sum_{i \in I_{j+2}} \hat{\ell}_{n,j}^{\mathrm{sym}}(Y_i - X_i^\top\beta)
\end{equation}
to be a corresponding $M$-estimator of $\beta_0$. Finally, let
\begin{equation}
\label{eq:DML1}
\hat{\beta}_n^\dagger := \frac{\hat{\beta}_n^{(1)} + \hat{\beta}_n^{(2)} + \hat{\beta}_n^{(3)}}{3}.
\end{equation}
\end{enumerate}

In~\eqref{eq:betahat-sym}, $\hat{\beta}_n^{(j)}$ always exists because either $\hat{\psi}_{n,j}^{\mathrm{anti}} \equiv 0$ and any $\beta \in \R^d$ is a minimiser, or otherwise $\inf_{z \in \R}\hat{\psi}_{n,j}^{\mathrm{anti}}(z) < 0 < \sup_{z \in \R}\hat{\psi}_{n,j}^{\mathrm{anti}}(z)$ and hence the convex function $\hat{\ell}_{n,j}^{\mathrm{sym}}$ is \emph{coercive} in the sense that~$\hat{\ell}_{n,j}^{\mathrm{sym}}(z) \to \infty$ as $|z| \to \infty$. Moreover, $\hat{\beta}_n^{(j)}$ is unique if $\hat{\psi}_{n,j}^{\mathrm{anti}}$ is strictly decreasing and the design matrix $X$ has full column rank, which happens with probability tending to 1 as $n \to \infty$ if $\E(X_1 X_1^\top)$ is invertible; see Proposition~\ref{prop:cvx-M-est-asymp}\textit{(a)} for elementary justifications of these claims. In practice, our antitonic score estimates may have constant pieces, but the conclusion of Theorem~\ref{thm:linreg-score-sym} below applies to all sequences of minimisers $(\hat{\beta}_n^{(j)})$, so is unaffected by non-uniqueness issues.
% For each $j \in \{1,2,3\}$, we claim that $\hat{\beta}_n^{(j)}$ always exists in Step~4. Indeed, if $\hat{\psi}_{n,j}^{\mathrm{anti}} \equiv 0$, then $\hat{\ell}_{n,j}^{\mathrm{sym}} \equiv 0$ and any $\beta \in \R^d$ minimises the objective function in~\eqref{eq:betahat-sym}. Otherwise, $\inf_{z \in \R}\hat{\psi}_{n,j}^{\mathrm{anti}}(z) < 0 < \sup_{z \in \R}\hat{\psi}_{n,j}^{\mathrm{anti}}(z)$ and hence $\hat{\ell}_{n,j}^{\mathrm{sym}}$ is a finite convex function on $\R$ that is \emph{coercive} in the sense that~$\hat{\ell}_{n,j}^{\mathrm{sym}}(z) \to \infty$ as $|z| \to \infty$. Thus, $\theta \equiv (\theta_1,\dotsc,\theta_n) \mapsto \sum_{i \in I_{j+2}} \hat{\ell}_{n,j}^{\mathrm{sym}}(Y_i - \theta_i) =: \mathcal{L}_{n,j}(\theta)$ is convex and coercive on $\{X\beta : \beta \in \R^d\}$, and hence $\beta \mapsto \mathcal{L}_{n,j}(X\beta)$ attains its minimum on $\R^d$. A necessary condition for $\hat{\beta}_n^{(j)}$ to be unique is that $X$ has full column rank, which happens with probability tending to 1 as $n \to \infty$ if $\E(X_1 X_1^\top)$ is invertible. In this case, uniqueness holds when $\hat{\psi}_{n,j}^{\mathrm{anti}}$ is strictly decreasing and hence $\mathcal{L}_{n,j}$ is strictly convex on $\R^n$.

The above procedure uses the observations indexed by $I_1,I_2,I_3$ to construct the pilot estimator~$\bar{\beta}_n^{(1)}$, antitonic score estimate $\hat{\psi}_{n,1}^{\mathrm{anti}}$ and semiparametric $M$-estimator $\hat{\beta}_n^{(1)}$ respectively. Since each fold (specifically the last one) contains only about one-third of all the data, sample splitting reduces the efficiency of $\hat{\beta}_n^{(1)}$. We remedy this by \textit{cross-fitting}~(\citealp[p.~393]{vdV1998asymptotic}; \citealp[Section~3]{chernozhukov2018double}), which involves cyclically permuting the folds to obtain $\hat{\beta}_n^{(2)},\hat{\beta}_n^{(3)}$ analogously to $\hat{\beta}_n^{(1)}$, and then averaging these three estimators. This reduces the limiting covariance of $\hat{\beta}_n^{(1)}$ by a factor of three in the theory below, where we show that $\hat{\beta}_n^{(1)},\hat{\beta}_n^{(2)},\hat{\beta}_n^{(3)}$ are `asymptotically independent' in a precise sense. 

A different version of cross-fitting~\citep[Definition~3.2]{chernozhukov2018double} instead averages the empirical risk functions across all three folds, and outputs a single estimator
\begin{equation}
\label{eq:DML2}
\hat{\beta}_n^\ddagger \in \argmin_{\beta \in \R^d} \sum_{j=1}^3 \sum_{i \in I_{j+2}} \hat{\ell}_{n,j}^{\mathrm{sym}}(Y_i - X_i^\top\beta),
\end{equation}
whose existence is similarly guaranteed by the convexity of $\hat{\ell}_{n,j}^{\mathrm{sym}}$ for $j \in \{1,2,3\}$.

To introduce our theoretical guarantees for this procedure, for a sequence of regression models~\eqref{eq:linear-model} indexed by $n \in \N$, we make the following assumptions on the model and the parameters in our procedure. 
\begin{enumerate}[label=(A\arabic*)]
\item \label{ass:fisher-finite} $p_0$ is an absolutely continuous density on $\R$ such that $i(p_0) < \infty$ and $\int_\R |z|^\delta\,p_0(z)\,dz < \infty$ for some~$\delta > 0$.
\item \label{ass:psi0-star} There exists $t_0 > 0$ such that for $t \in \{-t_0,t_0\}$, the antitonic projected score function $\psi_0^*$ satisfies $\int_\R \psi_0^*(z + t)^2\,p_0(z)\,dz < \infty$.
\item \label{ass:kernel} The kernel $K$ is non-negative, twice continuously differentiable and supported on $[-1,1]$.
\item \label{ass:alpha-gamma-h} $\alpha_n \to \infty$,\; $\gamma_n, h_n \to 0$,\; $nh_n^3\gamma_n^2 \to \infty$ and $(h_n \vee n^{-2\rho/3})(\alpha_n/\gamma_n)^2 \to 0$ for some $\rho \in \bigl(0,\delta/(\delta + 1)\bigr)$.
\item \label{ass:covariates} $\E(X_1 X_1^\top) \in \R^{d \times d}$ is positive definite and $\max_{i \in [n]}\norm{X_i}\,\alpha_n/\gamma_n = o_p(n^{1/2})$ as $n \to \infty$.
\end{enumerate}

The conditions~\ref{ass:fisher-finite} and~\ref{ass:psi0-star} 
% only exclude densities with extremely heavy or extremely light tails respectively, and 
are satisfied by a wide variety of commonly-encountered densities $p_0$ ranging from all $t_\nu$ densities with $\nu > 0$ degrees of freedom (including the Cauchy density as a special case $\nu = 1$) to lighter-tailed Weibull, Laplace, Gaussian and Gumbel densities. In particular, $P_0$ need not have a finite mean, and our procedure does not require knowledge of the exponent $\delta > 0$ in~\ref{ass:fisher-finite}. By Lemma~\ref{lem:psi0-star}, $\{z \in \R : \psi_0^*(z) \in \R\} = \mathcal{S}_0 = \bigl(\inf(\supp p_0), \sup(\supp p_0)\bigr)$, so if~\ref{ass:psi0-star} holds, then $\mathcal{S}_0 = \R$ and $\psi_0^*$ must be finite-valued on $\R$. The truncation parameters $\alpha_n,\gamma_n$ and bandwidth $h_n$ can be chosen quite flexibly; for instance,~\ref{ass:alpha-gamma-h} holds if $\alpha_n = \gamma_n^{-1} = \log n$ and $h_n = n^{-b}$ for some $b \in (0,1/3)$. As for~\ref{ass:covariates}, the fact that $\norm{X_1}^2,\dotsc,\norm{X_n}^2$ are identically distributed and integrable means that $\max_{i \in [n]}\norm{X_i} = o_p(n^{1/2})$; see~\eqref{eq:expected-max}. Our condition is slightly stronger than this to account for the score estimators being uniformly bounded in absolute value by $\alpha_n/\gamma_n$. In particular, if $\E(\norm{X_1}^3) < \infty$, then $\max_{i \in [n]}\norm{X_i} = o_p(n^{1/3})$, so under~\ref{ass:alpha-gamma-h}, we have $\alpha_n/\gamma_n = o_p(h_n^{-1/2}) = o_p(n^{1/6})$ and hence~\ref{ass:covariates} holds automatically.

We also require the pilot estimators in Step~2 to exist and be $\sqrt{n}$-consistent for $\beta_0$. By a classical result (see Proposition~\ref{prop:cvx-M-est-asymp}), this is guaranteed if the loss function $L$ in this step has negative right derivative~$\varphi$ satisfying both $\E\varphi(\varepsilon_1) = 0$ and the following condition.

\begin{enumerate}[label=(B)]
\item \label{ass:zeta} $\varphi$ is decreasing and right-continuous with $\inf_{z \in \R}\varphi(z) < 0 < \sup_{z \in \R}\varphi(z)$. In addition,
\[
V_{p_0}(\varphi) = \frac{\int_\R \varphi^2\,p_0}{(\int_\R p_0\,d\varphi)^2} \in (0,\infty)
\]
and there exists $t_0 > 0$ such that $\int_\R \varphi(z + t)^2\,p_0(z)\,dz < \infty$ for $t \in \{-t_0,t_0\}$.
\end{enumerate}
In particular, if $L$ is a symmetric and twice differentiable convex function whose derivative is strictly increasing and bounded, then~\ref{ass:zeta} holds and $\int_\R \varphi\,p_0 = 0$ for all symmetric densities $p_0$.  Under the assumptions above, we first prove the $L^2(P_0)$-consistency of the initial estimates $\tilde{\psi}_{n,j}$ of the score function $\psi_0$, from which it follows that the antitonic functions $\hat{\psi}_{n,j}$ consistently estimate the population-level projected score $\psi_0^*$ in $L^2(P_0)$; see Lemmas~\ref{lem:kernel-score-consistency} and~\ref{lem:proj-score-consistency} in Section~\ref{subsubsec:score-estimation}. This enables us to establish the following result.

\begin{theorem}
\label{thm:linreg-score-sym}
Assume that~\emph{\ref{ass:fisher-finite}--\ref{ass:covariates}} hold for the linear model~\eqref{eq:linear-model} with symmetric error density~$p_0$.  If $\bar{\beta}_n^{(j)} - \beta_0 = O_p(n^{-1/2})$ for $j \in \{1,2,3\}$, then for any sequence of estimators $(\hat{\beta}_n^{\mathrm{sym}})$ with $\hat{\beta}_n^{\mathrm{sym}} \in \{\hat{\beta}_n^\dagger,\hat{\beta}_n^\ddagger\}$ for each $n$, we have
\[
\sqrt{n}(\hat{\beta}_n^{\mathrm{sym}} - \beta_0) \cvd N_d\biggl(0, \frac{\{\E(X_1 X_1^\top)\}^{-1}}{i^*(p_0)}\biggr)
\]
as $n \to \infty$.
\end{theorem}

Therefore, our semiparametric convex $M$-estimators $\hat{\beta}_n^\dagger,\hat{\beta}_n^\ddagger$ are $\sqrt{n}$-consistent and have the same limiting Gaussian distribution as the `oracle' convex $M$-estimator $\hat{\beta}_{\psi_0^*} := \argmin_{\beta \in \R^d} \sum_{i=1}^n \ell_0^*(Y_i - X_i^\top\beta)$, where $\ell_0^*$ denotes an optimal convex loss function with right derivative $\psi_0^*$.

To understand the form of the asymptotic covariance matrix in Theorem~\ref{thm:linreg-score-sym}, observe that since $X_1$ and $\varepsilon_1$ are independent, $(X_1,Y_1)$ has joint density $(x,y) \mapsto p_0(y - x^\top\beta_0)$ with respect to the product measure $P_X \otimes \mathrm{Leb}$ on $\R^d \times \R$, where we write $P_X$ for the distribution of $X_1$, and $\mathrm{Leb}$ for Lebesgue measure on $\R$. Therefore, in a parametric model where $p_0$ is known, the score function $\dot{\ell}_{\beta_0} \colon \R^d \times \R \to \R^d$ for $\beta_0$ is given by
\[
\dot{\ell}_{\beta_0}(x,y) := -x\psi_0(y - x^\top\beta_0),
\]
where $\psi_0 = p_0'/p_0$. If $i(p_0) = \E\bigl(\psi_0(\varepsilon_1)^2\bigr)$ is finite, then because $\E\psi_0(\varepsilon_1) = 0$, the Fisher information matrix for $\beta_0$ is
\begin{equation}
\label{eq:Ibeta0}
I_{\beta_0} := \Cov\dot{\ell}_{\beta_0}(X_1,Y_1) = \Cov\{X_1\psi_0(\varepsilon_1)\} = \E\bigl\{X_1 X_1^\top\psi_0(\varepsilon_1)^2\bigr\} = \E(X_1 X_1^\top)\,i(p_0) \in \R^{d \times d}.
\end{equation}
Since $i(p_0) \geq i^*(p_0) > 0$ by Theorem~\ref{thm:antitonic-score-proj}\textit{(d)}, $I_{\beta_0}$ is positive definite if and only if $\E(X_1 X_1^\top)$ is positive definite. In this case, the convolution and local asymptotic minimax theorems~\citep[Chapter~8]{vdV1998asymptotic} indicate that $\sqrt{n}(\hat{\beta}^{\mathrm{MLE}} - \beta_0)$ in~\eqref{eq:betahat-MLE-asymp} has the `optimal' limiting distribution
\[
N_d(0, I_{\beta_0}^{-1}) = N_d\biggl(0, \frac{\{\E(X_1 X_1^\top)\}^{-1}}{i(p_0)}\biggr)
\]
among all (regular) sequences of estimators of $\beta_0$. By analogy with the previous display, the limiting covariance in Theorem~\ref{thm:linreg-score-sym} can be written as the inverse $(I^*_{\beta_0})^{-1}$ of the \textit{antitonic information matrix} $I^*_{\beta_0} := \E(X_1 X_1^\top)\,i^*(p_0)$. By Theorem~\ref{thm:antitonic-score-proj}, $1/i^*(p_0) = V_{p_0}(\psi_0^*) = \min_{\psi \in \Psi_\downarrow(p_0)}V_{p_0}(\psi)$, so by Proposition~\ref{prop:cvx-M-est-asymp}, $(I^*_{\beta_0})^{-1}$ is the smallest possible limiting covariance among all convex $M$-estimators $\hat{\beta}_\psi$ based on a fixed $\psi \in \Psi_\downarrow(p_0)$. We can therefore interpret $(I^*_{\beta_0})^{-1}$ as an \textit{antitonic efficiency lower bound}.

\subsection{Linear regression with an intercept term}
\label{subsec:linreg-intercept}
%In Proposition~\ref{prop:joint-min}\emph{(b)}, we do not assume that $p_0$ is symmetric, but instead consider a linear model where $d \geq 2$ and
For $d \geq 2$, now consider the linear model
\begin{equation}
\label{eq:linear-model-intercept}
Y_i = \mu_0 + \tilde{X}_i^\top\theta_0 + \varepsilon_i \quad\text{for }i \in [n],
\end{equation}
where $\mu_0$ is an explicit intercept term, so that $\beta_0 = (\theta_0,\mu_0)$ and $X_i = (\tilde{X}_i,1)$ in~\eqref{eq:linear-model} for $i \in [n]$.  In the absence of further restrictions on the distribution of $\varepsilon_1$, the intercept term in this model is non-identifiable since we may add a scalar to $\mu_0$ and make a corresponding location shift to the distribution of $\varepsilon_1$ without changing the distribution of $(X_1,Y_1)$. One could restore identifiability by including an assumption that $\E(\varepsilon_1) = 0$. However, to incorporate the potential for heavy-tailed error distributions without a finite first moment (such as the Cauchy distribution), we instead impose a more general centring condition of the form $\E\zeta(\varepsilon_1) = 0$ for some pre-specified decreasing function $\zeta \colon \R \to \R$ that satisfies condition~\ref{ass:zeta} with $\varphi = \zeta$. In this case, define
\[
\upsilon_{p_0} := V_{p_0}(\zeta) \in (0,\infty).
\]
Naturally, taking $\zeta$ to be the function $z \mapsto -z$ yields the usual mean-zero assumption on the errors; on the other hand, for $\tau \in (0,1)$, letting $\zeta(z) = \Ind_{\{z < 0\}} - \tau$ for $z \in \R$ constrains the errors to have $\tau$-quantile equal to 0. In these two examples, $\upsilon_{p_0} = \E(\varepsilon_1^2)$ and $\upsilon_{p_0} = \tau(1 - \tau)/p_0(0)^2$ 
% J_0(\tau) = p_0(0)
respectively, and when $\upsilon_{p_0} \in (0,\infty)$, we automatically have $\int_\R \zeta(z + t)^2\,p_0(z)\,dz < \infty$ for all $t \in \R$.
In general, the fact that $\int_\R p_0\,d\zeta \in (-\infty,0)$ ensures that $\E\zeta(\varepsilon_1 - c) = 0$ if and only if $c = 0$, and hence that $\mu_0$ is
% the unique solution to
identified by the equation $\E\zeta(Y_1 - \mu_0 - \tilde{X}_1^\top\theta_0) = 0$; see~\eqref{eq:zeta-Lambda-deriv}. 

Similarly to Section~\ref{subsec:linreg-sym}, we employ three-fold cross-fitting with the convention $I_{j+3} = I_j$ for $j \in \{1,2\}$, and obtain pilot estimators $\bar{\beta}_n^{(j)} = (\bar{\theta}_n^{(j)},\bar{\mu}_n^{(j)})$ of $\beta_0 = (\theta_0,\mu_0)$ given by~\eqref{eq:pilot-est} for $j \in \{1,2,3\}$ based on a fixed loss function $L$. We require the estimators $\bar{\theta}_n^{(j)}$ to be $\sqrt{n}$-consistent for $\theta_0$ as $n \to \infty$. This is guaranteed by Proposition~\ref{prop:cvx-M-est-asymp} if condition~\ref{ass:zeta} is satisfied by $\varphi = -L^{(\mathrm{R})}$, so here we can either take $L$ to be a twice differentiable convex loss function with a strictly decreasing and bounded derivative, or let $L$ be a negative antiderivative of $\zeta$.
% In the latter case, $\bar{\beta}_n^{(j)}$ is $\sqrt{n}$-consistent for $\beta_0$

We make some modifications to subsequent steps of the previous antitonic score matching procedure.

\begin{enumerate}[leftmargin=0.55cm,label=\arabic*$'$.]
\setcounter{enumi}{2}
\item \textbf{Antitonic projected score estimation}: For $j \in \{1,2,3\}$ and $i \in I_{j+1}$, use the out-of-sample residuals $\hat{\varepsilon}_i := Y_i - \tilde{X}_i^\top\bar{\theta}_n^{(j)}$ to construct the initial kernel-based score estimator $\tilde{\psi}_{n,j}$ and its antitonic projection $\hat{\psi}_{n,j} := \widehat{\mathcal{M}}_\mathrm{R}(\tilde{\psi}_{n,j} \circ \tilde{F}_{n,j}^{-1}) \circ \tilde{F}_{n,j}$ as before. Since $p_0$ is not symmetric in general, we use $\hat{\psi}_{n,j}$ instead of $\hat{\psi}_{n,j}^{\mathrm{anti}}$.
\item \textbf{Plug-in cross-fitted convex $M$-estimator}: For $j \in \{1,2,3\}$, let $\hat{\ell}_{n,j} \colon \R \to \R$ be the induced convex loss function given by $\hat{\ell}_{n,j}(z) := -\int_0^z \hat{\psi}_{n,j}$, and define
\begin{align}
\label{eq:betahat-intercept}
\hat{\theta}_n^{(j)} \in \argmin_{\theta \in \R^{d-1}} \sum_{i \in I_{j+2}} \hat{\ell}_{n,j}\bigl(Y_i - \bar{X}_{n,j}^\top\bar{\theta}_n^{(j)} - (\tilde{X}_i - \bar{X}_{n,j})^\top\theta\bigr),
\end{align}
where $\bar{X}_{n,j} := |I_{j+2}|^{-1}\sum_{i \in I_{j+2}}\tilde{X}_i$. Finally, let $\hat{\theta}_n^\dagger := \bigl(\hat{\theta}_n^{(1)} + \hat{\theta}_n^{(2)} + \hat{\theta}_n^{(3)}\bigr)/3$. Alternatively, define
\[
\hat{\theta}_n^\ddagger \in \argmin_{\theta \in \R^{d-1}} \sum_{j=1}^3 \sum_{i \in I_{j+2}} \hat{\ell}_{n,j}\bigl(Y_i - \bar{X}_{n,j}^\top\bar{\theta}_n^{(j)} - (\tilde{X}_i - \bar{X}_{n,j})^\top\theta\bigr).
\]
\item \textbf{Intercept estimation}: Taking either $\hat{\theta}_n = \hat{\theta}_n^\dagger$ or $\hat{\theta}_n = \hat{\theta}_n^\ddagger$, let
\begin{equation}
\label{eq:intercept-est}
\hat{\mu}_n^\zeta \in \argmin_{\mu \in \R} \sum_{i=1}^n L_\zeta(Y_i - \tilde{X}_i^\top\hat{\theta}_n - \mu),
\end{equation}
where $L_\zeta$ is a negative antiderivative of $\zeta$. Finally, output $\hat{\beta}_n^\zeta := (\hat{\theta}_n,\hat{\mu}_n^\zeta)$. 
\end{enumerate}

In summary, $\hat{\theta}_n$ minimises an empirical risk based on centred covariates and an estimated convex loss (via antitonic score matching), while $\hat{\mu}_n^\zeta$ is defined as a location $M$-estimator with respect to the residuals from $\hat{\theta}_n$ and the fixed loss function used to centre the regression errors. There exists a minimiser $\hat{\theta}_n^{(j)}$ in~\eqref{eq:betahat-intercept} if $\inf_{z \in \R}\hat{\psi}_{n,j}(z) < 0 < \sup_{z \in \R}\hat{\psi}_{n,j}(z)$, and $\hat{\theta}_n^{(j)}$ is unique if $\hat{\psi}_{n,j}$ is strictly decreasing and the design matrix has full column rank; see Proposition~\ref{prop:cvx-M-est-asymp}\textit{(a)}. 

% We cannot hope to estimate the full vector $\beta_0$ since the intercept $\mu_0$ is unidentifiable in the absence of further restrictions on $p_0$. We therefore view~\eqref{eq:linear-model-intercept} as a semiparametric model and aim to estimate $\theta_0 \in \R^{d-1}$ in the presence of nuisance parameters $\mu_0$ and $p_0$. 
% As mentioned in Section~\ref{subsec:lin_reg_id_alt}, a necessary condition for $\theta_0$ to be identifiable is that
% \[
% \E(X_1 X_1^\top) = 
% \begin{pmatrix} 
% \E(\tilde{X}_1 \tilde{X}_1^\top) & \!\!\!\E(\tilde{X}_1) 
% \\ \E(\tilde{X}_1)^\top & \!\!\!1 
% \end{pmatrix}
% \]  
% is positive definite, which is equivalent to the Schur complement of $1$ in $\E(X_1 X_1^\top)$, i.e.~$\Sigma := \Cov(\tilde{X}_1)$, being positive definite. 

Step $4'$ can be motivated by 
% standard 
semiparametric calculations that are summarised briefly below and presented formally in Section~\ref{subsec:linreg-semiparametric}. We verify in Proposition~\ref{prop:linreg-semiparametric}\textit{(a)} that if $\mu_0$ is viewed as a nuisance parameter, then
% In a parametric model where $p_0$ is known and $\beta_0$ is unknown, the efficiency lower bound for $\theta_0$ is the top-left $(d-1) \times (d-1)$ submatrix of $I_{\beta_0}^{-1}$, which by the blockwise inversion formula~\citep[e.g.][Proposition~10.10.2]{samworth24modern} is $\Sigma^{-1}/i(p_0) \in \R^{(d-1) \times (d-1)}$. If instead $p_0$ is unknown and we regard both $\mu_0$ and $p_0$ as nuisance parameters, then 
the \textit{efficient score function} $\tilde{\ell}_{\theta_0} \colon \R^d \times \R \to \R^{d-1}$~\citep[Section~25.4]{vdV1998asymptotic} for~$\theta_0$ is given by
\begin{equation}
\label{eq:efficient-score}
\tilde{\ell}_{\theta_0}(x,y) := -(\tilde{x} - \tilde{m})\,\psi_0(y - x^\top\beta_0),
\end{equation}
where $\tilde{m} := \E(\tilde{X}_1) \in \R^{d-1}$, $x = (x_1,\ldots,x_d)$ and $\tilde{x} = (x_1,\ldots,x_{d-1})$. 
% This is an $L^2$ orthogonal projection of the parametric score function for $\theta_0$ (namely the first $d - 1$ components of $\dot{\ell}_{\beta_0}$ above) onto the orthogonal complement of the \textit{nuisance tangent set}, and 
This is the same regardless of whether $p_0$ is known or unknown.
% so a form of \textit{Neyman orthogonality} holds~\citep{neyman1959optimal}
The \textit{efficient information matrix} for $\theta_0$ is
\begin{equation}
\label{eq:efficient-information-theta}
\tilde{I}_{\theta_0} := \E\bigl(\tilde{\ell}_{\theta_0}(X_1,Y_1)\,\tilde{\ell}_{\theta_0}(X_1,Y_1)^\top\bigr) = \E\bigl(\psi_0(\varepsilon_1)^2\bigr)\Cov(\tilde{X}_1) = i(p_0)\Sigma,
\end{equation}
where $\Sigma := \Cov(\tilde{X}_1)$, and the semiparametric efficiency lower bound \citep[e.g.][p.~367]{vdV1998asymptotic} is $\tilde{I}_{\theta_0}^{-1} = \Sigma^{-1}/i(p_0)$. This is the top-left $(d-1) \times (d-1)$ submatrix of $I_{\beta_0}^{-1}$ in~\eqref{eq:Ibeta0}, 
% so intuitively the difficulty of estimating $\theta_0$ when $p_0$ is unknown is the same as in a parametric model where $p_0$ is known
and is asymptotically attained by an adaptive estimator $\tilde{\theta}_n$ of $\theta'$ that solves a version of the \textit{efficient score equations} $\sum_{i=1}^n \tilde{\ell}_{\tilde{\theta}_n}(X_i,Y_i) = 0$ with $\psi_0$ replaced with an $L^2(P_0)$-consistent score estimate; see~\citet[Example~3]{bickel1982adaptive} and~\citet[Chapter~25.8]{vdV1998asymptotic}.
In~\eqref{eq:betahat-intercept}, we instead target a surrogate of the efficient score $\tilde{\ell}_{\theta_0}$ above with~$\psi_0$ replaced by $\psi_0^*$.  If $\hat{\psi}_{n,j}$ is continuous, then any minimiser $\hat{\theta}_n^{(j)}$ satisfies
\[
\sum_{i \in I_{j+2}} (\tilde{X}_i - \bar{X}_{n,j}) \cdot \hat{\psi}_{n,j}\bigl(Y_i - \bar{X}_{n,j}^\top\bar{\theta}_n^{(j)} - (\tilde{X}_i - \bar{X}_{n,j})^\top\hat{\theta}_n^{(j)}\bigr) = 0,
\]
which is a variant of the efficient score equations for $\theta_0$. Since we do not assume knowledge of the population mean $\tilde{m} = \E(\tilde{X}_1)$, we replace it with its sample analogue $\bar{X}_{n,j}$ in each of the three folds. The rationale for the final Step~$5'$ is that $\hat{\beta}_n^\zeta = (\hat{\theta}_n,\hat{\mu}_n^\zeta)$ solves a version of the efficient score equations for $\beta_0$ when $p_0$ is regarded as a nuisance parameter; see~\eqref{eq:efficient-score-eqn-beta} in Section~\ref{subsec:linreg-semiparametric}.

Under the regularity conditions in Section~\ref{subsec:linreg-sym}, it turns out that, up to a translation by $\mu_0$, the projected score functions $\hat{\psi}_{n,j}$ are also $L^2(P_0)$-consistent estimators of $\psi_0^*$ in this setting (Lemma~\ref{lem:proj-score-consistency}). It follows that with probability tending to~1 as $n \to \infty$, we have $\lim_{z \to -\infty}\hat{\psi}_{n,j}(z) > 0 > \lim_{z \to \infty}\hat{\psi}_{n,j}(z)$ and hence $\hat{\theta}_n^\dagger$ and $\hat{\theta}_n^\ddagger$ exist. We can then adapt the proof strategy for Theorem~\ref{thm:linreg-score-sym} above to establish the following main result.

\begin{theorem}
\label{thm:linreg-score-intercept}
Assume that~\emph{\ref{ass:fisher-finite}--\ref{ass:covariates}} hold, and moreover that~\emph{\ref{ass:zeta}} is satisfied by $\varphi = \zeta$ in the linear model~\eqref{eq:linear-model-intercept} with $\E\zeta(\varepsilon_1) = 0$. Suppose further that $\bar{\theta}_n^{(j)} - \theta_0 = O_p(n^{-1/2})$ for $j \in \{1,2,3\}$. Then for any sequence of estimators $\hat{\beta}_n^\zeta = (\hat{\theta}_n,\hat{\mu}_n^\zeta)$ such that $\hat{\theta}_n \in \{\hat{\theta}_n^\dagger,\hat{\theta}_n^\ddagger\}$ for each $n$, we have
\[
\sqrt{n}(\hat{\beta}_n^\zeta - \beta_0) \cvd N_d\bigl(0, (\tilde{I}_{\beta_0}^*)^{-1}\bigr);
\]
here,
\begin{align}
\label{eq:antitonic-efficient-information-beta}
(\tilde{I}_{\beta_0}^*)^{-1} =
\begin{pmatrix}
\dfrac{\Sigma^{-1}}{i^*(p_0)} & -\dfrac{\Sigma^{-1}\tilde{m}}{i^*(p_0)} \\[12pt]
-\dfrac{\tilde{m}^\top\Sigma^{-1}}{i^*(p_0)} & \upsilon_{p_0} + \dfrac{\tilde{m}^\top\Sigma^{-1}\tilde{m}}{i^*(p_0)}
\end{pmatrix}
\in \R^{d \times d}
\end{align}
is the inverse of
\[
\tilde{I}_{\beta_0}^* := 
\begin{pmatrix}
i^*(p_0)\Sigma + \tilde{m}\tilde{m}^\top/\upsilon_{p_0} & \tilde{m}/\upsilon_{p_0} \\[3pt]
\tilde{m}^\top/\upsilon_{p_0} & 1/\upsilon_{p_0}
\end{pmatrix} = i^*(p_0)\,\E(X_1 X_1^\top) - \Bigl(i^*(p_0) - \frac{1}{\upsilon_{p_0}}\Bigr)\,\E(X_1)\E(X_1)^\top.
\]
\end{theorem}

As an immediate consequence of Theorem~\ref{thm:linreg-score-intercept},
\begin{equation}
\label{eq:thetahat}
\sqrt{n}(\hat{\theta}_n - \theta_0) \cvd N_{d-1}\biggl(0,\,\frac{\Sigma^{-1}}{i^*(p_0)}\biggr)
\end{equation}
as $n \to \infty$, and the proof reveals that this holds even without the centring constraint $\E\zeta(\varepsilon_1) = 0$; see also Proposition~\ref{prop:linreg-semiparametric}\textit{(a)}. Therefore, $\hat{\theta}_n$ is an data-driven convex $M$-estimator of $\theta_0$ whose limiting covariance $\Sigma^{-1}/i^*(p_0)$ is identical to $\tilde{I}_{\theta_0}^{-1}$ in~\eqref{eq:efficient-information-theta}, except with $i(p_0)$ in place of $i^*(p_0)$. Similarly, $\tilde{I}_{\beta_0}^*$ is the antitonic analogue of the efficient information matrix $\tilde{I}_{\beta_0}$ for $\beta_0$; see~\eqref{eq:efficient-information-beta} in Proposition~\ref{prop:linreg-semiparametric}\textit{(b)}. Consequently, we can view Theorem~\ref{thm:linreg-score-intercept} as an `antitonic efficiency' result. 

% The main interest lies in the last row and column of $\tilde{I}_{\beta_0}^{-1}$ since the top-left $(d-1) \times (d-1)$ block is precisely the limiting covariance of $\sqrt{n}(\hat{\theta}_n - \theta_0)$ in Theorem~\ref{thm:linreg-score-intercept}, namely the previous semiparametric efficiency lower bound $\tilde{I}_{\theta_0}^{-1}$ for $\theta_0$.
To interpret the bottom-right entry in~\eqref{eq:antitonic-efficient-information-beta}, first suppose that $\theta_0$ is known and $p_0$ is unknown. Then the problem of estimating $\mu_0$ reduces to one-dimension location estimation based on 
$\tilde{Y}_i := Y_i - \tilde{X}_i^\top\theta_0 = \mu_0 + \varepsilon_i$ for $i \in [n]$. Since $\E\zeta(\tilde{Y}_1 - \mu_0) = 0$, the corresponding semiparametric efficiency lower bound is~$\upsilon_{p_0}$, which is the one-dimensional version of~\eqref{eq:M-estimator-asymp}; see~\citet[Example~25.24]{vdV1998asymptotic} for the special case of mean estimation. On the other hand, when $\theta_0$ is also unknown, the efficiency lower bound for $\mu_0$ is instead the bottom-right entry $(\tilde{I}_{\beta_0}^{-1})_{d,d} = \upsilon_{p_0} + \tilde{m}^\top\Sigma^{-1}\tilde{m}/i(p_0)$ in~\eqref{eq:efficient-information-beta}. Therefore, it is strictly harder to estimate $\mu_0$ in our semiparametric setting unless $\tilde{m} = 0$, and the asymptotic variance of our $\hat{\mu}_n^\zeta$ is the antitonic counterpart of $(\tilde{I}_{\beta_0}^{-1})_{d,d}$.  Observe also that $i^*(p_0) \geq 1/\upsilon_{p_0}$ by~\eqref{eq:antitonic-fisher-LB}, so the limiting covariance matrices in Theorems~\ref{thm:linreg-score-sym} and~\ref{thm:linreg-score-intercept} satisfy $(I_{\beta_0}^*)^{-1} \preceq (\tilde{I}_{\beta_0}^*)^{-1}$ in the positive semidefinite (Loewner) ordering~$\preceq$, with equality if and only if $\zeta$ is proportional to $-\psi_0^*$.

Lemma~\ref{lem:CQR-suboptimal} in Section~\ref{subsec:CQR} establishes that $\hat{\theta}_n$ always has asymptotic efficiency at least that of the composite quantile estimator~\citep{zou2008composite}, and also that there exist log-concave densities $p_0$ for which the latter has arbitrarily low efficiency relative to the former.

\subsection{Inference}
\label{subsec:linreg-inference}

To perform asymptotically valid inference for $\beta_0$ based on Theorems~\ref{thm:linreg-score-sym} and~\ref{thm:linreg-score-intercept} when $p_0$ is unknown, we require a consistent estimator of the antitonic information $i^*(p_0)$. This can be constructed using residuals $\check{\varepsilon}_i := Y_i - X_i^\top\bar{\beta}_n^{(j)}$ for $j \in \{1,2,3\}$ and $i \in I_{j+2}$ in place of the unobserved errors, where $\bar{\beta}_n^{(j)}$ are $\sqrt{n}$-consistent estimators of $\beta_0$ given by~\eqref{eq:pilot-est}.

\begin{lemma}
\label{lem:observed-information}
In the setting of Theorem~\ref{thm:linreg-score-sym}, let $\check{\psi}_{n,j} \in \{\hat{\psi}_{n,j},\hat{\psi}_{n,j}^{\mathrm{anti}}\}$ for $n \in \N$ and $j \in \{1,2,3\}$. Then
\[
\hat{\imath}_n := \frac{1}{n}\sum_{j=1}^3 \sum_{i \in I_{j+2}}\check{\psi}_{n,j}(\check{\varepsilon}_i)^2 \cvp i^*(p_0).
\]
The same conclusion holds in the setting of Theorem~\ref{thm:linreg-score-intercept} if instead $\check{\psi}_{n,j}(\cdot) = \hat{\psi}_{n,j}(\,\cdot + \bar{\mu}_n^{(j)})$ for all $n,j$.
\end{lemma}

Therefore, $I_{\beta_0}^* = \E(X_1 X_1^\top)\,i^*(p_0)$ can be estimated consistently by 
% the \textit{observed antitonic information matrix}
\[
\hat{I}_n := \frac{\hat{\imath}_n}{n}\sum_{i=1}^n X_i X_i^\top = \frac{\hat{\imath}_n}{n}X^\top X,
\]
where $X = (X_1 \; \cdots \; X_n)^\top \in \mathbb{R}^{n \times d}$ has full column rank with probability tending to 1 under~\ref{ass:covariates}. When $p_0$ is symmetric, it follows from Theorem~\ref{thm:linreg-score-sym} and Lemma~\ref{lem:observed-information} that
\[
\sqrt{n}\hat{I}_n^{1/2}(\hat{\beta}_n^{\mathrm{sym}} - \beta_0) \cvd N_d(0, I_d).
\]
Thus, writing $\hat{\beta}_{n,j}^{\mathrm{sym}}$ for the $j$th component of $\hat{\beta}_n^{\mathrm{sym}}$ and $\hat{s}_j := (\hat{I}_n^{-1})_{jj}$ for the $j$th diagonal entry of $\hat{I}_n^{-1}$, we deduce that
\[
\biggl[\hat{\beta}_{n,j}^{\mathrm{sym}} - z_{\alpha/2}\sqrt{\frac{\hat{s}_j}{n}} \, , \, \hat{\beta}_{n,j}^{\mathrm{sym}} + z_{\alpha/2}\sqrt{\frac{\hat{s}_j}{n}}\biggl]
\]
is an asymptotic $(1-\alpha)$-level confidence interval for the $j$th component of $\beta_0$, where $z_{\alpha/2}$ denotes the $(1 - \alpha/2)$-quantile of the standard normal distribution. Moreover,
\[
\bigl\{b \in \R^d : n(\hat{\beta}_n^{\mathrm{sym}} - b)^\top\hat{I}_n(\hat{\beta}_n^{\mathrm{sym}} - b) \leq \chi_d^2(\alpha)\bigr\}
\]
is an asymptotic $(1 - \alpha)$-confidence ellipsoid for $\beta_0$, where $\chi_d^2(\alpha)$ denotes the $(1-\alpha)$-quantile of the $\chi_d^2$ distribution. 

On the other hand, in the setting of Section~\ref{subsec:linreg-intercept}, $\tilde{I}_n := n^{-1}\sum_{i=1}^n \hat{\imath}_n(\tilde{X}_i - \bar{\tilde{X}}_n)(\tilde{X}_i - \bar{\tilde{X}}_n)^\top$ is a consistent estimator of $i^*(p_0)\Sigma$, where $\bar{\tilde{X}}_n := n^{-1}\sum_{i=1}^n \tilde{X}_i$. In view of~\eqref{eq:thetahat}, we can therefore construct asymptotically valid confidence sets for $\theta_0$ and confidence intervals for its components, as above. To obtain a sample approximation to the asymptotic variance of the intercept estimate $\hat{\mu}_n^\zeta$, we also require a consistent estimate $\hat{\upsilon}_n$ of $\upsilon_{p_0} = V_{p_0}(\zeta)$.  If the errors are mean-centred via $\zeta \colon z \mapsto -z$, then a natural estimator of $\upsilon_{p_0} = \E(\varepsilon_1^2)$ is $n^{-1}\sum_{i=1}^n \check{\varepsilon}_i^2$. On the other hand, if the $\tau$-quantile of the errors are centred via $\zeta \colon z \mapsto \Ind_{\{z < 0\}} - \tau$ for some $\tau \in (0,1)$, then we can approximate $\upsilon_{p_0} = \tau(1 - \tau)/p_0(0)^2$ using a kernel density estimator of $p_0(0)$. The following lemma verifies that using the residuals $\check{\varepsilon}_1,\dotsc,\check{\varepsilon}_n$ is justified more generally in this context.
\begin{lemma}
\label{lem:upsilon-estimate}
Suppose that $\zeta = \zeta_{\mathrm{ac}} - \sum_{m=1}^M \zeta_m \Ind_{[z_m,\infty)}$ for some $M \in \N_0$, where $\zeta_{\mathrm{ac}}$ is absolutely continuous on $\R$, while $\zeta_m > 0$ and $z_m \in \R$ for all $m \in [M]$. Define $\tilde{p}_n \colon \R \to \R$ by $\tilde{p}_n(z) := n^{-1}\sum_{i=1}^n K_h(z - \check{\varepsilon}_i)$ for some square-integrable kernel $K$ and bandwidth $h \equiv  h_n$ satisfying $h_n \to 0$ and $nh_n \to \infty$. If $\zeta_{\mathrm{ac}}'$ is continuous Lebesgue almost everywhere on $\R$, then under the hypotheses of Theorem~\ref{thm:linreg-score-intercept},
\[
\hat{\upsilon}_n := \frac{n^{-1}\sum_{i=1}^n \zeta(\check{\varepsilon}_i)^2}{\bigl\{n^{-1}\sum_{i=1}^n \zeta_{\mathrm{ac}}'(\check{\varepsilon}_i) - \sum_{m=1}^M \zeta_m\tilde{p}_n(z_m)\bigr\}^2} \cvp \upsilon_{p_0}.
\]
\end{lemma}
Now define
\[
\hat{I}_n^\zeta := \hat{I}_n - \Bigl(\hat{\imath}_n - \frac{1}{\hat{\upsilon}_n}\Bigr)\bar{X}_n\bar{X}_n^\top,
\]
where $\bar{X}_n := n^{-1}\sum_{i=1}^n X_i$, and denote by $\tilde{s}_j$ the $j$th diagonal entry of $(\hat{I}_n^\zeta)^{-1}$ for $j \in [d]$. We deduce from Theorem~\ref{thm:linreg-score-intercept} together with Lemmas~\ref{lem:observed-information} and~\ref{lem:upsilon-estimate} that
\[
\biggl[\hat{\beta}_{n,j}^\zeta - z_{\alpha/2}\sqrt{\frac{\tilde{s}_j}{n}} \, , \, \hat{\beta}_{n,j}^\zeta + z_{\alpha/2}\sqrt{\frac{\tilde{s}_j}{n}}\biggl]
\]
is an asymptotic $(1-\alpha)$-level confidence interval for the $j$th component of $\beta_0$, and
\begin{align}
\label{eq:confidence ellipsoid}
\bigl\{b \in \R^d: n(\hat{\beta}_n^\zeta - b)^\top\hat{I}_n^\zeta (\hat{\beta}_n^\zeta - b) \leq \chi_d^2(\alpha)\bigr\}
\end{align}
is an asymptotic $(1 - \alpha)$-confidence ellipsoid for $\beta_0$. 
Lemma~\ref{lem:observed-information} also ensures that standard linear model diagnostics, either based on heuristics such as Cook's distances~\citep{cook1977detection}, or formal goodness-of-fit tests~\citep{jankova2020goodness}, can be applied. 

\section{Numerical experiments}
\label{sec:experiments}

In our numerical experiments, we generate covariates $\tilde{X}_1,\dotsc, \tilde{X}_n \iid N_{d-1}(\mathbf{1}_{d-1}, I_{d-1})$, where $\mathbf{1}_{d-1}$ denotes the $(d-1)$-dimensional all-ones vector, and responses $Y_1, \dotsc, Y_n$ according to the linear model~\eqref{eq:linear-model-intercept}. Independently of the covariates, we draw errors $\varepsilon_1, \ldots, \varepsilon_n \iid P_0$ for the following choices of $P_0$:

\begin{enumerate}[label=(\roman*)]
\item $P_0$ is standard Gaussian.
\item $P_0$ is standard Cauchy.
\item Gaussian scale mixture: $P_0 = \frac{1}{2} N(0, 1) + \frac{1}{2} N(0, 16)$.
\item Gaussian location mixture: $P_0 = \frac{1}{2} N\bigl(- \frac{3}{2}, \frac{1}{100} \bigr) + \frac{1}{2} N\bigl( \frac{3}{2}, \frac{1}{100} \bigr)$.  This distribution is similar to the one constructed in the proof of Proposition~\ref{prop:Vp0-MLE} to show that log-concave maximum likelihood estimation of the error distribution may result in arbitrarily large efficiency loss.
\item Smoothed uniform: $P_0$ is the distribution of $U + \frac{1}{10} Z$, where $U \sim \mathrm{Unif}[-1,1]$ and $Z \sim N(0,1)$ are independent.
\item Smoothed exponential: $P_0$ is the distribution of $W - 1 + \frac{\sqrt{3}}{10} Z$, where $W \sim \Exp(1)$ and $Z \sim N(0,1)$ are independent. We choose the standard deviation $\frac{\sqrt{3}}{10}$ for the Gaussian component so that the ratio of the variances of the non-Gaussian and the Gaussian components is the same as that in the smoothed uniform setting.
\end{enumerate}
In cases (i),~(v) and~(vi), $P_0$ is log-concave, while for the other settings it is not.  Since the OLS and LAD estimators are targeting different population intercepts in cases where the mean and median of $P_0$ are not equal (so at most one of these estimators can be Fisher consistent), we focus on estimation and inference for the $(d-1)$-dimensional subvector $\theta_0$ of~$\beta_0$ in order to present a fair comparison.  

% \begin{table}[ht]
% \centering
% \begin{tabular}{|l|r||r|r|r|r|r|r|}
% \hline
% & Oracle & ASM & Alt & LCMLE & 1S & LAD & OLS \\ 
% \hline
% Standard Gaussian & 8.34 & 8.88 & 8.86 & 10.73 & 9.72 & 13.11 & \textbf{8.34} \\ 
% Standard Cauchy & 20.07 & $\bf{20.61}$ & 20.76 & 23.85 & 22.13 & 21.36 & $2.03 \times 10^6$ \\ 
% Gaussian scale mixture & 31.36 & $\bf{32.01}$ & 32.33 & 34.89 & 36.18 & 34.97 & 72.34 \\ 
% Gaussian location mixture & 0.16 & 0.17 & $\bf{0.16}$ & 1.51 & 18.07 & 319.65 & 18.51 \\ 
% Smoothed uniform & 1.02 & 1.29 & $\bf{1.15}$  & 1.52 & 2.07 & 7.92 & 2.78 \\ 
% Smoothed exponential & 1.78 & 2.13 & $\bf{2.02}$ & 2.54 & 3.36 & 8.27 & 8.65 \\ 
% \hline
% \end{tabular}
% \caption{Squared estimation error ($\times 10^3$) for different estimators, with $n = 600$ and $d=6$.}
% \label{tab:MSE-comparison}
% \end{table}

\begin{table}[ht]
\centering
\begin{tabular}{|l|r||r|r|r|r|r|r|}
\hline
& Oracle & ASM & Alt & LCMLE & 1S & LAD & OLS \\ 
\hline
Standard Gaussian & 8.51 & 8.96 & 9.01 & 9.43 & 9.77 & 12.70 & $\bf{8.51}$ \\ 
Standard Cauchy & 19.98 & $\bf{20.44}$ & 20.68 & 48.31 & 21.74 & 21.15 & $3.9 \times 10^6$ \\ 
Gaussian scale mixture & 30.71 & $\bf{31.58}$ & 31.78 & 34.49 & 36.48 & 34.67 & 73.45 \\ 
Gaussian location mixture & 0.17 & 0.18 & $\bf{0.17}$ & 0.74 & 18.28 & 332.08 & 18.72 \\ 
Smoothed uniform & 1.03 & 1.36 & $\bf{1.18}$ & 1.31 & 2.10 & 8.38 & 2.99 \\ 
Smoothed exponential & 1.91 & 2.24 & $\bf{2.10}$ & 2.26 & 3.33 & 8.60 & 8.86 \\ 
\hline
\end{tabular}
\caption{Squared estimation error ($\times 10^3$) for different estimators of $\theta_0$, with $n = 600$ and $d=6$.}
\label{tab:MSE-comparison}
\end{table}

\begin{figure}[ht]
\centering
\includegraphics[width=0.47\textwidth]{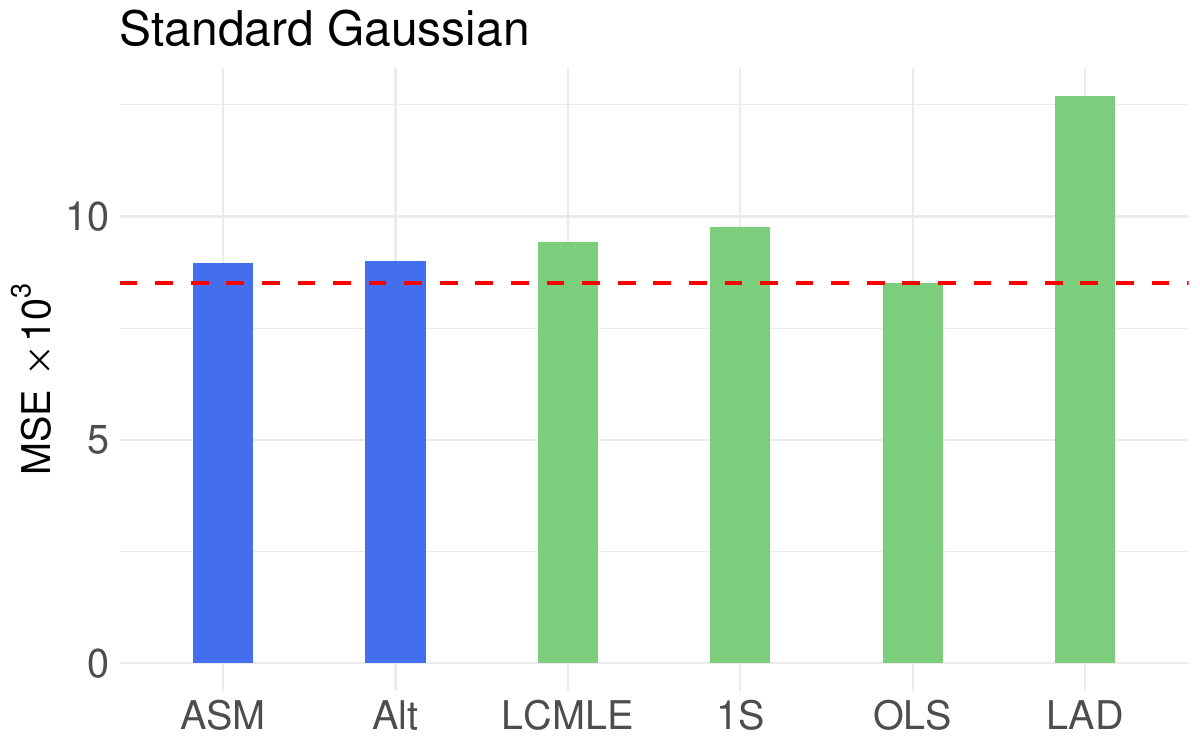}
\hfill
\includegraphics[width=0.47\textwidth]{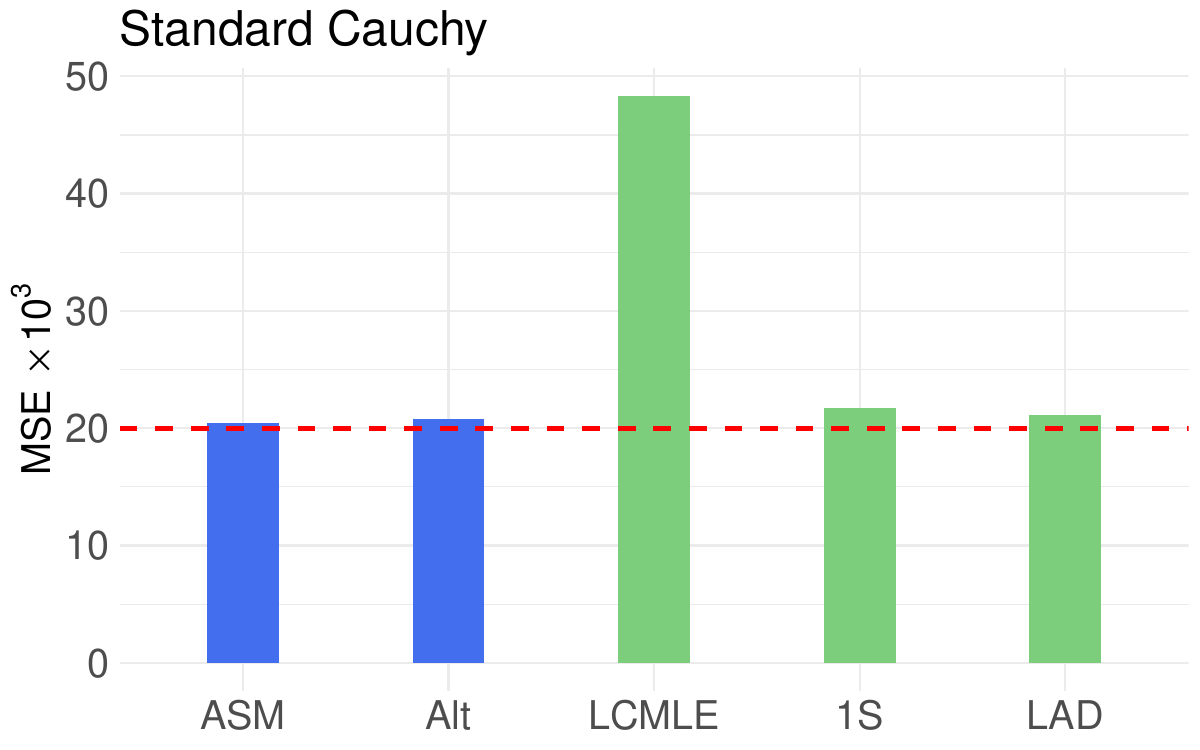}
\includegraphics[width=0.47\textwidth]{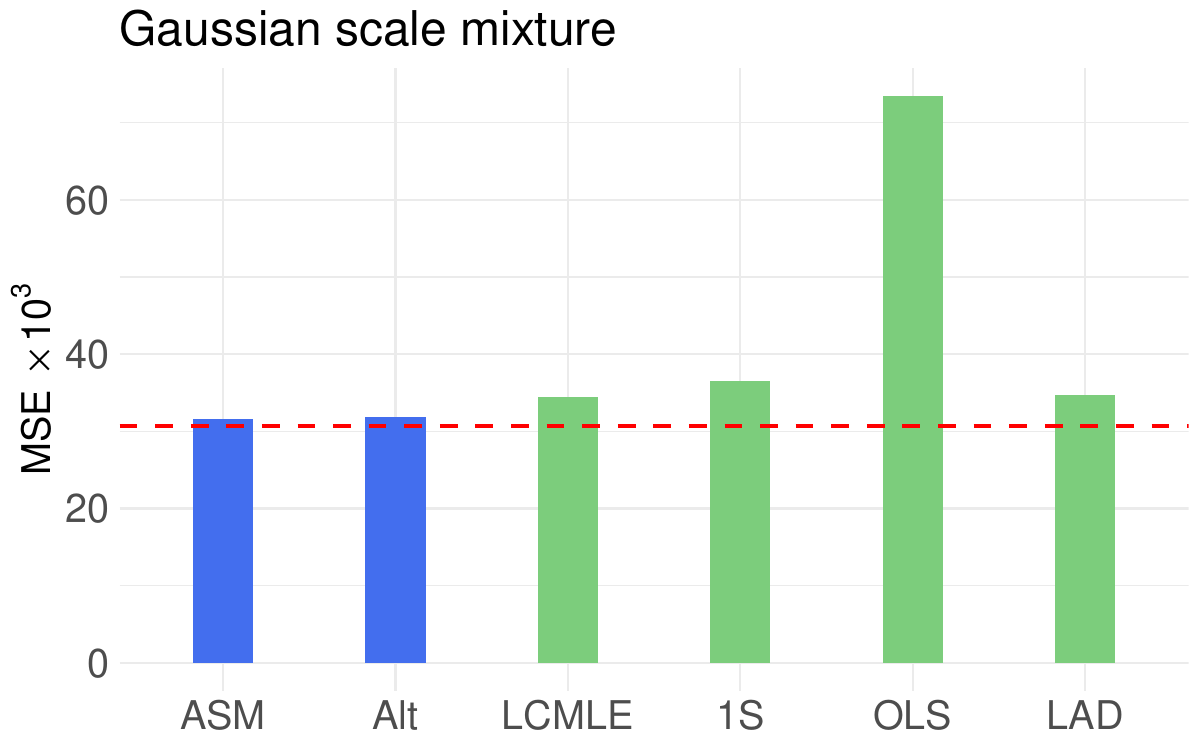}
\hfill
\includegraphics[width=0.47\textwidth]{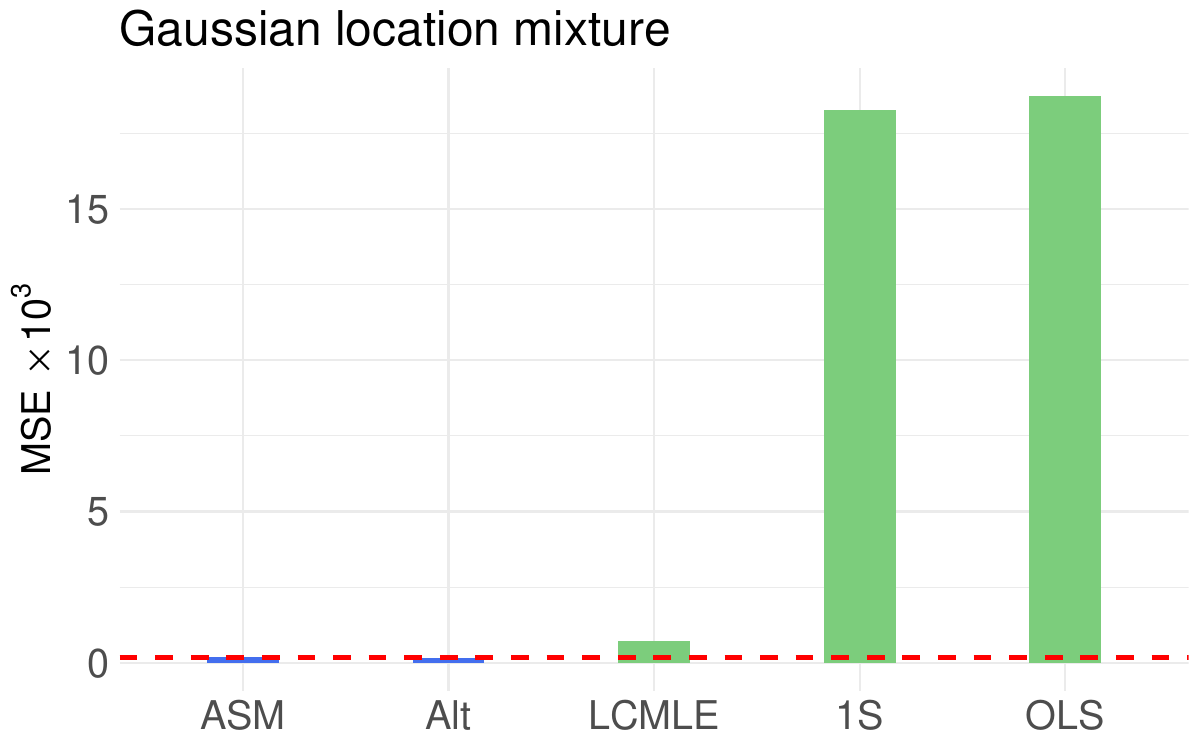}
\includegraphics[width=0.47\textwidth]{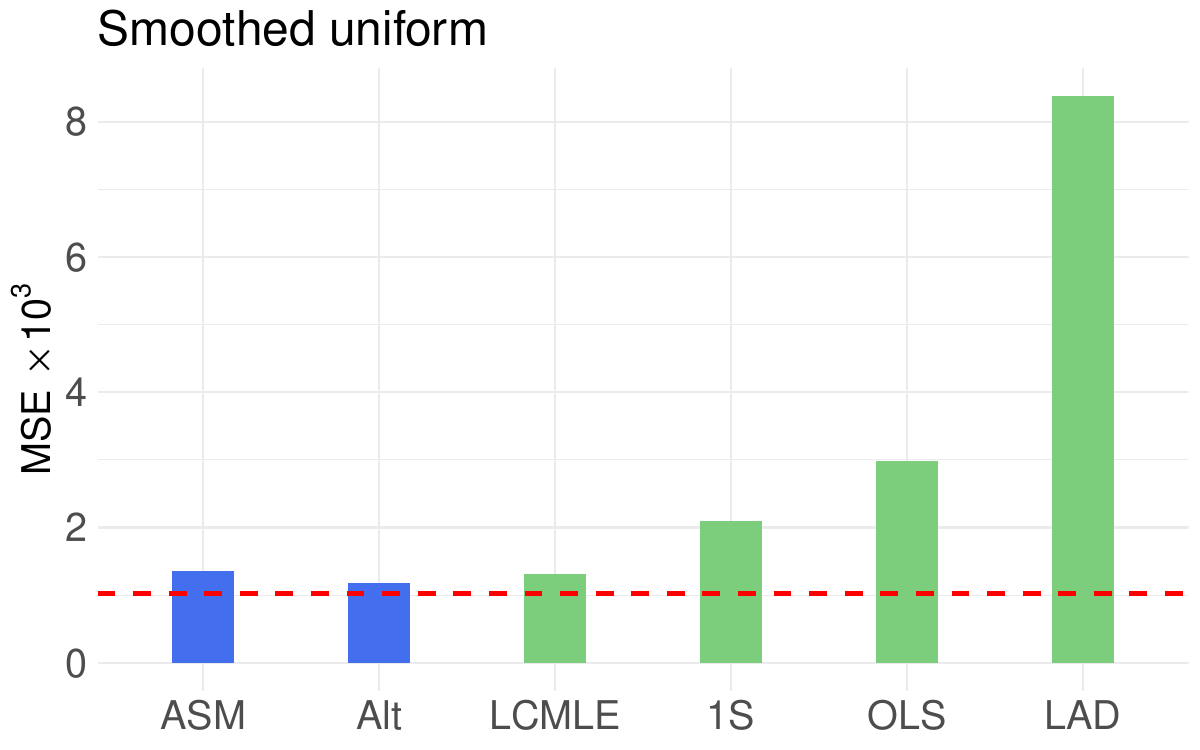}
\hfill
\includegraphics[width=0.47\textwidth]{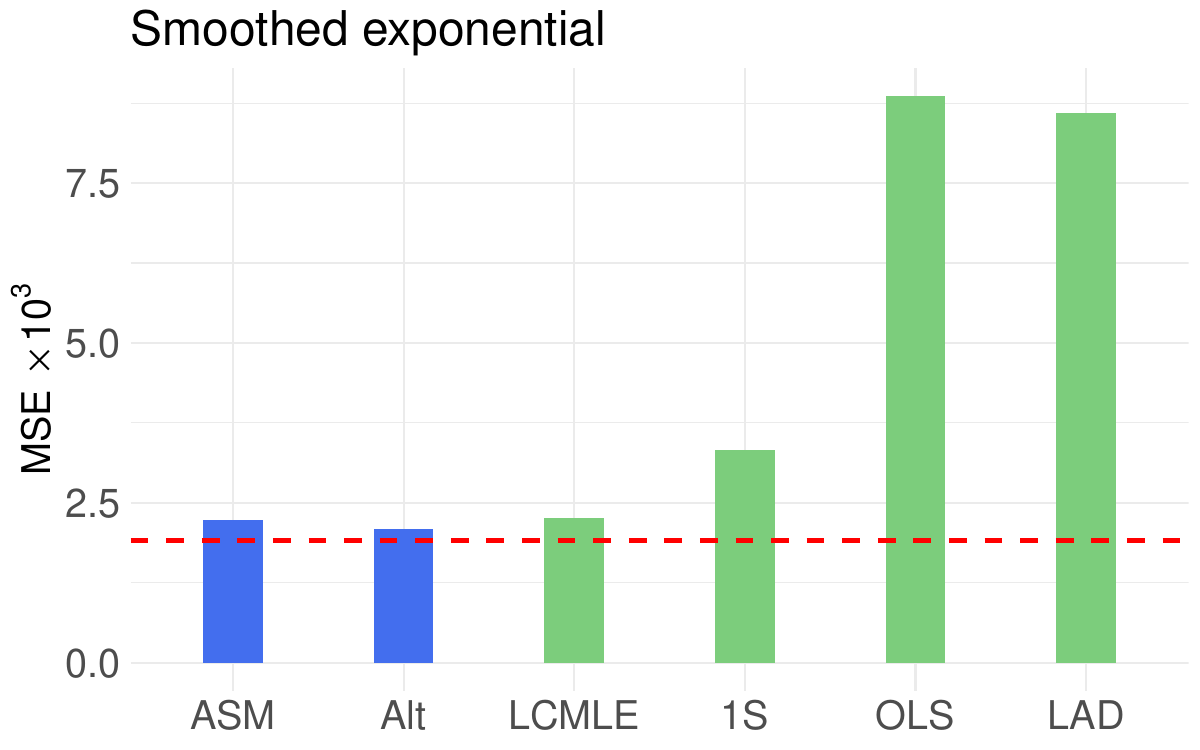}
% \vspace{-0.3cm}   
\caption{Plots of the average squared error loss ($\times 10^3$) of different estimators of $\theta_0$ for noise distributions (i)--(vi), with $n = 600$ and $d = 6$. In each plot, the red dashed line indicates the corresponding value for the oracle convex $M$-estimator, and we omit the estimators that have very large estimation error (see Table~\ref{tab:MSE-comparison} for full details).}
\label{fig:MSE-comparison}
\end{figure}

We compare the performance of two versions of our procedure with an oracle approach and four existing methods. The first variant of our procedure, which we refer to as \textbf{ASM} (antitonic score matching) in all of the plots, is as described in Section~\ref{subsec:linreg-intercept}, except that we do not perform sample splitting, cross-fitting or truncation of the initial score estimates. These devices are convenient for theoretical analysis but not essential in practice. More precisely, \textbf{ASM} first constructs a pilot estimator $\bar{\beta}_n = (\bar{\theta}_n,\bar{\mu}_n)$, and then uses the vector of residuals $\bigl( \hat{\varepsilon}_1,\dotsc,\hat{\varepsilon}_n \bigr)$, where $\hat{\varepsilon}_i := Y_i - \tilde{X}_i^\top\bar{\theta}_n$, to obtain an initial kernel-based score estimator $\tilde{\psi}_n$, formed using a Gaussian kernel and the default Silverman's choice of bandwidth~\citep[p.~48]{silverman1986density}. Following Step~3$'$ in Section~\ref{subsec:linreg-intercept}, we estimate the antitonic projected score and the corresponding convex loss function by $\hat{\psi}_n := \widehat{\mathcal{M}}_{\mathrm{R}}(\tilde{\psi}_n \circ \tilde{F}_n^{-1}) \circ \tilde{F}_n$ and $\hat{\ell}_n$ respectively, where $\tilde{F}_n$ denotes the distribution function associated with the kernel density estimate. Finally, we use Newton's algorithm with Hessian modification\footnote{This modification involves adding the identity matrix to the Hessian prior to its version.}~\citep[Section~3.4]{nocedal2006numerical} to compute our semiparametric estimator
\[
\hat{\theta}_n^{\mathrm{ASM}} \in \argmin_{\theta \in \mathbb{R}^{d-1}} \sum_{i=1}^n \hat{\ell}_n\bigl(Y_i - \bar{X}_n^\top \bar{\theta}_n - (\tilde{X}_i - \bar{X}_n)^\top \theta \bigr),
\]
where $\bar{X}_n := n^{-1} \sum_{i=1}^n \tilde{X}_i$.

In the second version of our procedure, which we refer to as \textbf{Alt} in our plots, we implement an empirical analogue of the alternating optimisation procedure described in Section~\ref{subsec:lin_reg_id_alt}. We start with an uninformative initialiser $(\hat{\theta}_n^{(0)}, \hat{\mu}_n^{(0)}) = (0, 0) \in \R^{d-1} \times \R$, and then alternate between the following steps for $t \in \mathbb{N}$:
\begin{enumerate}[label=\Roman*.]
\item Compute residuals $\hat{\varepsilon}_i^{(t-1)} := Y_i - \hat{\mu}_n^{(t-1)} - \tilde{X}_i^\top \hat{\theta}_n^{(t-1)}$ for $i \in [n]$ and hence estimate the antitonic projected score $\hat{\psi}_n^{(t-1)}$ and the corresponding convex loss $\hat{\ell}_n^{(t-1)}$ as for \textbf{ASM}.
\item Update $(\hat{\theta}_n^{(t)}, \hat{\mu}_n^{(t)}) \in \argmin_{(\theta, \mu) \in \mathbb{R}^{d-1} \times \mathbb{R}} \sum_{i=1}^n \hat{\ell}_n^{(t-1)}(Y_i - \mu - \tilde{X}_i^\top \theta)$.
\end{enumerate}
We iterate these steps until convergence of the empirical score matching objective
\[
\hat{D}_n(\hat{\psi}_n^{(t)}; \hat{\varepsilon}_1^{(t)},\dotsc,\hat{\varepsilon}_n^{(t)}) = \frac{1}{n}\sum_{i=1}^n \bigl\{ \hat{\psi}_n^{(t)}(\hat{\varepsilon}_i^{(t)}) + 2(\hat{\psi}_n^{(t)})'(\hat{\varepsilon}_i^{(t)}) \bigr\}
\]
defined in~\eqref{eq:score-matching-empirical}; in all of our experiments both $\hat{\theta}_n^{(t)}$ and the score matching objective values did indeed converge.  
% The iterates $\hat{\mu}_n^{(t)}$ and $\hat{\psi}_n^{(t)}$ are not guaranteed to converge due to non-identifiability of the intercept term $\mu_0$ in~\eqref{eq:linear-model-intercept}, but $\hat{\theta}_n^{(t)}$ and the score matching objective values did indeed converge in all of our experiments. 

The alternative approaches that we consider are as follows:
\begin{itemize}
\item \textbf{Oracle}: The $M$-estimator $\hat{\theta}_n^{\mathrm{oracle}}$, where
\[
\begin{pmatrix}
\hat{\theta}_n^{\mathrm{oracle}} \\
\hat{\mu}_n^{\mathrm{oracle}} 
\end{pmatrix}
\in \argmin_{(\theta, \mu) \in \R^{d-1} \times \R}\;\sum_{i=1}^n \ell_0^*(Y_i - \mu - \tilde{X}_i^\top\theta)
\]
is defined with respect to the optimal convex loss function~$\ell_0^*$. Although $\hat{\theta}_n^{\mathrm{oracle}}$ is antitonically efficient in the sense of~\eqref{eq:thetahat}, it is not a valid estimator in our semiparametric framework since it requires knowledge of $p_0$. 
\item \textbf{LAD}: The least absolute deviation estimator $\hat{\theta}_n^{\mathrm{LAD}}$, where
\[
\hat{\beta}_n^{\mathrm{LAD}} =
\begin{pmatrix}
\hat{\theta}_n^{\mathrm{LAD}} \\
\hat{\mu}_n^{\mathrm{LAD}}
\end{pmatrix} 
\in \argmin_{(\theta, \mu) \in \R^{d-1} \times \mathbb{R}}\;\sum_{i=1}^n |Y_i - \mu - \tilde{X}_i^\top\theta|.
\]
\item \textbf{OLS}: The ordinary least squares estimator $\hat{\theta}_n^{\mathrm{OLS}}$, where
\[
\hat{\beta}_n^{\mathrm{OLS}} =
\begin{pmatrix}
\hat{\theta}_n^{\mathrm{OLS}} \\
\hat{\mu}_n^{\mathrm{OLS}}
\end{pmatrix} 
\in \argmin_{(\theta, \mu) \in \R^{d-1} \times \mathbb{R}}\;\sum_{i=1}^n (Y_i - \mu - \tilde{X}_i^\top\theta)^2.
\]
\item \textbf{1S}: The semiparametric one-step method, where we start with a pilot estimator $(\bar{\theta}_n,\bar{\mu}_n)$, compute a (not necessarily decreasing) nonparametric score estimate, and then update $\bar{\theta}_n$ with a single Newton step instead of solving the estimating equations exactly. Our implementation follows~\citet[Chapter~25.8]{vdV1998asymptotic}: we split the data into two folds of equal size indexed by $I_1$ and~$I_2$, and then use the residuals $\hat{\varepsilon}_i := Y_i - \tilde{X}_i^\top \bar{\theta}_n - \bar{\mu}_n$ for $i \in I_1$ and $i \in I_2$ separately to obtain kernel density estimates $\hat{p}_{n,1},\hat{p}_{n,2}$ of $p_0$ (constructed as for \textbf{ASM}).  Defining the score estimates $\hat{\psi}_{n,j} := \hat{p}_{n,j}'/\hat{p}_{n,j}$ for $j \in \{1,2\}$, we output the cross-fitted estimator $\hat{\theta}_n^{\mathrm{1S}}$, where
\begin{align*}
% where we need to recentre the covariates?
\hat{\theta}_n^{\mathrm{1S}} &:= \bar{\theta}_n - \biggl(\sum_{i \in I_1}\hat{\psi}_{n,2}(\hat{\varepsilon}_i)^2 \tilde{X}_i \tilde{X}_i^\top + \sum_{i \in I_2}\hat{\psi}_{n,1}(\hat{\varepsilon}_i)^2 \tilde{X}_i \tilde{X}_i^\top\biggr)^{-1}\biggl(\sum_{i \in I_1}\hat{\psi}_{n,2}(\hat{\varepsilon}_i)\tilde{X}_i + \sum_{i \in I_2}\hat{\psi}_{n,1}(\hat{\varepsilon}_i)\tilde{X}_i\biggr).
\end{align*}

\item \textbf{LCMLE}: We estimate the error density $p_0$ using the log-concave maximum likelihood estimator~\citep{cule2010maximum,dumbgen2011approximation,dumbgen2013stochastic}. More precisely, again writing $\mathcal{P}_{\mathrm{LC}}$ for the set of univariate log-concave densities and defining $Q(\theta,\mu; p) := \sum_{i=1}^n \log p(Y_i - \mu - \tilde{X}_i^\top\theta)$ for $\theta \in \R^{d-1}$, $\mu \in \R$ and $p \in \mathcal{P}_{\mathrm{LC}}$, we start with a pilot estimator $(\hat{\theta}_n^{(0)},\hat{\mu}_n^{(0)})$ and alternate the following two steps for $t \in \N$ until convergence of $Q\bigl(\hat{\theta}_n^{(t)},\hat{\mu}_n^{(t)};\hat{p}_n^{(t)}\bigr)$:
% \red{extrapolation outside the convex support of the residuals?}:
\begin{align*}
\hat{p}_n^{(t)} \in \argmax_{p \in \mathcal{P}_{\mathrm{LC}}}\,Q\bigl(\hat{\theta}_n^{(t-1)},\hat{\mu}_n^{(t-1)}; p\bigr), \qquad
\begin{pmatrix} \hat{\theta}_n^{(t)} \\ \hat{\mu}_n^{(t)} \end{pmatrix} \in \argmax_{(\theta, \mu) \in \R^{d-1} \times \R}\,Q(\theta,\mu;\hat{p}_n^{(t)}).
\end{align*}
\end{itemize}

For all of our error densities $p_0$, condition~\ref{ass:zeta} is satisfied by $\varphi = -\sgn$ and hence by Proposition~\ref{prop:cvx-M-est-asymp}, $\hat{\beta}_n^{\mathrm{LAD}}$ is $\sqrt{n}$-consistent with asymptotic variance factor $V_{p_0}(\varphi) = 1/\bigl(4p_0(0)^2\bigr)$. Therefore, we took $\hat{\beta}_n^{\mathrm{LAD}}$ to be our pilot estimator for all methods except in the Gaussian location mixture setting~(iv), where instead we used $\hat{\beta}_n^{\mathrm{OLS}}$ since $p_0(0)$ is close to 0 and hence $V_{p_0}(\varphi)$ for $\hat{\beta}_n^{\mathrm{LAD}}$ is very large.

\begin{table}[ht]
\begin{subtable}{0.33\linewidth}
\centering
\begin{tabular}{|l|r|r|r|}
\hline
$n$ & 600 & 1200 & 2400 \\
\hline
ASM & 0.26 & 0.61 & 1.93 \\ 
Alt & 4.35 & 9.99 & 27.85 \\ 
LCMLE & 0.64 & 2.21 & 8.34 \\ 
1S & 0.10 & 0.15 & 0.29 \\ 
\hline
\end{tabular}
\caption*{Standard Gaussian}
\end{subtable}
\begin{subtable}{0.33\linewidth}
\centering
\begin{tabular}{|l|r|r|r|}
\hline
$n$ & 600 & 1200 & 2400 \\ 
\hline
ASM & 0.27 & 0.61 & 1.81 \\ 
Alt & 5.03 & 10.52 & 28.64 \\ 
LCMLE & 0.71 & 1.36 & 2.72 \\ 
1S & 0.11 & 0.15 & 0.29 \\ 
\hline
\end{tabular}
\caption*{Gaussian scale mixture}
\end{subtable}
\begin{subtable}{0.33\linewidth}
\centering
\begin{tabular}{|l|r|r|r|}
\hline
$n$ & 600 & 1200 & 2400 \\
\hline
ASM & 0.21 & 0.58 & 1.96 \\ 
Alt & 1.84 & 3.72 & 11.36 \\ 
LCMLE & 3.15 & 7.09 & 9.29 \\ 
1S & 0.12 & 0.15 & 0.29 \\ 
\hline
\end{tabular}
\caption*{Gaussian location mixture}
\end{subtable}
\caption{Mean execution time in seconds for different semiparametric estimators when $d = 100$.}
\label{tab:execution-time}
\end{table}

\begin{figure}[ht]
\centering
\includegraphics[width=0.32\textwidth]{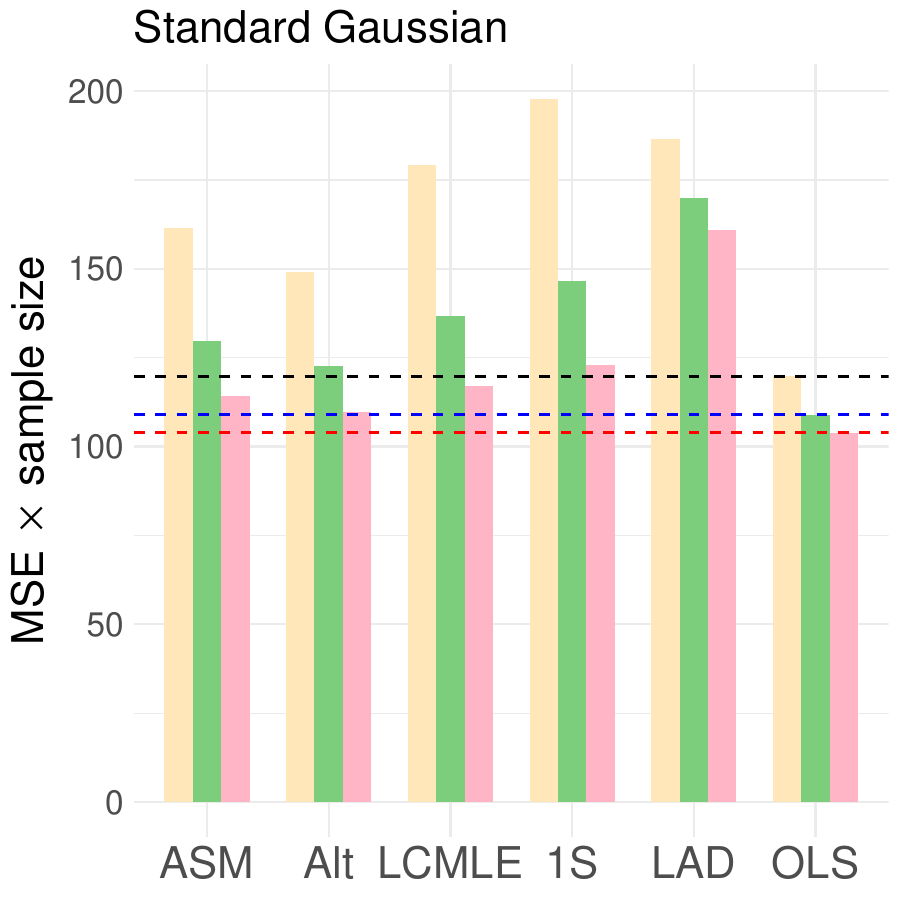}
\includegraphics[width=0.32\textwidth]{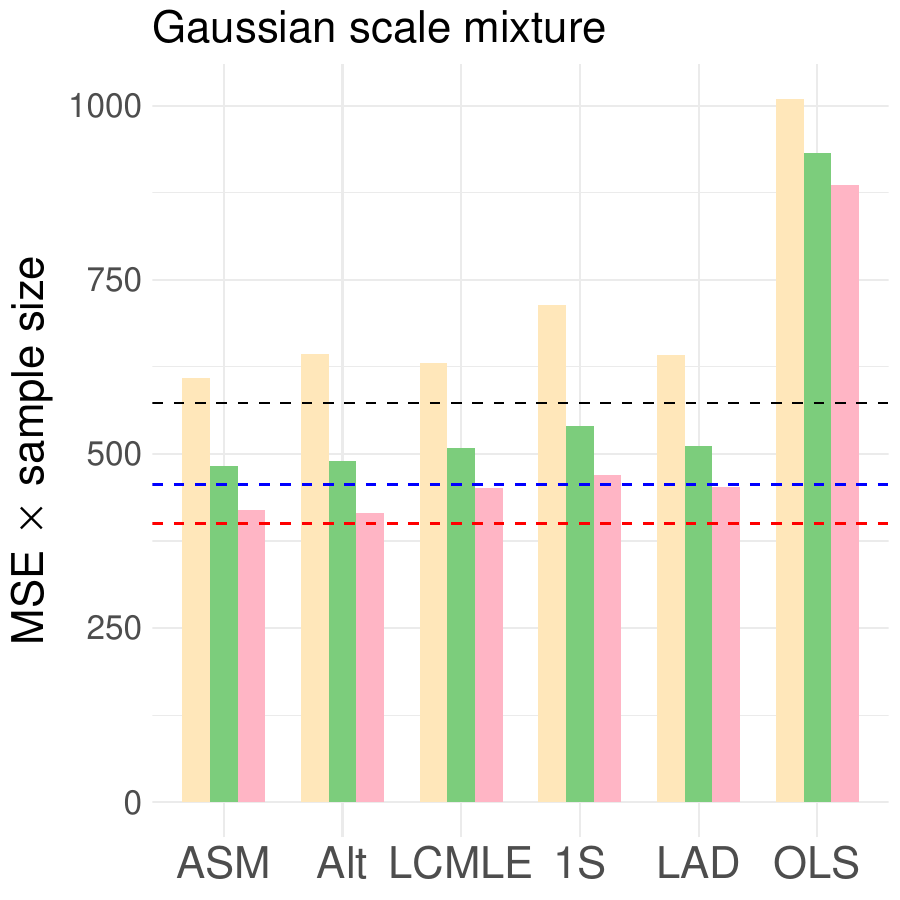}
\includegraphics[width=0.32\textwidth]{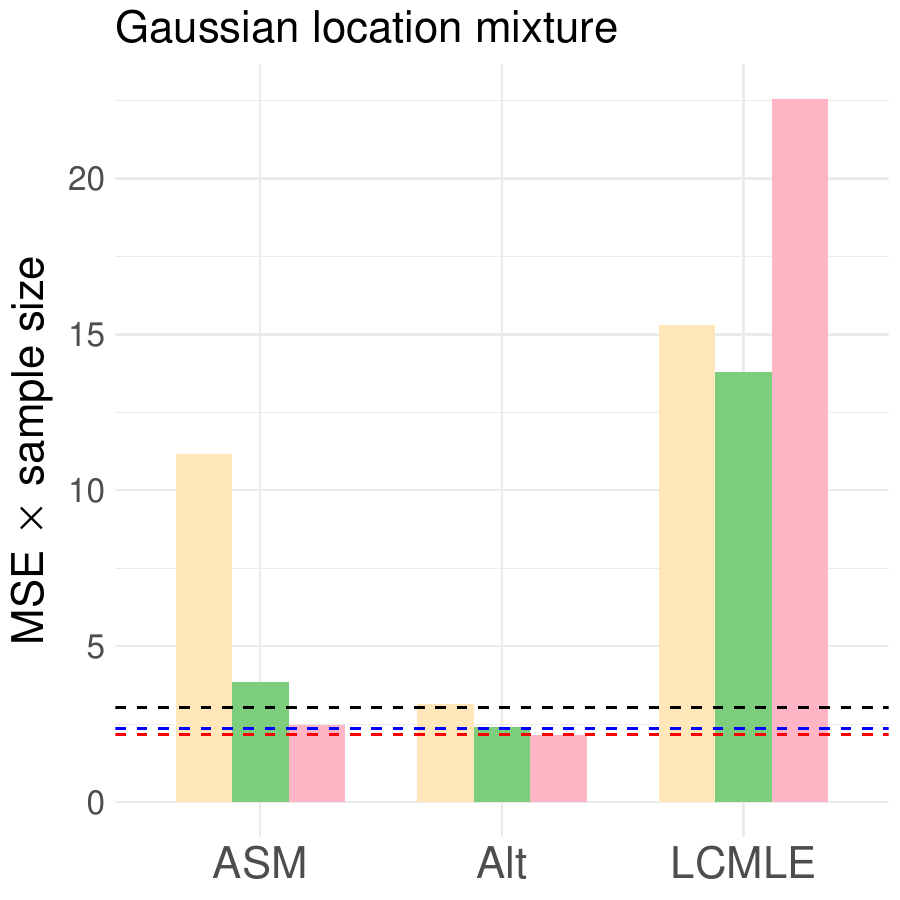}

% \vspace{-0.3cm}   
\caption{Plot of $n \times (\text{average squared loss})$ for different estimators of $\theta_0$, with $d = 100$ and sample sizes $n \in \{600, 1200, 2400\}$ corresponding to the yellow, green, and pink bars respectively. The black, blue and red dashed lines indicate the values of $n \times (\text{average squared loss})$ for the oracle convex $M$-estimator when $n = 600, 1200, 2400$ respectively.}
\label{fig:high_dim_sim}
\end{figure}

\begin{figure}[ht]
\centering
\includegraphics[width=0.32\textwidth]{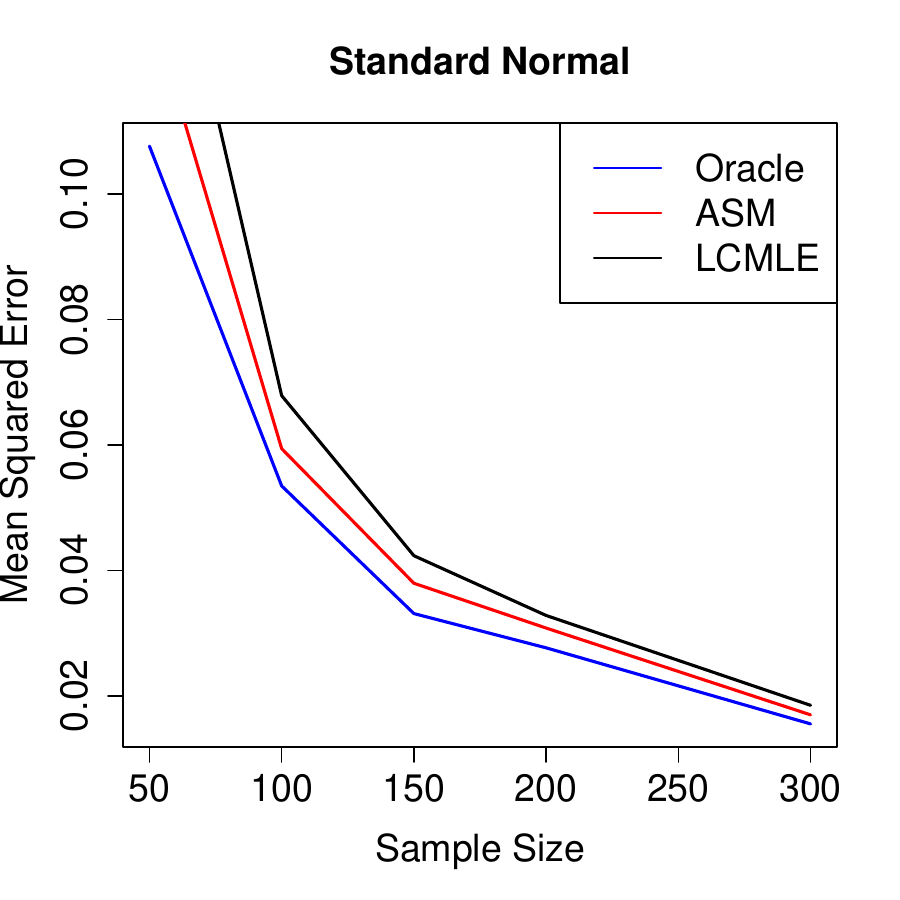}
\includegraphics[width=0.32\textwidth]{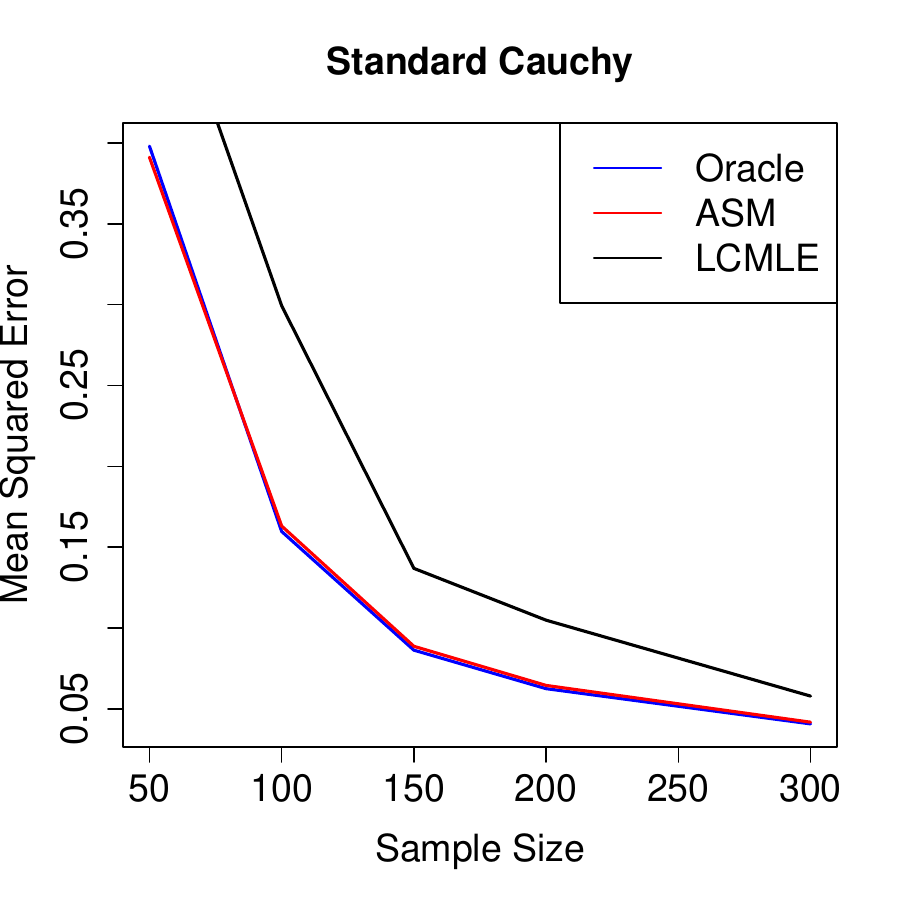}
\includegraphics[width=0.32\textwidth]{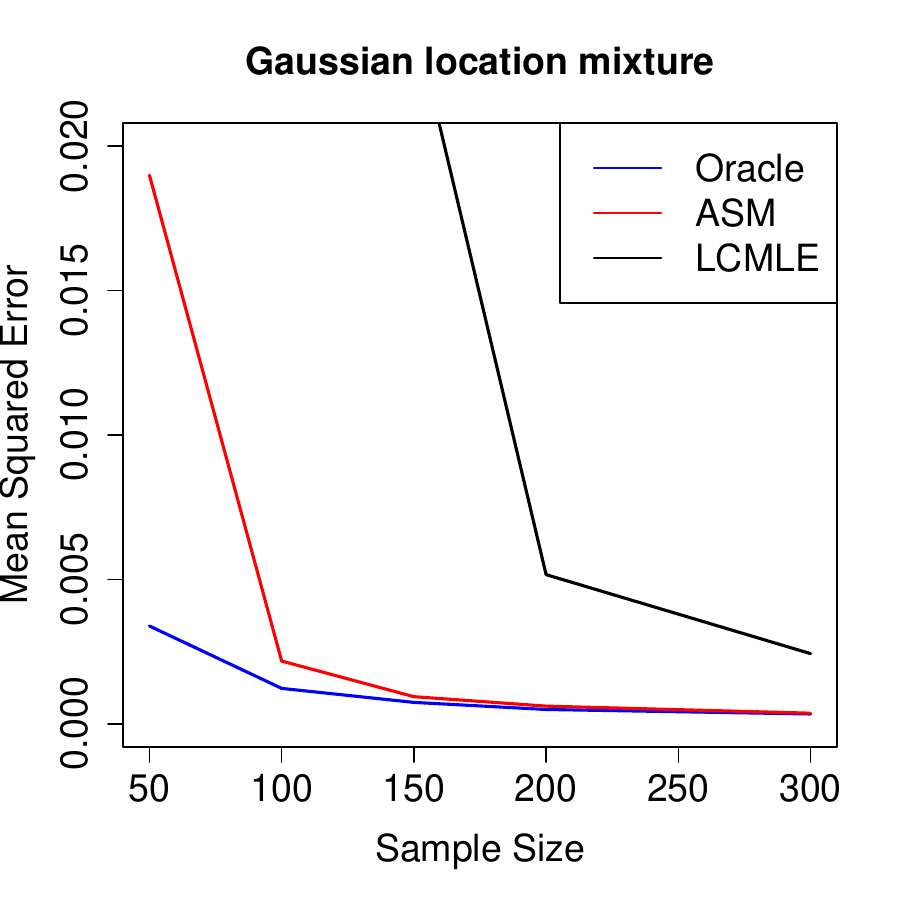}

\vspace{-0.3cm}   
\caption{Comparison of the mean squared errors of different convex $M$-estimators for sample sizes $n \in \{50, 100, 150, 200, 250, 300\}$ and $d = 6$.}
\label{fig:MSE vs sample size}
\end{figure}

\begin{figure}[ht]
\centering
\includegraphics[width=0.49\textwidth]{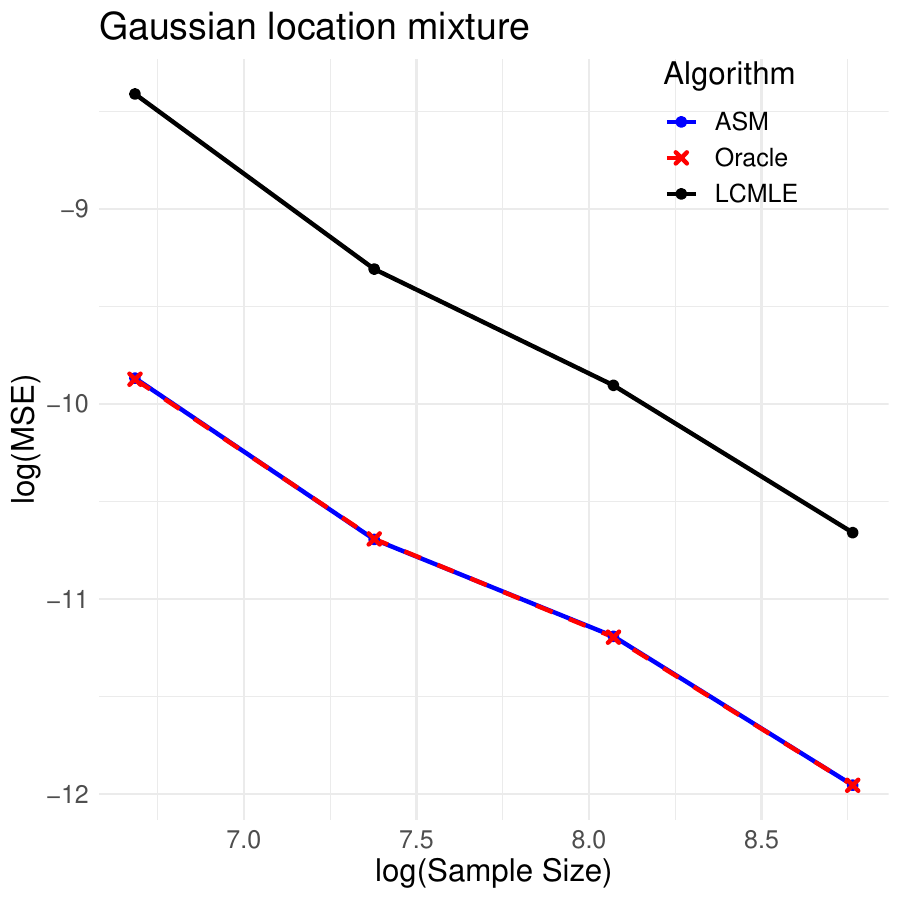}
\includegraphics[width=0.49\textwidth]{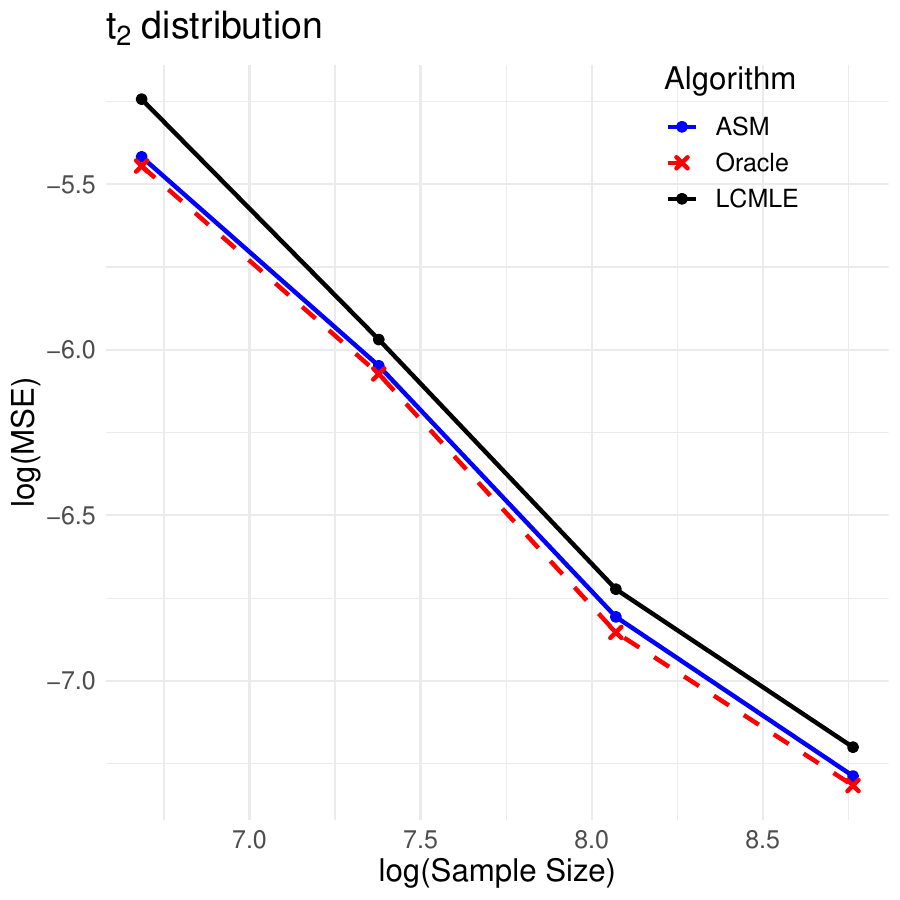}
\vspace{-0.3cm}   
\caption{Log-log plots of the average squared error loss against sample size for three convex $M$-estimators. In each experiment, we set $n \in \{800, 1600, 3200, 6400\}$ and $d = 2$.}
\label{fig:MSE larger sample size}
\end{figure}

\begin{figure}[htp]
\centering
\includegraphics[width=0.32\textwidth]{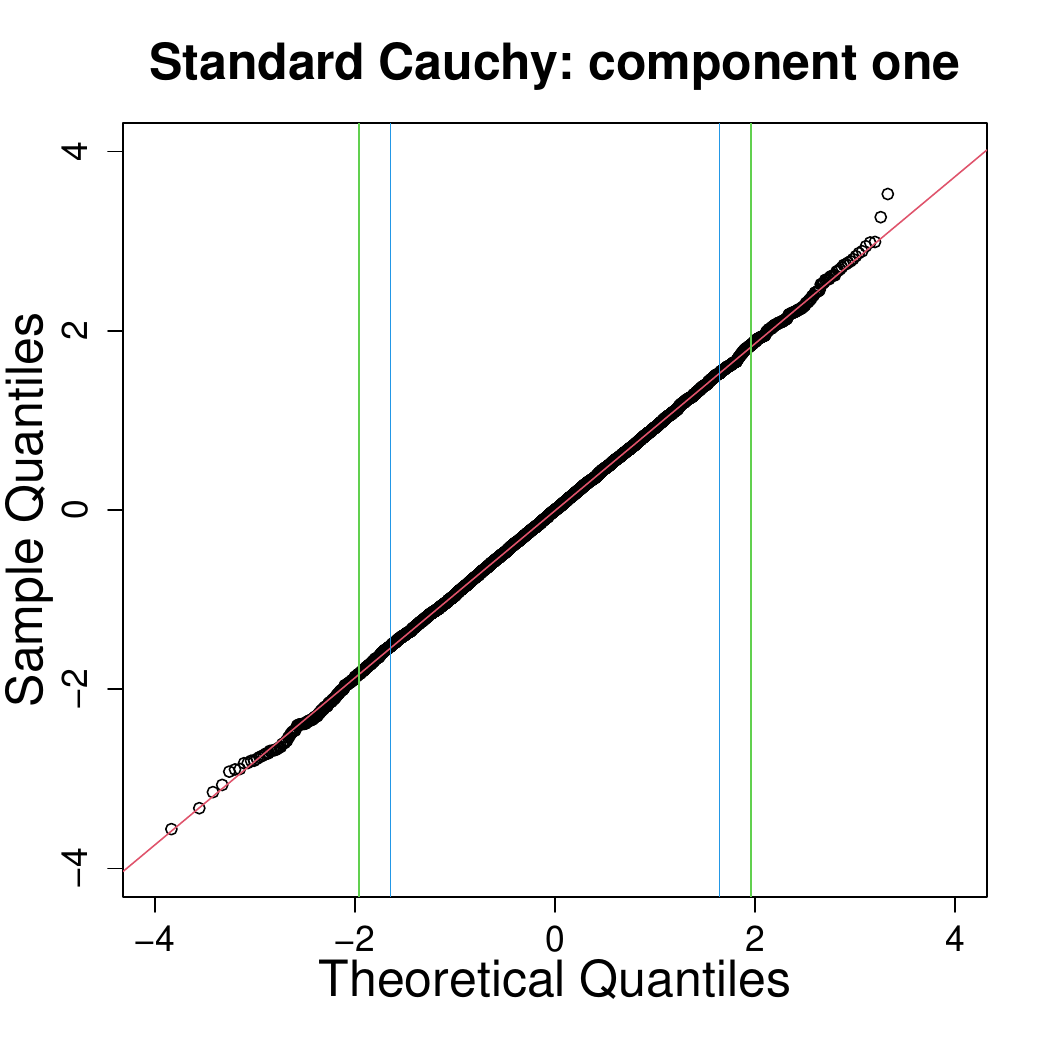}
\includegraphics[width=0.32\textwidth]{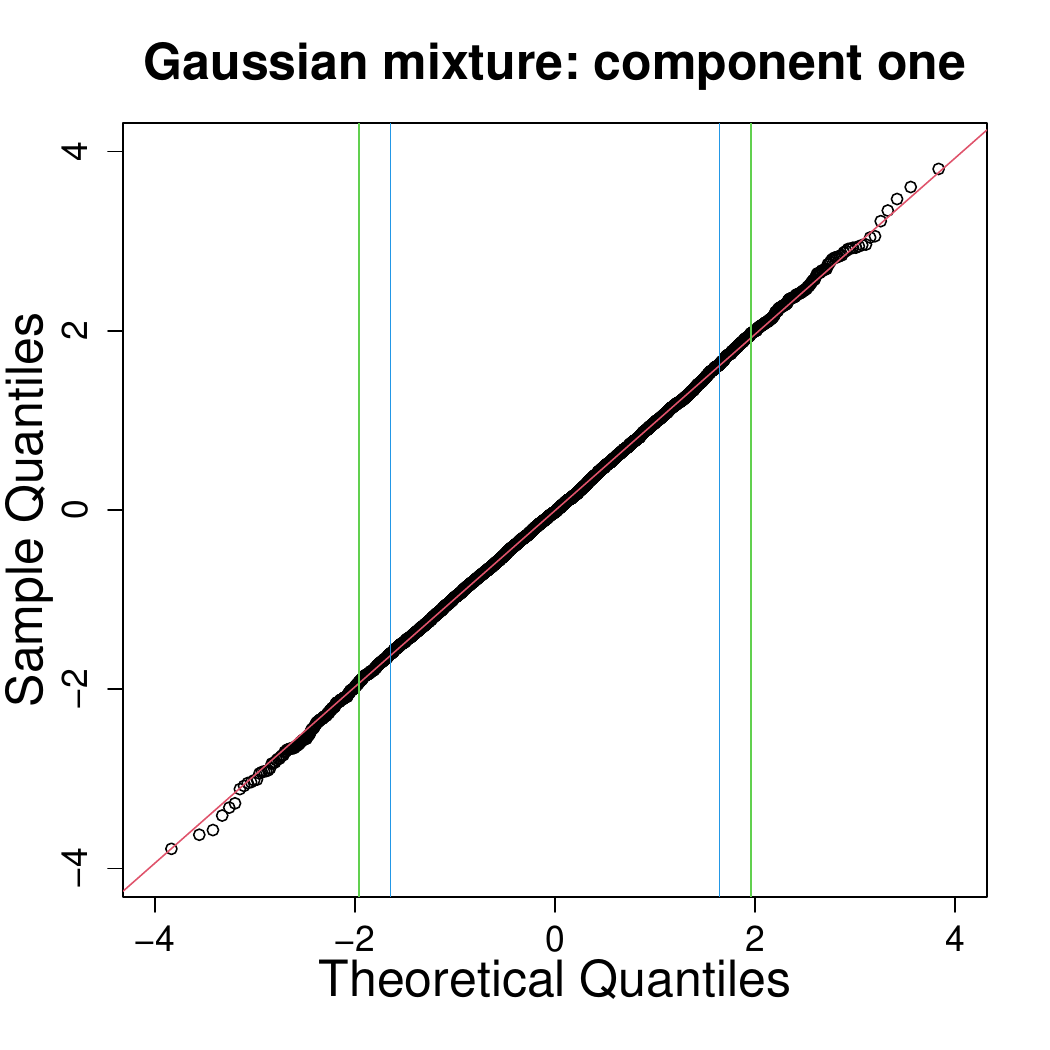}
\includegraphics[width=0.32\textwidth]{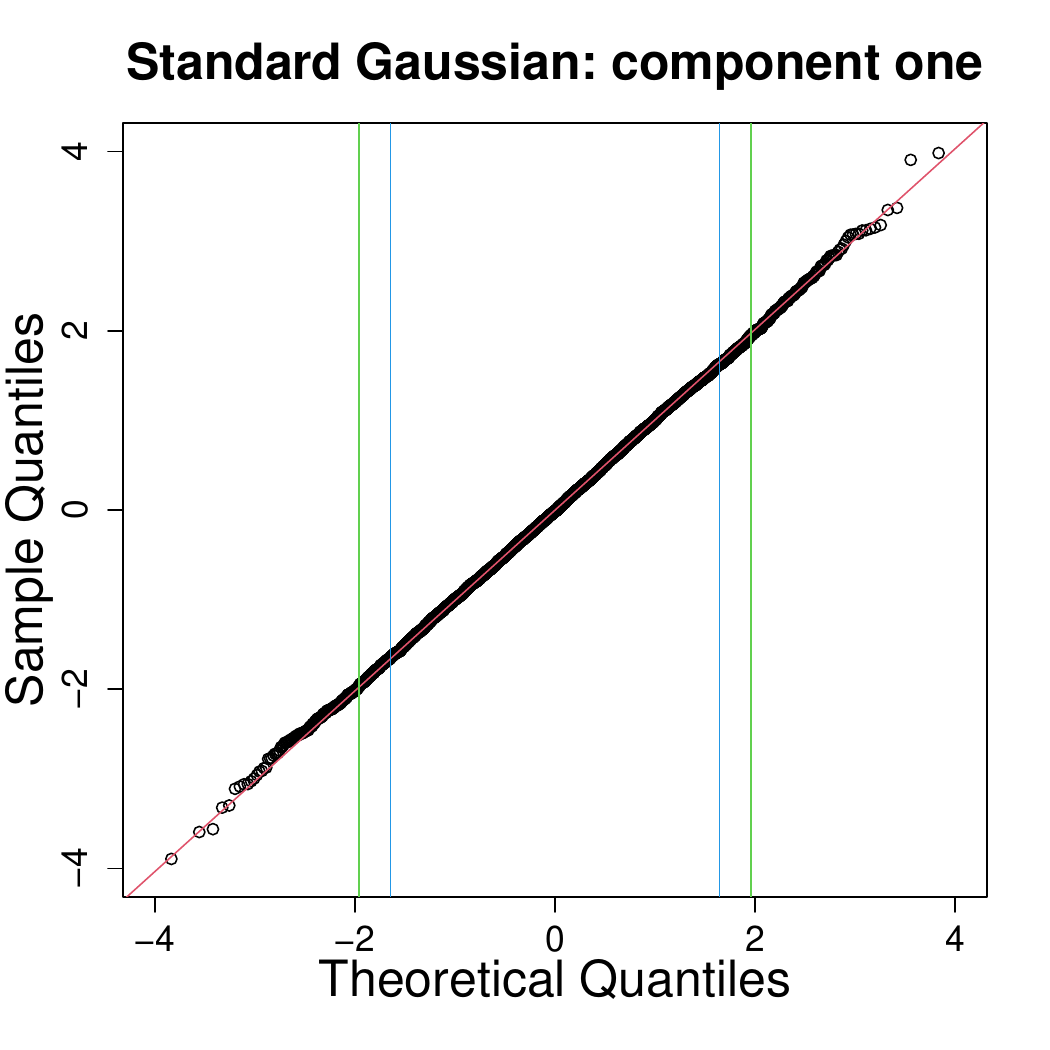}
\includegraphics[width=0.32\textwidth]{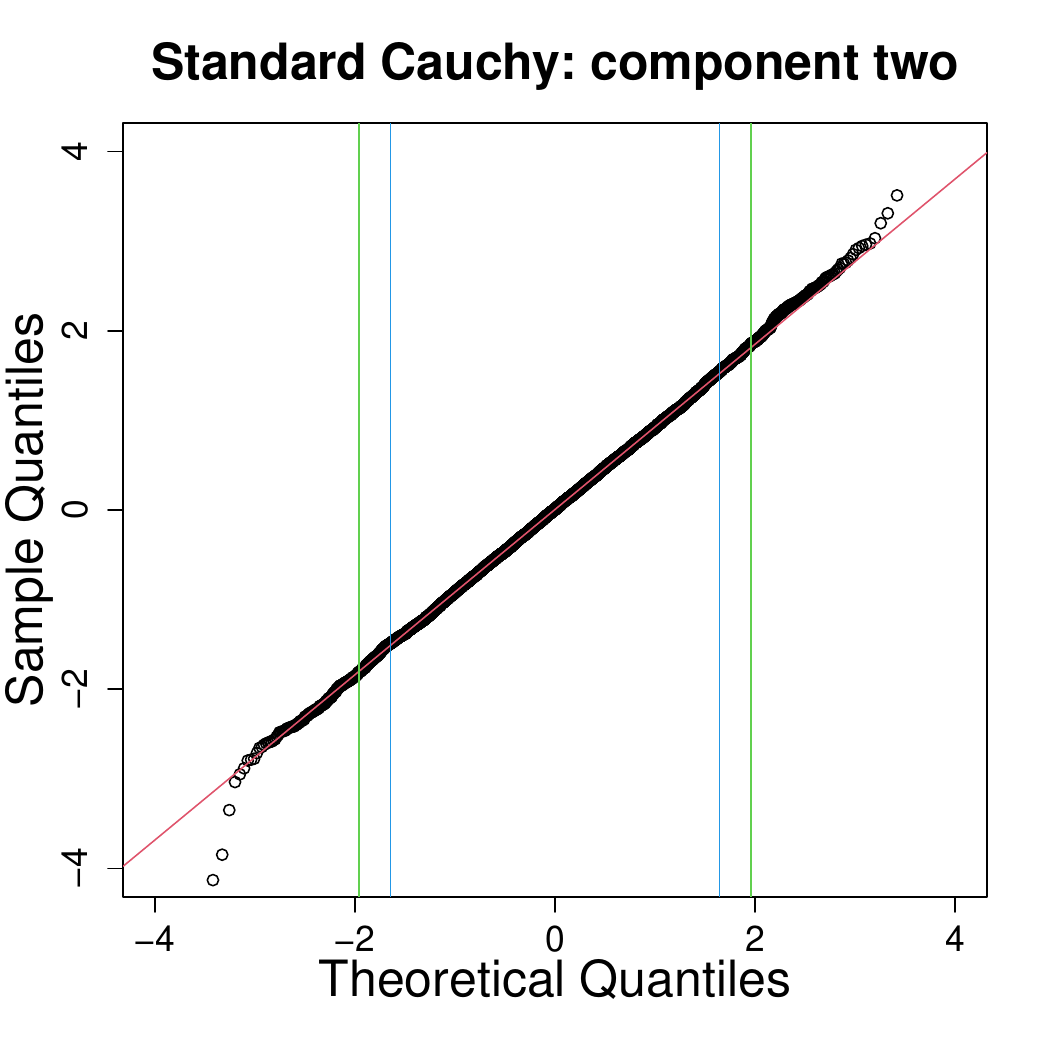}
\includegraphics[width=0.32\textwidth]{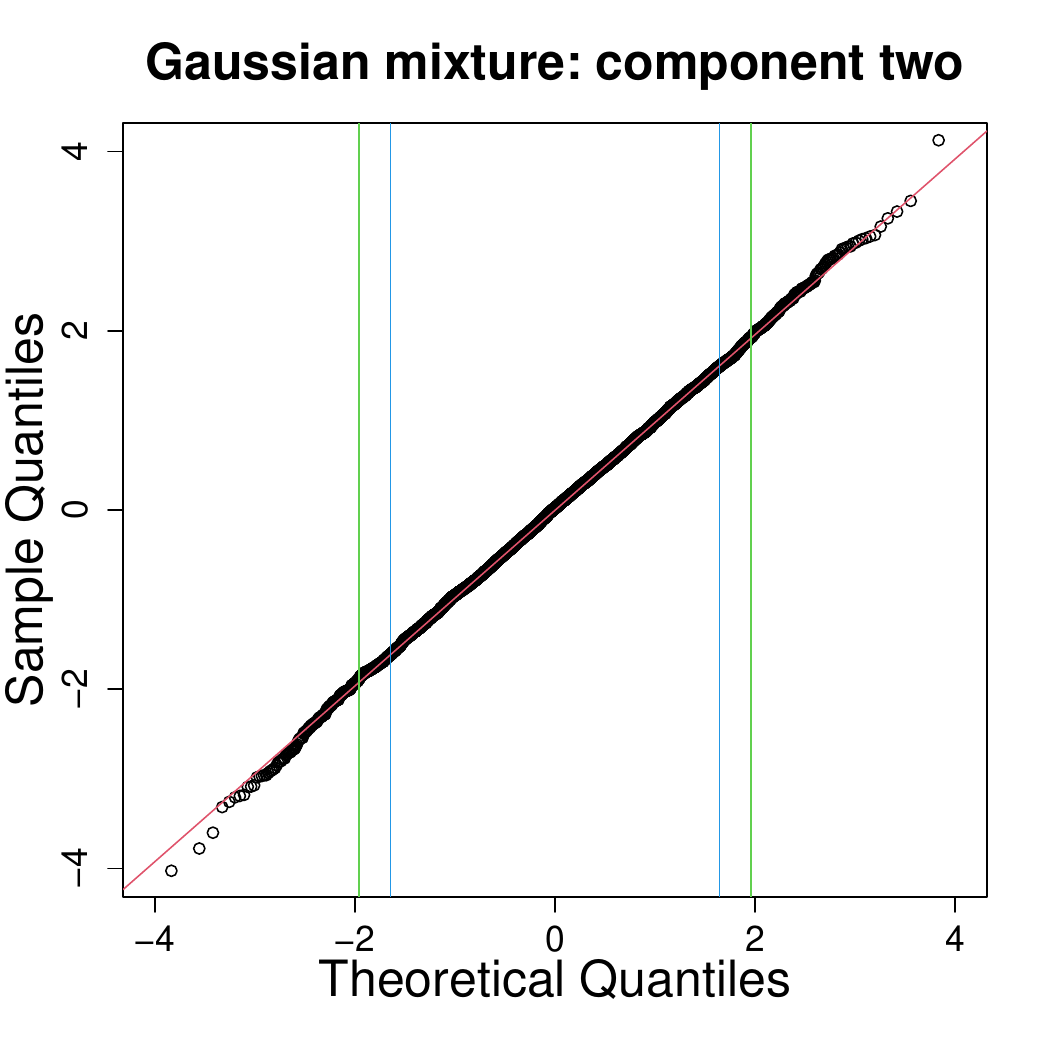}
\includegraphics[width=0.32\textwidth]{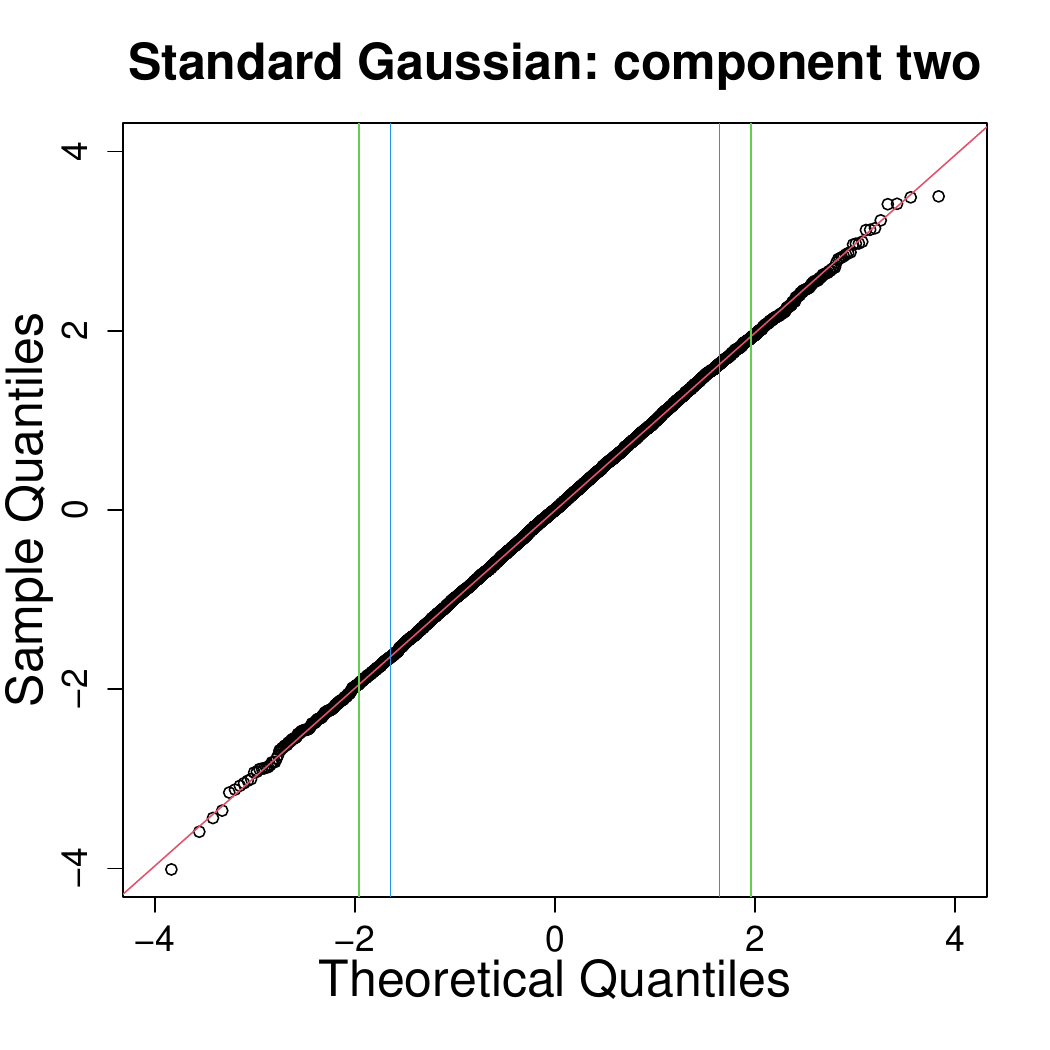}
\vspace{-0.3cm}   
\caption{Q-Q plots of the standardised errors of estimates of two components of $\theta_0$. The blue and green vertical lines mark the 90\% and 95\% theoretical quantiles respectively. In each experiment, we set $n = 600$ and $d = 4$, and perform 8000 repetitions.}
\label{fig:QQ-plot-1}
\end{figure}

\begin{figure}[htp]
\centering
\begin{subfigure}{0.49\textwidth}
\includegraphics[width=\linewidth, trim=0 30 0 0, clip]{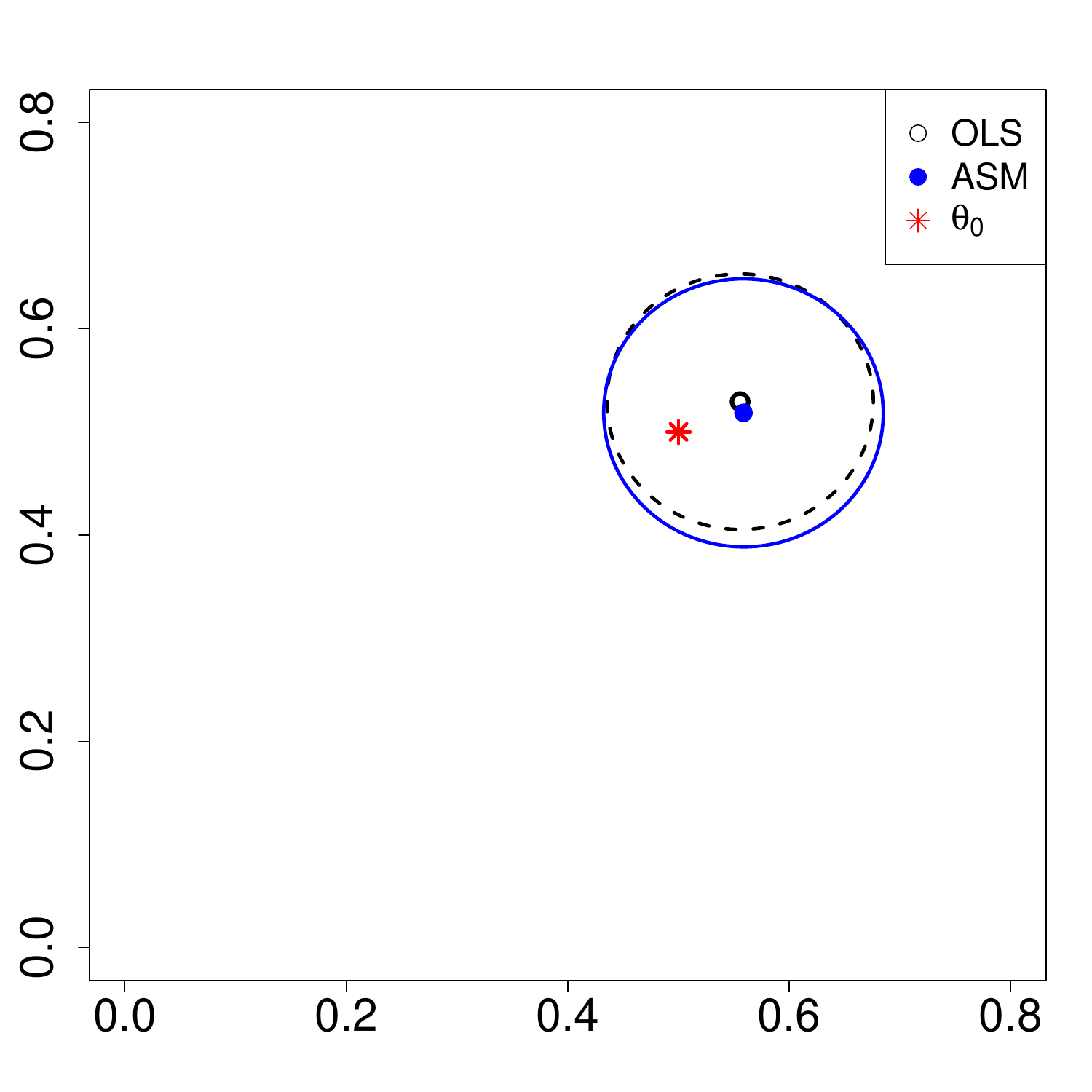}
\caption{Standard normal}
\end{subfigure}
\hfill
\begin{subfigure}{0.49\textwidth}
\includegraphics[width=\linewidth, trim=0 30 0 0, clip]{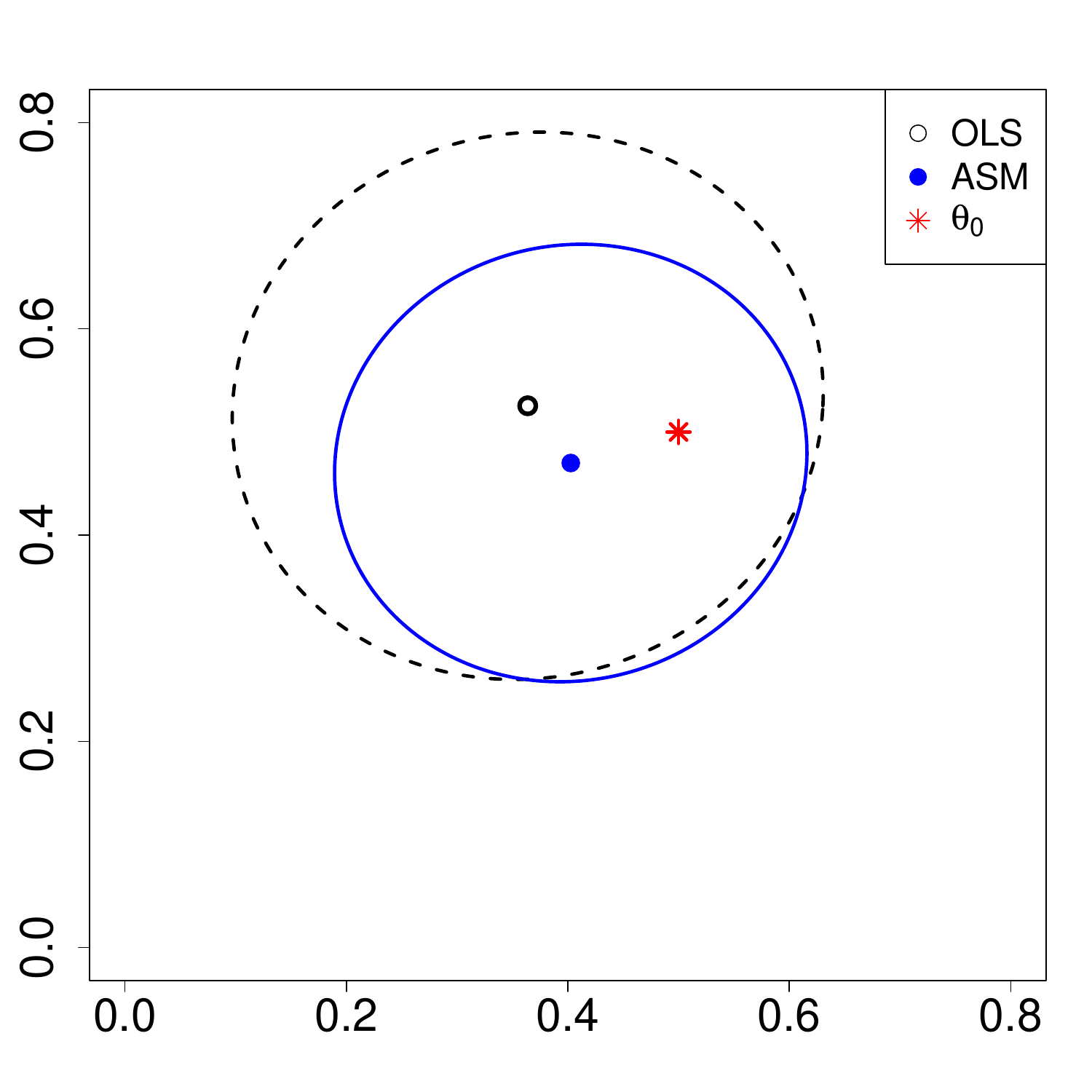}
\caption{Gaussian mixture}
\end{subfigure}
\vspace{-0.3cm}   
\caption{The projection of the $95\%$ confidence ellipsoid of $\hat{\theta}^{\mathrm{ASM}}$ onto the first two dimensions (\textit{blue}) and the projection of the $95\%$ confidence ellipsoid of $\hat{\theta}_n^{\mathrm{OLS}}$ onto the first two dimensions (\textit{black dashed}). The blue dot, black circle and red star correspond to $\hat{\theta}^{\mathrm{ASM}}$, $\hat{\theta}_n^{\mathrm{OLS}}$, and $\theta_0$ respectively.}
\label{fig:confidence-ellipsoids}
\end{figure}

\begin{table}[ht]
\begin{subtable}{0.33\linewidth}
\centering
\begin{tabular}{|r||r|r|r|}
\hline
& $\theta_{0,1}$ & $\theta_{0,2}$ & $\theta_{0,3}$ \\ 
\hline
95\% CI & 0.965 & 0.965 & 0.964 \\ 
90\% CI & 0.924 & 0.925 & 0.924 \\ 
\hline
\end{tabular}
\caption*{Standard Cauchy}
\end{subtable}
\begin{subtable}{0.33\linewidth}
\centering
\begin{tabular}{|r||r|r|r|}
\hline
& $\theta_{0,1}$ & $\theta_{0,2}$ & $\theta_{0,3}$ \\
\hline
95\% CI & 0.952 & 0.955 & 0.955 \\ 
90\% CI & 0.905 & 0.908 & 0.908 \\ 
\hline
\end{tabular}
\caption*{Gaussian mixture}
\end{subtable}
\begin{subtable}{0.33\linewidth}
\centering
\begin{tabular}{|r||r|r|r|}
\hline
& $\theta_{0,1}$ & $\theta_{0,2}$ & $\theta_{0,3}$ \\
\hline
95\% CI & 0.950 & 0.954 & 0.952 \\ 
90\% CI & 0.903 & 0.902 & 0.901 \\ 
\hline
\end{tabular}
\caption*{Standard Gaussian}
\end{subtable}
\caption{Empirical coverage of the confidence intervals for each coordinate of $\theta_0$. In each experiment, we set $n = 600$ and $d = 4$, and perform 8000 repetitions.}
\label{tab:coverage_ratio}
\end{table}

\begin{table}[htbp]
\begin{subtable}{0.5\linewidth}
\centering
\begin{tabular}{|r|r|r|r|}
\hline
& Gaussian & Cauchy & Mixture \\ 
\hline
$i^*(p_0)$ & 1.0 & 0.44 & 0.46   \\
RMSE($\hat{\jmath}_n$) & 0.1 & 0.01 & 0.05  \\ 
% CF-ASM & $7.63$ & $8.68$ & $13.5$ \\ 
\hline
\end{tabular}
\caption{Root mean squared errors of $\hat{\jmath}_n$.}
\label{tab:mse_ihat}
\end{subtable}
\begin{subtable}{0.5\linewidth}
\centering
\begin{tabular}{|r|r|r|r|}
\hline
& Gaussian & Cauchy & Mixture  \\ 
\hline
% ASM & 1.67 & 1.54 & 1.04 \\ 
% OLS & 160.39 & 1.96 & 1.00 \\ 
% Mean ratio $\hat{\gamma}_n$ & 0.07 & 0.79 & 1.04 \\
\rule{0pt}{3ex} $\dfrac{\mathrm{Vol}(\hat{C}_n^{\mathrm{ASM}})}{\mathrm{Vol}(\hat{C}_n^{\mathrm{OLS}})}$ & 1.14 & 0.001 & 0.49 \\[1ex]
\hline
\end{tabular}
\caption{Mean ratio of volumes of $\hat{C}_n^{\mathrm{ASM}}$ and $\hat{C}_n^{\mathrm{OLS}}$.}
\label{tab:volume_ratio}
\end{subtable}
\caption{Accuracy of estimates of the antitonic information, and comparison of the volumes of the two confidence ellipsoids. In each experiment, we set $n = 600$ and $d = 4$, and perform 8000 repetitions.}
\end{table}

\subsection{Estimation accuracy}
\label{subsec:sim_estimation}

In all of the experiments in this subsection, we drew $d = 6$, $\mu_0 = 2$ and drew $\theta_0$ uniformly at random from the centred Euclidean sphere in $\mathbb{R}^{d-1}$ of radius 3.  More precisely, for each of the estimators above, we computed the average squared Euclidean norm errors $\norm{\hat{\theta}_n - \theta_0}^2$ over 200 repetitions. We considered each error distribution in turn and compare the average squared estimation error when $n = 600$. The results are presented in Table~\ref{tab:MSE-comparison} and Figure~\ref{fig:MSE-comparison}. We observe that our proposed procedures ASM and Alt have the lowest estimation error except in the case of standard Gaussian error, where OLS coincides with the oracle convex loss estimator and has a slightly lower estimation error.  It is interesting that the one-step estimator (1S) can perform very poorly in finite samples, and in particular, may barely improve on its initialiser.  In all settings considered, ASM and Alt have comparable error. 
% We therefore recommend ASM (or a few iterations of ASM if there is a particular concern that the pilot estimator may be very poor) in practice due to its lower computational cost. 

% \begin{figure}[ht]
% \centering
% \includegraphics[width=0.32\textwidth]{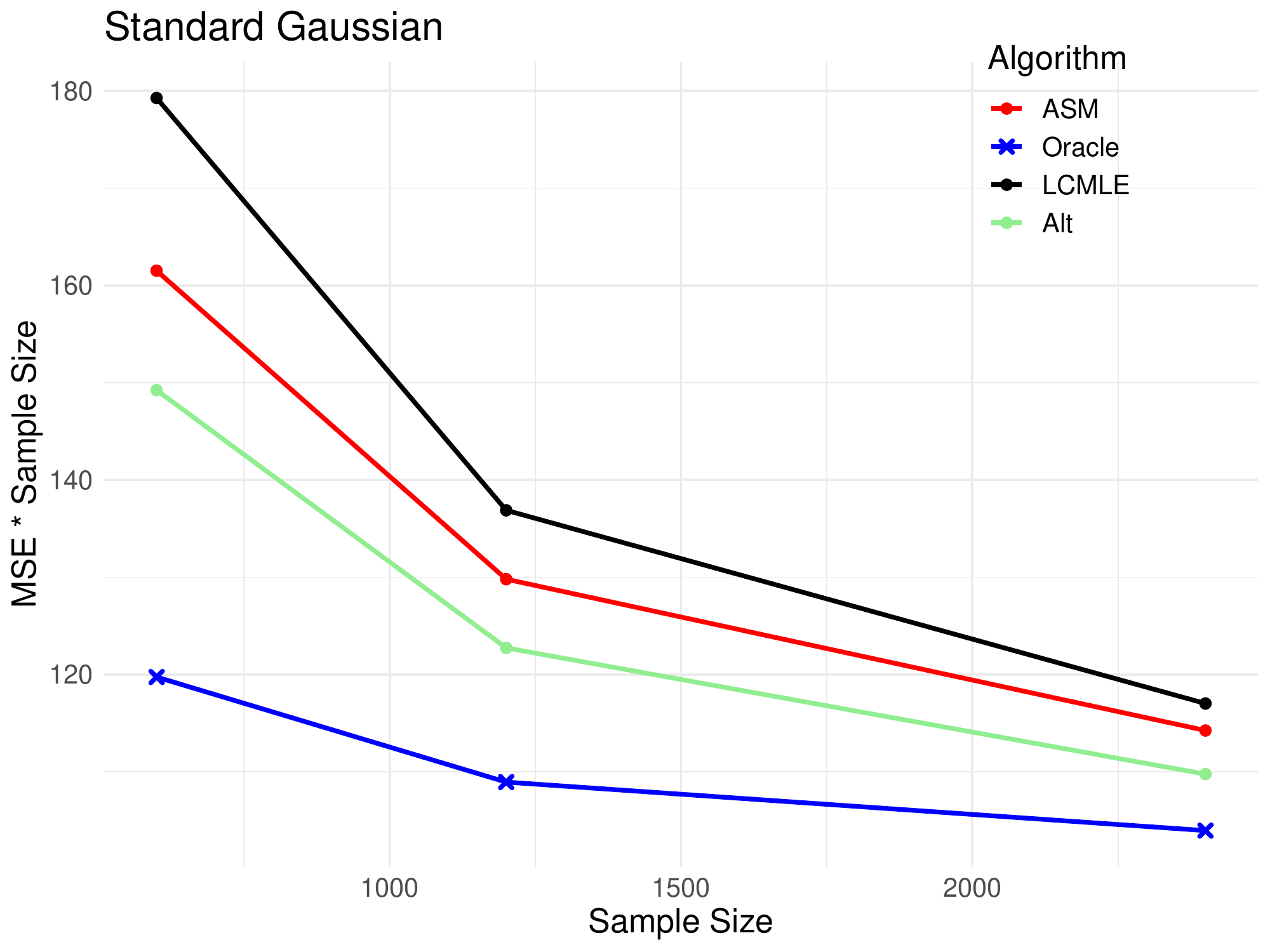}
% %
% \includegraphics[width=0.32\textwidth]{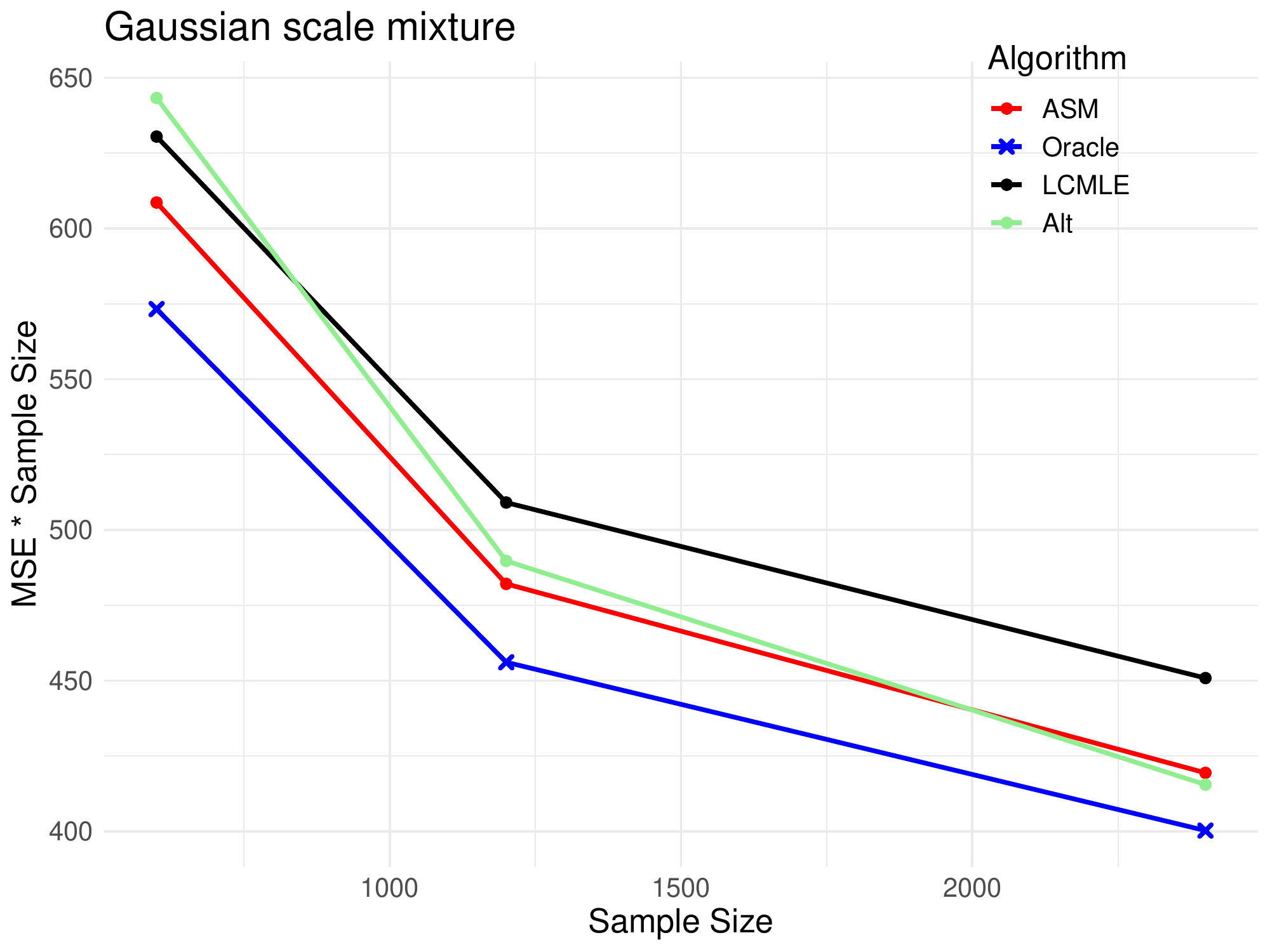}
% %
% \includegraphics[width=0.32\textwidth]{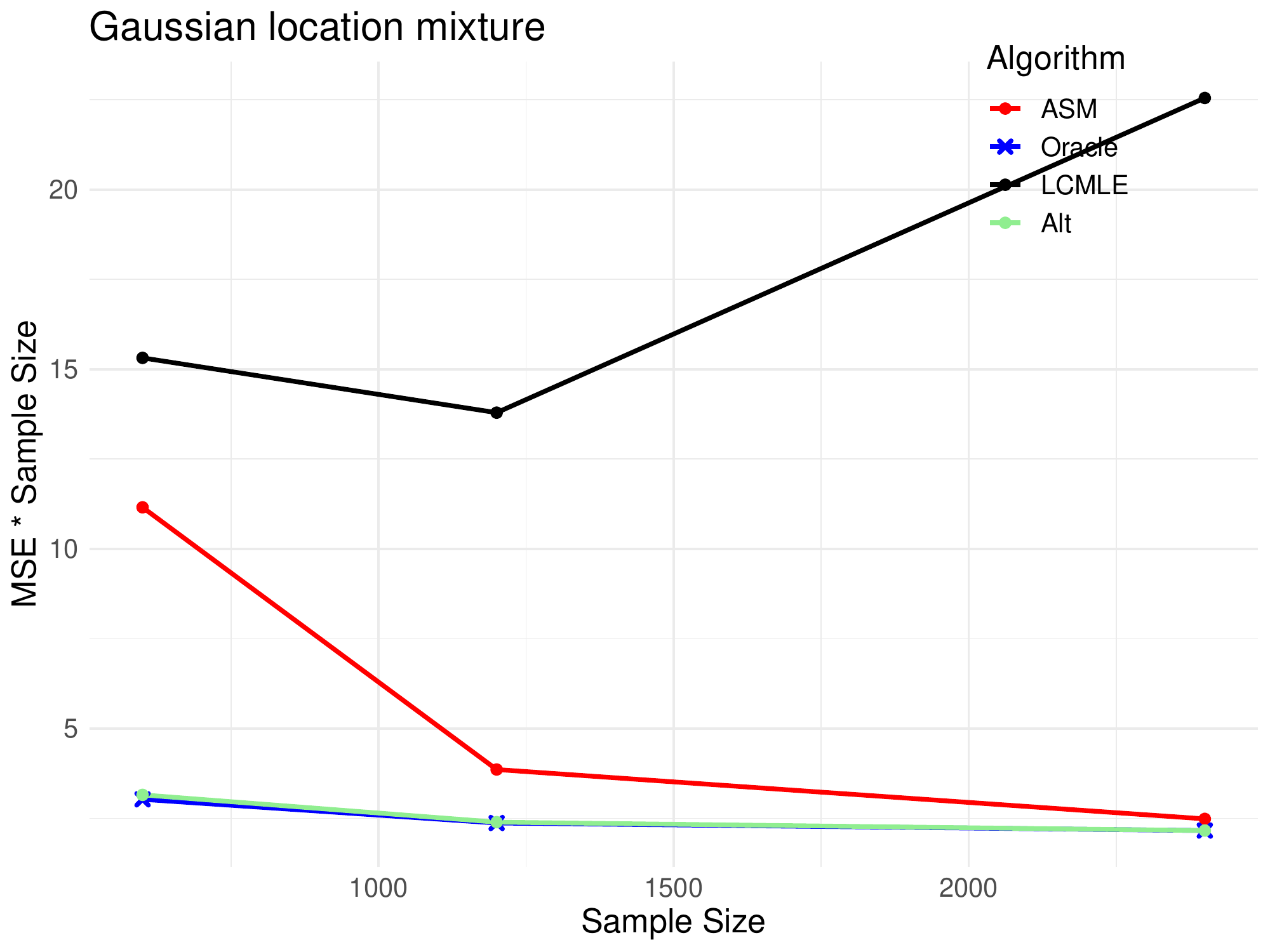}

% % \vspace{-0.3cm}   
% \caption{Plot of the mean squared errors of different estimators. In each experiment, we set $d = 100$ and $n = 600$, and perform 200 repetitions.}
% \end{figure}

% \begin{table}[ht]
% \centering
% \begin{tabular}{|l|r|r|r|r|r|}
%   \hline
%  & ASM & Alt & LCMLE & 1S \\ 
%   \hline
%   Standard Gaussian & 0.06 & 0.59 & 3.00 & 0.05 \\ 
%   % Standard Cauchy & 0.45 & 5.30 & 4.50 & 1.13 & 0.62 \\ 
%   Gaussian scale mixture & 0.06 & 0.58 & 1.56 & 0.05 \\ 
%   Gaussian location mixture & 0.06 & 0.38 &4.11 & 0.05 \\ 
%   % Smoothed uniform & 0.06 & 0.49 & 0.39 & 5.30 & 0.05 \\ 
%   % Smoothed exponential & 0.06 & 0.52 & 0.41 & 5.43 & 0.05 \\ 
%    \hline
% \end{tabular}
% \caption{Mean execution time in seconds for different estimation procedures. In each experiment, we set $d = 6$ and $n = 600$, and perform 400 repetitions. \green{How do these values change as $n$ increases? A plot might be helpful.}}
% \label{tab:execution-time-comparison}
% \end{table}

Next, we investigate the running time and estimation accuracy of the semiparametric estimators ASM, Alt, LCMLE and 1S for moderately large sample sizes $n \in \{600, 1200, 2400\}$ and dimension $d = 100$. We consider in turn the error distributions (i), (iii) and (iv); see Table~\ref{tab:execution-time} and Figure~\ref{fig:high_dim_sim}.  We see that ASM in particular is competitive in terms of its running time, and that the performance of our procedures does not deteriorate relative to its competitors for this larger choice of $d$.

Our third numerical experiment demonstrates that the estimation error of ASM is comparable to that of the oracle convex $M$-estimator even for small sample sizes. For $n \in \{50, 100, 150, 200, 300\}$ and $d = 6$, and error distributions~(i),~(ii) and~(iv), we record in Figure~\ref{fig:MSE vs sample size} the average estimation error of Oracle, ASM, and LCMLE.  With the exception of the smallest sample size $n = 50$ in the Gaussian location mixture setting, the estimation error of ASM tracks that of the oracle very closely; on the other hand, LCMLE is somewhat suboptimal, particularly in the Gaussian location mixture setting.

Finally in this subsection, we verify empirically the suboptimality of LCMLE relative to ASM for larger sample sizes. We let $d = 2$ and $n \in \{800, 1600, 3200, 6400\}$, and consider error distribution (iv) above as well as the $t_2$ distribution (Example~\ref{ex:t2}).  We include the $t_2$ distribution in place of the standard Cauchy because when $P_0$ does not have a finite first moment, its population level log-concave maximum likelihood projection does not exist (and indeed we found empirically that LCMLE was highly unstable in the Cauchy setting). Figure~\ref{fig:MSE larger sample size} displays the average squared error loss of the oracle convex $M$-estimator, ASM, and LCMLE. We see that the relative efficiency of ASM with respect to Oracle approaches 1 as $n$ increases, while LCMLE has an efficiency gap that does not vanish even for large $n$, in agreement with the asymptotic calculation in Example~\ref{ex:t2}. 

\subsection{Inference}

In this subsection, we assess the finite-sample performance of the inferential  procedures for $\theta_0$ described in Section~\ref{subsec:linreg-inference}, but do not perform sample splitting or cross-fitting. Here, we estimate the antitonic information $i^*(p_0)$ by
\[
\hat{\jmath}_n := \frac{1}{n}\sum_{i=1}^n \hat{\psi}_n(\hat{\varepsilon}_i)^2,
\]
rather than $\hat{\imath}_n$, where $\hat{\varepsilon}_i := Y_i - \bar{\mu}_n - \tilde{X}_i^\top \bar{\theta}_n$ for $i \in [n]$ is the $i$th residual of the pilot estimator $(\bar{\theta}_n,\bar{\mu}_n)$. We then compute $\tilde{\mathcal{J}}_n := \frac{\tilde{\jmath}_n}{n} \sum_{i=1}^n (\tilde{X}_i - \bar{X}_n) (\tilde{X}_i - \bar{X}_n)^\top$. 

Figure~\ref{fig:QQ-plot-1} displays Q-Q plots, computed over 8000 repetitions and with $n = 600$ and $d = 4$, of $\sqrt{n}\tilde{\mathcal{J}}_n^{1/2} (\hat{\theta}_n^{\mathrm{ASM}} - \theta_0)$ against $N_{d-1}(0, I_{d-1})$ for three different noise distributions: standard Gaussian, standard Cauchy and the Gaussian mixture $P_0 = \frac{2}{3} N(0, 1) + \frac{1}{3} N\bigl(\frac{1}{2}, 9\bigr)$.  These reveal that the distribution of $\sqrt{n}\tilde{\mathcal{J}}_n^{1/2} (\hat{\theta}_n^{\mathrm{ASM}} - \theta_0)$ is well-approximated by that of $N_{d-1}(0, I_{d-1})$ even quite far into the tails of the distribution.  Moreover, Table~\ref{tab:mse_ihat} demonstrates that the root mean squared estimation error of $\hat{\jmath}_n$ as an estimator of $i^*(p_0)$ is small. 

In Table~\ref{tab:coverage_ratio}, we present coverage probabilities of the confidence ellipsoid 
\[
\hat{C}_n^{\mathrm{ASM}} := \bigl\{v \in \mathbb{R}^{d-1} : n (\hat{\theta}_n^{\mathrm{ASM}} - v)^\top \tilde{\mathcal{J}}_n (\hat{\theta}_n^{\mathrm{ASM}} - v) \leq \chi^2_{d-1}(\alpha) \bigr\}
\]
in the same settings as in the previous paragraph.  For the standard Gaussian and Gaussian mixture noise distributions above, $\hat{C}_n^{\mathrm{ASM}}$ maintains nominal coverage, while for standard Cauchy errors, it is slightly conservative.  We also compare, in Table~\ref{tab:volume_ratio}, the volume of $\hat{C}_n^{\mathrm{ASM}}$ with that of the OLS confidence ellipsoid
\[
\hat{C}_n^{\mathrm{OLS}} := \bigl\{v \in \mathbb{R}^{d-1} : n (\hat{\theta}_n^{\mathrm{OLS}} - v)^\top \hat{I}_n^{\mathrm{OLS}} (\hat{\theta}_n^{\mathrm{OLS}} - v) \leq \chi^2_{d-1}(\alpha)\bigr\},
\]
where $\hat{I}_n^{\mathrm{OLS}} := n^{-1}\sum_{i=1}^n \hat{\sigma}^{-2}_n(\tilde{X}_i - \bar{X}_n)(\tilde{X}_i - \bar{X}_n)^\top$ and $\hat{\sigma}_n^2 := n^{-1} \sum_{i=1}^n (Y_i  - \hat{\mu}^{\mathrm{OLS}}_n - \tilde{X}_i^\top \hat{\theta}_n^{\mathrm{OLS}})^2$.  The $\hat{C}_n^{\mathrm{ASM}}$ ellipsoid is dramatically smaller in the Cauchy example, and also appreciably smaller for the Gaussian mixture, while on average it is a little larger in the Gaussian case.  Examples of $\hat{C}_n^{\mathrm{ASM}}$ and $\hat{C}_n^{\mathrm{OLS}}$ are plotted in Figure~\ref{fig:confidence-ellipsoids}.

\section{Discussion}

One of the messages of this paper is that, despite the Gauss--Markov theorem, the success of ordinary least squares is relatively closely tied to Gaussian or near-Gaussian error distributions.  Our antitonic score matching approach aims to free the practitioner from the Gaussian straitjacket while retaining the convenience and stability of working with convex loss functions.  The Fisher divergence projection framework brings together previously disparate ideas on shape-constrained estimation, score matching, information theory and classical robust statistics.  Given the prevalence of procedures in statistics and machine learning that are constructed as optimisers of pre-specified loss functions, we look forward to seeing how related insights may lead to more flexible, data-driven and computationally feasible approaches that combine robustness and efficiency.

\medskip
\noindent
\textbf{Acknowledgements}: The authors thank David Firth, Elliot Young and Cun-Hui Zhang for helpful discussions, as well as an Associate Editor and two anonymous reviewers whose constructive comments led to several improvements in the paper. The research of YCK and MX was supported by National Science Foundation grants DMS-2311299 and DMS-2113671; OYF and RJS were supported by Engineering and Physical Sciences Research Council Programme Grant EP/N031938/1, while RJS was also supported by European Research Council Advanced Grant 101019498.

% https://openreview.net/forum?id=PxTIG12RRHS

\bibliographystyle{apalike}
\bibliography{bib}

\section{Appendix}
\label{sec:appendix}

\subsection{Proofs for Section~\ref{sec:antitonic-proj}}
\label{sec:antitonic-proj-proofs}

Throughout this subsection, we work in the setting of Lemmas~\ref{lem:psi0-star} and~\ref{lem:p0-star}. Moreover, define $Q_0(u) := \sup\{z \in [-\infty,\infty] : F_0(z) \leq u\}$ for $u \in [0,1]$. 
% where $\sup\emptyset = -\infty$ by convention, and also set $F_0^{-1}(0) := -\infty$ for convenience. 
Then $z_{\min} := \inf(\supp p_0) = Q_0(0)$ and $z_{\max} := \sup(\supp p_0) = F_0^{-1}(1)$, and $\mathcal{S}_0 = \{z \in \R : F_0(z) \in (0,1)\}$.

\begin{proof}[Proof of Lemma~\ref{lem:psi0-star}]
If $F_0^{-1}(v) < Q_0(v)$ for some $v \in [0,1]$, then because $p_0(\pm\infty) = 0$ and $p_0,F_0$ are both continuous, we have $p_0(z) = 0$ for all $z \in [F_0^{-1}(v),Q_0(v)] = \{z \in [-\infty,\infty] : F_0(z) = v\}$. Therefore, $p_0(z) = J_0\bigl(F_0(z)\bigr)$ for all $z \in \R$ and $J_0(v) = p_0\bigl(F_0^{-1}(v)\bigr) = p_0\bigl(Q_0(v)\bigr)$ for all $v \in [0,1]$, with $J_0(0) = J_0(1) = 0$. Since $\lim_{u \nearrow v} F_0^{-1}(u) = F_0^{-1}(v)$ and $\lim_{u \searrow v} F_0^{-1}(u) = Q_0(v)$ for all $v \in [0,1]$, it follows from the continuity of $p_0$ on $[-\infty,\infty]$ that $J_0$ is continuous on $[0,1]$. Moreover, the least concave majorant $\hat{J}_0$ satisfies $\sup_{u \in [0,1]}\hat{J}_0(u) = \sup_{u \in [0,1]}J_0(u) < \infty$ and
\begin{equation}
\label{eq:J01}
\hat{J}_0(0) = J_0(0) = 0 = J_0(1) = \hat{J}_0(1).
\end{equation}
In particular, $\hat{J}_0$ is concave and bounded, so it is also continuous on $[0,1]$. By~\citet[Theorem~24.1]{rockafellar97convex}, the right derivative $\hat{J}_0^{(\mathrm{R})}$ is decreasing and right-continuous on $[0,1)$, with $\hat{J}_0^{(\mathrm{R})}(u) \in \R$ for all $u \in (0,1)$. Thus, $\psi_0^*$ is finite-valued on $\mathcal{S}_0$. If $z_n \searrow z$ for some $z \in \R$, then either $u_n := F_0(z_n) \searrow F_0(z) =:u < 1$ or $u_n = u = 1$ for all $n$. In both cases, $\psi_0^*(z_n) = \hat{J}_0^{(\mathrm{R})}(u_n) \nearrow \hat{J}_0^{(\mathrm{R})}(u) = \psi_0^*(z)$, so $\psi_0^*$ is right-continuous as a function from $\R$ to $[-\infty,\infty]$. Since $F_0$ is increasing, $\psi_0^*$ is decreasing.

We have $z_{\min} = \inf\{z \in \R : F_0(z) > 0\}$ and $\log F_0(z) \searrow -\infty$ as $z \searrow z_{\min}$. Thus, if $z_{\min} > -\infty$, then
\begin{equation}
\label{eq:left-hazard}
\limsup_{u \searrow 0}\frac{J_0(u)}{u} = \limsup_{z \searrow z_{\min}}\frac{J_0\bigl(F_0(z)\bigr)}{F_0(z)} = \limsup_{z \searrow z_{\min}}\frac{p_0(z)}{F_0(z)} = \limsup_{z \searrow z_{\min}}\,(\log F_0)'(z) = \infty,
\end{equation}
where the final equality follows from the mean value theorem. Together with Lemma~\ref{lem:lcm-deriv-0}, this implies that $\psi_0^*(z) = \hat{J}_0^{(\mathrm{R})}(0) = \infty$ for all $z \leq z_{\min}$. Similarly, if $z_{\max} < \infty$, then $\psi_0^*(z) = \hat{J}_0^{(\mathrm{R})}(1) = -\infty$ for all $z \geq z_{\max}$. Therefore, in all cases, $\psi_0^*(z) \in \R$ if and only if $z \in \mathcal{S}_0$.
\end{proof}

The next lemma characterises precisely the class of density quantile functions of uniformly continuous densities on $\R$, and shows explicitly how to recover a density from a continuous density quantile function that is strictly positive on $(0,1)$.
\begin{lemma}
\label{lem:density-quantile-reverse}
A function $J \colon [0,1] \to [0,\infty)$ is the density quantile function of a uniformly continuous density on $\R$ if and only if
\begin{equation}
\label{eq:quantile-J}
\text{ $J$ is continuous on $[0,1]$ with $J(0) = J(1) = 0$ and }Q_J(u) := \int_{1/2}^u \frac{1}{J} < \infty
\end{equation}
for all $u \in (0,1)$. In this case, $Q_J$ is a strictly increasing function from $(0,1)$ to $\bigl(Q_J(0),Q_J(1)\bigr)$, whose inverse is continuously differentiable. Moreover, the function $p_J \colon \R \to \R$ given by
\[
p_J(z) := 
\begin{cases}
(Q_J^{-1})'(z) \;&\text{for }z \in \bigl(Q_J(0),Q_J(1)\bigr) \\
0 \;&\text{otherwise}
\end{cases}
\]
is a uniformly continuous density with corresponding density quantile function $J$. If in addition $J > 0$ on $(0,1)$, then a density $p_0$ has density quantile function $J$ if and only if $p_0(\cdot) = p_J(\cdot - \mu)$ for some $\mu \in \R$.
\end{lemma}

\begin{proof}
If $p_0$ is a uniformly continuous density with density quantile function $J_0 = p_0 \circ F_0^{-1}$, then Lemma~\ref{lem:psi0-star} shows that $J_0$ is continuous with $J_0(0) = J_0(1) = 0$. Introducing $U \sim U(0,1)$ and $Z := F_0^{-1}(U) \sim P_0$, we have $p_0(Z) = J_0(U)$ and $\Pr\bigl(J_0(U) = 0\bigr) = \Pr\bigl(p_0(Z) = 0\bigr) = 0$. Moreover, since $F_0$ is continuous, $(F_0 \circ F_0^{-1})(u) = u$ for all $u \in (0,1)$, so $U = F_0(Z)$. Therefore,
\begin{equation}
\label{eq:density-quantile-reverse}
\int_{1/2}^u \frac{1}{J_0} = \E\biggl(\frac{1}{J_0(U)} \Ind_{\{U \in [1/2,u)\}}\biggr) = \E\biggl(\frac{1}{p_0(Z)} \Ind_{\{Z \in [F_0^{-1}(1/2),F_0^{-1}(u))\}}\biggr) \leq F_0^{-1}(u) - F_0^{-1}\Bigl(\frac{1}{2}\Bigr) < \infty
\end{equation}
for $u \in [1/2,1)$, with equality if and only if $p_0(z) > 0$ Lebesgue almost everywhere on $\bigl[F_0^{-1}(1/2),F_0^{-1}(u)\bigr)$. Similarly, for $u \in (0,1/2]$,~\eqref{eq:density-quantile-reverse} remains true, with an analogous equality condition.

Conversely, if $J \colon [0,1] \to \R$ satisfies~\eqref{eq:quantile-J}, then $Q_J$ is strictly increasing bijection from $(0,1)$ to $\bigl(Q_J(0),Q_J(1)\bigr)$ with $Q_J'(u) = 1/J(u)$ for all $u \in (0,1)$. Thus, by the inverse function theorem, $F_J := Q_J^{-1} \colon \bigl(Q_J(0),Q_J(1)\bigr) \to (0,1)$ is a well-defined, continuously differentiable bijection with derivative $p_J := F_J' = 1/(Q_J' \circ F_J) = J \circ F_J$ satisfying $J = p_J \circ F_J^{-1}$ on $(0,1)$. We have $\lim_{z \searrow Q_J(0)}F_J(z) = 0$ and $\lim_{z \nearrow Q_J(1)}F_J(z) = 1$, while $\lim_{z \searrow Q_J(0)}p_J(z) = \lim_{z \nearrow Q_J(1)}p_J(z) = 0$ since $J$ is continuous at $0$ and $1$. Therefore, setting $p_J(z) = 0$ for $z \in \R \setminus \bigl(Q_J(0),Q_J(1)\bigr)$, we conclude that $p_J$ is a uniformly continuous density on $\R$ with corresponding distribution function $F_J$, quantile function $Q_J$ and density quantile function $J$. Moreover, for every $\mu \in \R$, the density $p_J(\cdot - \mu)$ also has density quantile function $J$.

Finally, if $p_0$ is a uniformly continuous density with density quantile function $J = p_0 \circ F_0^{-1} > 0$ on $(0,1)$, then for every $u \in (0,1)$, equality holds in~\eqref{eq:quantile-J} and hence
\[
Q_J(u) = \int_{1/2}^u \frac{1}{J} = F_0^{-1}(u) - F_0^{-1}\Bigl(\frac{1}{2}\Bigr).
\]
Therefore, letting $\mu := F_0^{-1}(1/2)$, we deduce that $p_0(\cdot) = p_J(\cdot - \mu)$, which completes the proof.
\end{proof}

\begin{lemma}
\label{lem:J0-J0hat}
Let $\mathcal{T}$ be as in Proposition~\ref{prop:p0-star-fisher}\textit{(a)}. If $t \in \mathcal{T}$, then $p_0(t) = \hat{J}_0\bigl(F_0(t)\bigr) > 0$. 
\end{lemma}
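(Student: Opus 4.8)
The plan is to identify $F_0(t)$ as a point where the least concave majorant $\hat{J}_0$ touches the density quantile function $J_0$, and then to use the fact that $p_0$ integrates to $1$ to promote the resulting equality to a strict inequality. Throughout, write $u_0 := F_0(t)$, and note that $u_0 \in (0,1)$ because $t \in \mathcal{S}_0 = \{z \in \R : F_0(z) \in (0,1)\}$. Recall from Lemma~\ref{lem:psi0-star} that $p_0 = J_0 \circ F_0$, that $J_0$ and $\hat{J}_0$ are continuous on $[0,1]$, and that $\psi_0^* = \hat{J}_0^{(\mathrm{R})} \circ F_0$.

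For the equality $p_0(t) = \hat{J}_0(F_0(t))$, I would argue by contradiction: suppose $\hat{J}_0(u_0) > J_0(u_0)$. By continuity of $\hat{J}_0 - J_0$, this strict inequality persists on a neighbourhood of $u_0$; moreover, by a standard property of least concave majorants, $\hat{J}_0$ is affine on a neighbourhood of $u_0$, so we may choose $0 < a < u_0 < b < 1$ with $\hat{J}_0^{(\mathrm{R})}$ constant on $(a,b)$. Since $F_0$ is continuous, the set $F_0^{-1}\bigl((a,b)\bigr) = \{z \in \R : a < F_0(z) < b\}$ is open and contains $t$; and for every $z$ in this set, $F_0(z) \in (a,b) \subseteq (0,1)$, so $\psi_0^*(z) = \hat{J}_0^{(\mathrm{R})}\bigl(F_0(z)\bigr)$ equals the common constant value of $\hat{J}_0^{(\mathrm{R})}$ on $(a,b)$. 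Hence $\psi_0^*$ is constant on an open interval containing $t$, contradicting $t \in \mathcal{T}$. Therefore $\hat{J}_0(u_0) = J_0(u_0)$, and $p_0(t) = (J_0 \circ F_0)(t) = J_0(u_0) = \hat{J}_0(F_0(t))$.

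To see that this common value is positive, suppose instead that $p_0(t) = 0$, i.e.\ $\hat{J}_0(u_0) = 0$ with $u_0 \in (0,1)$. Since $\hat{J}_0 \ge J_0 \ge 0$ on $[0,1]$, the point $u_0$ is an interior minimiser of the concave function $\hat{J}_0$ over $[0,1]$; but a concave function whose minimum over an interval is attained at an interior point is constant on that interval (compare the value at any point with a chord through the minimiser). Thus $\hat{J}_0 \equiv 0$, hence $0 \le J_0 \le \hat{J}_0 \equiv 0$, hence $p_0 = J_0 \circ F_0 \equiv 0$, contradicting $\int_\R p_0 = 1$. This gives $p_0(t) = \hat{J}_0(F_0(t)) > 0$, as required.

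The only nontrivial ingredient is the assertion that $\hat{J}_0$ is affine near any point where it strictly exceeds $J_0$. If this is not already recorded in the paper's auxiliary material on least concave majorants, I would supply the short argument via the concave-hull representation $\hat{J}_0(u) = \sup\{\lambda J_0(x_1) + (1-\lambda)J_0(x_2) : \lambda \in [0,1],\, \lambda x_1 + (1-\lambda)x_2 = u\}$: if $\hat{J}_0(u) > J_0(u)$, any maximising triple $(x_1,x_2,\lambda)$ has $x_1 \ne x_2$ and $u \in (x_1,x_2)$, and concavity of $\hat{J}_0$ forces $\hat{J}_0(x_i) = J_0(x_i)$ for $i \in \{1,2\}$ and $\hat{J}_0$ to coincide with the chord joining $(x_1,J_0(x_1))$ and $(x_2,J_0(x_2))$ throughout $[x_1,x_2]$. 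Everything else — interiority of $u_0 = F_0(t)$, openness of the $F_0$-preimage, and the concave-minimiser fact used for positivity — is routine.
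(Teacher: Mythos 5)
Your proof is correct and follows essentially the same route as the paper's: both argue by contradiction that $J_0(F_0(t)) = \hat{J}_0(F_0(t))$ via the fact that a least concave majorant is locally affine wherever it strictly exceeds the underlying function (so that $\psi_0^*$ would be constant near $t$, contradicting $t \in \mathcal{T}$), and both deduce positivity from the observation that a non-negative concave function on $[0,1]$ vanishing at an interior point must vanish identically, forcing $p_0 \equiv 0$. The one place you re-derive rather than cite is the local-affinity property of the LCM: the paper records this as Lemma~\ref{lem:lcm-affine} and proves it by the infimum-over-concave-majorants characterisation, whereas you sketch a proof via the two-point (Carath\'eodory) concave-hull representation. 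Both are standard; citing Lemma~\ref{lem:lcm-affine} directly would shorten your write-up but changes nothing of substance.
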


\begin{proof}
Since $\hat{J}_0$ is a non-negative concave function, we must have $\hat{J}_0 > 0$ on $(0,1)$, as otherwise $J_0 = \hat{J}_0 = 0$ and hence $p_0 = J_0 \circ F_0 = 0$ on $\R$ by Lemma~\ref{lem:psi0-star}, which is a contradiction. Moreover, $\psi_0^*(t) \in \R$, so again by Lemma~\ref{lem:psi0-star}, $t \in \mathcal{S}_0$. Thus, $v := F_0(t) \in (0,1)$ and $p_0(t) = (J_0 \circ F_0)(t) = J_0(v)$. Suppose for a contradiction that $J_0(v) < \hat{J}_0(v)$. Then by Lemma~\ref{lem:lcm-affine} and the fact that $J_0$ and $\hat{J}_0$ are continuous at~$v$, there exists $\delta > 0$ such that $\hat{J}_0$ is affine on $[v - \delta, v + \delta]$. Thus, $\hat{J}_0^{(\mathrm{R})}$ is constant on $[v - \delta, v + \delta)$, so $\psi_0^* = \hat{J}_0^{(\mathrm{R})} \circ F_0$ is constant on the open interval $\bigl(Q_0(v - \delta),F_0^{-1}(v + \delta)\bigr)$, which contains~$t$. This contradicts the definition of $\mathcal{T}$, so $p_0(t) = J_0(v) = \hat{J}_0(v) > 0$, as required.
\end{proof}

\begin{lemma}
\label{lem:psi-unique}
If $\psi \in \Psi_\downarrow(p_0)$ satisfies $\psi = \psi_0^*$ $P_0$-almost everywhere, then in fact $\psi = \psi_0^*$ on $\R$.
\end{lemma}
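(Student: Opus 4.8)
The plan is to use the fact that both $\psi$ and $\psi_0^*$ are decreasing and right-continuous on $\R$, so they are determined by their values on a dense set. More precisely, I would first recall from Lemma~\ref{lem:psi0-star} (and the discussion at the start of Section~\ref{sec:score-matching-proj-proofs}) that $\psi_0^*$ takes finite values precisely on the open interval $\mathcal{S}_0 = \{z \in \R : F_0(z) \in (0,1)\}$, and equals $+\infty$ on $(-\infty, z_{\min}]$ and $-\infty$ on $[z_{\max},\infty)$, where $z_{\min} = \inf(\supp p_0)$ and $z_{\max} = \sup(\supp p_0)$. Since $\psi \in \Psi_\downarrow(p_0) \subseteq L^2(P_0)$, $\psi$ is finite $P_0$-almost everywhere, and the hypothesis $\psi = \psi_0^*$ $P_0$-a.e.\ forces $\psi$ to be finite on a subset of $\mathcal{S}_0$ of full $P_0$-measure; I would then need to argue that the monotone right-continuous extension to all of $\R$ is forced to agree with $\psi_0^*$ everywhere, including on the tails outside $\mathcal{S}_0$.

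The key observation is that $P_0$ assigns positive mass to every nonempty open subinterval of $\mathcal{S}_0$: indeed, $\supp p_0$ is dense in $\mathcal{S}_0$ (by definition of $\mathcal{S}_0$ as the smallest open interval containing $\supp p_0$, together with the fact that $\supp p_0 = \{p_0 > 0\}$ has no ``gaps'' detectable by $P_0$ — more carefully, if $(a,b) \subseteq \mathcal{S}_0$ then $F_0(b) > F_0(a)$ since otherwise $F_0$ would be constant on $(a,b)$, contradicting $(a,b) \cap \supp p_0 \neq \emptyset$). Hence the set $A := \{z \in \R : \psi(z) = \psi_0^*(z)\}$ has $P_0(A) = 1$, so $A$ is dense in $\mathcal{S}_0$. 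For any $z \in \mathcal{S}_0$, pick a sequence $z_n \in A$ with $z_n \searrow z$; by right-continuity of both $\psi$ and $\psi_0^*$ we get $\psi(z) = \lim_n \psi(z_n) = \lim_n \psi_0^*(z_n) = \psi_0^*(z)$. This establishes agreement on all of $\mathcal{S}_0$.

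It remains to handle $z \notin \mathcal{S}_0$, i.e.\ the tails. On $(-\infty, z_{\min}]$ we have $\psi_0^*(z) = +\infty$, and I would use monotonicity: for any $z \le z_{\min}$ and any $w \in \mathcal{S}_0$ with $w > z$, $\psi(z) \ge \psi(w) = \psi_0^*(w)$; letting $w \searrow z_{\min}$ (which forces $F_0(w) \searrow 0$), Lemma~\ref{lem:psi0-star} gives $\psi_0^*(w) = \hat{J}_0^{(\mathrm{R})}(F_0(w)) \to \hat{J}_0^{(\mathrm{R})}(0^+)$; when $z_{\min}$ is finite this limit is $+\infty$ by~\eqref{eq:left-hazard} and Lemma~\ref{lem:lcm-deriv-0}, so $\psi(z) = +\infty = \psi_0^*(z)$, while if $z_{\min} = -\infty$ there is nothing to check. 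The case $z \ge z_{\max}$ is symmetric, using $\psi(z) \le \psi(w) = \psi_0^*(w) \to -\infty$ as $w \nearrow z_{\max}$. There is also the boundary point itself when, say, $z_{\min}$ is finite and lies in $\mathcal{S}_0$'s closure: but $\psi_0^*(z_{\min})$ is already pinned down by right-continuity from within $\mathcal{S}_0$, which we have handled. Combining the three cases gives $\psi = \psi_0^*$ on $\R$.

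I expect the only mild subtlety — hardly an obstacle — to be the careful bookkeeping at the endpoints $z_{\min}, z_{\max}$: one must check whether these points lie in $\mathcal{S}_0$ or not (equivalently whether $F_0(z_{\min}) = 0$ or $> 0$) and invoke the correct one-sided continuity or monotonicity argument accordingly. Everything else is a routine density-plus-right-continuity argument, and the main structural input is simply that $P_0$ charges every open subinterval of its ``support interval'' $\mathcal{S}_0$.
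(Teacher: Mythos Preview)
Your proof has a genuine gap at the density claim. You assert that $P_0$ charges every nonempty open subinterval of $\mathcal{S}_0$, equivalently that $\supp p_0 = \{p_0 > 0\}$ is dense in $\mathcal{S}_0$. But $\mathcal{S}_0$ is merely the smallest open \emph{interval} containing $\supp p_0$; nothing prevents $p_0$ from vanishing on a subinterval. For instance, take a uniformly continuous density $p_0$ that is positive on $(-2,-1)\cup(1,2)$ and zero elsewhere: then $\mathcal{S}_0 = (-2,2)$ but $P_0\bigl((-1,1)\bigr) = 0$. In this situation the set $A = \{\psi = \psi_0^*\}$ need not meet $(-1,1)$ at all, so for $z \in [-1,1)$ there is no sequence $z_n \in A$ with $z_n \searrow z$, and your right-continuity step fails. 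Your justification ``contradicting $(a,b)\cap\supp p_0 \neq \emptyset$'' is circular: that nonemptiness is precisely the density claim you are trying to establish.

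The paper's proof avoids any blanket density assertion by exploiting the structure of $\psi_0^*$. For each $z \in \mathcal{S}_0$ it constructs $z_1 \le z \le z_2$ with $\psi_0^*$ constant on $[z_1,z_2]$ and, crucially, $z_1 \in \mathcal{T}$ (the set where $\psi_0^*$ is not locally constant). Lemma~\ref{lem:J0-J0hat} then gives $p_0(z_1) > 0$, so $P_0\bigl([z_j,z_j+\delta)\bigr) > 0$ for small $\delta$ and $j\in\{1,2\}$; right-continuity pins down $\psi(z_1)$ and $\psi(z_2)$, and monotonicity squeezes $\psi(z)$ between these two equal values. Your approach can be salvaged along similar lines---on each maximal gap $(a,b)$ of $\supp p_0$ inside $\mathcal{S}_0$, note that $F_0 \equiv v$ there with $J_0(v) = 0 < \hat{J}_0(v)$, so Lemma~\ref{lem:lcm-affine} makes $\hat{J}_0$ affine near $v$ and hence $\psi_0^*$ constant on a neighbourhood of $[a,b]$, after which a monotonicity sandwich works---but as written the argument is incomplete. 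Your treatment of the tails $(-\infty,z_{\min}]$ and $[z_{\max},\infty)$ is correct once the interior is fixed.
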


\begin{proof}
First consider $z \in \mathcal{S}_0$, let $u := F_0(z) \in (0,1)$ and let $u_1 := \inf\bigl\{v \in [0,1] : \hat{J}_0^{(\mathrm{R})}(v) = \hat{J}_0^{(\mathrm{R})}(u)\bigr\}$. Then $\hat{J}_0^{(\mathrm{R})}(u_1) = \hat{J}_0^{(\mathrm{R})}(u)$ by the right-continuity of $\hat{J}_0^{(\mathrm{R})}$. Moreover, define $z_1 := F_0^{-1}(u_1)$ and $z_2 := Q_0(u)$, so that $z_1 \leq z \leq z_2$ and
\[
F_0(z_1 - \delta) < u_1 = F_0(z_1) \leq F_0(z_2) = u < F_0(z_2 + \delta)
\]
for all $\delta > 0$. Then $\psi_0^*(z_1 - \delta) = \hat{J}_0^{(\mathrm{R})}\bigl(F_0(z_1 - \delta)\bigr) > \hat{J}_0^{(\mathrm{R})}(u_1) = \psi_0^*(z_1)$ for all such $\delta$, so $z_1 \in \mathcal{T}$ and hence $p_0(z_1) > 0$ by Lemma~\ref{lem:J0-J0hat}. It follows that $P_0\bigl([z_j, z_j + \delta)\bigr) > 0$ for $j \in \{1,2\}$, so because $\psi$ and $\psi_0^*$ are decreasing, right-continuous functions that agree $P_0$-almost everywhere,
\[
\psi(z) \leq \psi(z_1) = \psi_0^*(z_1) = \hat{J}_0^{(\mathrm{R})}(u_1) = \hat{J}_0^{(\mathrm{R})}(u) = \psi_0^*(z_2) = \psi(z_2) \leq \psi(z).
\]
Thus, $\psi(z) = \psi_0^*(z)$ for all $z \in \mathcal{S}_0$. Together with Lemma~\ref{lem:psi0-star}, this implies that $\lim_{z \searrow z_{\min}} \psi(z) = \lim_{z \searrow z_{\min}}  \psi_0^*(z) = \infty$ and $\lim_{z \nearrow z_{\max}} \psi(z) = \lim_{z \nearrow z_{\max}}  \psi_0^*(z) = -\infty$, so $\psi = \psi_0^* = \infty$ on $(-\infty,z_{\min}]$ and $\psi = \psi_0^* = -\infty$ on $[z_{\max},\infty)$. Thus, $\psi = \psi_0^*$ on $\R$, as required.
\end{proof}

\begin{proof}[Proof of Theorem~\ref{thm:antitonic-score-proj}]
Let $\chi_t := \Ind_{(-\infty,t)} \in \Psi_\downarrow(p_0)$ for $t \in (\infty,\infty]$. Then, letting $U \sim U(0,1)$, we have $(F_0 \circ F_0^{-1})(U) = U$, so
\begin{align*}
\int_\R\psi_0^*\chi_t\,dP_0 = \int_{(-\infty,t)}(\hat{J}_0^{(\mathrm{R})} \circ F_0)\,dP_0 &= \E\bigl\{\bigl(\hat{J}_0^{(\mathrm{R})} \circ F_0 \circ F_0^{-1}\bigr)(U)\,\Ind_{\{F_0^{-1}(U) \leq t\}}\bigr\} \\
&= \E\bigl\{\hat{J}_0^{(\mathrm{R})}(U)\,\Ind_{\{U \leq F_0(t)\}}\bigr\} = \int_0^{F_0(t)}\hat{J}_0^{(\mathrm{R})} \\
&= \hat{J}_0\bigl(F_0(t)\bigr) - \hat{J}_0(0) \geq J_0\bigl(F_0(t)\bigr) = p_0(t),
\end{align*}
where the last two equalities are due to~\citet[Corollary~24.2.1]{rockafellar97convex} and Lemma~\ref{lem:psi0-star} respectively. By~\eqref{eq:J01} and Lemma~\ref{lem:J0-J0hat}, equality holds throughout in the final line if $F_0(t) \in \{0,1\}$ or $t \in \mathcal{T}$, so
\begin{equation}
\label{eq:KKT-ind}
\int_\R\psi_0^*\chi_t\,dP_0\,
\begin{cases}
\geq p_0(t) &\;\text{for all }t \in (\infty,\infty] \\
= p_0(t) &\;\text{for all }t \in (-\infty,z_{\min}] \cup [z_{\max},\infty] \cup \mathcal{T}.
\end{cases}
\end{equation}
\textit{(a)} In particular, taking $t = \infty$, we have $\int_\R \psi_0^*\,dP_0 = 0$.

\medskip
\noindent
\textit{(b,\,c)} \textbf{Case 1}: Suppose that $i^*(p_0) = \int_\R(\psi_0^*)^2\,dP_0 < \infty$. Then $\psi_0^* \in \Psi_\downarrow(p_0)$ since it is decreasing and right-continuous. For measurable functions $\psi$ on $\R$, we write $\norm{\psi} \equiv \norm{\psi}_{L^2(P_0)} := (\int_\R\psi^2\,dP_0)^{1/2}$, and for $\psi_1,\psi_2 \in L^2(P_0)$, denote by $\ipr{\psi_1}{\psi_2} \equiv \ipr{\psi_1}{\psi_2}_{L^2(P_0)} := \int_\R \psi_1\psi_2\,dP_0$ their $L^2(P_0)$ inner product. Recalling that $D_{p_0}(\cdot)$ is a convex (quadratic) function on the convex cone $\Psi_\downarrow(p_0)$, we will deduce from~\eqref{eq:KKT-ind} that $\psi_0^*$ satisfies the first-order stationarity condition
\begin{align}
\label{eq:KKT}
-\int_{\mathcal{S}_0} p_0\,d\psi \leq \int_\R\psi_0^*\psi\,dP_0 = \ipr{\psi_0^*}{\psi} \in \R
\end{align}
for all $\psi \in \Psi_\downarrow(p_0)$, and that equality holds when $\psi = \psi_0^*$. To see this, fix $t_0 \in \mathcal{S}_0$ and define $g(z,t) := \Ind_{\{z < t \leq t_0\}} - \Ind_{\{t_0 < t \leq z\}}$ for $z,t \in \R$. Then $g(\cdot,t) = \chi_t(\cdot) - \Ind_{\{t > t_0\}}$ for all $t \in \R$ and $\psi(t_0) - \psi(z) = \int_{\mathcal{S}_0} g(z,t)\,d\psi(t)$ for all $z \in \mathcal{S}_0$. Since $\int_\R|\psi_0^*||\psi(t_0) - \psi|\,dP_0 \leq \norm{\psi_0^*}\bigl(|\psi(t_0)| + \norm{\psi}\bigr) < \infty$ and $\int_\R\psi_0^*\,dP_0 = 0$ by~\eqref{eq:KKT-ind}, we can apply Fubini's theorem to obtain
\begin{align}
\label{eq:KKT-Fubini}
\int_\R\psi_0^*\psi\,dP_0 &= \psi(t_0)\int_\R\psi_0^*\,dP_0-\int_\R\psi_0^*(z)\int_{\mathcal{S}_0} g(z,t)\,d\psi(t)\,dP_0(z) = -\int_{\mathcal{S}_0}\int_\R \psi_0^*(z)\,g(z,t)\,dP_0(z)\,d\psi(t).
\end{align}
On the right-hand side of~\eqref{eq:KKT-Fubini}, the outer Lebesgue--Stieltjes integral is with respect to a negative measure, so it follows from~\eqref{eq:KKT-ind} that
\begin{align}
\int_{\mathcal{S}_0}\int_\R \psi_0^*(z)\,g(z,t)\,dP_0(z)\,d\psi(t) &= \int_{(z_{\min},t_0]}\biggl(\int_\R\psi_0^*\chi_t\,dP_0\biggr)\,d\psi(t) + \int_{(t_0,z_{\max})}\biggl(\int_\R\psi_0^*(\chi_t - 1)\,dP_0\biggr)\,d\psi(t)\notag \\
\label{eq:KKT-integrals}
&\leq \int_{(z_{\min},t_0]}p_0(t)\,d\psi(t) + \int_{(t_0,z_{\max})}p_0(t)\,d\psi(t) = \int_{\mathcal{S}_0}p_0\,d\psi.
\end{align}
% where the final equality holds since $p_0 = 0$ on $[-\infty,\infty] \setminus (z_{\min},z_{\max})$ and $\psi$ is right-continuous. 
Together with~\eqref{eq:KKT-Fubini}, this proves the inequality~\eqref{eq:KKT} for all $\psi \in \Psi_\downarrow(p_0)$. It remains to show that~\eqref{eq:KKT} holds with equality when $\psi = \psi_0^*$. Let $\nu^*$ for the Lebesgue--Stieltjes measure induced by $\psi_0^*$, so that the set $\mathcal{T}$ in Lemma~\ref{lem:J0-J0hat} is the support of $\nu^*$. Then $\mathcal{T}^c$ is an open set with $\nu^*(\mathcal{T}^c) = 0$, so $\psi_0^*(t_0) - \psi_0^*(z) = \int_\mathcal{T} g(z,t)\,d\psi_0^*(t)$ for all $z \in \R$. Thus,~\eqref{eq:KKT-Fubini} holds for $\psi = \psi_0^*$ when the Lebesgue--Stieltjes integrals (with respect to $\nu^*$) are restricted to $\mathcal{S}_0\cap\mathcal{T}$. Moreover,~\eqref{eq:KKT-integrals} is still valid if we intersect the domains of all outer integrals with $\mathcal{T}$, again because $\nu^*(\mathcal{T}^c) = 0$. We established that~\eqref{eq:KKT-ind} is an equality for all $t \in \mathcal{T}$, so~\eqref{eq:KKT-integrals} and hence~\eqref{eq:KKT} hold with equality when $\psi = \psi_0^*$.

\medskip
\noindent
For $\psi \in \Psi_\downarrow(p_0)$, it follows from~\eqref{eq:KKT} that $\int_{\mathcal{S}_0} p_0\,d\psi > -\infty$ and hence $D_{p_0}(\psi) \in \R$, so
\begin{align}
D_{p_0}(\psi) &= \int_\R(\psi - \psi_0^*)^2\,dP_0 + \int_\R(\psi_0^*)^2\,dP_0 + 2\,\biggl\{\int_\R\psi_0^*(\psi - \psi_0^*)\,dP_0 + \int_{\mathcal{S}_0} p_0\,d\psi\biggr\} \notag \\
\label{eq:inf-Dp0}
&\geq \int_\R(\psi - \psi_0^*)^2\,dP_0 -\int_\R (\psi_0^*)^2\,dP_0 \\
&= \int_\R(\psi - \psi_0^*)^2\,dP_0 + \int_\R (\psi_0^*)^2\,dP_0 + 2\int_{\mathcal{S}_0} p_0\,d\psi_0^* \notag \\
&= \int_\R(\psi - \psi_0^*)^2\,dP_0 + D_{p_0}(\psi_0^*) \geq D_{p_0}(\psi_0^*) \notag.
\end{align}
Thus, $\psi \in \argmin_{\psi \in \Psi_\downarrow(p_0)}D_{p_0}(\psi)$ if and only if $\psi = \psi_0^*$ $P_0$-almost everywhere. By Lemma~\ref{lem:psi-unique}, this holds if and only if $\psi = \psi_0^*$ on $\R$, which proves the uniqueness assertion in \textit{(b)}.

Moreover, since $\hat{J}_0^{(\mathrm{R})}(0) = \lim_{u \searrow 0}\hat{J}_0^{(\mathrm{R})}(u) > 0$ and $F_0(z) > 0$ for all $z > Q_0(0) = z_{\min}$, we have $\int_{(z_{\min},z)}(\psi_0^*)^2\,dP_0 = \int_{(z_{\min},z)}(\hat{J}_0^{(\mathrm{R})} \circ F_0)^2\,dP_0 > 0$ for all $z > z_{\min}$, so $i^*(p_0) = \norm{\psi_0^*}^2 > 0$. Now for $\psi \in \Psi_\downarrow(p_0)$ such that $\int_\R\psi^2\,dP_0 > 0$, we have $-\int_{\mathcal{S}_0} p_0\,d\psi \geq 0$ since $\psi$ is decreasing. It follows from~\eqref{eq:KKT} and the Cauchy--Schwarz inequality that
\begin{align*}
V_{p_0}(\psi) = \frac{\int_\R\psi^2\,dP_0}{\bigl(\int_{\mathcal{S}_0} p_0\,d\psi\bigr)^2} \geq \frac{\norm{\psi}^2}{\ipr{\psi_0^*}{\psi}^2} \geq \frac{1}{\norm{\psi_0^*}^2} = \frac{1}{i^*(p_0)} \in (0,\infty),
\end{align*}
with equality if and only if $\psi = \lambda\psi_0^*$ $P_0$-almost everywhere for some $\lambda > 0$. By Lemma~\ref{lem:psi-unique}, this holds if and only if $\psi = \lambda\psi_0^*$ on $\R$, so the proof of \emph{(c)} is complete.

\medskip
\noindent
\textbf{Case 2}: Suppose instead that $i^*(p_0) = \int_\R (\psi_0^*)^2\,dP_0 = \infty$. For each $n \in \N$, define $\psi_n^* := (\psi_0^* \wedge n) \vee (-n)$. This is bounded, decreasing and right-continuous, so $\psi_n^* \in \Psi_\downarrow(p_0)$, and we claim that
\begin{align}
\label{eq:psi_n}
-\int_{\mathcal{S}_0} p_0\,d\psi_n^* = \int_\R\psi_0^*\psi_n^*\,dP_0  \in \R.
\end{align}
Indeed, by the quantile transform used to establish~\eqref{eq:KKT-ind}, $|\int_\R\psi_0^*\psi_n^*\,dP_0| \leq n\int_\R|\psi_0^*|\,dP_0 = n\int_0^1|\hat{J}_0^{(\mathrm{R})}| \leq 2n\sup_{u \in [0,1]}\hat{J}_0(u) < \infty$ since $\hat{J}_0$ is concave and bounded. Therefore, we may apply Fubini's theorem as above to see that~\eqref{eq:KKT-Fubini} above holds when $\psi = \psi_n^*$. Furthermore, writing $\nu_n^*$ for the Lebesgue--Stieltjes measure associated with $\psi_n^*$, we have $\nu_n^*(\mathcal{T}^c) = \nu^*(\mathcal{T}^c\cap I_n) = 0$, where $I_n := \{z \in \R : \psi_0^*(z) \in (-n,n]\}$.
% $I_n$ is a closed interval because $\psi_0^*$ is right-continuous
Consequently, by intersecting the domains of all outer integrals in~\eqref{eq:KKT-integrals} with $\mathcal{T}$ and arguing as above, we deduce that~\eqref{eq:KKT-integrals} holds with equality for $\psi = \psi_n^*$. This yields~\eqref{eq:psi_n} for every $n \in \N$. It then follows by the monotone convergence theorem that
\begin{align*}
D_{p_0}(\psi_n^*) = \int_\R(\psi_n^*)^2\,dP_0 - 2\int_\R\psi_0^*\psi_n^*\,dP_0 \leq -\int_\R(\psi_n^*)^2\,dP_0 &= -\int_\R\bigl((\psi_0^*)^2\wedge n^2\bigr)\,dP_0 \\
&\searrow -\int_\R(\psi_0^*)^2\,dP_0 = -\infty
\end{align*}
as $n \to \infty$. Thus, $\inf_{\psi \in \Psi_\downarrow(p_0)}D_{p_0}(\psi) = -\infty$. 

Together with~\eqref{eq:inf-Dp0}, this shows that $\inf_{\psi \in \Psi_\downarrow(p_0)}D_{p_0}(\psi) = -i^*(p_0)$ in both cases, which completes the proof of \emph{(b)}.

\medskip
\noindent
\textit{(d)} Suppose that $p_0$ is absolutely continuous on $\R$ with score function $\psi_0 = p_0'/p_0$. Recall that
$(F_0 \circ F_0^{-1})(u) = u$ for all $u \in (0,1)$, so that letting $U \sim U(0,1)$ and $Z = F_0^{-1}(U) \sim P_0$, we have
\begin{align}
\label{eq:psi0-J0}
\int_0^u \psi_0 \circ F_0^{-1} &= \E\bigl((\psi_0 \circ F_0^{-1})(U) \Ind_{\{U \leq (F_0 \circ F_0^{-1})(u)\}}\bigr) \\
&= \E\bigl(\psi_0(Z) \Ind_{\{Z \leq F_0^{-1}(u)\}}\bigr) = \int_{(-\infty,F_0^{-1}(u)]} \psi_0\,dP_0 = \int_{-\infty}^{F_0^{-1}(u)} p_0' = (p_0 \circ F_0^{-1})(u) = J_0(u) \notag
\end{align}
for all $u \in (0,1)$. This yields the representation~\eqref{eq:psi0-star} of $\psi_0^*$.
% and also shows that $J_0 = p_0 \circ F_0^{-1}$ is absolutely continuous on $[0,1]$ with derivative $\psi_0 \circ F_0^{-1}$
% NB: The concave function $\hat{J}_0$ is always absolutely continuous on $[0,1]$ with derivative $\hat{J}_0^{(\mathrm{R})} = \psi_0^* \circ F_0^{-1}$

We assume henceforth that $i(p_0) = \int_\R\psi_0^2\,dP_0 = \norm{\psi_0}^2 < \infty$, since otherwise the inequality $i^*(p_0) \leq i(p_0)$ holds trivially. Then $\int_\R|\psi\psi_0|\,dP_0 = \ipr{|\psi|}{|\psi_0|} \leq \norm{\psi}\,\norm{\psi_0} < \infty$ for all $\psi \in \Psi_\downarrow(p_0)$. In particular, $\int_\R|p_0'| = \int_\R|\psi_0|\,dP_0 < \infty$, so by the dominated convergence theorem, 
\[
\int_\R\psi_0\,dP_0 = \int_\R p_0' = \lim_{t \to \infty}\int_{-t}^t p_0' = \lim_{t \to \infty}\bigl(p_0(t) - p_0(-t)\bigr) = p_0(\infty) - p_0(-\infty) = 0.
\]
Recalling that $g(z,t) = \Ind_{\{z < t \leq t_0\}} - \Ind_{\{t_0 < t \leq z\}}$ for $z,t \in \R$ and $\psi(t_0) - \psi(z) = \int_{\mathcal{S}_0} g(z,t)\,d\psi(t)$ for all $z \in \mathcal{S}_0$, we similarly have $\int_\R p_0'(z)g(z,t)\,dz = -\bigl(p_0(\infty) - p_0(t)\bigr) = p_0(t)$ for all $t > t_0$ and $\int_\R p_0'(z)g(z,t)\,dz = p_0(t) - p_0(-\infty) = p_0(t)$ for all $t \leq t_0$. For $\psi \in \Psi_\downarrow(p_0)$, it then follows from Fubini's theorem that
\begin{align*}
\int_\R\psi\psi_0\,dP_0 = \psi(t_0)\int_\R\psi_0\,dP_0 - \int_\R p_0'(z)\int_{\mathcal{S}_0}g(z,t)\,d\psi(t)\,dz = -\int_{\mathcal{S}_0} p_0(t)\,d\psi(t)
\end{align*}
and hence that
\begin{align}
\label{eq:Dp0-psi0}
D_{p_0}(\psi) = \int_\R\psi^2\,dP_0 + 2\int_{\mathcal{S}_0} p_0\,d\psi = \norm{\psi}^2 - 2\ipr{\psi}{\psi_0} &= \norm{\psi - \psi_0}^2 - \norm{\psi_0}^2 \\
&\geq -\norm{\psi_0}^2 = -i(p_0) > -\infty. \notag
\end{align}
Thus, by \emph{(b)}, $-i^*(p_0) = \inf_{\psi \in \Psi_\downarrow(p_0)}D_{p_0}(\psi) \geq -i(p_0) > -\infty$, so $i^*(p_0) \leq i(p_0) < \infty$. Equality holds if and only if $\inf_{\psi \in \Psi_\downarrow(p_0)}\norm{\psi - \psi_0} = 0$, i.e.\ $\psi_0 \in \Psi_\downarrow(p_0)$, which is equivalent to $p_0$ being log-concave.
% Alternatively, by the previous display and the equality case of~\eqref{eq:KKT}, $\ipr{\psi_0^*}{\psi_0 - \psi_0^*} = 0$, so $i(p_0) = \norm{\psi_0}^2 = \norm{\psi_0^*}^2 + \norm{\psi_0 - \psi_0^*}^2 \geq \norm{\psi_0^*}^2 = i^*(p_0)$.
\end{proof}

\begin{proof}[Proof of Corollary~\ref{cor:istar-cvx}]
Fix $t \in (0,1)$. If $p_0,p_1$ are uniformly continuous densities on $\R$, then the same is true of $p_t := (1 - t)p_0 + tp_1$. For any $\psi \in \Psi_\downarrow(p_t)$, we have $\max\bigl\{(1 - t)\int_\R \psi^2 p_0,\,t\int_\R \psi^2 p_1\bigr\} \leq \int_\R \psi^2 p_t < \infty$, so $\psi \in \Psi_\downarrow(p_0) \cap \Psi_\downarrow(p_1)$ and $D_{p_t}(\psi) = (1 - t)D_{p_0}(\psi) + tD_{p_1}(\psi)$. Therefore, by Theorem~\ref{thm:antitonic-score-proj}\emph{(b)},
\[
-i^*(p_t) = \inf_{\psi \in \Psi_\downarrow(p_t)} D_{p_t}(\psi) \geq (1 - t)\inf_{\psi \in \Psi_\downarrow(p_0)} D_{p_0}(\psi) + t\inf_{\psi \in \Psi_\downarrow(p_1)} D_{p_1}(\psi) = -(1 - t)i^*(p_0) - t\,i^*(p_1),
\]
as required.
\end{proof}

\begin{proof}[Proof of Lemma~\ref{lem:hazard}]
\textit{(a)} By Lemma~\ref{lem:lcm-deriv-0} and~\eqref{eq:J01},
\[
\lim_{z \to -\infty}\psi_0^*(z) = \lim_{z \to -\infty} \hat{J}_0^{(\mathrm{R})}\bigl(F_0(z)\bigr) = \hat{J}_0^{(\mathrm{R})}(0) = \sup_{u \in (0,1)}\frac{J_0(u)}{u} > 0.
\]
On the other hand, recalling from Lemma~\ref{lem:psi0-star} that $p_0 = J_0 \circ F_0$, we have
\[
\limsup_{z \searrow z_{\min}}\,h_0(z) = \limsup_{z \searrow z_{\min}}\frac{p_0(z)}{F_0(z)} = \limsup_{u \searrow 0}\frac{J_0(u)}{u},
\]
which by~\eqref{eq:left-hazard} can only be finite if $z_{\min} = -\infty$. Since $J_0$ is continuous, $\sup_{u \in (0,1)} J_0(u)/u < \infty$ if and only if $\limsup_{u \searrow 0} J_0(u)/u < \infty$, which implies the desired conclusion.

\medskip
\noindent
\textit{(b)} Likewise, 
\[
\lim_{z \to \infty}\psi_0^*(z) = \hat{J}_0^{(\mathrm{L})}(1) = -\sup_{u \in (0,1)}\frac{J_0(u)}{1 - u} \in [-\infty,0),
\]
and is finite if and only if
\[
\limsup_{z \nearrow z_{\max}}\,h_0(z) = \limsup_{z \nearrow z_{\max}}\frac{p_0(z)}{1 - F_0(z)} = \limsup_{u \nearrow 1}\frac{J_0(u)}{1 - u}
\]
is finite, in which case $z_{\max} = \infty$ by arguing similarly to~\eqref{eq:left-hazard}.
\end{proof}

Lemma~\ref{lem:p0-psi0-ineq} is used in the proofs of Lemma~\ref{lem:p0-star} and Proposition~\ref{prop:p0-star-fisher} below.

\begin{lemma}
\label{lem:p0-psi0-ineq}
Let $s_0 := \inf\{z \in \R : \psi_0^*(z) \leq 0\}$ and $t_0 := \sup\{z \in \R : \psi_0^*(z) < 0\}$. For $s,t \in \mathcal{S}_0$ such that $s \leq t$, we have
\begin{equation}
\label{eq:psi0-int-integral}
\int_s^t \psi_0^* \,
\begin{cases}
\geq \log\hat{J}_0\bigl(F_0(t)\bigr) - \log\hat{J}_0\bigl(F_0(s)\bigr) \;\;&\text{if }t \leq t_0 \\
\leq \log\hat{J}_0\bigl(F_0(t)\bigr) - \log\hat{J}_0\bigl(F_0(s)\bigr)  \;\;&\text{if }s \geq s_0,
\end{cases}
\end{equation}
and hence
\begin{equation}
\label{eq:p0-psi0-ineq}
\begin{cases}
p_0(t) \leq p_0(s)\exp\bigl(\int_s^t \psi_0^*\bigr) \;&\text{if }t \leq t_0 \text{ and }s \in \mathcal{T} \\
p_0(t) \geq p_0(s)\exp\bigl(\int_s^t \psi_0^*\bigr) \;&\text{if }s \geq s_0 \text{ and }t \in \mathcal{T}.
\end{cases}
\end{equation}
\end{lemma}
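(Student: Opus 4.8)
The plan is to first reduce the density bounds \eqref{eq:p0-psi0-ineq} to the integral bounds \eqref{eq:psi0-int-integral}. Since $J_0 \le \hat{J}_0$ on $[0,1]$ and $p_0 = J_0 \circ F_0$ by Lemma~\ref{lem:psi0-star}, we have $0 \le p_0(z) \le \hat{J}_0\bigl(F_0(z)\bigr)$ for every $z$, and $\hat{J}_0\bigl(F_0(z)\bigr) \in (0,\infty)$ whenever $z \in \mathcal{S}_0$ (because then $F_0(z) \in (0,1)$ and $\hat{J}_0 > 0$ on $(0,1)$, as observed in the proof of Lemma~\ref{lem:J0-J0hat}). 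If moreover $s \in \mathcal{T}$, then Lemma~\ref{lem:J0-J0hat} gives $p_0(s) = \hat{J}_0\bigl(F_0(s)\bigr)$, so in the case $t \le t_0$ and $s \in \mathcal{T}$,
\begin{equation*}
p_0(t) \le \hat{J}_0\bigl(F_0(t)\bigr) = \hat{J}_0\bigl(F_0(s)\bigr)\exp\Bigl(\log\hat{J}_0\bigl(F_0(t)\bigr) - \log\hat{J}_0\bigl(F_0(s)\bigr)\Bigr) \le p_0(s)\exp\Bigl(\int_s^t \psi_0^*\Bigr)
\end{equation*}
by \eqref{eq:psi0-int-integral} and monotonicity of $\exp$. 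The second half of \eqref{eq:p0-psi0-ineq} follows symmetrically, using $p_0(t) = \hat{J}_0\bigl(F_0(t)\bigr)$ for $t \in \mathcal{T}$ together with $\hat{J}_0\bigl(F_0(s)\bigr) \ge p_0(s) \ge 0$, so the case $p_0(s)=0$ is trivial and otherwise one reverses the chain above.

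The heart of the matter is then \eqref{eq:psi0-int-integral}, and the key identity I would establish is that, for all $s \le t$ in $\mathcal{S}_0$,
\begin{equation*}
\log\hat{J}_0\bigl(F_0(t)\bigr) - \log\hat{J}_0\bigl(F_0(s)\bigr) = \int_{F_0(s)}^{F_0(t)} \frac{\hat{J}_0^{(\mathrm{R})}(w)}{\hat{J}_0(w)}\,dw = \int_s^t \psi_0^*(z)\,\frac{p_0(z)}{\hat{J}_0\bigl(F_0(z)\bigr)}\,dz.
\end{equation*}
The first equality holds because $\hat{J}_0$ is concave and strictly positive on the compact interval $[F_0(s),F_0(t)] \subset (0,1)$, hence Lipschitz there, so $\log\hat{J}_0$ is absolutely continuous on this interval with a.e.\ derivative $\hat{J}_0^{(\mathrm{R})}/\hat{J}_0$; the fundamental theorem of calculus then applies. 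The second equality is the change of variables $w = F_0(z)$: since $F_0 \in C^1$ is non-decreasing with $F_0' = p_0$, the Lebesgue--Stieltjes measure $dF_0 = p_0\,dz$ pushes forward under $F_0$ to Lebesgue measure on $[F_0(s),F_0(t))$, and $\psi_0^* = \hat{J}_0^{(\mathrm{R})} \circ F_0$ by Lemma~\ref{lem:psi0-star}; the integrands are bounded on the relevant compact ranges, so there is no integrability obstruction.

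To finish, write $r(z) := p_0(z)/\hat{J}_0\bigl(F_0(z)\bigr)$; then $0 \le p_0 = J_0\circ F_0 \le \hat{J}_0\circ F_0$ gives $r(z) \in [0,1]$ on $\mathcal{S}_0$, so by the displayed identity
\begin{equation*}
\int_s^t \psi_0^* - \Bigl(\log\hat{J}_0\bigl(F_0(t)\bigr) - \log\hat{J}_0\bigl(F_0(s)\bigr)\Bigr) = \int_s^t \psi_0^*(z)\bigl(1 - r(z)\bigr)\,dz,
\end{equation*}
which is $\ge 0$ if $\psi_0^* \ge 0$ throughout $(s,t)$ and $\le 0$ if $\psi_0^* \le 0$ throughout $(s,t)$. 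Finally, since $\psi_0^*$ is decreasing, the hypothesis $t \le t_0$ forces $\psi_0^* \ge 0$ on $(s,t)$, and the hypothesis $s \ge s_0$ forces $\psi_0^* \le 0$ on $[s,t)$ (indeed $\psi_0^* \le 0$ on $[s_0,\infty)$ directly from the definition of $s_0$), using also $s_0 \le t_0$; this gives the two cases of \eqref{eq:psi0-int-integral}.

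I expect the genuine (if modest) obstacle to be the change-of-variables step rather than anything conceptual: one must handle the possibility that $p_0$, and hence $F_0$, is not strictly increasing, so that the naive substitution $dz = dw/p_0$ fails on the zero set of $p_0$. This is dealt with precisely by phrasing the substitution in terms of $dF_0 = p_0\,dz$ (which assigns no mass to any interval on which $p_0$ vanishes — an interval on which $z \mapsto \log\hat{J}_0(F_0(z))$ is in any case constant), so no pointwise inversion of $F_0$ is needed. The remaining care is purely bookkeeping: keeping every logarithm applied to a strictly positive quantity, which is exactly why $\mathcal{S}_0$ controls $\hat{J}_0\circ F_0$ and $\mathcal{T}$ (via Lemma~\ref{lem:J0-J0hat}) controls $p_0$.
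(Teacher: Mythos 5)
Your proof is correct and takes essentially the same approach as the paper's: both establish $\log\hat{J}_0\bigl(F_0(t)\bigr) - \log\hat{J}_0\bigl(F_0(s)\bigr) = \int_{F_0(s)}^{F_0(t)}\hat{J}_0^{(\mathrm{R})}/\hat{J}_0$ via absolute continuity of $\log\hat{J}_0$ on $[F_0(s),F_0(t)]\subset(0,1)$, transfer to an integral over $(s,t)$ using $F_0$ and $\psi_0^* = \hat{J}_0^{(\mathrm{R})}\circ F_0$, and then compare with $\int_s^t\psi_0^*$ using $p_0 \leq \hat{J}_0\circ F_0$ together with the sign of $\psi_0^*$ dictated by $s_0$ and $t_0$. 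The only cosmetic difference is that the paper phrases the transfer as a quantile transform with $U=F_0(Z)$ and does the comparison in two steps (restricting to $\{p_0>0\}$ and then bounding $1/J_0 \geq 1/\hat{J}_0$), whereas you fold these into the single weight $r = p_0/(\hat{J}_0\circ F_0) \in [0,1]$.
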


\begin{proof}
For $s,t \in \mathcal{S}_0$ with $s \leq t$, we have $0 < F_0(s) \leq F_0(t) < 1$. By Lemma~\ref{lem:psi0-star} and the fact that $\psi_0^*$ is decreasing,
\[
-\infty < \psi_0^*(t)(t - s) \leq \int_s^t \psi_0^* \leq \psi_0^*(s)(t - s) < \infty.
\]
Introducing $U \sim U(0,1)$ and $Z := F_0^{-1}(U)\sim P_0$, we have $F_0(Z) = U$ by the continuity of $F_0$. Therefore, $\psi_0^*(Z) = \hat{J}_0^{(\mathrm{R})}(U)$ and $\Pr\bigl(J_0(U) = 0\bigr) = \Pr\bigl(p_0(Z) = 0\bigr) = 0$. If $t \leq t_0$, then $\hat{J}_0^{(\mathrm{R})} \circ F_0 = \psi_0^* \geq 0$ on $(s,t]$ and hence $\hat{J}_0^{(\mathrm{R})} \geq 0$ on $(F_0(s),F_0(t)]$. Therefore,
\begin{align*}
\int_s^t \psi_0^* &\geq \E\biggl(\frac{\psi_0^*(Z)}{p_0(Z)}\Ind_{\{Z \in (s,t]\}}\biggr) = \E\biggl(\frac{\hat{J}_0^{(\mathrm{R})}(U)}{J_0(U)}\Ind_{\{U \in (F_0(s),F_0(t)]\}}\biggr) \\
&= \int_{F_0(s)}^{F_0(t)} \frac{\hat{J}_0^{(\mathrm{R})}}{J_0} \geq \int_{F_0(s)}^{F_0(t)} \frac{\hat{J}_0^{(\mathrm{R})}}{\hat{J}_0}
= \log\hat{J}_0\bigl(F_0(t)\bigr) - \log\hat{J}_0\bigl(F_0(s)\bigr),
\end{align*}
where the final equality holds because $\log\hat{J}_0$ is finite-valued, Lipschitz and hence absolutely continuous on $[F_0(s),F_0(t)]$ with derivative equal to $\hat{J}_0^{(\mathrm{R})}/\hat{J}_0$, Lebesgue almost everywhere. This proves the first line of~\eqref{eq:psi0-int-integral}.  If $s \in \mathcal{T}$, then by Lemma~\ref{lem:J0-J0hat},
\[
\log\hat{J}_0\bigl(F_0(t)\bigr) - \log\hat{J}_0\bigl(F_0(s)\bigr) = \log\frac{\hat{J}_0\bigl(F_0(t)\bigr)}{p_0(s)} \geq \log\frac{J_0\bigl(F_0(t)\bigr)}{p_0(s)} = \log\frac{p_0(t)}{p_0(s)},
\]
which yields the first line of~\eqref{eq:p0-psi0-ineq}. The proofs in the case $s_0 \leq s \leq t$ are analogous, except that now $\hat{J}_0^{(\mathrm{R})} \circ F_0 = \psi_0^* \leq 0$ on $(s,t]$ and hence $\hat{J}_0^{(\mathrm{R})} \leq 0$ on $(F_0(s),F_0(t)]$, so the directions of all inequalities above are reversed.
\end{proof}

\begin{proof}[Proof of Lemma~\ref{lem:p0-star}]
Since $\sup_{u \in (0,1)}\hat{J}_0(u) = \sup_{u \in (0,1)}J_0(u) = \sup_{z \in \mathcal{S}_0}p_0(z) > 0$ and $\hat{J}_0$ is concave, we have
\[
\hat{J}_0^{(\mathrm{R})}(0) = \lim_{u \searrow 0}\hat{J}_0^{(\mathrm{R})}(u) > 0 > \lim_{u \nearrow 1}\hat{J}_0^{(\mathrm{R})}(u) = \hat{J}_0^{(\mathrm{R})}(1).
\]
We have $(F_0 \circ F_0^{-1})(u) = u$ for all $u \in (0,1)$, so if $\delta > 0$ is sufficiently small, then $z_1 := F_0^{-1}(\delta)$ and $z_2 := F_0^{-1}(1 - \delta)$ are elements of $\mathcal{S}_0$ satisfying $\psi_0^*(z_1) = \hat{J}_0^{(\mathrm{R})}(\delta) > 0 > \hat{J}_0^{(\mathrm{R})}(1 - \delta) = \psi_0^*(z_2)$. By Lemma~\ref{lem:psi0-star}, $\psi_0^*$ is decreasing and $\{z \in \R : \psi_0^*(z) \in \R\} = \mathcal{S}_0$, so
\[
\phi_0^*(z) := \int_{z_1}^z\psi_0^*
\]
is well-defined in $[-\infty,\infty)$ for all $z \in\R$, with $\phi_0^*(z) \in \R$ for all $z \in \mathcal{S}_0$. By Lemma~\ref{lem:p0-psi0-ineq},
\[
\phi_0^*(z) =
\begin{cases}
-\int_z^{z_1}\psi_0^* \leq \log\hat{J}_0\bigl(F_0(z)\bigr) - \log\hat{J}_0(\delta) \;\;&\text{for }z \leq z_1 \\
\phi_0^*(z_2) + \int_{z_2}^z \psi_0^* \leq \phi_0^*(z_2) + \log\hat{J}_0\bigl(F_0(z)\bigr) - \log\hat{J}_0(1 - \delta) \;\;&\text{for }z \geq z_2.
\end{cases}
\]
Together with~\eqref{eq:J01}, this shows that $\phi_0^*(z) = \phi_0^*(z_{\min}) = \lim_{z' \searrow z_{\min}}\phi_0^*(z') = -\infty$ for all $z \leq z_{\min}$, and $\phi_0^*(z) = \phi_0^*(z_{\max}) = \lim_{z' \nearrow z_{\max}}\phi_0^*(z') = -\infty$ for all $z \geq z_{\max}$. We deduce that $\phi_0^*$ is concave and continuous as a function from $\R$ to $[-\infty,\infty)$, with $\{z \in \R : \phi_0^*(z) > -\infty\} = \mathcal{S}_0$.
By~\citet[Theorem~24.2]{rockafellar97convex}, $\phi_0^*$ has right derivative $\psi_0^*$ on $\mathcal{S}_0$, with $(\phi_0^*)' = \psi_0^*$ Lebesgue almost everywhere on $\mathcal{S}_0$. Since $\psi_0^*$ is decreasing,
\[
\phi_0^*(z) \leq
\begin{cases}
-\psi_0^*(z_1)(z_1 - z)&\;\;\text{for }z \leq z_1 \\ 
\phi_0^*(z_2) + \int_{z_2}^z \psi_0^* \leq \phi_0^*(z_2) - |\psi_0^*(z_2)|(z - z_2)&\;\;\text{for }z \geq z_2,
\end{cases}
\]
so 
\[
\int_\R e^{\phi_0^*} \leq \frac{e^{\phi_0^*(z_1)}}{\psi_0^*(z_1)} + \int_{z_1}^{z_2}e^{\phi_0^*} + \frac{e^{\phi_0^*(z_2)}}{|\psi_0^*(z_2)|} < \infty.
\]
Therefore, $p_0^* := e^{\phi_0^*}/\int_\R e^{\phi_0^*}$ is a continuous log-concave density such that $\supp p_0^* = \mathcal{S}_0$ and $\log p_0^* = \phi_0^* - \log\bigl(\int_{\mathbb{R}} e^{\phi_0^*}\bigr)$ has right derivative $\psi_0^*$ on $\mathcal{S}_0$.

If $\tilde{p}$ is another continuous log-concave density such that $\log\tilde{p}$ has right derivative $\psi_0^*$ on $\mathcal{S}_0$, then by~\citet[Corollary~24.2.1]{rockafellar97convex}, $\log\tilde{p}(z) = \log\tilde{p}(z_1) + \int_{z_1}^z\psi_0^*$ for all $z \in\R$. This implies that $\tilde{p} = p_0^*$, so $p_0^*$ is the unique density with the required properties. 

\medskip
\noindent
Finally, suppose that $p_0$ is a continuous log-concave density on $\R$. Then $\phi_0 := \log p_0$ is concave, and on $\mathcal{S}_0 = \supp p_0 = \{z \in\R : \phi_0(z) > -\infty\}$, its right derivative $\psi_0 := \phi_0^{(\mathrm{R})}$ is decreasing and right-continuous. 
% Note that $\phi_0$ and hence $p_0 = e^{\phi_0}$ are locally Lipschitz on $\mathcal{S}_0$ and hence differentiable Lebesgue almost everywhere, with 
Thus, $p_0$ has right derivative $p_0^{(\mathrm{R})} = \psi_0 p_0$ on $\mathcal{S}_0$. Moreover, $F_0$ is strictly increasing and differentiable on $\mathcal{S}_0 = \{z \in\R : F_0(z) \in (0,1)\}$, so $F_0^{-1}$ is differentiable on $(0,1)$ with derivative $(F_0^{-1})'(u) = 1/(p_0 \circ F_0^{-1})(u)$ for $u \in (0,1)$, and $(F_0^{-1} \circ F_0)(z) = z$ for all $z \in \mathcal{S}_0$. Consequently, $J_0 = p_0 \circ F_0^{-1}$ 
% is locally Lipschitz on $(0,1)$
has right derivative $J_0^{(\mathrm{R})}(u) = p_0^{(\mathrm{R})}\bigl(F_0^{-1}(u)\bigr)/p_0\bigl(F_0^{-1}(u)\bigr) = (\psi_0 \circ F_0^{-1})(u)$ for $u \in (0,1)$. Since $\psi_0$ is decreasing, $J_0$ is therefore concave on $(0,1)$ by~\citet[Theorem~24.2]{rockafellar97convex}; see also~\citet[Proposition~A.1(c)]{bobkov1996extremal}. Therefore, $\hat{J}_0 = J_0$ and $\hat{J}_0^{(\mathrm{R})} = J_0^{(\mathrm{R})} = \psi_0 \circ F_0^{-1}$ on $(0,1)$, so $\psi_0^* = \psi_0 \circ F_0^{-1} \circ F_0 = \psi_0$ on $\mathcal{S}_0$. Together with the arguments in the previous paragraph, this implies that $\phi_0^* = \phi_0$ and hence $p_0^* = p_0$, as claimed.
\end{proof}

\begin{proof}[Proof of Proposition~\ref{prop:Vp0-MLE}]
Given $\epsilon \in (0,1)$, let $a \geq (1 - \epsilon^2)/\epsilon^2 > (1 - \epsilon)/\epsilon$ and define $p_0 \colon \R \to [0,\infty)$ by
\[
p_0(z) :=
\begin{cases}
\epsilon a e^{-a(|z| - 1)}/2 \;\;&\text{for }|z| \geq 1 \\
\epsilon a e^{-b(1 - |z|)}/2 \;\;&\text{for }|z| < 1,
\end{cases}
\]
where $b > 0$ uniquely satisfies $1 - \epsilon = \int_{-1}^1 \epsilon a e^{b(|z| - 1)}/2\,dz = \epsilon a(1 - e^{-b})/b$. Then $\int_\R p_0 = 1$, so $p_0$ is a symmetric, absolutely continuous and piecewise log-affine density, and $b = (1 - e^{-b})a\epsilon/(1 - \epsilon) \leq a\epsilon/(1 - \epsilon)$. 
% Letting $c := a\epsilon/(1 - \epsilon)$, we have $b = c(1 - e^{-b}) \leq c$ and hence $b \leq c(1 - e^{-c})$. 
Moreover,
\[
\psi_0(z) := \frac{p_0'(z)}{p_0(z)} =
\begin{cases}
-a \;\;&\text{for }z > 1 \\
b \;\;&\text{for }z \in (0,1) \\
-\psi_0(-z) \;\;&\text{for }z < 0,
\end{cases}
\]
so $i(p_0) = \int_\R \psi_0^2\,p_0 = b^2(1 - \epsilon) + a^2\epsilon$ and $\psi_0$ is increasing on $(-1,0) \cup (0,1)$. Thus, $p_0$ is log-convex on $[-1,1]$, and log-affine on both $(-\infty,1]$ and $[1,\infty)$. Since $\int_\R zp_0(z)\,dz = 0$, the proof of~\citet[Remark~2.11(ii)]{dumbgen2011approximation} ensures that the log-concave maximum likelihood projection $q_0 \equiv p_0^{\mathrm{ML}}$ of $p_0$ satisfies
\[
p_0^{\mathrm{ML}}(z) =
\begin{cases}
p_0(z) \;\;&\text{for }|z| \geq 1 + \delta \\
p_0(1 + \delta) = \dfrac{\epsilon a e^{-a\delta}}{2} \;\;&\text{for }|z| \leq 1 + \delta,
\end{cases} 
\]
where $\delta > 0$ uniquely solves $1 = \int_\R p_0^{\mathrm{ML}} = \epsilon\bigl(a(1 + \delta) + 1\bigr)e^{-a\delta}$. Then the associated score function $\psi_0^{\mathrm{ML}} := q_0^{(\mathrm{R})}/q_0$ is given by
\[
\psi_0^{\mathrm{ML}}(z) = 
\begin{cases}
a \;\;&\text{for }z < -(1 + \delta) \\
0 \;\;&\text{for }z \in [-(1 + \delta), 1 + \delta) \\
-a \;\;&\text{for }z \geq 1 + \delta.
\end{cases}
\]
Therefore, $\psi_0^{\mathrm{ML}} \in \Psi_\downarrow(p_0)$ with
\begin{align*}
\int_\R (\psi_0^{\mathrm{ML}})^2\,dP_0 = 2a^2\int_{1+\delta}^\infty p_0 = a^2\epsilon e^{-a\delta} = a\bigl(p_0(1 + \delta) + p_0(-1 - \delta)\bigr) = -\int_\R p_0 \,d\psi_0^{\mathrm{ML}},
\end{align*}
so 
\[
\frac{1}{V_{p_0}(\psi_0^{\mathrm{ML}})} = \frac{\bigl(\int_\R p_0 \,d\psi_0^{\mathrm{ML}}\bigr)^2}{\int_\R (\psi_0^{\mathrm{ML}})^2\,dP_0} = -\int_\R p_0\,d\psi_0^{\mathrm{ML}} = a^2\epsilon e^{-a\delta}.
\]
Furthermore, $J_0'(u) = (\psi_0 \circ F_0^{-1})(u)$ whenever $u = F_0(z)$ for $z \neq \{-1,0,1\}$, so $J_0$ is symmetric about $1/2$, convex on $[F_0(-1),F_0(1)]$, and linear on both $[0,F_0(-1)]$ and $[F_0(1),1]$. Hence, $\hat{J}_0 = J_0$ on $[0,F_0(-1)] \cup [F_0(1),1]$ and $\hat{J}_0 = p_0(1)$ on $[F_0(-1),F_0(1)]$, so
\[
\psi_0^*(z) = \hat{J}_0^{(\mathrm{R})}\bigl(F_0(z)\bigr) =
\begin{cases}
a \;\;&\text{for }z < -1 \\
0 \;\;&\text{for }z \in [-1,1) \\
-a \;\;&\text{for }z \geq 1.
\end{cases}
\]
Thus, by Theorem~\ref{thm:antitonic-score-proj}\textit{(c)}, $1/V_{p_0}(\psi_0^*) = i^*(p_0) = \int_\R (\psi_0^*)^2\,p_0 = a^2\epsilon$,
so
\[
\frac{V_{p_0}(\psi_0^*)}{V_{p_0}(\psi_0^{\mathrm{ML}})} = e^{-a\delta} = \frac{1}{\epsilon\bigl(a(1 + \delta) + 1\bigr)} < \frac{1}{\epsilon(a + 1)} \leq \epsilon
\]
and
\[
\mathrm{ARE}^*(p_0) = \frac{i^*(p_0)}{i(p_0)} = \frac{a^2\epsilon}{b^2(1 - \epsilon) + a^2\epsilon} \geq 
% \frac{a^2\epsilon}{c^2(1 - e^{-c})^2(1 - \epsilon) + a^2\epsilon} = 
\frac{a^2\epsilon}{(a\epsilon)^2/(1 - \epsilon) + a^2\epsilon} = 1 - \epsilon,
\]
as required.
\end{proof}

\begin{proof}[Proof of Lemma~\ref{lem:ARE-lower-bound}]
By Lemma~\ref{lem:psi0-star}, $\hat{J}_0$ is continuous on $[0,1]$ with $\hat{J}_0(0) = \hat{J}_0(1) = 0$, and we can find $u^* \in (0,1)$ such that $\hat{J}_0(u^*) = J_0(u^*) = \norm{J_0}_\infty = \norm{p_0}_\infty < \infty$. Then by the Cauchy--Schwarz inequality and~\citet[Corollary~24.2.1]{rockafellar97convex},
\begin{align*}
\int_0^{u^*} \bigl(\hat{J}_0^{(\mathrm{R})}\bigr)^2 &\geq \frac{\bigl(\int_0^{u^*} \hat{J}_0^{(\mathrm{R})}\bigr)^2}{u^*} = \frac{\hat{J}_0(u^*)^2}{u^*} = \frac{\norm{p_0}_\infty^2}{u^*}, \\
\int_{u^*}^1 \bigl(\hat{J}_0^{(\mathrm{R})}\bigr)^2 &\geq \frac{\bigl(\int_{u^*}^1 \hat{J}_0^{(\mathrm{R})}\bigr)^2}{1 - u^*} = \frac{\hat{J}_0(u^*)^2}{1 - u^*} = \frac{\norm{p_0}_\infty^2}{1 - u^*},
\end{align*}
with equality if and only if $\hat{J}_0$ is linear on both $[0,u^*]$ and $[u^*,1]$. Thus, by Remark~\ref{rem:fisher-J},
\[
i^*(p_0) = \int_0^1 \bigl(\hat{J}_0^{(\mathrm{R})}\bigr)^2 \geq \norm{p_0}_\infty^2 \Bigl(\frac{1}{u^*} + \frac{1}{1 - u^*}\Bigr) \geq 4\norm{p_0}_\infty^2.
\]
Equality holds if and only if $u^* = 1/2$ and $\hat{J}_0(u) = \min(u, 1 - u)/\sigma$ for some $\sigma > 0$. In this case, letting $\mu := F_0^{-1}(1/2)$, we deduce that $\psi_0^*(z) = (\hat{J}_0^{(\mathrm{R})} \circ F_0)(z) = -\sgn(z - \mu)/\sigma$ for all $z \in \R$, so $p_0^*$ is a Laplace density of the stated form.
\end{proof}

To prove Proposition~\ref{prop:p0-star-fisher}, we require some further definitions and lemmas. Let $\mathcal{T}$ be as in the proposition. Then $\mathcal{T}^c$ is an open subset of $\mathbb{R}$ and hence has a unique representation as a countable disjoint union $\bigcup_{k=1}^K (s_k,t_k)$ of open intervals, where $K \in \N_0 \cup \{\infty\}$ and $-\infty \leq s_k < t_k \leq \infty$ for every $k$. Recall from Lemma~\ref{lem:p0-psi0-ineq} that $s_0 = \inf\{z \in \R : \psi_0^*(z) \leq 0\}$ and $t_0 = \sup\{z \in \R : \psi_0^*(z) < 0\}$. For $t \in \R$, let $\psi_0(t-) := \lim_{z \nearrow t}\psi_0^*(z)$. For $k \in \N$ such that $k \leq K$, inductively define $p_k \colon [-\infty,\infty] \to \R$ by
\[
p_k(z) := 
\begin{cases}
p_{k-1}(z) \;\;&\text{for }z \leq s_k \\
p_{k-1}(s_k)e^{\psi_0^*(s_k)(z - s_k)} \;\;&\text{for } z \in [s_k,t_k] \\
p_{k-1}(z) \cdot \dfrac{p_{k-1}(s_k)e^{\psi_0^*(s_k)(t_k - s_k)}}{p_{k-1}(t_k)} \;\;&\text{for }z > t_k
\end{cases}
\]
if $s_k > s_0$, and otherwise let 
\[
p_k(z) := 
\begin{cases}
p_{k-1}(z) \cdot \dfrac{p_{k-1}(t_k)e^{\psi_0^*(t_k-)(s_k - t_k)}}{p_{k-1}(s_k)}  \;\;&\text{for }z < s_k \\[6pt]
p_{k-1}(t_k)e^{\psi_0^*(t_k-)(z - t_k)} \;\;&\text{for } z \in [s_k,t_k] \\
p_{k-1}(z) \;\;&\text{for }z \geq t_k.
\end{cases}
\]
if $s_k \leq s_0$. When $K < \infty$, define $p_k := p_K$ for all $k \in \N$ with $k > K$. By Lemmas~\ref{lem:psi0-star} and~\ref{lem:J0-J0hat}, $\psi_0^*(z) \in \R$ and $p_0(z) > 0$ for all $z \in \mathcal{T}$. Since $s_k,t_k \in \mathcal{T} \cup \{-\infty,\infty\}$ and $p_k(s_0) = p_{k-1}(s_0)$ for $k \in \N$ such that $k \leq K$, it follows by induction that $p_k > 0$ on $\mathcal{T}$ for all $k \in \N$, with $p_k(s_0) = p_0(s_0)$. In particular, $p_{k-1}(s_k) > 0$ whenever $s_k > -\infty$ and $p_{k-1}(t_k) > 0$ whenever $t_k < \infty$.

\begin{lemma}
\label{lem:pk-induction}
For every $k \in \N_0$, the following statements hold.
\begin{enumerate}[label=(\alph*)]
\item $p_k \leq p_0$ on $\mathcal{T}$, and $\norm{p_k}_\infty = \norm{p_0}_\infty$;
\item $\displaystyle p_0(z) \int_s^t p_k \geq p_k(z) \int_s^t p_0$ if $z \in \{s,t\}$ for some $s,t \in \mathcal{T} \cup \{-\infty,\infty\}$ such that $s \leq t$;
% If either $t_k \leq t \wedge t_0$ or $s_k \geq s \vee s_0$
% NB: if $s_0 \in [s,t]$, then since $p_0(s_k) = p_k(s_0)$, we obtain the tighter inequality $\int_s^t p_k = \int_s^{s_0} p_k + \int_{s_0}^t p_k \geq \int_s^{s_0} p_{k-1} + \int_{s_0}^t p_{k-1} = \int_s^t p_{k-1}$.
\item $\dfrac{p_k(z)}{p_k(s_j)} = 
\begin{cases}
p_0(z)/p_0(s_j) &\text{if }z \in [s_j,t_j]\text{ for some }j \in \N\text{ with }j > k \\
p_j(z)/p_j(s_j) &\text{if }z \in [s_j,t_j]\text{ for some }j \in \N\text{ with }j < k;
\end{cases}$
\item $\displaystyle \psi_0^*(t-) \int_s^t p_k \leq p_k(t) - p_k(s) \leq \psi_0^*(s)\int_s^t p_k$ for all $s,t \in \mathcal{T} \cup \{-\infty,\infty\}$ such that $s \leq t$.
\end{enumerate}
\end{lemma}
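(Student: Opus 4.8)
The plan is to prove statements (a)--(d) simultaneously by induction on $k$: they are genuinely interlocked, since the pointwise bound in (a) at level $k$ invokes the equality form of (d) at level $k-1$ on the flat interval $[s_k,t_k]$, while (b) and (d) at level $k$ need (c) at level $k-1$ to pin down $p_{k-1}$ on the intervals that are about to be modified or rescaled. When $K<\infty$, the stabilisation $p_k=p_K$ for $k>K$ makes those cases identical to $k=K$, so it suffices to treat $k\le K$.

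\emph{Base case $k=0$.} Here (a) and (b) are equalities and (c) is a tautology (for $j>0$) or vacuous (for $j<0$). For (d), Lemma~\ref{lem:J0-J0hat} gives $p_0=\hat J_0\circ F_0$ on $\mathcal T$ (and at $\pm\infty$, where both sides vanish), while $\int_s^t p_0=F_0(t)-F_0(s)$. Since $\hat J_0$ is concave with decreasing right derivative, $\psi_0^*=\hat J_0^{(\mathrm{R})}\circ F_0$, and $\psi_0^*(t-)$ equals the left derivative of $\hat J_0$ at $F_0(t)$ by continuity of $F_0$ (cf.\ Lemma~\ref{lem:psi0-star}), we obtain $p_0(t)-p_0(s)=\int_{F_0(s)}^{F_0(t)}\hat J_0^{(\mathrm{R})}\in\bigl[\psi_0^*(t-),\,\psi_0^*(s)\bigr]\cdot\bigl(F_0(t)-F_0(s)\bigr)$, which is exactly (d).

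\emph{Inductive step.} Assume (a)--(d) at level $k-1$. I treat the case $s_k>s_0$; the case $s_k\le s_0$ is the mirror image (interchange the roles of $-\infty$ and $+\infty$, of $s_k$ and $t_k$, and of left and right rescaling throughout). Using $s_k>s_0$ and the maximality of the constancy interval, $c_k:=\psi_0^*(s_k)=\psi_0^*(t_k-)<0$ and $\psi_0^*\equiv c_k$ on $[s_k,t_k)$; moreover $(s_k,t_k)$ is disjoint from and non-adjacent to every other $(s_j,t_j)$, so each such interval lies entirely in $(-\infty,s_k]$ or in $[t_k,\infty)$. On $(-\infty,s_k]$ one has $p_k=p_{k-1}$; on $[s_k,t_k]$, $p_k$ is the log-affine function of slope $c_k$ agreeing with $p_{k-1}$ at $s_k$; on $[t_k,\infty)$, $p_k=\mu_k p_{k-1}$ with $\mu_k=p_k(t_k)/p_{k-1}(t_k)$. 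Then (c) is immediate: on $(s_j,t_j)$ with $j\ne k$, $p_k$ is a positive constant multiple of $p_{k-1}$ and hence has the same endpoint ratios, so the claim reduces to (c) at level $k-1$. For the pointwise bound in (a): $p_k(s_k)=p_{k-1}(s_k)\le p_0(s_k)$ by hypothesis; by (c) at level $k-1$ applied with index $j=k$ (which exceeds $k-1$), $p_{k-1}=\lambda p_0$ on $[s_k,t_k]$ for some $\lambda>0$, so Lemma~\ref{lem:p0-psi0-ineq} (valid since $s_k>s_0$, $t_k\in\mathcal T$ and $\int_{s_k}^{t_k}\psi_0^*=c_k(t_k-s_k)$) yields $p_k(t_k)=\lambda p_0(s_k)e^{c_k(t_k-s_k)}\le\lambda p_0(t_k)=p_{k-1}(t_k)\le p_0(t_k)$; hence $\mu_k\le1$, so $p_k\le p_{k-1}\le p_0$ on $\mathcal T\cap[t_k,\infty)$ and $p_k=p_{k-1}\le p_0$ on $\mathcal T\cap(-\infty,s_k]$. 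The same description gives $p_k\le p_{k-1}$ everywhere ($c_k<0$ makes $p_k$ decreasing on the middle piece with maximum $p_{k-1}(s_k)$), and combining the bound on $\mathcal T$ with log-affine interpolation on $\mathcal T^c$ gives $p_k\le\norm{p_0}_\infty$ globally; the equality $\norm{p_k}_\infty=\norm{p_0}_\infty$ follows once one checks that the maximiser $z^*$ of $p_0$ (characterised by $\psi_0^*(z^*-)\ge0\ge\psi_0^*(z^*)$) always lies on the anchor side of the interval being modified, except possibly inside a zero-slope constancy interval, where the flattening level is forced to equal $\norm{p_0}_\infty$ — so $p_k(z^*)=\norm{p_0}_\infty$ in every case.

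\emph{The main obstacle: propagating (b) and (d).} Fix $s\le t$ in $\mathcal T\cup\{\pm\infty\}$; the cases $s=-\infty$ or $t=\infty$ are trivial since then the relevant $p_0$-value vanishes, so take $s,t\in\mathcal T$. Decompose $[s,t]$ into its intersections with $(-\infty,s_k]$, $[s_k,t_k]$ and $[t_k,\infty)$. On the two outer blocks $p_k$ is a positive constant multiple of $p_{k-1}$, so the block versions of (b) and (d) transfer verbatim from level $k-1$; on $[s_k,t_k]$ the explicit log-affine form makes both bounds in (d) equalities (as $\psi_0^*$ is constant there) and the block version of (b) is a direct calculation. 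The real work is recombining, because the surgery \emph{strictly decreases} $\int_s^t p_k$ relative to $\int_s^t p_{k-1}$, so one cannot simply add the block estimates. For (b) — equivalently, the statement that the $p_0$-weighted average of $p_k/p_0$ over $[s,t]$ dominates its endpoint values (and, with $s=-\infty$, a hazard-type comparison) — one argues with the ratio $p_k/p_0$ and a mediant-type inequality, using the ordering of the block ratios forced by (a) and the explicit middle-block form; for (d) one chains the block estimates using monotonicity of $\psi_0^*$ together with $\int_{s_k}^{t_k}p_k\le\int_{s_k}^{t_k}p_{k-1}$. Some further care is needed in the boundary cases $s_k=-\infty$ or $t_k=\infty$. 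I expect this recombination step, especially the argument for (b), to be the crux of the proof.
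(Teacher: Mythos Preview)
Your inductive framework and the handling of the base case, (a), (c) and (d) are essentially the paper's approach. For (d), the chaining of block estimates via monotonicity of $\psi_0^*$ is exactly right and is all that is needed; the claim that $\int_{s_k}^{t_k}p_k\le\int_{s_k}^{t_k}p_{k-1}$ is not used (and is in fact false), so drop it.

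The genuine gap is in (b), and your intuition there is backwards. The surgery does \emph{not} decrease $\int_s^t p_k$ when $(s_k,t_k)\subseteq(s,t)$: it \emph{increases} it, i.e.\ $\int_s^t p_k\ge\int_s^t p_{k-1}$. Once you have this, (b) is immediate from the sandwich
\[
p_0(z)\int_s^t p_k\;\ge\;p_0(z)\int_s^t p_{k-1}\;\ge\;p_{k-1}(z)\int_s^t p_0\;\ge\;p_k(z)\int_s^t p_0,
\]
using (b) at level $k-1$ and $p_k\le p_{k-1}$ on $\mathcal T$. The mediant heuristic you sketch does not obviously deliver this.

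The mechanism you are missing is a secant--slope identity on the modified interval. By (c) at level $k-1$, $p_{k-1}$ is a positive multiple of $p_0$ on $[s_k,t_k]$; combining this with the \emph{equality} case of (d) at level $0$ (both bounds in (d) coincide on a constancy interval of $\psi_0^*$) gives
\[
\frac{p_{k-1}(t_k)-p_{k-1}(s_k)}{\int_{s_k}^{t_k}p_{k-1}}\;=\;\frac{p_0(t_k)-p_0(s_k)}{\int_{s_k}^{t_k}p_0}\;=\;a_k.
\]
In your case $s_k>s_0$ (so $a_k<0$), apply (d) at level $k-1$ with $(s,t)=(t_k,\infty)$ to get $-p_{k-1}(t_k)\ge\psi_0^*(t_k)\int_{t_k}^\infty p_{k-1}$ and hence $\int_{t_k}^\infty p_{k-1}\big/p_{k-1}(t_k)\le 1/|a_k|$. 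An explicit computation of $\int_{s_k}^\infty p_k$ (log-affine piece plus the rescaled tail $\mu_k p_{k-1}$) then shows $\int_{s_k}^\infty p_k\ge\int_{s_k}^\infty p_{k-1}$; combined with $p_k=p_{k-1}$ on $(-\infty,s_k]$ and $\mu_k\le1$ this yields $\int_s^t p_k\ge\int_s^t p_{k-1}$. The case $s_k\le s_0$ is the mirror image (use $(s,t)=(-\infty,s_k)$ in (d)), and the case $a_k=0$ follows by a direct comparison of $p_k$ with $p_{k-1}$ on $[s_k,t_k]$ using $p_{k-1}(z)\le p_{k-1}(s_k)=p_{k-1}(t_k)$ there.
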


\begin{proof}
We will proceed by induction on $k$. When $k = 0$, \textit{(a)},~\textit{(b)} and~\textit{(c)} hold trivially, so it remains to prove \textit{(d)}. This holds with equality when $s = t$, so now let $s,t \in \mathcal{T} \cup \{-\infty,\infty\}$ be such that $s < t$. By the concavity of $\hat{J}_0$ together with Lemmas~\ref{lem:psi0-star} and~\ref{lem:J0-J0hat}, $v := F_0(s)$ and $w := F_0(t)$ satisfy $0 \leq v < w \leq 1$ and
\begin{equation}
\label{eq:p0-F0-ineq-1}
p_0(t) - p_0(s) = \hat{J}_0(w)  - \hat{J}_0(v) 
\begin{cases}
\leq \hat{J}_0^{(\mathrm{R})}(v)(w - v) = \psi_0^*(s)\int_s^t p_0 \\[3pt]
\geq \hat{J}_0^{(\mathrm{L})}(w)(w - v) = \displaystyle\lim_{u \nearrow w}\hat{J}_0^{(\mathrm{R})}(u)(w - v) = \textstyle\psi_0^*(t-)\int_s^t p_0,
\end{cases}
\end{equation}
where $\hat{J}_0^{(\mathrm{L})}(w) = \lim_{u \nearrow w}\hat{J}_0^{(\mathrm{R})}(u)$ by~\citet[Theorem~24.1]{rockafellar97convex}. 

Next, consider a general $k \in \N$. If $k > K$, then $p_k = p_{k-1}$ and hence \textit{(a)}--\textit{(d)} hold by induction. Supposing now that $k \leq K$, let $a_k := \psi_0^*(s_k)$ if $s_k > -\infty$ and otherwise let $a_k := \psi_0^*(t_k-)$, so that $\psi_0^* = a_k$ on $(s_k,t_k)$.

\medskip
\noindent
\textit{(a)} By part \textit{(c)} of the inductive hypothesis and~\eqref{eq:p0-psi0-ineq},
\begin{equation}
\label{eq:rk}
r_k := \frac{p_{k-1}(t_k)e^{a_k(s_k - t_k)}}{p_{k-1}(s_k)} = \frac{p_0(t_k)e^{a_k(s_k - t_k)}}{p_0(s_k)}
\begin{cases}
\leq 1 \;\;&\text{if }s_k \leq s_0 \\
\geq 1 \;\;&\text{if }s_k \geq s_0.
\end{cases}
\end{equation}
Therefore,
\begin{equation}
\label{eq:pk-pk-1}
\begin{cases}
p_k = r_k p_{k-1} \leq p_{k-1} \text{ on }(-\infty,s_k] \quad\text{and}\quad p_k = p_{k-1} \text{ on }[t_k,\infty) \;\;&\text{if }s_k \leq s_0 \\
p_k = p_{k-1} \text{ on }(-\infty,s_k] \quad\text{and}\quad p_k = r_k^{-1}p_{k-1} \leq p_{k-1} \text{ on }[t_k,\infty) \;\;&\text{if }s_k \geq s_0,
\end{cases}
\end{equation}
while for $z \in [s_k,t_k]$, we have
\[
p_k(z) = 
\begin{cases}
p_{k-1}(t_k)e^{a_k(z - t_k)} \leq p_{k-1}(t_k) \;\;&\text{if }s_k \leq s_0 \\
p_{k-1}(s_k)e^{a_k(z - s_k)} \leq p_{k-1}(s_k) \;\;&\text{if }s_k \geq s_0.
\end{cases}
\]
This shows that $p_k \leq p_{k-1}$ on $\R \setminus (s_k,t_k) \supseteq \mathcal{T}$ and $\norm{p_k}_\infty = \norm{p_{k-1}}_\infty$, so \textit{(a)} holds by induction.

\medskip
\noindent
% Let $a_k := \psi_0^*(s_k) = \psi_0^*(t_k-)$ and $r_k := p_{k-1}(t_k)e^{a_k(s_k - t_k)}/p_{k-1}(s_k)$.
\textit{(b)} Taking $s = s_k$ and $t = t_k$ in~\eqref{eq:p0-F0-ineq-1}, we deduce from part~\textit{(c)} of the inductive hypothesis that
\begin{equation}
\label{eq:ak-eq}
\frac{p_{k-1}(t_k) - p_{k-1}(s_k)}{\int_{s_k}^{t_k} p_{k-1}} = \frac{p_0(t_k) - p_0(s_k)}{\int_{s_k}^{t_k} p_0} = a_k
\begin{cases}
\geq 0 \;\;&\text{if }s_k \leq s_0 \\
\leq 0 \;\;&\text{if }s_k \geq s_0.
\end{cases}
\end{equation}
% If either $t_k \leq s \wedge t_0$ or $s_k \geq t \vee s_0$, then $p_k = p_{k-1}$ on $(s,t)$ and the result holds trivially.
There is nothing to prove when $s = t$, so let $s,t \in \mathcal{T} \cup \{-\infty,\infty\}$ be such that $s < t$. If either $(s,t) \subseteq (-\infty,s_k)$ or $(s,t) \subseteq (t_k,\infty)$, then~\eqref{eq:pk-pk-1} implies that $p_k = rp_{k-1}$ on $[s,t]$ for some $r \in \{1,r_k,r_k^{-1}\}$, so by part~\textit{(c)} of the inductive hypothesis,
\[
p_0(z) \int_s^t p_k = rp_0(z) \int_s^t p_{k-1} \geq rp_{k-1}(z) \int_s^t p_0 = p_k(z) \int_s^t p_0
\]
% $s < t_k \leq t \wedge t_0$, in which case $s \leq s_k \leq s_0$ and $a_k \in [0,\infty)$
for $z \in \{s,t\}$. It remains to consider the case $(s_k,t_k) \subseteq (s,t)$. Assume first that $s_k \leq s_0$, so that $a_k \in [0,\infty)$. If $a_k = 0$, then by part~\textit{(c)} of the inductive hypothesis and~\eqref{eq:p0-psi0-ineq},
\[
p_{k-1}(z) = \frac{p_0(z)}{p_0(s_k)} \cdot p_{k-1}(s_k) \leq p_{k-1}(s_k) \leq p_{k-1}(s_k) = p_{k-1}(t_k)
\]
for all $z \in [s_k,t_k]$, so
\[
\int_s^t p_k = \frac{p_{k-1}(t_k)}{p_{k-1}(s_k)}\int_{-\infty}^{s_k} p_{k-1} + \int_{s_k}^{t_k}p_{k-1}(t_k)\,dz + \int_{t_k}^t p_{k-1} \geq \int_s^{s_k} p_{k-1} + \int_{s_k}^{t_k}p_{k-1} + \int_{t_k}^t p_{k-1} = \int_s^t p_{k-1}.
\]
Suppose instead that $a_k > 0$. Taking $s = -\infty$ and $t = s_k$ in part~\textit{(d)} of the inductive hypothesis, we obtain $p_{k-1}(s_k) \geq \psi_0^*(s_k-)\int_{-\infty}^{s_k}p_{k-1} \geq a_k\int_{-\infty}^{s_k}p_{k-1}$, so
\[
\frac{\int_{-\infty}^{s_k}p_{k-1}}{p_{k-1}(s_k)} \leq \frac{1}{a_k}.
\]
Together with~\eqref{eq:ak-eq}, this implies that
\begin{align*}
\int_{-\infty}^{t_k}p_k = \int_{-\infty}^{s_k} r_k p_{k-1} + p_{k-1}(t_k) \cdot \frac{1 - e^{a_k(s_k - t_k)}}{a_k} &= p_{k-1}(t_k)e^{a_k(s_k - t_k)}\biggl(\frac{\int_{-\infty}^{s_k} p_{k-1}}{p_{k-1}(s_k)} - \frac{1}{a_k}\biggr) + \frac{p_{k-1}(t_k)}{a_k} \\
&\geq p_{k-1}(s_k)\biggl(\frac{\int_{-\infty}^{s_k} p_{k-1}}{p_{k-1}(s_k)} - \frac{1}{a_k}\biggr) + \frac{p_{k-1}(t_k)}{a_k} \\
&= \int_{-\infty}^{s_k} p_{k-1} + \frac{p_{k-1}(t_k) - p_{k-1}(s_k)}{a_k} = \int_{-\infty}^{t_k} p_{k-1}.
\end{align*}
Combining this with~\eqref{eq:rk} and~\eqref{eq:pk-pk-1} yields
\[
\int_s^t p_k = \int_{-\infty}^{t_k}p_k - \int_{-\infty}^s r_k p_{k-1} + \int_{t_k}^t p_{k-1} \geq \int_{-\infty}^{t_k}p_{k-1} - \int_{-\infty}^s p_{k-1} + \int_{t_k}^t p_{k-1} = \int_s^t p_{k-1}.
\]
In view of~\eqref{eq:pk-pk-1} and part~\textit{(b)} of the inductive hypothesis, we conclude that if $(s_k,t_k) \subseteq (s,t)$ and $s_k \leq s_0$, then
\[
p_0(z) \int_s^t p_k  = p_0(z) \int_s^t p_{k-1} \geq p_{k-1}(z) \int_s^t p_0 \geq p_k(z) \int_s^t p_0
\]
for $z \in \{s,t\}$, which proves \textit{(b)} in this case. On the other hand, if $(s_k,t_k) \subseteq (s,t)$ and $s_k > s_0$, then instead $a_k \in (-\infty,0]$, but the arguments are similar and hence omitted.

\medskip
\noindent
\textit{(c)} For $j \in [K]$ such that $j > k$, we have $(s_j,t_j) \cap (s_k,t_k) = \emptyset$, so by~\eqref{eq:pk-pk-1} and part~\textit{(c)} of the inductive hypothesis, 
\[
\frac{p_k(z)}{p_k(s_j)} = \frac{p_{k-1}(z)}{p_{k-1}(s_j)} =
\begin{cases}
p_0(z)/p_0(s_j) &\text{if }z \in [s_j,t_j]\text{ for }j \in \N\text{ with }j > k \\
p_j(z)/p_j(s_j) &\text{if }z \in [s_j,t_j]\text{ for }j \in \N\text{ with }j < k.
\end{cases}
\]
\textit{(d)} There is nothing to prove when $s = t$, so let $s,t \in \mathcal{T} \cup \{-\infty,\infty\}$ be such that $s < t$. If either $(s,t) \subseteq (-\infty,s_k)$ or $(s,t) \subseteq (t_k,\infty)$, then by~\eqref{eq:pk-pk-1}, $p_k = rp_{k-1}$ on $[s,t]$ for some $r \in \{1,r_k,r_k^{-1}\}$. Thus, by part \textit{(d)} of the inductive hypothesis,
\[
p_k(t) - p_k(s) = r\bigl(p_{k-1}(t) - p_{k-1}(s)\bigl) \:
\begin{cases}
\leq r\psi_0^*(s)\int_s^t p_{k-1} = \psi_0^*(s)\int_s^t p_k \\[3pt]
\geq r\psi_0^*(t-)\int_s^t p_{k-1} = \psi_0^*(t-)\int_s^t p_k,
\end{cases}
\]
as required. In the remaining case, $(s_k,t_k) \subseteq (s,t)$, and similarly by~\eqref{eq:pk-pk-1} and part \textit{(d)} of the inductive hypothesis,
\begin{align*}
\psi_0^*(s_k-)\int_s^{s_k} p_k &\leq p_k(s_k) - p_k(s) \leq \psi_0^*(s)\int_s^{s_k} p_k, \\
\psi_0^*(t-)\int_{t_k}^t p_k &\leq p_k(t) - p_k(t_k) \leq \psi_0^*(t_k)\int_{t_k}^t p_k.
\end{align*}
Moreover, $a_k\int_{s_k}^{t_k}p_k = a_k p_k(t_k)\int_{s_k}^{t_k}e^{a(z - t_k)}\,dz = p_k(t_k) - p_k(s_k)$ and $a_k = \psi_0^*(s_k) = \psi_0^*(t_k-)$, so
\[
\psi_0^*(t_k-)\int_{s_k}^{t_k} p_k = p_k(t_k) - p_k(s_k) = \psi_0^*(s_k)\int_{s_k}^{t_k} p_k.
\]
Since $\psi_0^*$ is decreasing, 
% $\psi_0^*(t-) \leq \psi_0^*(t_1-) \leq \psi_0^*(s_1-)$ and $\psi_0^*(t_1) \leq \psi_0^*(s_1) \leq \psi_0^*(s)$
combining this with the two inequalities above yields \textit{(d)}.
\end{proof}

\begin{lemma}
\label{lem:pk-limit}
For $z \in \R$, we have
\begin{equation}
\label{eq:pk-limit}
\lim_{k \to \infty}p_k(z) = \frac{p_0(s_0)}{p_0^*(s_0)} \cdot p_0^*(z).
\end{equation}
Moreover, let $t_{\min} := \inf\mathcal{T}$ and $t_{\max} := \sup\mathcal{T}$. If $s,t \in \mathcal{T} \cup \{-\infty,\infty\}$ are such that $-\infty < s \vee t_{\min} \leq t \wedge t_{\max} < \infty$, then
\begin{equation}
\label{eq:pk-int-limit}
\lim_{k \to \infty}\int_s^t p_k = \frac{p_0^*(s_0)}{p_0(s_0)} \int_s^t p_0^*.
\end{equation}
% Is this true in general if $|s| \vee |t| = \infty$?
\end{lemma}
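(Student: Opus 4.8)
The plan is to show that $(p_k)$ converges pointwise to a function $p_\infty$, to pass to the limit inside the integrals $\int_s^t p_k$, and then to identify $p_\infty$ with a rescaling of $p_0^*$ via the uniqueness statement in Lemma~\ref{lem:p0-star}. For the pointwise convergence, fix $z\in\R$. If $z\in\mathcal{T}$, then Lemma~\ref{lem:pk-induction}\textit{(a)} shows $\bigl(p_k(z)\bigr)_k$ is non-increasing and non-negative, hence convergent; call the limit $p_\infty(z)$. If $z$ lies in a component $(s_j,t_j)$ of $\mathcal{T}^c$, then the construction together with Lemma~\ref{lem:pk-induction}\textit{(c)} shows that for $k\ge j$ the restriction of $p_k$ to $(s_j,t_j)$ is a fixed scalar multiple of the log-affine function $p_j|_{(s_j,t_j)}$, the scalar being $p_k$ evaluated at a finite endpoint of the interval (using $t_j$ if $s_j=-\infty$, and $s_j$ if $t_j=\infty$); since such an endpoint lies in $\mathcal{T}$, where convergence is already known, and since the endpoint-at-infinity cases are governed by the explicit log-affine formula with a fixed slope, $p_k(z)$ converges there too. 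Thus $p_k\to p_\infty$ pointwise on $\R$, with $p_\infty\ge0$, $p_\infty\le\|p_0\|_\infty$ everywhere (Lemma~\ref{lem:pk-induction}\textit{(a)} again), $p_\infty(s_0)=p_0(s_0)>0$, and $p_\infty$ log-affine on each component of $\mathcal{T}^c$ with slope the constant value of $\psi_0^*$ there.

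For~\eqref{eq:pk-int-limit}, write $[a,b]:=[s\vee t_{\min},\,t\wedge t_{\max}]$, which under the stated hypotheses is a bounded subinterval of the closure of $\mathcal{T}$. On $[a,b]$ the uniform bound $p_k\le\|p_0\|_\infty$ lets me invoke the bounded convergence theorem to conclude $\int_a^b p_k\to\int_a^b p_\infty$. The rest of $[s,t]$, namely $[s,t]\setminus[a,b]$, is a union of at most two half-lines contained in $\mathcal{T}^c$; on each, $p_k$ is for $k$ large an explicit log-affine function with a fixed slope and with endpoint value converging to that of $p_\infty$, so integrating the closed form shows that $\int$ of $p_k$ over these pieces also converges to the corresponding integral of $p_\infty$. (The conditions $-\infty<s\vee t_{\min}$ and $t\wedge t_{\max}<\infty$ are precisely what guarantee that these leftover pieces lie in the flattened region $\mathcal{T}^c$, rather than in an unbounded part of $\mathcal{T}$ where no log-affine structure is available.) Hence $\int_s^t p_k\to\int_s^t p_\infty$.

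It remains to identify $p_\infty$. Passing to the limit in Lemma~\ref{lem:pk-induction}\textit{(d)}, using the pointwise convergence of $p_k$ and the integral convergence just established, gives
\[
\psi_0^*(t-)\int_s^t p_\infty\;\le\;p_\infty(t)-p_\infty(s)\;\le\;\psi_0^*(s)\int_s^t p_\infty
\]
for all $s\le t$ in $\mathcal{T}\cup\{-\infty,\infty\}$, while on each component of $\mathcal{T}^c$ the function $p_\infty$ is already log-affine with slope $\psi_0^*$. From this two-sided estimate one reads off that $p_\infty$ is continuous, strictly positive on $\mathcal{S}_0$ and zero off $\overline{\mathcal{S}_0}$, is integrable, and that $\log p_\infty$ is locally absolutely continuous on $\mathcal{S}_0$ with $(\log p_\infty)'=\psi_0^*$ Lebesgue almost everywhere, hence with right derivative $\psi_0^*$ everywhere on $\mathcal{S}_0$ by right-continuity of $\psi_0^*$. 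The uniqueness part of Lemma~\ref{lem:p0-star} then forces $p_\infty\big/\!\int_\R p_\infty=p_0^*$, i.e.\ $p_\infty=\bigl(\int_\R p_\infty\bigr)p_0^*$; evaluating at $s_0$ and using $p_\infty(s_0)=p_0(s_0)$ determines the multiplicative constant and yields~\eqref{eq:pk-limit}, and substituting this expression for $p_\infty$ into the conclusion of the previous paragraph yields~\eqref{eq:pk-int-limit}.

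The main obstacle is this last step: extracting from the limiting form of the inequality in Lemma~\ref{lem:pk-induction}\textit{(d)} the precise structural facts needed to apply Lemma~\ref{lem:p0-star} — in particular that $\log p_\infty$ has right derivative \emph{exactly} $\psi_0^*$ on all of $\mathcal{S}_0$ (and not merely on $\mathcal{T}^c$ and almost everywhere on $\mathcal{T}$), which requires some care at the endpoints $t_{\min},t_{\max}$ and with the possibly intricate structure of $\mathcal{T}$ (countably many, possibly accumulating, components). The interdependence of~\eqref{eq:pk-limit} and~\eqref{eq:pk-int-limit} must also be respected: the integral convergence $\int_s^t p_k\to\int_s^t p_\infty$ should be proved first, using only the pointwise convergence and the uniform $L^\infty$ bound, before it is used to identify $p_\infty$.
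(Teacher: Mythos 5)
Your proposal takes a genuinely different route from the paper. The paper does not first establish existence of a pointwise limit and then identify it; instead it computes $\phi_k(t) - \phi_0(s_0)$ \emph{in closed form} by telescoping the explicit expression $\log r_\ell = \int_{s_\ell}^{t_\ell}\psi_0^* - \bigl(\phi_0(t_\ell) - \phi_0(s_\ell)\bigr)$ over the components $(s_\ell,t_\ell) \subset (s_0,t)$, combined with a quantile-transform identity for $\int_{s_0}^t \psi_0^*\Ind_{\mathcal{T}}$. This immediately yields $\phi_k(t) \to \phi_0(s_0) + \int_{s_0}^t \psi_0^* = \log\bigl(c\,p_0^*(t)\bigr)$, so the limit is known \emph{and} recognised as $c\,p_0^*$ in one step, with no structural properties of a putative limit function ever needing to be verified. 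Your observation that $(p_k(z))_k$ is monotone decreasing on $\mathcal{T}$ (hence convergent) is correct and not used by the paper, and your treatment of pointwise convergence on $\mathcal{T}^c$ via the fixed log-affine shape with converging endpoint scalar is also sound; the integral step (bounded convergence on the compact core plus the exponential tails with fixed slope $\psi_0^*(t_{\min}-)$ or $\psi_0^*(t_{\max})$) is essentially the same dominated-convergence argument as the paper's.

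The difficulty you flag in your last paragraph is, however, a genuine gap and not merely a loose end. To invoke the uniqueness clause of Lemma~\ref{lem:p0-star} you must exhibit $p_\infty$ (up to normalisation) as a \emph{continuous log-concave density with $\supp p_\infty = \mathcal{S}_0$} whose log has right derivative \emph{exactly} $\psi_0^*$ on all of $\mathcal{S}_0$. None of these come for free from the limiting two-sided estimate of Lemma~\ref{lem:pk-induction}\textit{(d)}. Continuity of $p_\infty$ at points of $\mathcal{T}$ (and hence the ability to replace $\frac{1}{t-s}\int_s^t p_\infty$ by $p_\infty(s)$ when $t\searrow s$ through $\mathcal{T}$) requires extracting a local Lipschitz bound, patching it across the countably many (possibly accumulating) components of $\mathcal{T}^c$, and checking there is no jump where the monotone limit on $\mathcal{T}$ meets the log-affine extension on $\overline{\mathcal{T}^c}$. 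Strict positivity of $p_\infty$ on $\mathcal{S}_0$ is also not automatic from a decreasing limit and needs an argument (e.g.\ a Gr\"onwall-type consequence of the same two-sided estimate starting from $p_\infty(s_0) = p_0(s_0) > 0$). And obtaining $(\log p_\infty)^{(\mathrm{R})}(z) = \psi_0^*(z)$ at a point $z \in \mathcal{T}$ that is an accumulation point of $\mathcal{T}$ from the right requires a squeeze along a sequence in $\mathcal{T}$ together with right-continuity of $\psi_0^*$ and the continuity of $p_\infty$ just discussed. I believe all of this can be made to work, but it is a substantial amount of additional argument; the paper's closed-form computation sidesteps every one of these issues, which is precisely why it is structured the way it is.
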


\begin{proof}
For $k \in \N$, let $\phi_k := \log p_k$ and define $r_k$ as in~\eqref{eq:rk}. Then as noted above, $\phi_k > -\infty$ on $\mathcal{T}$, and Lemma~\ref{lem:pk-induction}\textit{(c)} ensures that
\begin{equation}
\label{eq:log-rk}
\log r_k = \psi_0^*(s_k)(t_k - s_k) - \bigl(\phi_{k-1}(t_k) - \phi_{k-1}(s_k)\bigr) = \int_{s_k}^{t_k}\psi_0^*(z)\,dz - \bigl(\phi_0(t_k) - \phi_0(s_k)\bigr).
\end{equation}
Fix $t \in \mathcal{T}$ such that $t \geq s_0$, and for convenience, define $[K] = \emptyset$ when $K = 0$ and $[K] = \N$ when $K = \infty$. Let $\mathcal{K}$ be the set of $\ell \in [K]$ such that $(s_\ell,t_\ell) \subseteq (s_0,t)$, so that $(s_\ell,t_\ell) \cap (s_0,t) = \emptyset$ for $\ell \in [K] \setminus \mathcal{K}$ and hence $\mathcal{T}^c \cap (s_0,t) = \bigcup_{\ell \in \mathcal{K}}(s_\ell,t_\ell)$. Similarly to the proof of Lemma~\ref{lem:p0-psi0-ineq}, let $U \sim U(0,1)$, so that $Z := F_0^{-1}(U)\sim P_0$, $F_0(Z) = U$ and $\psi_0^*(Z) = \hat{J}_0^{(\mathrm{R})}(U)$. We have $\psi_0^* \leq 0$ and $p_0 > 0$ on $\mathcal{T} \cap (s_0,t)$, and moreover $\hat{J}_0$ is Lipschitz on $(F_0(s_0),F_0(t)]$ with $\hat{J}_0^{(\mathrm{R})} \leq 0$, so
\begin{align}
\int_{s_0}^t \psi_0^* \Ind_{\mathcal{T}} &= \E\biggl(\frac{\psi_0^*(Z)}{p_0(Z)}\Ind_{\{Z \in \mathcal{T} \cap (s_0,t)\}}\biggr) = \E\biggl(\frac{\hat{J}_0^{(\mathrm{R})}(U)}{\hat{J}_0(U)}\Ind_{\{F_0^{-1}(U) \in \mathcal{T} \cap (s_0,t)\}}\biggr) \notag \\
&= \E\biggl(\frac{\hat{J}_0^{(\mathrm{R})}(U)}{\hat{J}_0(U)}\Ind_{\{F_0^{-1}(U) \in (s_0,t]\}}\biggr) - \sum_{\ell \in \mathcal{K}} \E\biggl(\frac{\hat{J}_0^{(\mathrm{R})}(U)}{\hat{J}_0(U)}\Ind_{\{F_0^{-1}(U) \in (s_\ell,t_\ell]\}}\biggr) \notag \\
% &= \E\biggl(\frac{\hat{J}_0^{(\mathrm{R})}(U)}{\hat{J}_0(U)}\Ind_{\{U \in (F_0(s_0),F_0(t)]\}}\biggr) - \sum_{k \in \mathcal{K}} \E\biggl(\frac{\hat{J}_0^{(\mathrm{R})}(U)}{\hat{J}_0(U)}\Ind_{\{U \in (F_0(s_0),F_0(t)]\}}\biggr) \notag \\
&= \int_{F_0(s_0)}^{F_0(t)} \frac{\hat{J}_0^{(\mathrm{R})}}{\hat{J}_0} - \sum_{\ell \in \mathcal{K}} \int_{F_0(s_\ell)}^{F_0(t_\ell)} \frac{\hat{J}_0^{(\mathrm{R})}}{\hat{J}_0} \notag \\
&= \log\hat{J}_0\bigl(F_0(t)\bigr) - \log\hat{J}_0\bigl(F_0(s_0)\bigr) - \sum_{\ell \in \mathcal{K}} \bigl\{\log\hat{J}_0\bigl(F_0(t_\ell)\bigr) - \log\hat{J}_0\bigl(F_0(s_\ell)\bigr)\bigr\} \notag \\
\label{eq:psi0-T}
&= \phi_0(t) - \phi_0(s_0) - \sum_{\ell \in \mathcal{K}}\bigl(\phi_0(t_\ell) - \phi_0(s_\ell)\bigr),
\end{align}
where the second and final equalities above follow from Lemma~\ref{lem:J0-J0hat} and the fact that $s_0,t,s_\ell,t_\ell \in \mathcal{T} \cup \{-\infty,\infty\}$ for $\ell \in [K]$. In addition, for all such $\ell$, we have $\phi_\ell(t) = \phi_{\ell-1}(t) + (\log r_\ell)\Ind_{\{\ell \in \mathcal{K}\}}$ by the definition of $p_\ell$, and $\phi_0^* := \log p_0^*$ satisfies $\phi_0^*(t) - \phi_0^*(s_0) = \int_{s_0}^t \psi_0^*$. Thus, by induction together with~\eqref{eq:log-rk} and~\eqref{eq:psi0-T},
\begin{align*}
\phi_k(t) - \phi_0(s_0) &= \phi_0(t) - \phi_0(s_0) + \sum_{\ell \in \mathcal{K} \cap [k]}\log r_\ell = \phi_0(t) - \phi_0(s_0) - \sum_{\ell \in \mathcal{K} \cap [k]} \Bigl(\phi_0(t_\ell) - \phi_0(s_\ell) - \int_{s_\ell}^{t_\ell}\psi_0^*\Bigr) \\
&= \biggl(\int_{s_0}^t \psi_0^* \Ind_{\mathcal{T}} + \sum_{\ell \in \mathcal{K}} \int_{s_\ell}^{t_\ell} \psi_0^*\biggr) + \sum_{\ell \in \mathcal{K} : \ell > k} \Bigl(\phi_0(t_\ell) - \phi_0(s_\ell) - \int_{s_\ell}^{t_\ell}\psi_0^*\Bigr) \\
&= \int_{s_0}^t \psi_0^* + \sum_{\ell \in \mathcal{K} : \ell > k} \Bigl(\phi_0(t_\ell) - \phi_0(s_\ell) - \int_{s_\ell}^{t_\ell}\psi_0^*\Bigr) \to \phi_0^*(t) - \phi_0^*(s_0)
% \begin{cases}
% = \phi_0^*(t) - \phi_0^*(s_0) &\text{if }k = K < \infty \\[3pt]
% \to \phi_0^*(t) - \phi_0^*(s_0) &\text{as }k \to \infty \text{ if }K = \infty,
% \end{cases}
\end{align*}
as $k \to \infty$, which yields~\eqref{eq:pk-limit} for $z = t$. On the other hand, if $t \in \mathcal{T}$ and $t < s_0$, then we can instead let $\mathcal{K}$ be the set of $\ell \in [K]$ such that $(s_\ell,t_\ell) \subseteq (t,s_0)$, and deduce by similar reasoning that
\begin{align*}
\phi_k(t) - \phi_0(s_0) &= \phi_0(t) - \phi_0(s_0) + \sum_{\ell \in \mathcal{K} \cap [k]} \Bigl(\phi_0(t_\ell) - \phi_0(s_\ell) - \int_{s_\ell}^{t_\ell}\psi_0^*\Bigr) \\
&= -\int_t^{s_0}\psi_0^* - \sum_{\ell \in \mathcal{K} : \ell > k} \Bigl(\phi_0(t_\ell) - \phi_0(s_\ell) - \int_{s_\ell}^{t_\ell}\psi_0^*\Bigr) \to \int_{s_0}^t \psi_0^* = \phi_0^*(t) - \phi_0^*(s_0)
% \begin{cases}
% = \phi_0^*(t) - \phi_0^*(s_0) &\text{if }k = K < \infty \\[3pt]
% \to \phi_0^*(t) - \phi_0^*(s_0) &\text{as }k \to \infty \text{ if }K = \infty,
% \end{cases}
\end{align*}
as $k \to \infty$. Having proved that~\eqref{eq:pk-limit} holds for all $z \in \mathcal{T}$, we now consider $z \in \mathcal{T}^c$, for which there exists a unique $\ell \in [K]$ such that $z \in (s_\ell,t_\ell)$. If $s_\ell > s_0$, then $p_\ell(z) = p_\ell(s_\ell)e^{\psi_0^*(s_\ell)(z - s_\ell)}$, while if $s_\ell \leq s_0$, then $p_\ell(z) = p_\ell(t_\ell)e^{\psi_0^*(s_\ell)(z - t_\ell)}$.
% \[
% p_\ell(z) = 
% \begin{cases}
% p_\ell(s_\ell)e^{\psi_0^*(s_\ell)(z - s_\ell)} &\text{if }s_\ell > s_0 \\
% p_\ell(t_\ell)e^{\psi_0^*(s_\ell)(z - t_\ell)} &\text{if }s_\ell \leq s_0.
% \end{cases}
% \]
For $k > \ell$, it follows from Lemma~\ref{lem:pk-induction}\textit{(c)} that
\[
p_k(z) = 
\begin{cases}
p_k(t_\ell)e^{\psi_0^*(s_\ell)(z - t_\ell)} &\text{if }s_\ell \leq s_0 \\
p_k(s_\ell)e^{\psi_0^*(s_\ell)(z - s_\ell)} &\text{if }s_\ell > s_0.
\end{cases}
\]
Moreover, $s_\ell,t_\ell \in \mathcal{T} \cup \{-\infty,\infty\}$, so if $s_\ell \leq s_0$, then
\[
p_k(z) = p_k(t_\ell)e^{\psi_0^*(s_\ell)(z - t_\ell)} \to \frac{p_0(s_0)}{p_0^*(s_0)} \cdot p_0^*(t_\ell)e^{\psi_0^*(s_\ell)(z - t_\ell)} = \frac{p_0(s_0)}{p_0^*(s_0)} \cdot p_0^*(z)
\]
as $K \to \infty$, and~\eqref{eq:pk-limit} holds similarly when $s_\ell > s_0$.
% \[
% \frac{p_0(s_0)}{p_0^*(s_0)} \cdot p_0^*(z) = \frac{p_0(s_0)}{p_0^*(s_0)} \cdot p_0^*(s_\ell)e^{\psi_0^*(s_\ell)(z - t_\ell)} =
% \begin{cases}
% p_K(s_\ell)e^{\psi_0^*(s_\ell)(z - t_\ell)} = p_K(z) &\text{if }K < \infty \\
% \lim_{k \to K}p_k(s_\ell)e^{\psi_0^*(s_\ell)(z - t_\ell)} = \lim_{k \to K}p_k(z) &\text{if }K = \infty,
% \end{cases}
% \]
% as required.

Finally, let $s,t \in \mathcal{T} \cup \{-\infty,\infty\}$ be such that $-\infty < s \vee t_{\min} \leq t \wedge t_{\max} < \infty$. If $t > t_{\max}$, then there exists a unique $j \in [K]$ such that $t_{\max} = s_j < t_j = t = \infty$, and by Lemma~\ref{lem:p0-star} and its proof, $\psi_0^*(t_{\max}) = \lim_{z \to \infty}\psi_0^*(z) \in (-\infty,0)$. Moreover, by Lemma~\ref{lem:pk-induction}\textit{(c)},
\[
p_k(z) = p_k(t_{\max})e^{\psi_0^*(t_{\max})(z - t_{\max})}
\]
for all $z > t_{\max}$ and $k \geq j$. Similarly, if $s < t_{\min}$, then there exists a unique $\ell \in [K]$ such that $-\infty = s = s_\ell < t_\ell = t_{\min}$, and $\psi_0^*(t_{\min}-) = \lim_{z \to -\infty}\psi_0^*(z) \in (0,\infty)$. In addition, by Lemma~\ref{lem:pk-induction}\textit{(c)},
\[
p_k(z) = p_k(t_{\min})e^{\psi_0^*(t_{\min}-)(z - t_{\min})}
\]
for all $z < t_{\min}$ and $k \geq \ell$. Thus, for $k \geq j \vee \ell$, it follows from Lemma~\ref{lem:pk-induction}\textit{(a)} that in all cases,
% $p_k(t) \leq p_0(t)$ for $t \in \{t_{\min},t_{\max}\} \subseteq \mathcal{T}$, so
\[
p_k(z) \leq
\begin{cases}
p_0(t_{\min})e^{\psi_0^*(t_{\min}-)(z - t_{\min})} \;\;&\quad\text{for }z \in (s,s \vee t_{\min}) \\
\norm{p_0}_\infty \;\;&\quad\text{for }z \in [s \vee t_{\min},t \wedge t_{\max}] \\
p_0(t_{\max})e^{\psi_0^*(t_{\max})(z - t_{\max})} \;\;&\quad\text{for }z \in (t \wedge t_{\max},t),
\end{cases}
\]
so the pointwise supremum $\sup_{k \geq j \vee \ell} p_k$ is integrable on $(s,t)$, and hence~\eqref{eq:pk-int-limit} follows from~\eqref{eq:pk-limit} and the dominated convergence theorem.
\end{proof}

\begin{proof}[Proof of Proposition~\ref{prop:p0-star-fisher}]
\textit{(a)} We will prove the following stronger statement: for $s,t \in \mathcal{T} \cup \{-\infty,\infty\}$ such that $s \leq t$, and $z \in \{s,t\}$, we have
\begin{equation}
\label{eq:p0-star-s-t}
\frac{p_0^*(z)}{F_0^*(t) - F_0^*(s)} \leq \frac{p_0(z)}{F_0(t) - F_0(s)}.
\end{equation}
Assume that at least one of $s,t$ is finite, since otherwise~\eqref{eq:p0-star-s-t} holds trivially. Let $t_{\min} := \inf\mathcal{T}$ and $t_{\max} := \sup\mathcal{T}$. If $-\infty < s \vee t_{\min} \leq t \wedge t_{\max} < \infty$, then by Lemma~\ref{lem:pk-limit},
\begin{equation}
\label{eq:p0star-int-ineq}
p_0(z) \int_s^t p_0^* = \frac{p_0^*(s_0)}{p_0(s_0)} \cdot p_0(z) \lim_{k \to \infty} \int_s^t p_k \geq \lim_{k \to \infty} \frac{p_0^*(s_0)}{p_0(s_0)} \cdot p_k(z) \int_s^t p_0 = p_0^*(z) \int_s^t p_0
\end{equation}
for $z \in \{s,t\}$. Otherwise, it follows from~\eqref{eq:p0star-int-ineq} that
\begin{align*}
\displaystyle p_0(s) \int_s^\infty p_0^* = p_0(s) \cdot \sup_{t' \in \mathcal{T}} \int_s^{t'} p_0^* \geq p_0^*(s) \cdot \sup_{t' \in \mathcal{T}} \int_s^{t'} p_0 = p_0^*(s) \int_s^\infty p_0 \quad&\text{if }-\infty < s < t_{\max} = t = \infty \\
\displaystyle p_0(t) \int_{-\infty}^t p_0^* = p_0(t) \cdot \sup_{s' \in \mathcal{T}} \int_{s'}^t p_0^* \geq p_0^*(t) \cdot \sup_{s' \in \mathcal{T}} \int_{s'}^t p_0 = p_0^*(t) \int_{-\infty}^t p_0 \quad&\text{if }-\infty = s = t_{\min} < t < \infty,
\end{align*}
which completes the proof of~\eqref{eq:p0-star-s-t}. In particular, for $z \in \mathcal{T}$, taking $(s,t) = (-\infty,z)$ and $(s,t) = (z,\infty)$ in~\eqref{eq:p0-star-s-t}, we obtain
\[
p_0(z) \int_z^\infty p_0^* \geq p_0^*(z) \int_z^\infty p_0 \qquad\text{and}\qquad p_0(z) \int_{-\infty}^z p_0^* \geq p_0^*(z) \int_{-\infty}^z p_0,
\]
which proves~\eqref{eq:p0-star-hazard}. Summing these inequalities yields $p_0(z) \geq p_0^*(z)$, as required.

\medskip
\noindent
\textit{(b)} Since $s_0 \in \mathcal{T}$ and $p_0^*(z) = p_0^*(s_0)\exp\bigl(\int_{s_0}^z \psi_0^*\bigr) \leq p_0^*(s_0)$ for $z \in \R$, we deduce that $\norm{p_0^*}_\infty = p_0^*(s_0) \leq p_0(s_0) \leq \norm{p_0}_\infty$.
% Actually $p_0(s_0) = \norm{p_0}_\infty$
% Alternative 1: For $z \in \R$, we deduce from Lemmas~\ref{lem:pk-induction}\textit{(a)} and~\ref{lem:pk-limit} that
% \[
% p_0^*(z) \leq \frac{p_0(s_0)}{p_0^*(s_0)} \cdot p_0^*(z) = \lim_{k \to \infty} p_k(z) \leq \norm{p_0}_\infty.
% \]
% Therefore, $\norm{p_0^*}_\infty \leq \norm{p_0}_\infty$.
% Alternative 2: $p_0^* \leq p_0$ on $\mathcal{T}$, and for $z \in \mathcal{T}^c$, there exists a unique $\ell \in [K]$ such that $z \in (s_\ell,t_\ell)$ and hence $p_0^*(z) \leq p_0^*(s_\ell) \vee p_0^*(t_\ell) \leq \norm{p_0}_\infty$, as required.
Finally, by Theorem~\ref{thm:antitonic-score-proj}\textit{(c)}, $i^*(p_0) = \bigl(\int_\R (\psi_0^*)^2\,p_0\big/V_{p_0}(\psi_0^*)\bigr)^{1/2} = -\int_\R p_0\,d\psi_0^*$; see the equality case of~\eqref{eq:KKT}. Similarly, $i(p_0^*) = -\int_\R p_0^*\,d\psi_0^*$, so because $p_0^* \leq p_0$ on the support $\mathcal{T}$ of the Lebesgue--Stieltjes measure induced by $\psi_0^*$, we have
\[
i^*(p_0) = -\int_\R p_0\,d\psi_0^* \geq -\int_\R p_0^*\,d\psi_0^* = i(p_0^*),
\]
as required.
\end{proof}

\subsection{Additional examples for Section~\ref{sec:antitonic-proj}}
\label{subsec:appendix-examples}

\begin{example}
\label{rem:ARE-infinity}
For densities $p_0$ that are uniformly continuous and locally absolutely continuous on $\R$, it is possible to have $i^*(p_0) < \infty = i(p_0)$. This is unfavourable from the perspective of statistical efficiency because $\mathrm{ARE}^*(p_0) = 0$ in this case. To construct an example of such a density $p_0$, let
\[
J_0(u) := 
\begin{cases}
u\biggl(2 + \sin\Bigl(\dfrac{\pi}{4u}\Bigr)\biggr)\;&\text{for } u \in (0,1/2] \\
0 &\text{for }u = 0
\end{cases}
\qquad\text{and}\qquad J_0(u) := J_0(1 - u)\;\;\text{for } u \in (1/2,1],
\]
so that $J_0$ is continuous on $[0,1]$ and infinitely differentiable on $(0,1)$, with $u \leq J_0(u) \leq 3u$ for all $u \in [0,1/2]$. The least concave majorant $\hat{J}_0$ of $J_0$ is given by $\hat{J}_0(u) = 3\min(u, 1 - u)$ for $u \in [0,1]$. In addition, by the Cauchy--Schwarz inequality,
\[
\biggl(\int_0^1(J_0')^2\biggr)^{1/2} \geq \int_0^1 |J_0'| = 2\int_0^{1/2}\,\Bigl|2 + \sin\Bigl(\frac{u}{4\pi}\Bigr) - \frac{\pi}{4u}\cos\Bigl(\frac{u}{4\pi}\Bigr)\Bigr| = \infty,
\]
so $J_0$ is not of bounded variation on $(0,1)$. Next, defining $Q_0 \colon (0,1) \to \R$ by $Q_0(u) := \int_{1/2}^u 1/J_0$, we have $\lim_{u \nearrow 1}Q_0(u) = -\lim_{u \searrow 0}Q_0(u) = \int_0^{1/2} 1/J_0 \geq \int_0^{1/2} 1/(3u)\,du = \infty$. Then by Lemma~\ref{lem:density-quantile-reverse}, $Q_0$ is a strictly increasing, continuously differentiable bijection from $(0,1)$ to $\R$, and $p_0 := (Q_0^{-1})'$ is a strictly positive, uniformly continuous density on $\R$ with density quantile function $J_0$. Moreover, by Lemma~\ref{lem:psi0-star}, $p_0' = (J_0' \circ F_0)\,p_0$ is continuous, so $p_0$ is locally absolutely continuous on $\R$. By Remark~\ref{rem:fisher-J}, 
\[
i^*(p_0) = \int_0^1 \bigl(\hat{J}_0^{(\mathrm{R})}\bigr)^2 = \int_0^1 3^2 = 9 < \infty = \int_0^1 (J_0')^2 = i(p_0),
\]
so indeed $\mathrm{ARE}^*(p_0) = 0$. By modifying this construction slightly, one can also exhibit a unimodal density~$p_0$ with $\mathrm{ARE}^*(p_0) = 0$.
\end{example}

\begin{example}
\label{ex:t2} 
For $z \in \R$, let
\[
p_0(z) := \frac{1}{2}(1 + z^2)^{-3/2},
\]
so that $p_0$ is the density of $W/\sqrt{2}$ when $W \sim t_2$. Then by~\citet[Example~2.9]{dumbgen2011approximation}, the log-concave maximum likelihood projection $p_0^{\mathrm{ML}}$ is a standard Laplace density given by
\[
p_0^{\mathrm{ML}}(z) = \frac{1}{2}e^{-|z|}
\]
for $z \in \R$, with score function $\psi_0^{\mathrm{ML}}(\cdot) = -\sgn(\cdot)$. Therefore, the corresponding regression $M$-estimator $\hat{\beta}_{\psi_0^{\mathrm{ML}}} \in \argmax_{\beta \in \R^d} \sum_{i=1}^n \log p_0^{\mathrm{ML}}(Y_i - X_i^\top\beta) = \argmin_{\beta \in \R^d} \sum_{i=1}^n |Y_i - X_i^\top\beta|$ is the LAD estimator. On the other hand, by straightforward computations,
% \begin{align*}
% F_0(z) &= \frac{1}{2}\Bigl(1 + \frac{z}{\sqrt{1 + z^2}}\Bigr) \;\;\text{for }z \leq 0, \qquad F_0^{-1}(u) = -\frac{\sqrt{1 - 2u}}{2\sqrt{u(1 - u)}} \;\;\text{for }u \in (0,1/2), \\
% J_0(u) &= 4u^{3/2}(1 - u)^{3/2} \;\;\text{for }u \in [0,1],
% \end{align*}
% so that $\hat{J}_0$ is linear on $[0,1/4]$ and on $[3/4,1]$, with $\hat{J}_0(u) = J_0(u)$ for $u \in [1/4,3/4] \cup \{0,1\}$. Since $F_0^{-1}(3/4) = 3^{-1/2} = -F_0^{-1}(1/4)$, it follows that
\begin{align*}
\psi_0(z) &= (\log p_0)'(z) = -\frac{3z}{1 + z^2}, \\
\psi_0^*(z) &= (\hat{J}_0^{(\mathrm{R})} \circ F_0)(z) = \psi_0\bigl((z \wedge 3^{-1/2}) \vee (-3^{-1/2})\bigr) = \Bigl(-\frac{3\sqrt{3}}{4}\Bigr) \vee \psi_0(z) \wedge \frac{3\sqrt{3}}{4} 
\end{align*}
for $z \in \R$, so similarly to Example~\ref{ex:cauchy}, the optimal convex loss is a Huber-like function given by
\[
\ell_0^*(z) = -\log p_0^*(z) = -\int_0^z \psi_0^* + \log 2 = 
\begin{cases}
\,\dfrac{3\log\bigl((1 + z^2)/2\bigr)}{2} &\text{for }z \in [-3^{-1/2},3^{-1/2}] \\[6pt]
\,\dfrac{3\bigl(\sqrt{3}|z| - 1 + \log(4/9)\bigr)}{4} &\text{for }z \in \R \setminus [-3^{-1/2},3^{-1/2}].
\end{cases}
\]
This means that $p_0^* \neq p_0^{\mathrm{ML}}$, and $\hat{\beta}_{\psi_0^*} \in \argmin_{\beta \in \R^d} \sum_{i=1}^n \ell_0^*(Y_i - X_i^\top\beta)$ is usually different from $\hat{\beta}_{\psi_0^{\mathrm{ML}}}$, with the ratio of their asymptotic covariances being equal to
\[
\frac{V_{p_0}(\psi_0^*)}{V_{p_0}(\psi_0^{\mathrm{ML}})} = \frac{1/i^*(p_0)}{1/\bigl(4p_0(0)^2\bigr)} = \frac{1}{i^*(p_0)} = -\biggl(\int_{-1/\sqrt{3}}^{1/\sqrt{3}}\;p_0\,d\psi_0\biggr)^{-1} = \frac{80}{93} \approx 0.860.
\]
% Again, $i^*(p_0) = i(p_0^*)$ because $p_0 = p_0^*$ on $[-3^{-1/2},3^{-1/2}]$.
\end{example}

\begin{example}
\label{ex:pareto}
For $\alpha,\sigma > 0$, consider the symmetrised Pareto density $p_0 \colon \R \to \R$ given by
\[
p_0(z) := \frac{\alpha\sigma^\alpha}{2(|z| + \sigma)^{\alpha + 1}}.
\]
The corresponding distribution has a finite mean if and only if $\alpha > 1$, in which case its log-concave maximum likelihood projection~\citep{dumbgen2011approximation} is a Laplace density given by
\begin{equation}
\label{eq:pareto-sym}
p_0^{\mathrm{ML}}(z) := \frac{\alpha - 1}{2\sigma}\exp\biggl(-\frac{(\alpha - 1)
|z|}{\sigma}\biggr)
\end{equation}
for $z \in \R$; see~\citet[p.~1382]{chen2013smoothed} or~\citet[Exercise~9.14]{samworth24modern}. On the other hand, for general $\alpha,\sigma > 0$,
routine calculations yield
\[
% F_0(z) = \frac{1}{2} + \frac{\sgn(z)}{2}\biggl(1 - \Bigl(\frac{\sigma}{|z| + \sigma}\Bigr)^{\alpha}\biggr)
% F_0^{-1}(u) = -\bigl((2u)^{-1/\alpha} - 1\bigr)\sigma for u \leq 1/2
% F_0^{-1}(u) = \bigl((2 - 2u)^{-1/\alpha} - 1\bigr)\sigma for u \geq 1/2
J_0(u) = \frac{\alpha \bigl(2\min(u, 1 - u)\bigr)^{1 + 1/\alpha}}{2\sigma} \quad\text{and}\quad \hat{J}_0(u) = \frac{\alpha\min(u, 1 - u)}{\sigma}
\]
for $u \in [0,1]$. Since $p_0$ is symmetric about 0, it follows that $\psi_0^*(z) = (1 - 2\Ind_{\{z \geq 0\}})\alpha/\sigma$ and hence
\[
\ell_0^*(z) = -\log p_0^*(z) = \log\Bigl(\frac{\alpha}{2\sigma}\Bigr) - \frac{\alpha|z|}{\sigma}, \qquad p_0^*(z) = \frac{\alpha}{2\sigma}\exp\Bigl(-\frac{\alpha
|z|}{\sigma}\Bigr)
\]
for $z \in \R$. Therefore, $p_0^*$ is also a Laplace density, but while $p_0^{\mathrm{ML}} \neq p_0^*$ when $\alpha > 1$, both corresponding regression $M$-estimators $\hat{\beta}_{\psi_0^{\mathrm{ML}}}$ and $\hat{\beta}_{\psi_0^*}$ in~\eqref{eq:betahat-ML-fisher} minimise $\beta \mapsto \sum_{i=1}^n |Y_i - X_i^\top\beta|$ over $\R^d$, and hence coincide when there is a unique minimiser (i.e.~least absolute deviation estimator).
\end{example}

% NB: $p_0$ can be unimodal but not log-concave (when $|\mu_1 - \mu_2| > 2\sigma$ and $q$ is very close to 0 or 1, e.g.~$q = 10^{-3}$, $mu_1 = 0$ and $\mu_2 = 2.1$)
% When the variances of the mixture components are unequal, there can be two disjoint intervals on which $p_0$ is log-convex, e.g.~$0.5 N(0,1) + 0.5 N(2, 0.1^2)$

\begin{example}
\label{ex:laplace-mixture}
For $\rho \in (0,1)$ and $\mu > 0$, let $p_0$ be the two-component Laplace mixture density given by
\[
p_0(z) = \frac{1 - \rho}{2}e^{-|z + \mu|} + \frac{\rho}{2}e^{-|z - \mu|}
\]
for $z \in \R$. The corresponding score function $\psi_0 = (\log p_0)' = p_0'/p_0$ satisfies
\[
\psi_0(z) =
\begin{cases}
1 \;\;&\text{for }z < -\mu \\
\dfrac{\rho e^z - (1 - \rho)e^{-z}}{\rho e^z + (1 - \rho)e^{-z}} \;\;&\text{for }z \in (-\mu,\mu) \\
-1 \;\;&\text{for }z > \mu, 
\end{cases}
\]
so $\psi_0$ is strictly increasing on $(-\mu,\mu)$ and constant on both $(-\infty,\mu)$ and $(\mu,\infty)$. Since $J_0' = \psi_0 \circ F_0^{-1}$ on $(0,1) \setminus \{F_0(-\mu),F_0(\mu)\}$, it follows that $J_0$ is convex on $[F_0(-\mu),F_0(\mu)]$ while being linear on both $[0,F_0(-\mu)]$ and $[F_0(\mu),1]$. Therefore, $\hat{J}_0 = J_0$ on $[0,F_0(-\mu)] \cup [F_0(\mu),1]$ and $\hat{J}_0$ is linear on $[F_0(-\mu),F_0(\mu)]$ with $J_0\bigl(F_0(\pm\mu)\bigr) = p_0(\pm\mu)$, so
\[
\psi_0^*(z) = \hat{J}_0^{(\mathrm{R})}\bigl(F_0(z)\bigr) =
\begin{cases}
1 \;\;&\text{for }z < -\mu \\
\dfrac{p_0(\mu) - p_0(-\mu)}{F_0(\mu) - F_0(-\mu)} = 2\rho - 1 \;\;&\text{for }z \in [-\mu,\mu) \\
-1 \;\;&\text{for }z \geq \mu,
\end{cases}
\]
and hence
\[
p_0^*(z) =
\begin{cases}
Ce^{z + \mu} \;\;&\text{for }z \leq -\mu \\
Ce^{(2\rho - 1)(z + \mu)} \;\;&\text{for }z \in [-\mu,\mu] \\
Ce^{(2\rho - 1)2\mu} \cdot e^{-(z - \mu)} \;\;&\text{for }z \geq \mu,
\end{cases}
\qquad \text{where }C := \frac{1}{1 + e^{(2\rho - 1)2\mu} + \int_0^{2\mu} e^{(2\rho - 1)z}\,dz}.
\]
By direct calculation, $p_0^*(-\mu) = C < p_0(-\mu)$ and $p_0^*(\mu) = Ce^{(2\rho - 1)2\mu} < p_0(\mu)$, so $\norm{p_0^*}_\infty = p_0^*(-\mu) \vee p_0^*(\mu) < p_0(-\mu) \vee p_0(\mu) = \norm{p_0}_\infty$ and
\[
i^*(p_0) =  -\int_\R p_0\,d\psi_0^* = 2(1 - \rho) \cdot p_0(-\mu) + 2(1 + \rho) \cdot p_0(\mu) > 2(1 - \rho) \cdot p_0^*(-\mu) + 2(1 + \rho) \cdot p_0^*(\mu) = i(p_0^*).
\]
\end{example}

\begin{proposition}
\label{prop:gaussian-mixture}
For $\rho \in (0,1)$, $\mu_1,\mu_2 \in \R$ and $\sigma > 0$, let $P_0 := (1 - \rho)N(\mu_1,\sigma^2) + \rho N(\mu_2,\sigma^2)$ and denote by $p_0^{\mathrm{ML}}$ and $p_0^*$ its log-concave maximum likelihood and Fisher divergence projections respectively.
\begin{enumerate}[label=(\alph*)]
\item If $|\mu_1 - \mu_2| \leq 2\sigma$, then $P_0$ has density $p_0^{\mathrm{ML}} = p_0^*$.
\item If $|\mu_1 - \mu_2| > 2\sigma$, then $p_0^{\mathrm{ML}} \neq p_0^*$. Moreover, $\psi_0^*$ is continuous on $\R$, and there exist $z_1 \in (-\infty,a]$ and $z_2 \in [b,\infty)$ such that $\psi_0^*$ is decreasing on $(-\infty,z_1] \cup [z_2,\infty)$ and constant on $[z_1,z_2]$.
\end{enumerate}
\end{proposition}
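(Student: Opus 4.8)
The plan is to work entirely with the density quantile function $J_0 = p_0 \circ F_0^{-1}$ and its least concave majorant $\hat{J}_0$, exploiting the representation $\psi_0^* = \hat{J}_0^{(\mathrm{R})} \circ F_0$ from Lemma~\ref{lem:psi0-star} and the characterisation of log-concavity via concavity of $J_0$ established in the proof of Lemma~\ref{lem:p0-star}. For a Gaussian mixture $p_0 = (1-\rho)\varphi_\sigma(\cdot - \mu_1) + \rho\varphi_\sigma(\cdot - \mu_2)$ with $\varphi_\sigma$ the $N(0,\sigma^2)$ density, a direct computation gives the score function $\psi_0 = p_0'/p_0$ explicitly; writing $\delta := \mu_2 - \mu_1$ (WLOG $\delta \ge 0$), one finds $\psi_0(z) = -\sigma^{-2}\bigl(z - \mu_1 - \delta\, w(z)\bigr)$ where $w(z) \in [0,1]$ is the posterior weight on the second component. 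Hence $\psi_0'(z) = -\sigma^{-2}\bigl(1 - \delta\, w'(z)\bigr)$, and since $w' = \sigma^{-2} w(1-w) \le \sigma^{-2}/4$ pointwise, we get $\psi_0' \le -\sigma^{-2}(1 - \delta^2/(4\sigma^2))$. First I would record that when $\delta = |\mu_1 - \mu_2| \le 2\sigma$ this shows $\psi_0' \le 0$ everywhere, so $p_0$ is log-concave; then $p_0^{\mathrm{ML}} = p_0$ by~\citet[Theorem~2.2]{dumbgen2011approximation} (the mixture has a finite mean) and $p_0^* = p_0$ by Lemma~\ref{lem:p0-star}, giving part~\textit{(a)}.

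For part~\textit{(b)}, assume $\delta > 2\sigma$. The key structural fact to establish is that $\psi_0$ is \emph{unimodal in its derivative}: more precisely, I would show that $\psi_0'$ changes sign exactly twice, being negative on two tail intervals $(-\infty, a']$ and $[b', \infty)$ and positive on a single middle interval $(a', b')$ with $\mu_1 < a' < b' < \mu_2$. This follows from analysing $w'(z) = \sigma^{-2} w(z)(1 - w(z))$: the function $w$ is strictly increasing from $0$ to $1$, so $w(1-w)$ is itself unimodal (peaking where $w = 1/2$, i.e.\ at $z = (\mu_1+\mu_2)/2$), and the equation $\delta\, w'(z) = 1$, i.e.\ $w(z)(1-w(z)) = \sigma^2/\delta < \sigma^2/(2\sigma) \cdot \tfrac{1}{2}$ — wait, more carefully $\sigma^2/\delta$ compared against the maximum value $1/4$: since $\delta > 2\sigma$ we have $\sigma^2/\delta < \sigma/2$, and one checks $\sigma^2/\delta < 1/4$ fails in general, so the right comparison is whether $\sigma^2/\delta$ lies below $1/4 \cdot$ something; the clean statement is that $\sup_z \delta w'(z) = \delta/(4\sigma^2) > 1/2 > $ hmm. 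The honest version: $\sup_z \delta\, w'(z) = \delta/(4\sigma^2)$, which exceeds $1$ precisely when $\delta > 2\sigma$, so in that regime $\{z : \delta w'(z) > 1\}$ is a nonempty open interval (by unimodality of $w'$), and on it $\psi_0' > 0$ while off its closure $\psi_0' < 0$. This gives the two sign changes of $\psi_0'$, hence $\psi_0$ is decreasing on the tails and increasing in the middle. Consequently $J_0' = \psi_0 \circ F_0^{-1}$ has the same shape (since $F_0^{-1}$ is increasing), so $J_0$ is concave on the two images of the tail intervals and convex in between; thus $\hat{J}_0$ agrees with $J_0$ on the left-tail concave piece up to some point $u_1$, is affine on $[u_1, u_2]$, and agrees with $J_0$ on the right-tail piece for $u \ge u_2$, where $u_1 < u_2$ lie in the image of the convex middle region. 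Transporting back via $F_0$, setting $z_1 := F_0^{-1}(u_1)$ and $z_2 := F_0^{-1}(u_2)$, we get that $\psi_0^* = \hat{J}_0^{(\mathrm{R})} \circ F_0$ equals $\psi_0$ (decreasing) on $(-\infty, z_1] \cup [z_2, \infty)$ and is constant on $[z_1, z_2]$; continuity of $\psi_0^*$ follows because $\hat{J}_0^{(\mathrm{R})}$ is continuous at $u_1, u_2$ (the majorant touches $J_0$ there and both one-sided derivatives match by the tangency of an affine majorant) and $\psi_0$ itself is continuous and $F_0$ is a homeomorphism on the relevant range. To get $z_1 \le a$ and $z_2 \ge b$ I would identify $a, b$ in the proposition statement as the sign-change points $a', b'$ of $\psi_0'$ (i.e.\ the endpoints of the convex region of $J_0$, pulled back): since $\hat{J}_0$ can only peel away from $J_0$ within regions where $J_0$ is convex, the contact points $u_1, u_2$ satisfy $u_1 \le F_0(a')$ and $u_2 \ge F_0(b')$, hence $z_1 \le a'$ and $z_2 \ge b'$. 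Finally $p_0^{\mathrm{ML}} \ne p_0^*$ because $p_0$ is not log-concave (so $p_0^* \ne p_0$ by Lemma~\ref{lem:p0-star}, indeed $\psi_0^*$ has a flat piece whereas the score of any log-concave density with full support matching $p_0^*$'s... ), and one shows $p_0^{\mathrm{ML}} \ne p_0^*$ directly by noting, e.g., via~\citet[Remark~2.3]{dumbgen2011approximation} that $p_0^{\mathrm{ML}}$ is piecewise log-affine with a different set of knots, or simply that $p_0^*$ has $\psi_0^*$ strictly decreasing on the tails whereas $\psi_0^{\mathrm{ML}}$ is piecewise constant.

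I expect the main obstacle to be the careful verification that $\psi_0'$ (equivalently $\delta w' - 1$) changes sign \emph{exactly} twice rather than merely establishing it is sometimes positive and sometimes negative — this requires the unimodality of the posterior-weight derivative $w(z)(1-w(z))$, which in turn hinges on $w$ being a strictly monotone logistic-type function of $z$ (it is: $w(z) = \rho\varphi_\sigma(z-\mu_2)/p_0(z)$ and $\log(w/(1-w))$ is affine in $z$). Once that unimodality is in hand, the rest is bookkeeping with least concave majorants and the order-preserving change of variables $F_0$. A secondary subtlety is pinning down that the contact set of $\hat{J}_0$ with $J_0$ is exactly a left interval, a point, and a right interval — i.e.\ that the majorant has a \emph{single} affine segment — which follows because $J_0$ is concave-convex-concave (one convex stretch), so by a standard fact about least concave majorants of such functions the majorant coincides with $J_0$ on the maximal concave-ending intervals and is affine across the single convex dip; I would either cite or quickly prove this one-dimensional lemma about LCMs of functions with a single convexity reversal.
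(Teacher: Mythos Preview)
Your overall strategy for part~\textit{(a)} and for the structure of $\psi_0^*$ in part~\textit{(b)} matches the paper's: compute $\psi_0'$, show it is nonpositive when $|\mu_1-\mu_2| \le 2\sigma$ and changes sign exactly twice otherwise, deduce that $\log p_0$ (equivalently $J_0$) is concave--convex--concave, and read off the shape of $\psi_0^*$ from the least concave majorant (the paper packages this last step as Lemma~\ref{lem:lcm-concave-cvx}). The paper parametrises $\psi_0'$ via an explicit convex function $g(z)$ (a sum of two exponentials plus a constant) rather than the posterior weight $w$, but the content is identical; your unimodality argument for $w(1-w)$ is a perfectly good substitute. (Minor arithmetic slip: the correct formula is $w' = \delta\sigma^{-2}w(1-w)$, not $\sigma^{-2}w(1-w)$; your final bound $\psi_0' \le -\sigma^{-2}\bigl(1 - \delta^2/(4\sigma^2)\bigr)$ is nevertheless right.)

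The genuine gap is in showing $p_0^{\mathrm{ML}} \ne p_0^*$. Your proposed distinguishing arguments do not work: $\psi_0^{\mathrm{ML}}$ is \emph{not} piecewise constant. By~\citet[Examples~2.11(ii) and~2.12]{dumbgen2011approximation}, $\log p_0^{\mathrm{ML}}$ agrees with $\log p_0$ on two tail intervals $(-\infty,a']$, $[b',\infty)$ (with $a'\le a$, $b'\ge b$) and is affine on $[a',b']$, so $\psi_0^{\mathrm{ML}}$ equals $\psi_0$ (strictly decreasing) on the tails and is constant in the middle --- exactly the same qualitative shape you have derived for $\psi_0^*$. Appealing to ``a different set of knots'' therefore begs the question. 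The paper's decisive observation is a differentiability mismatch: you have already argued that $\psi_0^*$ is continuous (via tangency of the affine piece of $\hat{J}_0$ to the differentiable $J_0$; the paper invokes Lemma~\ref{lem:lcm-deriv}), so $\log p_0^*$ is differentiable on $\R$. By contrast $p_0^{\mathrm{ML}}$ is \emph{not} differentiable at $a'$ or $b'$, and this is shown by contradiction: if it were, the affine segment of $\log p_0^{\mathrm{ML}}$ would be tangent to $\log p_0$ at both $a'$ and $b'$; since $\log p_0$ is concave on $(-\infty,a]$ and on $[b,\infty)$ and convex on $[a,b]$, this tangent line would dominate $\log p_0$ everywhere, forcing $p_0^{\mathrm{ML}} \ge p_0$ pointwise and hence (as both integrate to $1$) $p_0^{\mathrm{ML}} = p_0$, contradicting non-log-concavity. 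This contradiction step is what you are missing.
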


\begin{proof}
By~\citet{cule2010maximum} and~\citet[Exercise~9.11]{samworth24modern}, $p_0$ is log-concave if and only if $|\mu_1 - \mu_2| \leq 2\sigma$. We will slightly refine this result by first noting that
\begin{align*}
p_0(z) &= \frac{1 - \rho}{\sqrt{2\pi}\sigma}\exp\biggl(-\frac{(z - \mu_1)^2}{2\sigma^2}\biggr) + \frac{\rho}{\sqrt{2\pi}\sigma}\exp\biggl(-\frac{(z - \mu_2)^2}{2\sigma^2}\biggr) \\
\psi_0(z) &= (\log p_0)'(z) = \frac{1}{p_0(z)}\biggl\{\frac{(1 - \rho)(\mu_1 - z)}{\sqrt{2\pi}\sigma^3}\exp\biggl(-\frac{(z - \mu_1)^2}{2\sigma^2}\biggr) + \frac{\rho(\mu_2 - z)}{\sqrt{2\pi}\sigma^3}\exp\biggl(-\frac{(z - \mu_2)^2}{2\sigma^2}\biggr)\biggr\} \\
\psi_0'(z) &= (\log p_0)''(z) = -\frac{g(z)}{2\pi\sigma^4 p_0(z)^2}\exp\biggl(-\frac{(z -\mu_1)^2 + (z - \mu_2)^2}{2\sigma^2}\biggr) 
\end{align*}
for $z \in \R$, where
\begin{align*}
g(z) := (1 - \rho)^2 \exp\Bigl(-\frac{\mu_2 -\mu_1}{2\sigma^2}(2z - \mu_1 - \mu_2)\Bigr) &+ \rho^2 \exp\Bigl(\frac{\mu_2 -\mu_1}{2\sigma^2}(2z - \mu_1 - \mu_2)\Bigr) \\
&+ \rho(1 - \rho)\Bigl\{2 - \Bigl(\frac{\mu_1 - \mu_2}{\sigma}\Bigr)^2\Bigr\}
% \\ &= \biggl\{(1 - q)\exp\biggl(-\frac{(\mu_2 -\mu_1)(2z - \mu_1 - \mu_2)}{4\sigma^2}\biggr) - q\exp\biggl(\frac{(\mu_2 -\mu_1)(2z - \mu_1 - \mu_2)}{4\sigma^2}\biggr)\biggr\}^2 + q(1 - q)\Bigl\{4 - \Bigl(\frac{\mu_1 - \mu_2}{\sigma}\Bigr)^2\Bigr\}
\end{align*}
is a convex function of $z$ with
\[
\min_{z \in \R}g(z) = \rho(1 - \rho)\Bigl\{4 - \Bigl(\frac{\mu_1 - \mu_2}{\sigma}\Bigr)^2\Bigr\}.
\]
\textit{(a)} If $|\mu_1 - \mu_2| \leq 2\sigma$, then $g \geq 0$ on $\R$ and hence $P_0$ has a log-concave density $p_0 = p_0^{\mathrm{ML}} = p_0^*$.

\medskip
\noindent
\textit{(b)} Otherwise, if $|\mu_1 - \mu_2| > 2\sigma$, then there exist $a < b$ such that $g \geq 0$ and hence $\psi_0' \leq 0$ on $(-\infty,a] \cup [b,\infty)$, while $g \leq 0$ and $\psi_0' \geq 0$ on $[a,b]$. Therefore, by~\citet[Examples~2.11(ii) and~2.12]{dumbgen2011approximation}, there exist $a' \in (-\infty,a]$ and $b' \in [b,\infty)$ such that $\phi := \log p_0^{\mathrm{ML}}$ agrees with $\phi_0 := \log p_0$ on $(-\infty,a'] \cup [b',\infty)$, and $\phi$ is affine on $[a',b']$. 

Suppose for a contradiction that $p_0^{\mathrm{ML}}$ is differentiable on $\R$. Then $\psi_0(a') = \phi_0'(a') = \phi'(a') = \phi'(b') = \psi_0(b')$, so defining $\ell \colon \R \to \R$ by $\ell(z) := \phi_0(a') + \psi_0(a')(z - a')$, we have $\phi = \ell$ on $[a',b']$ and $\ell(z) = \phi_0(b') + \psi_0(b')(z - b')$ for $z \in \R$. Since $\phi_0$ is concave on both $(-\infty,a]$ and $[b,\infty)$ while being convex on $[a,b]$, it follows that $\phi_0 \leq \ell$ on $(-\infty,a] \cup [b,\infty)$ and
\[
\phi_0(z) \leq \frac{b - z}{b - a}\,\phi_0(a) + \frac{z - a}{b - a}\,\phi_0(b) \leq \frac{b - z}{b - a}\,\ell(a) + \frac{z - a}{b - a}\,\ell(b) = \ell(z) = \phi(z)
\]
for all $z \in [a,b]$. Therefore, $\log p_0 = \phi_0 \leq \phi = \log p_0^{\mathrm{ML}}$ on $\R$, but since $\int_\R p_0 = \int_\R p_0^{\mathrm{ML}} = 1$, we must have $p_0 = p_0^{\mathrm{ML}}$ because both functions are continuous. However, this contradicts the fact that $p_0$ is not log-concave, so $p_0^{\mathrm{ML}}$ is not differentiable (at either $a'$ or $b'$).

On the other hand, $p_0$ and its corresponding quantile function $F_0^{-1}$ are differentiable on $\R$, so $J_0 = p_0 \circ F_0^{-1}$ is differentiable on $(0,1)$. By Lemma~\ref{lem:lcm-deriv}, $\hat{J}_0$ is continuously differentiable on $(0,1)$, so $(\log p_0^*)^{(\mathrm{R})} = \psi_0^* = \hat{J}_0^{(\mathrm{R})} \circ F_0$ is continuous on $\R$. Therefore, $\log p_0^*$ is differentiable on $\R$ and hence $p_0^{\mathrm{ML}} \neq p_0^*$.

Moreover, by the first line of the proof of \textit{(b)}, $J_0' = \psi_0 \circ F_0^{-1}$ is increasing on $[F_0(a),F_0(b)]$, and decreasing on both $(0,F_0(a)]$ and $[F_0(b),1)$. We deduce from Lemma~\ref{lem:lcm-concave-cvx} that there exist $z_1 \in (-\infty,a]$ and $z_2 \in [b,\infty)$ such that $\psi_0^*$ is decreasing on $(-\infty,z_1] \cup [z_2,\infty)$ and constant on $[z_1,z_2]$.
\end{proof}

\subsection{Background and proofs for Section~\ref{sec:regression}}
\label{subsec:regression-proofs}

Section~\ref{subsec:joint-opt} uses population-level results about the antitonic score projection from Section~\ref{sec:antitonic-proj} to motivate the alternating algorithms in Section~\ref{sec:regression}; this material is intended as background and is not required elsewhere. In the subsequent three subsections, we prove the antitonic efficiency results in Sections~\ref{subsec:linreg-sym} and~\ref{subsec:linreg-intercept}. Section~\ref{subsubsec:score-estimation} establishes key consistency results for our kernel-based score estimators, which are then used to prove Theorems~\ref{thm:linreg-score-sym} and~\ref{thm:linreg-score-intercept} in Sections~\ref{subsec:linreg-sym-proofs} and~\ref{subsec:linreg-intercept-proofs} respectively. Some techniques from the classical $M$-estimation literature are adapted to provide a common template for both semiparametric models, whose efficient scores and information matrices are derived in Section~\ref{subsec:linreg-semiparametric}. Despite the similarities among Lemmas~\ref{lem:linreg-sym-equicontinuity} to~\ref{lem:linreg-zeta-consistency} and their proofs, there are nevertheless some subtle differences.

\subsubsection{Joint optimisation for the score and regression coefficients}
\label{subsec:joint-opt}

For $\beta \in \R^d$, define $q_\beta \colon \R \to \R$ by
\[
q_\beta(z) := \E p_0\bigl(z - X_1^\top(\beta_0 - \beta)\bigr),
\]
so that $q_\beta$ is the density of $Y_1 - X_1^\top\beta = \varepsilon_1 + X_1^\top(\beta_0 - \beta)$. Our approach to estimating $\beta_0$ with data-driven convex loss functions is motivated by the following population-level joint optimisation problem. We seek to minimise the augmented score matching objective
\begin{align}
\label{eq:Q-beta-psi}
Q(\beta,\psi) := D_{q_\beta}(\psi) = \E\biggl(\int_\R \psi^2\bigl(z + X_1^\top(\beta_0 - \beta)\bigr)\,p_0(z)\,dz + 2\int_\R p_0\bigl(z - X_1^\top(\beta_0 - \beta)\bigr)\,d\psi(z)\biggr)
\end{align}
jointly over $\beta \in \R^d$ and $\psi \in \Psi_\downarrow(q_\beta)$ satisfying 
\begin{equation}
\label{eq:constraint-Z-est}
\E\bigl(X_1 \psi(Y_1 - X_1^\top \beta)\bigr) = 0.
\end{equation}
Recall from~\eqref{eq:Dp0-deriv} that if $\psi \in \Psi_\downarrow(q_\beta)$ is locally absolutely continuous on $\R$, then 
\[
Q(\beta,\psi) = \E\psi^2(Y_1 - X_1^\top\beta) + 2\E\psi'(Y_1 - X_1^\top\beta).
\]
The constraint~\eqref{eq:constraint-Z-est} is the population analogue of the estimating equations~\eqref{eq:linreg-Z-est} based on $\psi$. It forces $\beta$ to be a minimiser of the convex population risk function $b \mapsto \E\ell(Y_1 - X_1^\top b) =: L_\psi(b)$ over $\mathcal{D}_\psi$,\footnote{\label{footnote:population-risk}Indeed, $\ell$ is convex with $\psi = -\ell^{(\mathrm{R})}$, so for $\beta,\beta' \in \R^d$, we have
\[
\ell(Y_1 - X_1^\top\beta') \geq \ell(Y_1 - X_1^\top\beta) + (\beta - \beta')^\top X_1\psi(Y_1 - X_1^\top\beta).
\]
If $\beta$ satisfies~\eqref{eq:constraint-Z-est}, then taking expectations in the display above shows that $L_\psi(\beta') \geq L_\psi(\beta)$ for all $\beta' \in \mathcal{D}_\psi$.} where $\ell$ is any negative antiderivative of~$\psi$, and where $\mathcal{D}_\psi$ is the convex set of $b \in \R^d$ such that $L_\psi(b)$ is well-defined in $[-\infty,\infty]$.

The following proposition characterises the global joint minimiser of our constrained score matching optimisation problem. For $j \in [d]$, write $\mathsf{e}_j$ for the $j$th standard basis vector in $\R^d$.
\begin{proposition}
\label{prop:joint-min}
For an absolutely continuous density $p_0$ with $\mathcal{S}(p_0) = \R$, let $\psi^*_0$ be the projected score function defined in~\eqref{eq:psi0-star}.  Assume that $i^*(p_0) = \int_\R (\psi_0^*)^2\,p_0 < \infty$ and that $\E(X_1 X_1^\top) \in \R^{d \times d}$ is positive definite. Let
\[
\Gamma := \bigl\{(\beta,\psi) : \beta \in \R^d,\,\psi \in \Psi_\downarrow(q_\beta),\,\E\bigl(X_1\psi(Y_1 - X_1^\top \beta)\bigr) = 0\bigr\}.
\]
\begin{enumerate}[label=(\alph*)]
\item Suppose that $p_0$ is symmetric. Denote by $\Psi_\downarrow^{\mathrm{anti}}(q_\beta)$ the set of all $\psi \in \Psi_\downarrow(q_\beta)$ such that $\psi(-z) = -\lim_{z' \nearrow z}\psi(z')$, and let $\Gamma^{\mathrm{anti}} := \{(\beta,\psi) \in \Gamma : \psi \in \Psi_\downarrow^{\mathrm{anti}}(q_\beta)\}$. Then
\begin{align*}
(\beta_0,\psi_0^*) = \argmin_{(\beta,\psi) \in \Gamma^{\mathrm{anti}}} Q(\beta,\psi).
\end{align*}
\item If instead $X_{1d} = 1$ almost surely, then 
\begin{align*}
\bigl(\beta_0 + c \mathsf{e}_d, \psi_0^*(\cdot + c)\bigr) \in \argmin_{(\beta,\psi) \in \Gamma}\,Q(\beta,\psi)
\end{align*}
for all $c \in \R$. Moreover, all minimisers are of this form.
\end{enumerate}
\end{proposition}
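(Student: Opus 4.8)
The plan is to reduce both parts to the one-dimensional antitonic projection results of Theorem~\ref{thm:antitonic-score-proj} via a single change of variables. Writing $W := X_1^\top(\beta_0 - \beta)$ and $\psi_W := \psi(\,\cdot + W)$, I would first establish the identity
\[
Q(\beta,\psi) = \E\bigl[D_{p_0}(\psi_W)\bigr]
\]
for every $\beta \in \R^d$ and $\psi \in \Psi_\downarrow(q_\beta)$: starting from $q_\beta(z) = \E\,p_0(z - W)$ and $D_{q_\beta}(\psi) = \int_\R\psi^2\,q_\beta + 2\int_\R q_\beta\,d\psi$ (note $\mathcal{S}(q_\beta) = \R$, since $q_\beta$ is a uniformly continuous density with support unbounded both ways), substitute $u = z - W$ in both terms and interchange $\E$ with the integrals --- legitimate by Tonelli's theorem, as $\psi^2 p_0 \geq 0$ while $p_0 \geq 0$ and the Lebesgue--Stieltjes measure $d\psi \leq 0$. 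Along the way I would record that each $\psi_W$ is decreasing and right-continuous, and that $\E\int_\R\psi_W^2\,p_0 = \int_\R\psi^2\,q_\beta < \infty$ forces $\psi_W \in \Psi_\downarrow(p_0)$ for $\Pr_W$-almost every $W$.

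Granting this, Theorem~\ref{thm:antitonic-score-proj}(b) with $\lambda = 1$ gives $Q(\beta,\psi) = \E[D_{p_0}(\psi_W)] \geq -i^*(p_0)$ for all feasible $(\beta,\psi)$, with equality iff $D_{p_0}(\psi_W) = -i^*(p_0)$ for $\Pr_W$-a.e.\ $W$; by the uniqueness clause of Theorem~\ref{thm:antitonic-score-proj}(c) this holds iff $\psi_W = \psi_0^*$, i.e.\ $\psi = \psi_0^*(\,\cdot - w)$, for every $w$ in a set $G$ of full $\Pr_W$-measure. The key rigidity I would then invoke is that $\psi_0^*$ cannot be periodic: it is finite-valued on all of $\R$ (since $\mathcal{S}_0 = \R$), and a finite-valued decreasing periodic function is constant, hence --- as $\int_\R\psi_0^*\,dP_0 = 0$ by Theorem~\ref{thm:antitonic-score-proj}(a) --- identically zero, contradicting $i^*(p_0) > 0$ (Theorem~\ref{thm:antitonic-score-proj}(d)).

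For part (b), I would first check the candidate family lies in $\Gamma$ and attains the bound: when $\beta = \beta_0 + c\,\mathsf{e}_d$ and $X_{1d} = 1$ almost surely, $W = -c X_{1d} = -c$ a.s., so $q_\beta(z) = p_0(z + c)$ and $(\psi_0^*(\,\cdot + c))_W = \psi_0^*(\,\cdot + W + c) = \psi_0^*$, giving $Q\bigl(\beta_0 + c\,\mathsf{e}_d, \psi_0^*(\,\cdot + c)\bigr) = D_{p_0}(\psi_0^*) = -i^*(p_0)$; and since $Y_1 - X_1^\top(\beta_0 + c\,\mathsf{e}_d) = \varepsilon_1 - c$, the constraint reduces to $\E(X_1)\,\E\psi_0^*(\varepsilon_1) = 0$ by independence and Theorem~\ref{thm:antitonic-score-proj}(a). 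Conversely, for a minimiser $(\beta,\psi) \in \Gamma$: $G$ cannot contain two distinct points (else $\psi_0^*$ would be periodic), so $W$ equals a constant, say $-c$, almost surely, whence $\psi = \psi_0^*(\,\cdot + c)$; and putting $v := \beta_0 - \beta$, the vector $v + c\,\mathsf{e}_d$ satisfies $X_1^\top(v + c\,\mathsf{e}_d) = W + c = 0$ a.s., so it lies in the kernel of the positive-definite matrix $\E(X_1 X_1^\top)$ and therefore vanishes, giving $\beta = \beta_0 + c\,\mathsf{e}_d$. Part (a) follows the same scheme at $\beta = \beta_0$: symmetry of $p_0$ makes $J_0$ (and hence $\hat{J}_0$) symmetric about $1/2$, so $\psi_0^* \in \Psi_\downarrow^{\mathrm{anti}}(p_0)$ and $(\beta_0,\psi_0^*) \in \Gamma^{\mathrm{anti}}$ with $Q(\beta_0,\psi_0^*) = -i^*(p_0)$ as before; and for any minimiser $(\beta,\psi) \in \Gamma^{\mathrm{anti}}$ and any $w \in G$, comparing left limits in $\psi = \psi_0^*(\,\cdot - w)$ together with the fact that $\psi$ and $\psi_0^*$ both satisfy $\phi(-z) = -\lim_{z' \nearrow z}\phi(z')$ shows $\psi_0^*(s - w) = \psi_0^*(s + w)$ for all $s$, i.e.\ $\psi_0^*$ has period $2w$; by the rigidity this forces $w = 0$, so $W = X_1^\top(\beta_0 - \beta) = 0$ a.s., and positive-definiteness of $\E(X_1 X_1^\top)$ yields $\beta = \beta_0$ and thus $\psi = \psi_0^*$.

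The step I expect to be the main obstacle is the careful measure-theoretic handling underpinning the first two paragraphs: justifying the Tonelli interchange for the Stieltjes term together with the translation of the measure $d\psi$, and --- more delicately --- pinning down the precise sense in which Theorem~\ref{thm:antitonic-score-proj}(c) delivers $\psi_W = \psi_0^*$, so that the coincidence of these decreasing right-continuous functions is genuinely pointwise on $\R$ and the periodicity argument is valid; translation-invariance of Lebesgue measure and right-continuity should close that gap. Everything else is routine bookkeeping with the cited results.
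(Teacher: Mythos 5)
Your proposal is correct, but it takes a genuinely different route from the paper, and the two approaches buy different things.

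The paper never forms the superposition identity $Q(\beta,\psi) = \E\bigl[D_{p_0}(\psi(\,\cdot + W))\bigr]$ with $W = X_1^\top(\beta_0 - \beta)$.  Instead, it uses the constraint $\E\bigl(X_1\psi(Y_1 - X_1^\top\beta)\bigr) = 0$ centrally in the optimisation: combining it with antisymmetry of $\psi$ (part (a)) or a centering $c$ with $\E\psi(\varepsilon_1 - c) = 0$ (part (b)), it derives $\E\bigl(T\psi(\varepsilon_1)\bigr) = 0 = \E\bigl(T\psi(\varepsilon_1 - T)\bigr)$, and then a Fubini--Fatou argument on $g(s) := -\E\int_\R T^2\,p_0(z + sT)\,d\psi(z) \ge 0$ shows that the Stieltjes term in $Q$ vanishes whenever $\beta$ is off the optimal set.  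The payoff is the quantitative dichotomy $Q(\beta,\psi) \ge 0 > -i^*(p_0) = Q(\beta_0,\psi_0^*)$ whenever $\beta \ne \beta_0$ (resp.\ $\beta$ not of the form $\beta_0 + c\mathsf{e}_d$), which the paper exploits afterwards as a ``well-separation'' statement.  Your route gets a sharp structural characterisation instead: equality in the lower bound forces $\psi = \psi_0^*(\,\cdot - w)$ simultaneously for $\Pr_W$-a.e.\ $w$, and the periodicity rigidity (a finite-valued decreasing periodic function is constant, hence zero by the zero-mean property from Theorem~\ref{thm:antitonic-score-proj}(a), contradicting $i^*(p_0) > 0$) collapses $W$ to a constant.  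Antisymmetry of both $\psi$ and $\psi_0^*$ (established in the paper's proof via~\eqref{eq:psi0star-antisym}) then kills that constant in part (a), and the intercept absorbs it in part (b); positive definiteness of $\E(X_1 X_1^\top)$ delivers $\beta$.  A pleasant structural by-product of your argument, which the paper's does not expose, is that the Fisher-consistency constraint in $\Gamma$ is needed only to certify feasibility of the candidate family, not to carry out the minimisation; on the other hand, your argument does not directly yield the paper's $Q(\beta,\psi) \ge 0$ separation.  Your self-identified risk points (the Tonelli interchange for the Stieltjes term under translation, and the pointwise rather than $P_0$-a.e.\ nature of the uniqueness in Theorem~\ref{thm:antitonic-score-proj}(c)) are indeed the ones to nail down, but both are handled exactly as you anticipate: the translated Stieltjes integral is non-positive so Tonelli applies after a sign flip, and Lemma~\ref{lem:psi-unique} upgrades $P_0$-a.e.\ equality to identity on $\R$, which is what the periodicity argument needs.
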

Thus, when we restrict attention to right-continuous versions of antisymmetric functions $\psi$ in the setting of Proposition~\ref{prop:joint-min}\emph{(a)}, the unique minimiser of our population-level optimisation problem is given by the pair consisting of the vector $\beta_0$ of true regression coefficients, and the antitonic score projection~$\psi_0^*$ of the error density $p_0$.  This motivates our statistical methodology below, where we seek to minimise sample versions of this population-level objective. The proof below shows that for any $(\beta,\psi) \in \Gamma^{\mathrm{anti}}$, we also have $(\beta_0,\psi) \in \Gamma^{\mathrm{anti}}$; moreover, $Q(\beta_0,\psi_0^*) = \inf_{\psi \in \Psi_{\downarrow}^{\mathrm{anti}}(q_{\beta_0})} Q(\beta_0, \psi) = -1/i^*(p_0) < 0$.  On the other hand, if $\beta \neq \beta_0$, and if $\psi \in \Psi_{\downarrow}^{\mathrm{anti}}(q_\beta)$ is such that $(\beta,\psi) \in \Gamma^{\mathrm{anti}}$, then $Q(\beta,\psi) \geq 0$.  In other words, feasible pairs $(\beta,\psi)$ with $\beta \neq \beta_0$ are well-separated from the optimal solution in terms of their objective function values.

In Proposition~\ref{prop:joint-min}\emph{(b)}, where the errors need not have mean zero and where we no longer restrict attention to antisymmetric~$\psi$, the pair $(\beta_0,\psi_0^*)$ still minimises our objective up to appropriate translations to account for the lack of identifiability of the intercept term. Indeed, the joint distribution of $(X_1,Y_1)$ does not change if we replace $\beta_0$ and $\varepsilon_1$ with $\beta_0 + c \mathsf{e}_d$ and $\varepsilon_1 - c$ respectively, for any $c \in \R$. This explains why the minimiser in \emph{(b)} is not unique; observe that $\psi_0^*(\cdot + c)$ is the projected score corresponding to the density $p_0(\cdot + c)$ of $\varepsilon_1 - c$. Nevertheless, \emph{(b)} indicates that translations of the intercept term are the \textit{only} source of non-uniqueness, so that the first $d-1$ components of $\beta_0$ can still be recovered by solving the constrained optimisation problem above on the population level. This provides some insight into the reason that~\eqref{eq:thetahat} in Section~\ref{subsec:linreg-intercept} holds without any centring assumption on the errors. The constraint $\E\zeta(\varepsilon_1) = 0$ restores the identifiability of the intercept $\mu_0$.

\begin{proof}
In both \textit{(a)} and \textit{(b)}, $X_1$ and $\varepsilon_1$ are independent, so by the final assertion of Lemma~\ref{lem:psi0-star}, we have $\E\bigl\{X_1\psi_0^*(Y_1 - X_1^\top\beta_0)\bigr\} = \E(X_1)\,\E\psi_0^*(\varepsilon_1) = 0$. Therefore, $(\beta_0,\psi_0^*)$ satisfies the constraint~\eqref{eq:constraint-Z-est}. Moreover, $Y_1 - X_1^\top\beta_0 = \varepsilon_1$ has density $q_{\beta_0} = p_0$, so by Theorem~\ref{thm:antitonic-score-proj}(\emph{b},\,\emph{c}),
\[
Q(\beta_0,\psi_0^*) = D_{p_0}(\psi_0^*) = -i^*(p_0) = \inf_{\psi \in \Psi_\downarrow(p_0)} D_{p_0}(\psi) = \inf_{\psi \in \Psi_\downarrow(p_0)} Q(\beta_0,\psi).
\]
\noindent
\emph{(a)} Since $p_0$ is symmetric and has a continuous distribution function $F_0$, we have $F_0(z) = 1 - F_0(-z)$ for every $z \in \R$, so $J_0(u) := (p_0 \circ F_0^{-1})(u) = p_0\bigl(-F_0^{-1}(1 - u)\bigr) = J_0(1 - u)$ and hence $\hat{J}_0(u) = \hat{J}_0(1 - u)$ for all $u \in [0,1]$. For every $z \in \R$, we have $F_0(z) \in (0,1)$ because $\mathcal{S}(p_0) = \R$, so
\begin{equation}
\label{eq:psi0star-antisym}
\psi_0^*(-z) = \hat{J}_0^{(\mathrm{R})}\bigl(F_0(-z)\bigr) = \hat{J}_0^{(\mathrm{R})}\bigl(1 - F_0(z)\bigr) = -\hat{J}_0^{(\mathrm{L})}\bigl(F_0(z)\bigr) = -\lim_{u \nearrow F_0(z)}\hat{J}_0^{(\mathrm{R})}(u) = -\lim_{z' \nearrow z}\psi_0^*(z'),
\end{equation}
where the penultimate equality follows from~\citet[Theorem~24.1]{rockafellar97convex}. Thus, $\psi_0^* \in \Psi_\downarrow^{\mathrm{anti}}(p_0) = \Psi_\downarrow^{\mathrm{anti}}(q_{\beta_0})$.

Next, we claim that if $(\beta,\psi) \in \Gamma^{\mathrm{anti}}$ is such that $\beta \neq \beta_0$, then necessarily $Q(\beta,\psi) \geq 0 > Q(\beta_0,\psi_0^*)$, which proves \emph{(a)}. Indeed, $\E(X_1 X_1^\top)$ is positive definite by assumption, so $T := X_1^\top (\beta - \beta_0)$ satisfies $\E(T^2) = (\beta - \beta_0)^\top \E(X_1 X_1^\top) (\beta - \beta_0) > 0$ and hence $\Pr(T \neq 0) > 0$. By~\eqref{eq:constraint-Z-est}, $\E\bigl(T\psi(\varepsilon_1 - T)\bigr) = 0$, so because $T$ and $\varepsilon_1$ are independent, there exists $t \neq 0$ such that $\E|\psi(\varepsilon_1 - t)| < \infty$. Since $\varepsilon_1 \eqd -\varepsilon_1$ and $\psi$ agrees Lebesgue almost everywhere on $\R$ with an antisymmetric function, we have $\E\psi(\varepsilon_1 + t) = \E\psi(-\varepsilon_1 + t) = -\E\psi(\varepsilon_1 - t) \in \R$. Finally, since $|\psi(\varepsilon_1)| \leq |\psi(\varepsilon_1 - t)| \vee |\psi(\varepsilon_1 + t)|$, this shows that $\E|\psi(\varepsilon_1)| < \infty$. Thus, $\E\psi(\varepsilon_1) = \E\psi(-\varepsilon_1) = -\E\psi(\varepsilon_1)$, so again because $T$ and $\varepsilon_1$ are independent,
\begin{equation}
\label{eq:non-identifiable}
\E\bigl(T\psi(\varepsilon_1)\bigr) = \E(T)\,\E\psi(\varepsilon_1) = 0 = \E\bigl(T\psi(\varepsilon_1 - T)\bigr).
\end{equation}
For $z \in \R$, we have
\[
T^2\int_0^1 p_0(z + sT)\,ds = T\int_z^{z + T} p_0(w)\,dw = \int_\R Tp_0(w)\bigl(\Ind_{\{w - T < z \leq w\}} - \Ind_{\{w < z \leq w - T\}}\bigr)\,dw.
\]
Hence, by Fubini's theorem and the independence of $T$ and $\varepsilon_1$, we have
\begin{align*}
\E\int_0^1 \int_\R T^2 p_0(z + sT)\,d\psi(z)\,ds &= \E\int_\R p_0(w)\int_\R T\bigl(\Ind_{\{w - T < z \leq w\}} - \Ind_{\{w < z \leq w - T\}}\bigr)\,d\psi(z)\,dw \\
&= \E \int_\R T\bigl(\psi(w) - \psi(w - T)\bigr)\,p_0(w)\,dw \\
&= \E\bigl(T\psi(\varepsilon_1)\bigr) - \E\bigl(T\psi(\varepsilon_1 - T)\bigr) = 0.
\end{align*}
Defining $g \colon [0,1] \to \R$ by $g(s) := -\E \int_\R T^2 p_0(z + sT)\,d\psi(z) \geq 0$, we therefore have $\int_0^1 g = 0$, so $g = 0$ Lebesgue almost everywhere. Now $p_0$ is continuous on $\R$, so by Fatou's lemma,
\begin{align}
\label{eq:joint-min-deriv-1}
0 &\leq -\E \int_\R T^2 p_0(z + T)\,d\psi(z) = g(1) \leq \liminf_{s \nearrow 1} g(s) = 0, \\
\label{eq:joint-min-deriv-2}
0 &\leq -\E(T^2) \int_\R p_0(z)\,d\psi(z) = -\E \int_\R T^2 p_0(z)\,d\psi(z) = g(0) \leq \liminf_{s \searrow 0} g(s) = 0.
\end{align}
By~\eqref{eq:joint-min-deriv-1}, $\int_\R p_0(z + T)\Ind_{\{T \neq 0\}}\,d\psi(z) = 0$ almost surely. Moreover, since $\E(T^2) > 0$,~\eqref{eq:joint-min-deriv-2} implies that $\int_\R p_0(z + T)\Ind_{\{T = 0\}}\,d\psi(z) = \Ind_{\{T = 0\}}\int_\R p_0(z)\,d\psi(z) = 0$ almost surely, so
\begin{equation}
\label{eq:joint-min-deriv}
\E\int_\R p_0\bigl(z + X_1^\top(\beta - \beta_0)\bigr)\,d\psi(z) = \E\int_\R p_0(z + T)\,d\psi(z) = 0.
\end{equation}
Hence, by the definition of $Q(\beta,\psi)$ in~\eqref{eq:Q-beta-psi},
\[
Q(\beta,\psi) = \E\int_\R \psi^2\bigl(z - X_1^\top(\beta - \beta_0)\bigr)\,p_0(z)\,dz \geq 0 > Q(\beta_0,\psi_0^*),
\]
as claimed.

\medskip
\noindent
\emph{(b)} For $c \in \R$, $\beta \in \R^d$, $\psi \in \Psi_\downarrow(q_\beta)$ and $x = (\tilde{x},1) \in \R^{d-1} \times \R$, we have $x^\top (\beta_0 - \beta) = x^\top (\beta_0 - \beta - c\mathsf{e}_d) + c$, so $\psi_c(\cdot) := \psi(\cdot + c)$ satisfies
\begin{align*}
\int_\R \psi_c^2\bigl(z + x^\top(\beta_0 - \beta - c\mathsf{e}_d)\bigr)\,p_0(z)\,dz &= \int_\R \psi^2\bigl(z + x^\top(\beta_0 - \beta)\bigr)\,p_0(z)\,dz, \\
\text{and}\quad \int_\R p_0\bigl(z - x^\top(\beta_0 - \beta - c\mathsf{e}_d)\bigr)\,d\psi_c(z) &= \int_\R p_0\bigl(z - x^\top(\beta_0 - \beta - c\mathsf{e}_d) - c\bigr)\,d\psi(z) \\
&= \int_\R p_0\bigl(z - x^\top(\beta_0 - \beta)\bigr)\,d\psi(z).
\end{align*}
It follows that if $X_{1d} = 1$ almost surely, then $Q(\beta + c\mathsf{e}_d,\psi_c) = Q(\beta,\psi)$. 

For $\beta_c := \beta_0 + c\mathsf{e}_d$, we have $q_{\beta_c}(\cdot) = p_0(\cdot + c)$, and $\psi_c \in \Psi_\downarrow(q_{\beta_c})$ if and only if $\psi \in \Psi_\downarrow(q_{\beta_0}) = \Psi_\downarrow(p_0)$.  Thus,
\begin{equation}
\label{eq:beta-c}
0 > -i^*(p_0) = Q(\beta_0,\psi_0^*) = \inf_{\psi \in \Psi_\downarrow(p_0)} Q(\beta_0,\psi) = \inf_{\psi_c \in \Psi_\downarrow(q_{\beta_c})} Q(\beta_c,\psi_c)
\end{equation}
for all $c \in \R$.

Next, for any $(\beta,\psi) \in \Gamma$, we claim that there exists $c \in \R$ such that $\E\psi(\varepsilon_1 - c) = 0$. Indeed, $X_{1d} = 1$ almost surely, so $T = X_1^\top (\beta - \beta_0)$ satisfies $\E\psi(\varepsilon_1 - T) = \E\psi(Y_1 - X_1^\top\beta) = 0$. By the independence of $T$ and $\varepsilon$, it follows that $\varPsi(t) := \int_\R \psi(z - t)\,p_0(z)\,dz = \E\psi(\varepsilon_1 - t)$ is well-defined and finite for $P_T$-almost every $t \in \R$, where $P_T$ denotes the distribution of $T$. Since $\psi$ is decreasing, $\varPsi$ is increasing, so there exist $t_1 \leq t_2$ in the support of $P_T$ such that $-\infty < \varPsi(t_1) \leq 0 \leq \varPsi(t_2) < \infty$, and $|\psi(\varepsilon_1 - t)| \leq |\psi(\varepsilon_1 - t_1)| \vee |\psi(\varepsilon_1 - t_2)|$ for all $t \in [t_1,t_2]$. It follows from the dominated convergence theorem that $\varPsi(t)$ is continuous in $t \in [t_1,t_2]$, so by the intermediate value theorem, there exists $c \in [t_1,t_2]$ such that $\E\psi(\varepsilon_1 - c) = \varPsi(c) = 0$, as claimed.

By~\eqref{eq:constraint-Z-est} together with the independence of $\tilde{T} := T - c = X_1^\top (\beta - \beta_c)$ and $\varepsilon_1$, we deduce that  
\[
\E\bigl(\tilde{T}\psi(\varepsilon_1 - c)\bigr) = 0 = \E\bigl(\tilde{T}\psi(\varepsilon_1 - c - \tilde{T})\bigr).
\]
This is similar to~\eqref{eq:non-identifiable} in the proof of \emph{(a)}, but with $T$ and $\varepsilon_1$ replaced by $\tilde{T}$ and $\varepsilon_1 - c$ respectively. 

Suppose now that $\beta \neq \beta_{c'}$ for any $c' \in \R$. Since $\E(X_1 X_1^\top)$ is positive definite by assumption, $\E(\tilde{T}^2) = (\beta - \beta_c)^\top \E(X_1 X_1^\top) (\beta - \beta_c) > 0$. By arguing similarly to \emph{(a)} and noting that $p_c(\cdot) := p_0(\cdot + c)$ is the density of $\varepsilon_1 - c$, we conclude as in~\eqref{eq:joint-min-deriv} that
\[
\E\int_\R p_0\bigl(z + X_1^\top(\beta - \beta_0)\bigr)\,d\psi(z) = \E\int_\R p_0(z + T)\,d\psi(z) = \E\int_\R p_c(z + \tilde{T})\,d\psi(z) = 0.
\]
Thus, if $(\beta,\psi) \in \Gamma$ is such that $\beta \neq \beta_{c'}$ for any $c' \in \R$, then
\[
Q(\beta,\psi) = \E\int_\R \psi^2\bigl(z - X_1^\top(\beta - \beta_0)\bigr)\,p_0(z)\,dz \geq 0 > Q(\beta_0,\psi_0^*).
\]
Together with~\eqref{eq:beta-c}, this completes the proof.
\end{proof}

\subsubsection{Consistency of kernel-based score estimators}
\label{subsubsec:score-estimation}

Lemma~\ref{lem:kernel-score-consistency} below shows that the initial score estimates $\tilde{\psi}_{n,j}$ are consistent in $L^2(P_0)$. It extends the proof of~\citet[Lemma~25.64]{vdV1998asymptotic} to kernel density estimators $\tilde{p}_{n,j}$ based on out-of-sample residuals $\{\hat{\varepsilon}_i : i \in I_{j+1}\}$ instead of the unobserved regression errors~$\varepsilon_i$. The three folds of the data will be denoted by $\mathcal{D}_j \equiv \mathcal{D}_{n,j} := \{(X_i, Y_i) : i \in I_j\}$ for $j \in \{1,2,3\}$. 

\begin{lemma}
\label{lem:kernel-score-consistency}
Suppose that for $j \in \{1,2,3\}$, we have $\hat{\varepsilon}_i = \varepsilon_i + \mu + X_i^\top b_n$ for all $i \in I_{j+1}$, where $\mu \in \R$ is fixed and $b_n = O_p(n^{-1/2})$ as $n \to \infty$. If in addition~\emph{\ref{ass:fisher-finite}},~\emph{\ref{ass:kernel}} and~\emph{\ref{ass:alpha-gamma-h}} hold, then for each $j$, we have
\[
\int_\R \bigl(\tilde{\psi}_{n,j}(z) - \psi_0(z - \mu)\bigr)^2\,p_0(z - \mu)\,dz \cvp 0
\]
as $n \to \infty$.
\end{lemma}

\begin{proof}
It suffices to prove the result for $\tilde{\psi}_n \equiv \tilde{\psi}_{n,1}$ and $\mu = 0$, since $\tilde{\psi}_{n,2},\tilde{\psi}_{n,3}$ are obtained by permuting the three folds of the data (which have roughly the same size), and the density and score function of $\varepsilon_i' := \varepsilon_i + \mu$ are $p_0(\cdot - \mu)$ and $\psi_0(\cdot - \mu)$ respectively. By~\ref{ass:fisher-finite} and the Cauchy--Schwarz inequality, $\norm{p_0}_\infty \leq \int_\R |p_0'| \leq \bigl(\int_{\{p_0 > 0\}} (p_0')^2/p_0\bigr)^{1/2} = i(p_0)^{1/2} < \infty$. Since $K$ is twice continuously differentiable and supported on $[-1,1]$ by~\ref{ass:kernel}, we have $\int_\R K^2 \vee \int_\R (K')^2 < \infty$. Denote by $P_X$ the distribution of $X_1$ on $\R^d$. For $y \in \R$, $n \in \N$ and $h \equiv h_n$, let $\tilde{p}_n(y) \equiv \tilde{p}_{n,1}(y) = |I_2|^{-1}\sum_{i \in I_2}K_h(y - \hat{\varepsilon}_i)$ and
\begin{align*}
p_n(y) := \E\bigl(\tilde{p}_n(y) \!\bigm|\! \mathcal{D}_1\bigr) &= \int_{\R^d} \int_\R K_h(y - z - x^\top b_n)p_0(z)\,dz\,dP_X(x) \notag \\
&= \int_{\R^d} \int_\R p_0(y - uh - x^\top b_n)K(u)\,du\,dP_X(x).
\end{align*}
We have $nh_n^2 = (nh_n^3\gamma_n^2)(h_n\gamma_n^2)^{-1} \to \infty$ by~\ref{ass:alpha-gamma-h}, so $\norm{b_n} = o_p(h)$ as $n \to \infty$. Moreover, since $p_0$ is uniformly continuous on $\R$, it follows from the bounded convergence theorem that $p_n(y) \cvp p_0(y)$ for every $y \in \R$. By Fubini's theorem,
\begin{align}
\int_\R |p_n - p_0|
&= \int_\R\,\biggl|\int_{\R^d} \int_\R \bigl\{p_0(y - uh - x^\top b_n) - p_0(y)\bigr\}K(u)\,du\,dP_X(x)\biggr|\,dy \notag \\
&\leq \int_\R \int_{\R^d} \int_\R \int_\R |p_0'(z)|(\Ind_{\{y < z \leq y - uh - x^\top b_n\}} + \Ind_{\{y - uh - x^\top b_n < z \leq y\}})\,dz\,K(u)\,du\,dP_X(x)\,dy \notag \\
&= \biggl(\int_\R |p_0'(z)|\,dz\biggr) \int_{\R^d} \int_\R |uh + x^\top b_n|K(u)\,du\,dP_X(x) \notag \\
\label{eq:kernel-L1-bias}
&\leq \biggl(\int_\R |p_0'|\biggr)\biggl(h\int_\R |u|K(u)\,du + \norm{b_n} \cdot \E\norm{X_1}\biggr) = O_p(h).
\end{align}
By~\ref{ass:alpha-gamma-h} and the dominated convergence theorem, we may differentiate under the integral sign to obtain
\begin{align*}
p_n'(y) = \E\bigl(\tilde{p}_n'(y) \!\bigm|\! \mathcal{D}_1\bigr) &= \int_{\R^d} \int_\R K_h'(y - z - x^\top b_n)p_0(z)\,dz\,dP_X(x).
\end{align*}
For $y \in \R$, we have
\begin{align}
\label{eq:var-pnh}
\Var\bigl(\tilde{p}_n(y) \!\bigm|\! \mathcal{D}_1\bigr) &\leq \frac{1}{|I_2|}\int_{\R^d} \int_\R K_h(y - z - x^\top b_n)^2\,p_0(z)\,dz\,dP_X(x) \leq \frac{\norm{p_0}_\infty \int_\R K^2}{|I_2| h},\\
\label{eq:var-pnh'}
\Var\bigl(\tilde{p}_n'(y) \!\bigm|\! \mathcal{D}_1\bigr) &\leq \frac{1}{|I_2|}\int_{\R^d} \int_\R K_h'(y - z - x^\top b_n)^2\,p_0(z)\,dz\,dP_X(x) \leq \frac{\norm{p_0}_\infty \int_\R (K')^2}{|I_2| h^3}.
\end{align}
Since $|I_2| h_n^3 \to \infty$ by~\ref{ass:alpha-gamma-h}, it follows from Lemma~\ref{lem:cond-cvg} that $\tilde{p}_n(y) - p_n(y) \cvp 0$ and $\tilde{p}_n'(y) - p_n'(y) \cvp 0$. Defining $\tau(z) := \int_\R |p_0'(y - z) - p_0'(y)|\,dy \leq 2\int_\R |p_0'|$ for $z \in \R$, we have $\lim_{z \to 0}\tau(z) = 0$ by continuity of translation~\citep[Proposition~8.5]{folland1999real}. Thus, since $b_n \cvp 0$, it follows from the bounded convergence theorem that
\begin{align*}
\int_\R |p_n' - p_0'| &= \int_\R\,\biggl|\int_{\R^d} \int_\R \bigl\{p_0'(y - uh - x^\top b_n) - p_0'(y)\bigr\}K(u)\,du\,dP_X(x)\biggr|\,dy \\
&\leq \int_{\R^d}\int_\R \tau(uh + x^\top b_n)K(u)\,du\,dP_X(x) \cvp 0.
\end{align*}
This means that every subsequence of $(p_n)$ has a further subsequence $(p_{n_k})$ such that with probability~1, we have $p_{n_k}' \to p_0'$ and hence $(p_{n_k}')^2/p_{n_k} \to (p_0')^2/p_0$ Lebesgue almost everywhere. By the Cauchy--Schwarz inequality,
\begin{align*}
\frac{p_n'(y)^2}{p_n(y)} &= \frac{\bigl(\int_{\R^d} \int_\R p_0'(y - uh - x^\top b_n)K(u)\,du\,dP_X(x)\bigr)^2}{\int_{\R^d} \int_\R p_0(y - uh - x^\top b_n)K(u)\,du\,dP_X(x)} \\
&\leq \int_{\R^d} \int_\R \frac{(p_0')^2}{p_0}(y - uh - x^\top b_n)K(u)\,du\,dP_X(x)
\end{align*}
for $y \in \R$ and $n \in \N$, so by Fubini's theorem,
\begin{equation}
\int_\R \frac{(p_n')^2}{p_n} \leq \int_\R \frac{(p_0')^2}{p_0}
\end{equation}
for all $n$. Thus, by applying a slight generalisation of Scheff\'e's lemma~\citep[Lemma~2.29]{vdV1998asymptotic} to subsequences of $(p_n)$, we obtain
\begin{align}
\label{eq:pnh-inequality}
\int_\R \Bigl(\frac{p_n'}{\sqrt{p_n}} - \frac{p_0'}{\sqrt{p_0}}\Bigr)^2 \cvp 0
\end{align}
as $n \to \infty$. Since $\int_\R \psi_0^2\,p_0 = i(p_0) < \infty$ and $\alpha_n \to \infty$ while $\gamma_n \to 0$, we can argue similarly to the proof of~\citet[Lemma~25.64]{vdV1998asymptotic} and deduce by the dominated convergence theorem that
\[
\E\biggl(\int_{\tilde{S}_{n,1}^c} (\tilde{\psi}_n - \psi_0)^2\,p_0 \biggm| \mathcal{D}_1\biggr) \leq \int_\R \psi_0(y)^2\,p_0(y) \cdot \bigl\{\Pr\bigl(|\tilde{p}_n'(y)| > \alpha_n \!\bigm|\! \mathcal{D}_1\bigr) + \Pr\bigl(\tilde{p}_n(y) < \gamma_n \!\bigm|\! \mathcal{D}_1\bigr)\bigr\}\,dy \cvp 0.
\]
Moreover, on $\tilde{S}_{n,1}$, we have
\begin{align*}
|\tilde{\psi}_n - \psi_0|\sqrt{p_0} &\leq \frac{|\tilde{p}_n'|}{\tilde{p}_n}|\sqrt{p_0} - \sqrt{p_n}| + \frac{\sqrt{p_n}}{\tilde{p}_n}|\tilde{p}_n' - p_n'| + \frac{|p_n'|}{\sqrt{p_n}}\frac{|p_n - \tilde{p}_n|}{|\tilde{p}_n|} + \Bigl|\frac{p_n'}{\sqrt{p_n}} - \frac{p_0'}{\sqrt{p_0}}\Bigr| \\
&\leq \frac{\alpha_n}{\gamma_n}|p_0 - p_n|^{1/2} + \frac{\sqrt{p_n}}{\gamma_n}|\tilde{p}_n' - p_n'| + \frac{|p_n'|}{\sqrt{p_n}}\frac{|p_n - \tilde{p}_n|}{\gamma_n} + \Bigl|\frac{p_n'}{\sqrt{p_n}} - \frac{p_0'}{\sqrt{p_0}}\Bigr|,
\end{align*}
so by~\eqref{eq:var-pnh}--\eqref{eq:pnh-inequality} and~\ref{ass:alpha-gamma-h},
\begin{align*}
&\frac{1}{4}\,\E\biggl(\int_{\tilde{S}_{n,1}} (\tilde{\psi}_n - \psi_0)^2\,p_0 \biggm| \mathcal{D}_1\biggr) \\
&\quad\leq \frac{\alpha_n^2}{\gamma_n^2}\int_\R |p_n - p_0| + \int_\R \frac{p_n(y)}{\gamma_n^2}\Var\bigl(\tilde{p}_n'(y)\,|\,\mathcal{D}_1\bigr)\,dy + \int_\R \frac{(p_n')^2}{\gamma_n^2\,p_n}(y)\Var\bigl(\tilde{p}_n(y)\,|\,\mathcal{D}_1\bigr)\,dy \\
&\hspace{11cm}+ \int_\R \Bigl(\frac{p_n'}{\sqrt{p_n}} - \frac{p_0'}{\sqrt{p_0}}\Bigr)^2\\
&\quad\leq \frac{\alpha_n^2}{\gamma_n^2}\int_\R |p_n - p_0| + \frac{\norm{p_0}_\infty \int_\R (K')^2}{|I_2| h_n^3\gamma_n^2} + \frac{\norm{p_0}_\infty \int_\R K^2}{|I_2| h_n\gamma_n^2}\int_\R \frac{(p_0')^2}{p_0} + \int_\R \Bigl(\frac{p_n'}{\sqrt{p_n}} - \frac{p_0'}{\sqrt{p_0}}\Bigr)^2 \\
&\quad= O_p\Bigl(\frac{\alpha_n^2}{\gamma_n^2}h_n\Bigr) + O_p\Bigl(\frac{1}{|I_2| h_n^3\gamma_n^2}\Bigr) + o_p(1) = o_p(1).
\end{align*}
Therefore, by Lemma~\ref{lem:cond-cvg}, $\int_\R (\tilde{\psi}_n - \psi_0)^2\,p_0 \cvp 0$.
\end{proof}

Next, we use Lemma~\ref{lem:kernel-score-consistency} and properties of antitonic projections (Section~\ref{sec:isoproj}) to show that the projected score estimates $\hat{\psi}_{n,j}$ are consistent in $L^2(P_0)$ for the population-level antitonic projection $\psi_0^*$ of $\psi_0$.

\begin{lemma}
\label{lem:proj-score-consistency}
In the setting of Lemma~\ref{lem:kernel-score-consistency}, suppose that~\emph{\ref{ass:fisher-finite}--\ref{ass:covariates}} hold. Then for $j \in \{1,2,3\}$, we have
\begin{equation}
\label{eq:proj-score-consistency}
\int_\R \bigl(\hat{\psi}_{n,j}(z) - \psi_0^*(z - \mu)\bigr)^2\,p_0(z - \mu)\,dz \cvp 0.
\end{equation}
Moreover, $\hat{\Psi}_{n,j}(t) := \int_\R \hat{\psi}_{n,j}(z - t)\,p_0(z - \mu)\,dz \in \R$ for all $t \in \R$, and for any sequence $(v_n)$ satisfying $v_n(\alpha_n/\gamma_n) \to 0$, we have
\begin{align}
\label{eq:proj-score-consistency-1}
\sup_{s,t \in [-v_n,v_n]} \int_\R \bigl(\hat{\psi}_{n,j}(z - t) - \hat{\psi}_{n,j}(z - s)\bigr)^2\,p_0(z - \mu)\,dz &\cvp 0, \\ 
\label{eq:proj-score-consistency-2}
\Delta_n(v_n) := \sup_{\substack{s,t \in [-v_n,v_n] : \\ s \neq t}}\, \biggl|\frac{\hat{\Psi}_{n,j}(t) - \hat{\Psi}_{n,j}(s)}{t - s} - i^*(p_0)\biggr| &\cvp 0
\end{align}
as $n \to \infty$.
\end{lemma}

\begin{proof}
Similarly to the previous lemma, it suffices to prove this result when $\mu = 0$ and $j = 1$, so we will drop the index~$j$ in all of our notation below. In the terminology of Section~\ref{sec:isoproj}, $\hat{\psi}_n \equiv \hat{\psi}_{n,1} \in \Pi_\downarrow(\tilde{\psi}_n, \tilde{P}_n)$ is defined as an $L^2(\tilde{P}_n)$-antitonic projection of the initial score estimator $\tilde{\psi}_n \equiv \tilde{\psi}_{n,1}$, where $\tilde{P}_n \equiv \tilde{P}_{n,1}$ denotes the distribution with density $\tilde{p}_n \equiv \tilde{p}_{n.1}$. For each $n$, let 
\[
\tilde{\psi}_n^* := \widehat{\mathcal{M}}_\mathrm{R}(\tilde{\psi}_n \circ F_0^{-1}) \circ F_0 \in \Pi_\downarrow(\tilde{\psi}_n, P_0)
\]
denote a corresponding $L^2(P_0)$-antitonic projection of $\tilde{\psi}_n$. Then by~\eqref{eq:Linfty-proj-contraction}
% need the $L^\infty$ rather than the $L^\infty(P)$ version of this
and the definition~\eqref{eq:psi-kernel} of $\tilde{\psi}_n$, we have $\norm{\hat{\psi}_n}_\infty \vee \norm{\tilde{\psi}_n^*}_\infty \leq \norm{\tilde{\psi}_n}_\infty \leq \alpha_n/\gamma_n < \infty$. We have $\tilde{\psi}_n \in L^2(P_0) \cap L^2(\tilde{P}_n)$ for all $n$, so by~\eqref{eq:L2-proj-ineq} in Lemma~\ref{lem:L2-proj-ineq} together with the triangle inequality,
\begin{align}
\norm{\hat{\psi}_n - \tilde{\psi}_n^*}_{L^2(P_0)}^2 &\leq \int_\R (\tilde{\psi}_n^* - \hat{\psi}_n)(\hat{\psi}_n - \tilde{\psi}_n)\,d(\tilde{P}_n - P_0) \notag \\
\label{eq:iso-kernel-crude-bd}
&\leq \norm{(\tilde{\psi}_n^* - \hat{\psi}_n)(\hat{\psi}_n - \tilde{\psi}_n)}_\infty \int_\R |\tilde{p}_n - p_0| \leq 4\Bigl(\frac{\alpha_n}{\gamma_n}\Bigr)^2 \int_\R (|\tilde{p}_n - p_n| + |p_n - p_0|),
\end{align}
where $p_n(y) = \E\bigl(\tilde{p}_n(y)\!\bigm|\!\mathcal{D}_1\bigr)$ for $y \in \R$. By~\ref{ass:fisher-finite}, $\E(|\varepsilon_1|^\delta) = \int_\R |z|^\delta\,p_0(z)\,dz < \infty$ for some $\delta \in (0,1]$. Recall from~\ref{ass:alpha-gamma-h} that $\rho \in \bigl(0,\delta/(\delta + 1)\bigr)$. By Lemma~\ref{lem:L1-kde},~\ref{ass:covariates},~\ref{ass:kernel} and the fact that $b_n \cvp 0$, we have
\begin{align*}
&\E\biggl(\int_\R |\tilde{p}_n - p_n| \Bigm| \mathcal{D}_1\biggr) \\
&\quad\leq \frac{2^{1 - 2\rho} \norm{K}_\infty C_{\delta,\rho}^\rho}{(|I_2|h)^\rho}(1 + h)^{\delta(1 - \rho)}\Bigl(1 + \int_\R |z|^\delta \int_{\R^d} p_0(z - x^\top b_n)\,dP_X(x)\,dz\Bigr) \\
&\quad= \frac{2^{1 - 2\rho} \norm{K}_\infty C_{\delta,\rho}^\rho}{(|I_2|h)^\rho}(1 + h)^{\delta(1 - \rho)}\bigl(1 + \E\bigl\{|\varepsilon_1 + X_1^\top b_n|^\delta\!\bigm|\!\mathcal{D}_1\bigr\}\bigr) \\
&\quad\leq \frac{2^{1 - 2\rho} \norm{K}_\infty C_{\delta,\rho}^\rho}{(|I_2|h)^\rho}(1 + h)^{\delta(1 - \rho)}\bigl\{1 + \E(|\varepsilon_1|^\delta) + \E(\norm{X_1}^\delta) \cdot \norm{b_n}^\delta\bigr\} = O_p\bigl((nh)^{-\rho}\bigr)
\end{align*}
as $n \to \infty$ and $h \to 0$ with $nh \to \infty$, where $C_{\delta,\rho} = \int_\R (1 + |z|)^{-\delta(1 - \rho)/\rho}\,dz < \infty$. Thus, by Lemma~\ref{lem:cond-cvg} and~\ref{ass:alpha-gamma-h}, $\int_\R |\tilde{p}_n - p_n| = O_p\bigl((nh)^{-\rho}\bigr) = O_p(n^{-2\rho/3})$. Combining this with~\eqref{eq:kernel-L1-bias},~\eqref{eq:iso-kernel-crude-bd} and~\ref{ass:alpha-gamma-h} yields $\norm{\hat{\psi}_n - \tilde{\psi}_n^*}_{L^2(P_0)}^2 = O_p\bigl((h_n \vee n^{-2\rho/3})(\alpha_n/\gamma_n)^2\bigr) = o_p(1)$ as $n \to \infty$. 
% See also Goldenshluger and Lepski (2014, Theorem~4), where we take $p = 1$ and e.g.\ $\theta < 1/2$ in the tail dominance condition
Hence, by applying the triangle inequality for $\norm{{\cdot}}_{L^2(P_0)}$ together with~\eqref{eq:L2-proj-contraction} and Lemma~\ref{lem:kernel-score-consistency}, we obtain
\begin{align*}
\biggl(\int_\R (\hat{\psi}_n - \psi_0^*)^2\,p_0\biggr)^{1/2} = \norm{\hat{\psi}_n - \psi_0^*}_{L^2(P_0)} &\leq \norm{\hat{\psi}_n - \tilde{\psi}_n^*}_{L^2(P_0)} + \norm{\tilde{\psi}_n^* - \psi_0^*}_{L^2(P_0)}\\
&\leq \norm{\hat{\psi}_n - \tilde{\psi}_n^*}_{L^2(P_0)} + \norm{\tilde{\psi}_n - \psi_0}_{L^2(P_0)} \cvp 0,
\end{align*}
which proves~\eqref{eq:proj-score-consistency}.

For $t \in \R$, let $p_t(\cdot) := p_0(\cdot + t)$. We have $\norm{\hat{\psi}_n}_\infty \leq \norm{\tilde{\psi}_n}_\infty \leq \alpha_n/\gamma_n$ for each $n$, so $\hat{\Psi}_n(t) \equiv\hat{\Psi}_{n,1}(t) = \int_\R \hat{\psi}_n\,p_t$ is finite for every $t$. Moreover,
\begin{align}
\int_\R (\hat{\psi}_n - \psi_0^*)^2\,p_t
&\leq 3\biggl(\int_\R \hat{\psi}_n^2\bigl(\sqrt{p_t} - \sqrt{p_0}\bigr)^2 + \int_\R \bigl(\hat{\psi}_n\sqrt{p_0} - \psi_0^*\sqrt{p_0}\bigr)^2 + \int_\R (\psi_0^*)^2\bigl(\sqrt{p_0} - \sqrt{p_t}\bigr)^2\biggr) \notag \\
\label{eq:psi-kernel-consistency-1}
&\leq 3\biggl(\Bigl(\frac{\alpha_n}{\gamma_n}\Bigr)^2 \int_\R \bigl(\sqrt{p_t} - \sqrt{p_0}\bigr)^2 + \int_\R (\hat{\psi}_n - \psi_0^*)^2\,p_0 + \int_\R (\psi_0^*)^2\bigl(\sqrt{p_0} - \sqrt{p_t}\bigr)^2\biggr)
\end{align}
and
\begin{align}
&\int_\R \bigl(\hat{\psi}_n(z - t) - \hat{\psi}_n(z)\bigr)^2\,p_0(z)\,dz \notag \\
&\qquad\leq 3\int_\R \bigl\{\bigl(\hat{\psi}_n(z - t) - \psi_0^*(z - t)\bigr)^2 + \bigl(\psi_0^*(z - t) - \psi_0^*(z)\bigr)^2 + \bigl(\psi_0^*(z) - \hat{\psi}_n(z)\bigr)^2\bigr\}\,p_0(z)\,dz \notag \\
\label{eq:psi-kernel-consistency-2}
&\qquad= 3\biggl(\int_\R (\hat{\psi}_n - \psi_0^*)^2\,p_t + \int_\R \bigl(\psi_0^*(z - t) - \psi_0^*(z)\bigr)^2\,p_0(z)\,dz + \int_\R (\hat{\psi}_n - \psi_0^*)^2\,p_0\biggr).
\end{align}
Since $i(p_0) < \infty$ by~\ref{ass:fisher-finite}, Lemma~\ref{lem:location-DQM} implies that 
\begin{equation}
\label{eq:location-DQM-hellinger}
\int_\R \,\bigl(\sqrt{p_t} - \sqrt{p_0}\bigr)^2 = O(t^2)
\end{equation}
as $t \to 0$. Moreover, by~\ref{ass:psi0-star}, $\psi_0^*$ satisfies the hypotheses of Lemma~\ref{lem:varPsi-deriv}, so
\begin{equation}
\label{eq:psi0-shift}
\int_\R (\psi_0^*)^2\bigl(\sqrt{p_0} - \sqrt{p_t}\bigr)^2 \to 0 \quad\text{and}\quad \int_\R \bigl(\psi_0^*(z - t) - \psi_0^*(z)\bigr)^2 p_0(z)\,dz \to 0
\end{equation}
as $t \to 0$. Therefore, letting $(v_n)$ be such that $v_n(\alpha_n/\gamma_n) \to 0$, we deduce from~\eqref{eq:proj-score-consistency},~\eqref{eq:psi-kernel-consistency-1} and~\eqref{eq:psi0-shift} that
\begin{equation}
\label{eq:psi-kernel-consistency-3}
\sup_{t \in [-v_n,v_n]}\int_\R (\hat{\psi}_n - \psi_0^*)^2\,p_t \cvp 0.
\end{equation}
Hence by~\eqref{eq:psi-kernel-consistency-2} and~\eqref{eq:psi0-shift},
\[
\sup_{s,t \in [-v_n,v_n]}\int_\R \bigl(\hat{\psi}_n(z - t) - \hat{\psi}_n(z - s)\bigr)^2\,p_0(z)\,dz \leq 4\sup_{t \in [-v_n,v_n]} \int_\R \bigl(\hat{\psi}_n(z - t) - \hat{\psi}_n(z)\bigr)^2\,p_0(z)\,dz \cvp 0.
\]
This yields~\eqref{eq:proj-score-consistency-1}. Next, by Lemma~\ref{lem:varPsi-deriv}, $\Psi_0^*(t) := \int_\R \psi_0^*(z - t)\,p_0(z)\,dz = \int_\R \psi_0^*\,p_t \in \R$ for all $t \in [-t_0,t_0]$, and by the Cauchy--Schwarz inequality,
\begin{align*}
\bigl|\hat{\Psi}_n(t) - \hat{\Psi}_n(s) - \bigl(\Psi_0^*(t) - \Psi_0^*(s)\bigr)\bigr|^2 &=
\biggl|\int_\R (\hat{\psi}_n - \psi_0^*)(p_t - p_s)\biggr|^2 \\
&\leq \biggl\{\int_\R (\hat{\psi}_n - \psi_0^*)^2 \bigl(\sqrt{p_t} + \sqrt{p_s}\bigr)^2\biggr\}\biggl\{\int_\R \bigl(\sqrt{p_t} - \sqrt{p_s}\bigr)^2\biggr\} \\
&\leq \biggl\{2\int_\R (\hat{\psi}_n - \psi_0^*)^2 (p_t + p_s)\biggr\}\biggl\{\int_\R \bigl(\sqrt{p_t} - \sqrt{p_s}\bigr)^2\biggr\}
\end{align*}
for $s,t \in [-t_0,t_0]$. By~\eqref{eq:location-DQM-hellinger},
% $\int_\R (\hat{\psi}_n - \psi_0^*)^2 (p_t + p_s) \to 0$ by~\eqref{eq:psi-kernel-consistency-1} and 
$\int_\R \bigl(\sqrt{p_t} - \sqrt{p_s}\bigr)^2 = \int_\R \bigl(\sqrt{p_{t - s}} - \sqrt{p_0}\bigr)^2 = O\bigl((t - s)^2\bigr)$ as $s,t \to 0$, so it follows from~\eqref{eq:psi-kernel-consistency-3} that
\begin{equation}
\label{eq:Psi_n-hat-1}
\sup_{\substack{s,t \in [-v_n,v_n] : \\ s \neq t}}\, \biggl|\frac{\hat{\Psi}_n(t) - \hat{\Psi}_n(s) - \bigl(\Psi_0^*(t) - \Psi_0^*(s)\bigr)}{t - s}\biggr| = o_p(1)
\end{equation}
as $n \to \infty$. By Lemma~\ref{lem:varPsi-deriv} and Theorem~\ref{thm:antitonic-score-proj}\emph{(c)}, $(\Psi_0^*)'(0) = -\int_\R p_0\,d\psi_0^* = \bigl(\int_\R (\psi_0^*)^2\,p_0\big/V_{p_0}(\psi_0^*)\bigr)^{1/2} = i^*(p_0) \in (0,\infty)$, so
\[
\sup_{\substack{s,t \in [-v_n,v_n] : \\ s \neq t}}\, \biggl|\frac{\Psi_0^*(t) - \Psi_0^*(s)}{t - s} - i^*(p_0)\biggr| \to 0
\]
as $n \to \infty$. Combining this with~\eqref{eq:Psi_n-hat-1} yields~\eqref{eq:proj-score-consistency-2}.
\end{proof}

\subsubsection{Proof of Theorem~\ref{thm:linreg-score-sym}}
\label{subsec:linreg-sym-proofs}

\begin{corollary}
\label{cor:sym-score-consistency}
For $n \in \N$ and $j \in \{1,2,3\}$, define $\hat{\Psi}_{n,j}^{\mathrm{anti}} \colon \R \to \R$ by $\hat{\Psi}_{n,j}^{\mathrm{anti}}(t) := \int_\R \hat{\psi}_{n,j}^{\mathrm{anti}}(z - t)\,p_0(z)\,dz$. Then under the hypotheses of Theorem~\ref{thm:linreg-score-sym}, the conclusions of Lemma~\ref{lem:proj-score-consistency} hold with $\mu = 0$ for $(\hat{\psi}_{n,j}^{\mathrm{anti}})$ and $(\hat{\Psi}_{n,j}^{\mathrm{anti}})$ in place of $(\hat{\psi}_{n,j})$ and $(\hat{\Psi}_{n,j})$ respectively.
\end{corollary}

\begin{proof}
It suffices to consider $j = 1$, so we drop the index $j$ in all of our notation below. The pilot residuals in Step~3 can be written as $\hat{\varepsilon}_i = Y_i - X_i^\top\bar{\beta}_n^{(1)} = \varepsilon_i + X_i^\top b_n$ for $i \in I_2$, where $b_n := \beta_0 - \bar{\beta}_n^{(1)} = O_p(n^{-1/2})$ by assumption. Therefore, the hypotheses of Lemmas~\ref{lem:kernel-score-consistency} and~\ref{lem:proj-score-consistency} are satisfied with $\mu = 0$. Since $p_0$ is symmetric, $\psi_0(z) = p_0'(z)/p_0(z) = -\psi_0(-z)$ for all $z \in \R$. By~\eqref{eq:psi0star-antisym}, $\psi_0^*(z) = -\psi_0^*(-z)$ for Lebesgue almost every $z \in \R$, so
\begin{align*}
\int_\R (\hat{\psi}_n^{\mathrm{anti}} - \psi_0^*)^2\,p_0 &= \int_\R \biggl(\frac{\hat{\psi}_n(z) - \hat{\psi}_n(-z)}{2} - \frac{\psi_0^*(z) - \psi_0^*(-z)}{2}\biggr)^2\,p_0(z)\,dz \\
&\leq \int_\R \frac{\bigl(\hat{\psi}_n(z) - \psi_0^*(z)\bigr)^2 + \bigl(\hat{\psi}_n(-z) - \psi_0^*(-z)\bigr)^2}{2}\,p_0(z)\,dz = \int_\R (\hat{\psi}_n - \psi_0^*)^2\,p_0
\end{align*}
for each $n$. Similarly, for $s,t \in \R$, we have
\begin{align*}
\int_\R \bigl(\hat{\psi}_n^{\mathrm{anti}}(z - t) &- \hat{\psi}_n^{\mathrm{anti}}(z - s)\bigr)^2\,p_0(z)\,dz \\
&\leq \int_\R \frac{\bigl(\hat{\psi}_n(z - t) - \hat{\psi}_n(z - s)\bigr)^2 + \bigl(\hat{\psi}_n(z + t) - \hat{\psi}_n(z + s)\bigr)^2}{2}\,p_0(z)\,dz
\end{align*}
and
\begin{align*}
\hat{\Psi}_n^{\mathrm{anti}}(t) = \int_\R \frac{\hat{\psi}_n(z - t) - \hat{\psi}_n(t - z)}{2}\,p_0(z)\,dz &= \int_\R \frac{\hat{\psi}_n(z - t) - \hat{\psi}_n(t + z)}{2}\,p_0(z)\,dz \\
&= \frac{\hat{\Psi}_n(t) - \hat{\Psi}_n(-t)}{2}.
\end{align*}
The desired conclusions for $(\hat{\psi}_{n,j}^{\mathrm{anti}})$ and $(\hat{\Psi}_{n,j}^{\mathrm{anti}})$ therefore follow from Lemma~\ref{lem:proj-score-consistency}.
\end{proof}

The proof of Theorem~\ref{thm:linreg-score-sym} relies on the following `asymptotic equicontinuity' result,
% NB: Modulus of continuity of an empirical process (local increments); see van de Geer (2000, Lemma~5.13)
which is similar to~\citet[Lemma~4.1]{bickel1975one}.

\begin{lemma}
\label{lem:linreg-sym-equicontinuity}
For $b \in \R^d$, $n \in \N$ and $j \in \{1,2,3\}$, define
\[
R_{n,j}(b) := \frac{1}{\sqrt{n}}\sum_{i \in I_{j+2}} X_i\Bigl\{\hat{\psi}_{n,j}^{\mathrm{anti}}\Bigl(\varepsilon_i - \frac{X_i^\top b}{\sqrt{n}}\Bigr) - \hat{\Psi}_{n,j}^{\mathrm{anti}}\Bigl(\frac{X_i^\top b}{\sqrt{n}}\Bigr)\Bigr\}.
\]
Fix $M > 0$ and let $B_M := \{b \in \R^d : \norm{b} \leq M\}$. Under the hypotheses of Theorem~\ref{thm:linreg-score-sym}, it holds for every $j \in \{1,2,3\}$ that
\[
\sup_{b \in B_M} \norm{R_{n,j}(b) - R_{n,j}(0)} = o_p(1)
\] 
as $n \to \infty$.
% \green{Using chaining and empirical process techniques to refine the crude proof below, we should strengthen this to a maximal inequality in expectation.}
\end{lemma}

\begin{proof}
It suffices to consider $j = 1$, so we drop the $j$ subscript from $\hat{\psi}_{n,j}^{\mathrm{anti}}$, $\hat{\Psi}_{n,j}^{\mathrm{anti}}$ and $R_{n,j}$; we also write $\mathcal{D}' \equiv \mathcal{D}_n' := \mathcal{D}_1 \cup \mathcal{D}_2 \cup \{X_i : i \in I_3\}$.  We initially fix $b \in B_M$. For $i \in I_3$, we have $\E\bigl\{\hat{\psi}_n^{\mathrm{anti}}(\varepsilon_i - X_i^\top bn^{-1/2})\,|\,\mathcal{D}'\bigr\} = \hat{\Psi}_n^{\mathrm{anti}}(X_i^\top bn^{-1/2})$, so $\E\bigl(R_n(b)\!\bigm|\!\mathcal{D}'\bigr) = 0$. By~\ref{ass:covariates} and Cauchy--Schwarz, $\max_{i \in [n]}|X_i^\top b|\,\alpha_n/\gamma_n = o_p(n^{1/2})$, so it follows from the weak law of large numbers and Corollary~\ref{cor:sym-score-consistency} that
\begin{align}
\E\bigl(\norm{R_n(b) - R_n(0)}^2 \!\bigm|\! \mathcal{D}'\bigr) &= \tr\Cov\bigl(R_n(b) - R_n(0) \bigm| \mathcal{D}'\bigr) \notag \\
&= \frac{1}{n}\sum_{i \in I_3} \tr\Cov\bigl(X_i\{\hat{\psi}_n^{\mathrm{anti}}(\varepsilon_i - X_i^\top bn^{-1/2}) - \hat{\psi}_n^{\mathrm{anti}}(\varepsilon_i)\} \bigm| \mathcal{D}'\bigr) \notag \\
&= \frac{1}{n}\sum_{i \in I_3} \norm{X_i}^2 \Var\bigl(\hat{\psi}_n^{\mathrm{anti}}(\varepsilon_i - X_i^\top bn^{-1/2}) - \hat{\psi}_n^{\mathrm{anti}}(\varepsilon_i) \bigm| \mathcal{D}'\bigr) \notag  \\
&\leq \Bigl(\frac{1}{n}\sum_{i \in I_3} \norm{X_i}^2\Bigr) \max_{i \in I_3}\int_\R \bigl(\hat{\psi}_n^{\mathrm{anti}}(z - X_i^\top bn^{-1/2}) - \hat{\psi}_n^{\mathrm{anti}}(z)\bigr)^2\,p_0(z)\,dz \notag \\
\label{eq:cov-Rn-beta}
&= O_p(1)\,o_p(1) = o_p(1)
\end{align}
as $n \to \infty$, where the second equality holds because the summands $X_i \{\hat{\psi}_n^{\mathrm{anti}}(\varepsilon_i - X_i^\top bn^{-1/2}) - \hat{\psi}_n^{\mathrm{anti}}(\varepsilon_i)\}$ are conditionally independent given $\mathcal{D}'$. Thus, by Lemma~\ref{lem:cond-cvg}, $\norm{R_n(b) - R_n(0)} \cvp 0$ for every $b \in B_M$.

Fix $\epsilon \in (0,1)$. There exists a $\epsilon M$-Euclidean covering set $B_{M,\epsilon} \subseteq B_M$ such that $|B_{M,\epsilon}| \leq 3\epsilon^{-d}$~\citep[e.g.][Example~5.8]{wainwright2019high}, so since $d$ is fixed as $n \to \infty$, it follows by a union bound that
\[
\max_{b \in B_{M,\epsilon}}\norm{R_n(b) - R_n(0)} \cvp 0.
\]
For every $b \in B_M$, there exists $\pi_b \in B_{M,\epsilon}$ such that $\norm{b - \pi_b} \leq \epsilon M$. Since $\hat{\psi}_n^{\mathrm{anti}}$ is decreasing and $\hat{\Psi}_n^{\mathrm{anti}}$ is increasing for each $n$, we have 
\begin{align*}
\hat{\psi}_n^{\mathrm{anti}}\biggl(\varepsilon_i - \frac{X_i^\top\pi_b - \epsilon M\norm{X_i}}{\sqrt{n}}\biggr) &\leq \hat{\psi}_n^{\mathrm{anti}}\biggl(\varepsilon_i - \frac{X_i^\top b'}{\sqrt{n}}\biggr) \leq \hat{\psi}_n^{\mathrm{anti}}\biggl(\varepsilon_i - \frac{X_i^\top\pi_b + \epsilon M\norm{X_i}}{\sqrt{n}}\biggr), \\[3pt]
\hat{\Psi}_n^{\mathrm{anti}}\biggl(\frac{X_i^\top\pi_b - \epsilon M\norm{X_i}}{\sqrt{n}}\biggr) &\leq \hat{\Psi}_n^{\mathrm{anti}}\Bigl(\frac{X_i^\top b'}{\sqrt{n}}\Bigr) \leq \hat{\Psi}_n^{\mathrm{anti}}\biggl(\frac{X_i^\top\pi_b + \epsilon M\norm{X_i}}{\sqrt{n}}\biggr)
\end{align*}
for $b' \in \{b,\pi_b\}$. Now for $b' \in B_{M,\epsilon}$, let
\[
r_n(b') := \sum_{i \in I_3} \frac{\norm{X_i}}{\sqrt{n}} \biggl\{\hat{\psi}_n^{\mathrm{anti}}\biggl(\varepsilon_i - \frac{X_i^\top b' + \epsilon M\norm{X_i}}{\sqrt{n}}\biggr) - \hat{\psi}_n^{\mathrm{anti}}\biggl(\varepsilon_i - \frac{X_i^\top b' - \epsilon M\norm{X_i}}{\sqrt{n}}\biggr)\biggr\}.
\]
Then
\begin{align*}
&\sup_{b \in B_M}\|R_n(b) - R_n(\pi_b)\| \leq \max_{b' \in B_{M,\epsilon}} \bigl\{r_n(b') + \E\bigl(r_n(b') \bigm| \mathcal{D}'\bigr)\bigr\}.
% \\
% &\hspace{1cm}\leq \max_{b' \in B_{M,\epsilon}}\sum_{i \in I_3} \frac{\norm{X_i}}{\sqrt{n}} \biggl\{\hat{\psi}_n^{\mathrm{anti}}\biggl(\varepsilon_i - X_i^\top b' - \frac{\epsilon M\norm{X_i}}{\sqrt{n}}\biggr) - \hat{\psi}_n^{\mathrm{anti}}\biggl(\varepsilon_i - X_i^\top b' + \frac{\epsilon M\norm{X_i}}{\sqrt{n}}\biggr) \\
% &\hspace{5cm}+ \hat{\Psi}_n^{\mathrm{anti}}\biggl(X_i^\top b' + \frac{\epsilon M\norm{X_i}}{\sqrt{n}}\biggr) - \hat{\Psi}_n^{\mathrm{anti}}\biggl(X_i^\top b' - \frac{\epsilon M\norm{X_i}}{\sqrt{n}}\biggr)\biggr\}.
\end{align*}
By~\ref{ass:covariates}, $V_n := \max_{i \in I_3} M(1 + \epsilon)n^{-1/2}\norm{X_i} = o_p(\gamma_n/\alpha_n)$, so we deduce from Corollary~\ref{cor:sym-score-consistency}, Lemma~\ref{lem:cvp-plugin} and the independence of $\mathcal{D}_1,\mathcal{D}_2,\mathcal{D}_3$ that
\begin{align*}
&\Var\bigl(r_n(b') \bigm| \mathcal{D}'\bigr) \\
&\quad\leq \sum_{k \in I_3} \frac{\norm{X_k}^2}{n} \max_{i \in I_3} \int_\R\biggl\{\hat{\psi}_n^{\mathrm{anti}}\biggl(z - \frac{X_i^\top b' + \epsilon M\norm{X_i}}{\sqrt{n}}\biggr) - \hat{\psi}_n^{\mathrm{anti}}\biggl(z - \frac{X_i^\top b' - \epsilon M\norm{X_i}}{\sqrt{n}}\biggr)\biggr\}^2 p_0(z)\,dz \cvp 0.
\end{align*}
Together with a union bound over $b' \in B_{M,\epsilon}$, this shows that $\max_{b' \in B_{M,\epsilon}} \bigl\{r_n(b') - \E\bigl(r_n(b') \bigm| \mathcal{D}'\bigr)\bigr\} = o_p(1)$. Moreover, recalling the definition of $\Delta_n$ in~\eqref{eq:proj-score-consistency-2}, we have $\Delta_n(V_n) = o_p(1)$ by~\ref{ass:covariates} and Corollary~\ref{cor:sym-score-consistency}, so
\begin{align*}
\max_{b' \in B_{M,\epsilon}}\E\bigl(r_n(b') \bigm| \mathcal{D}'\bigr) &= \max_{b' \in B_{M,\epsilon}}\sum_{i \in I_3} \frac{\norm{X_i}}{\sqrt{n}} \biggl\{\hat{\Psi}_n^{\mathrm{anti}}\biggl(\frac{X_i^\top b' + \epsilon M\norm{X_i}}{\sqrt{n}}\biggr) - \hat{\Psi}_n^{\mathrm{anti}}\biggl(\frac{X_i^\top b' - \epsilon M\norm{X_i}}{\sqrt{n}}\biggr)\biggr\} \\
&\leq \sum_{i \in I_3} \frac{2\epsilon M\norm{X_i}^2}{n}\bigl(i^*(p_0) + \Delta_n(V_n)\bigr) = \frac{2\epsilon Mi^*(p_0)}{3}\,\E(\norm{X_1}^2) + o_p(1).
\end{align*}
Thus,
\begin{align*}
&\sup_{b \in B_M}\norm{R_n(b) - R_n(0)} \leq \sup_{b \in B_M}\|R_n(b) - R_n(\pi_b)\| + \max_{b' \in B_{M,\epsilon}}\norm{R_n(b') - R_n(0)} \\
&\quad\leq \max_{b' \in B_{M,\epsilon}} \bigl\{r_n(b') - \E\bigl(r_n(b') \bigm| \mathcal{D}'\bigr)\bigr\} + 2\max_{b' \in B_{M,\epsilon}}\E\bigl(r_n(b') \bigm| \mathcal{D}'\bigr) + \max_{b' \in B_{M,\epsilon}}\norm{R_n(b') - R_n(0)} \\
&\quad\leq \frac{4\epsilon Mi^*(p_0)}{3}\,\E(\norm{X_1}^2) + o_p(1)
\end{align*}
as $n \to \infty$. Since this holds for all $\epsilon \in (0,1)$, the desired conclusion follows.
\end{proof}

As a first step towards proving the asymptotic normality of $\hat{\beta}_n^\dagger$ and $\hat{\beta}_n^\ddagger$ in Theorem~\ref{thm:linreg-score-sym}, we show in Lemma~\ref{lem:linreg-sym-consistency} below that they are $\sqrt{n}$-consistent estimators of $\beta_0$. To this end, we exploit the convexity of the induced loss functions $z \mapsto \hat{\ell}_{n,j}^{\mathrm{sym}}(z) = -\int_0^z \hat{\psi}_{n,j}^{\mathrm{anti}}$, similarly to~\citet[Theorem~2.1]{he2000parameters}.  We denote the Euclidean unit sphere in $\mathbb{R}^d$ by $S^{d-1} := \{u \in \R^d : \|u\| = 1\}$.
\begin{lemma}
\label{lem:linreg-sym-consistency}
For $n \in \N$, $j \in \{1,2,3\}$ and $t > 0$, we have
\begin{align}
\label{eq:linreg-sym-sphere-j}
\bigl\{\norm{\hat{\beta}_n^{(j)} - \beta_0} > t\bigr\} &\subseteq \biggl\{\inf_{u \in S^{d-1}}u^\top \sum_{i \in I_{j+2}} X_i\hat{\psi}_{n,j}^{\mathrm{anti}}(\varepsilon_i - tX_i^\top u) \leq 0\biggr\}, \\
\label{eq:linreg-sym-sphere}
\bigl\{\norm{\hat{\beta}_n^\ddagger - \beta_0} > t\bigr\} &\subseteq \biggl\{\inf_{u \in S^{d-1}}u^\top \sum_{j=1}^3 \sum_{i \in I_{j+2}} X_i\hat{\psi}_{n,j}^{\mathrm{anti}}(\varepsilon_i - tX_i^\top u) \leq 0\biggr\}.
\end{align}
Consequently, under the hypotheses of Theorem~\ref{thm:linreg-score-sym}, the following statements hold as $n \to \infty$:
\begin{enumerate}[label=(\alph*)]
\item $\sqrt{n}(\hat{\beta}_n^{(j)} - \beta_0) = O_p(1)$ and $\sqrt{n}(\hat{\beta}_n^\ddagger - \beta_0) = O_p(1)$;
\item $\displaystyle\frac{1}{\sqrt{n}}\sum_{i \in I_{j+2}} X_i\hat{\psi}_{n,j}^{\mathrm{anti}}\bigl(Y_i - X_i^\top\hat{\beta}_n^{(j)}\bigr) = o_p(1)$ \; and \; $\displaystyle\frac{1}{\sqrt{n}}\sum_{j=1}^3 \sum_{i \in I_{j+2}} X_i\hat{\psi}_{n,j}^{\mathrm{anti}}\bigl(Y_i - X_i^\top\hat{\beta}_n^\ddagger\bigr) = o_p(1)$.
\end{enumerate}
\end{lemma}

\begin{proof}
It suffices to consider $j = 1$. We write $\hat{\psi}_n^{\mathrm{anti}} \equiv \hat{\psi}_{n,1}^{\mathrm{anti}}$ and $\hat{\ell}_n^{\mathrm{sym}} \equiv \hat{\ell}_{n,1}^{\mathrm{sym}}$. The function $\beta \mapsto \sum_{i \in I_3} \hat{\ell}_n^{\mathrm{sym}}(Y_i - X_i^\top\beta) =: \hat{L}_n^{\mathrm{sym}}(\beta)$ is convex on $\R^d$, with $\hat{\beta}_n \equiv \hat{\beta}_n^{(1)} \in \argmin_{\beta \in \R^d}\hat{L}_n^{\mathrm{sym}}(\beta)$, so $h \mapsto \hat{L}_n^{\mathrm{sym}}\bigl((1 - h)\hat{\beta}_n + h\beta_0\bigr) =: G_n(h)$ is convex and increasing on $[0,\infty)$. Since
\[
g_n(h) := -(\hat{\beta}_n - \beta_0)^\top\sum_{i \in I_3} X_i\hat{\psi}_n^{\mathrm{anti}}\bigl(\varepsilon_i - (1-h)X_i^\top(\hat{\beta}_n - \beta_0)\bigr)
\]
is a subgradient of $G_n$ at $h$, it follows that $g_n(h) \geq 0$ for every $h > 0$. Hence, if $\norm{\hat{\beta}_n - \beta_0} > t > 0$, then 
\begin{align}
0 \leq g\biggl(1 - \frac{t}{\norm{\hat{\beta}_n - \beta_0}}\biggr) &= -(\hat{\beta}_n - \beta_0)^\top\sum_{i \in I_3} X_i\hat{\psi}_n^{\mathrm{anti}}\biggl(\varepsilon_i - tX_i^\top\frac{\hat{\beta}_n - \beta_0}{\norm{\hat{\beta}_n - \beta_0}}\biggr) \notag \\
\label{eq:linreg-sym-sphere-j-1}
&\leq -\norm{\hat{\beta}_n - \beta_0}\inf_{u \in S^{d-1}} u^\top\sum_{i \in I_3}X_i\hat{\psi}_n^{\mathrm{anti}}(\varepsilon_i - tX_i^\top u),
\end{align}
so~\eqref{eq:linreg-sym-sphere-j} holds. By analogous reasoning based on $\beta \mapsto \hat{L}_n^\ddagger(\beta) := \sum_{j=1}^3 \sum_{i \in I_{j+2}} \hat{\ell}_{n,j}^{\mathrm{sym}}(Y_i - X_i^\top\beta)$, we obtain~\eqref{eq:linreg-sym-sphere}.

\medskip
\noindent \textit{(a)} Fix $M > 0$ and write $\hat{\Psi}_n^{\mathrm{anti}} \equiv \hat{\Psi}_{n,1}^{\mathrm{anti}}$. By~\ref{ass:covariates}, $V_n := \max_{i \in I_3} Mn^{-1/2}\norm{X_i} = o_p(\gamma_n/\alpha_n)$. We have $\hat{\Psi}_n^{\mathrm{anti}}(0) = 0$ and $|I_3|/n \to 1/3$, so it follows from Corollary~\ref{cor:sym-score-consistency},~\eqref{eq:proj-score-consistency-2} and Lemma~\ref{lem:cvp-plugin} that
\begin{align}
&\sup_{u \in S^{d-1}}\Biggl\|\,\sum_{i \in I_3} \frac{X_i}{\sqrt{n}}\hat{\Psi}_n^{\mathrm{anti}}\Bigl(\frac{M X_i^\top u}{\sqrt{n}}\Bigr) - \frac{Mi^*(p_0)}{3}\,\E(X_1 X_1^\top) u\,\Biggr\| \notag \\
&\qquad\leq \sup_{u \in S^{d-1}}\sum_{i \in I_3} \frac{\norm{X_i}}{\sqrt{n}}\biggl|\hat{\Psi}_n^{\mathrm{anti}}\Bigl(\frac{MX_i^\top u}{\sqrt{n}}\Bigr) - \frac{Mi^*(p_0)X_i^\top u}{\sqrt{n}}\biggr| + Mi^*(p_0)\,\biggl\|\,\sum_{i \in I_3}\frac{X_i X_i^\top}{n} - \frac{\E(X_1 X_1^\top)}{3}\biggr\|_{\mathrm{op}}  \notag \\
\label{eq:linreg-sym-Psi}
&\qquad\leq \frac{M\sum_{i \in I_3}\norm{X_i}^2}{n} \cdot \Delta_n(V_n) + Mi^*(p_0)\biggl\|\,\sum_{i \in I_3}\frac{X_i X_i^\top}{n} - \frac{\E(X_1 X_1^\top)}{3}\biggr\|_{\mathrm{op}} = o_p(1).
\end{align}
Since $\E(X_1 X_1^\top)$ is positive definite by~\ref{ass:covariates}, its minimum eigenvalue $\lambda_{\min}$ is strictly positive. Then by~\eqref{eq:linreg-sym-Psi} and Lemma~\ref{lem:linreg-sym-equicontinuity},
\begin{align}
\inf_{u \in S^{d-1}}u^\top \sum_{i \in I_3} \frac{X_i}{\sqrt{n}}\hat{\psi}_n^{\mathrm{anti}}\Bigl(\varepsilon_i - \frac{M X_i^\top u}{\sqrt{n}}\Bigr) &= \inf_{u \in S^{d-1}}u^\top\biggl\{R_n(Mu) + \sum_{i \in I_3} \frac{X_i}{\sqrt{n}}\hat{\Psi}_n^{\mathrm{anti}}\Bigl(\frac{M X_i^\top u}{\sqrt{n}}\Bigr)\biggr\} \notag \\
&= \inf_{u \in S^{d-1}} u^\top\Bigl(R_n(0) + \frac{M i^*(p_0)}{3}\,\E(X_1 X_1^\top)u\Bigr) + o_p(1) \notag \\
\label{eq:linreg-sym-consistency}
&\geq -\norm{R_n(0)} + \frac{M\lambda_{\min}i^*(p_0)}{3} + o_p(1).
\end{align}
Writing $\mathcal{D}' := \mathcal{D}_1 \cup \mathcal{D}_2 \cup \{X_i : i \in I_3\}$, we have 
\[
\E\biggl(\frac{1}{\sqrt{n}}\sum_{i \in I_3} X_i\bigl(\hat{\psi}_n^{\mathrm{anti}}(\varepsilon_i) - \psi_0^*(\varepsilon_i)\bigr) \biggm| \mathcal{D}'\biggr) = \frac{1}{\sqrt{n}}\sum_{i \in I_3} X_i \int_\R (\hat{\psi}_n^{\mathrm{anti}} - \psi_0^*)\,p_0 = 0
\]
because $p_0$ is symmetric and $\hat{\psi}_n^{\mathrm{anti}},\psi_0^*$ are antisymmetric. Together with Corollary~\ref{cor:sym-score-consistency},
% and~\eqref{eq:proj-score-consistency-1}, 
this implies that
\begin{align*}
\E\biggl(\biggl\|\frac{1}{\sqrt{n}}\sum_{i \in I_3} X_i\bigl(\hat{\psi}_n^{\mathrm{anti}}(\varepsilon_i) - \psi_0^*(\varepsilon_i)\bigr)\biggr\|^2 \biggm| \mathcal{D}'\biggr) &\leq \frac{1}{n}\sum_{i \in I_3}\E\bigl\{\norm{X_i}^2 (\hat{\psi}_n^{\mathrm{anti}} - \psi_0^*)^2(\varepsilon_i)\!\bigm|\! \mathcal{D}'\bigr\} \\
&= \frac{1}{n}\sum_{i \in I_3}\norm{X_i}^2 \int_\R (\hat{\psi}_n^{\mathrm{anti}} - \psi_0^*)^2\,p_0 \cvp 0.
\end{align*}
We have $\E\bigl(X_i\psi_0^*(\varepsilon_i)\bigr) = \E(X_i)\int_\R \psi_0^*\,p_0 = 0$ for each $i$, so by the central limit theorem,
\begin{equation}
\label{eq:linreg-sym-clt}
R_n(0) = \frac{1}{\sqrt{n}}\sum_{i \in I_3} X_i\hat{\psi}_n^{\mathrm{anti}}(\varepsilon_i) = \frac{1}{\sqrt{n}}\sum_{i \in I_3} X_i\psi_0^*(\varepsilon_i) + o_p(1) = O_p(1).
\end{equation}
Since $M\lambda_{\min}i^*(p_0) > 0$, we deduce from~\eqref{eq:linreg-sym-sphere-j} and~\eqref{eq:linreg-sym-consistency} that $\limsup_{n \to \infty}\Pr(\norm{\hat{\beta}_n - \beta_0} \geq M/\sqrt{n}) \to 0$ as $M \to \infty$, so $\sqrt{n}(\hat{\beta}_n - \beta_0) = O_p(1)$ as $n \to \infty$. The $\sqrt{n}$-consistency of $\hat{\beta}_n^\ddagger$ follows similarly from~\eqref{eq:linreg-sym-sphere}.

\medskip
\noindent
\textit{(b)} Since the errors $\varepsilon_i$ have an absolutely continuous density $p_0$, the conditional distribution of $(Y_i : i \in I_3)$ given $\mathcal{D}' = \mathcal{D}_1 \cup \mathcal{D}_2 \cup \{X_i : i \in I_3\}$ is absolutely continuous with respect to Lebesgue measure on $\R^{I_3}$. We used $\mathcal{D}_1 \cup \mathcal{D}_2$ to obtain the convex function $\hat{\ell}_n^{\mathrm{sym}}$, whose subdifferential at $z \in \R$ is
\[
\partial\hat{\ell}_n^{\mathrm{sym}}(z) = \bigl[\hat{\psi}_n^{\mathrm{anti}}(z),\hat{\psi}_n^{\mathrm{anti}}(z-)\bigr],
\]
where $\hat{\psi}_n^{\mathrm{anti}}(z-) := \lim_{z' \nearrow z}\hat{\psi}_n^{\mathrm{anti}}(z')$. Since $\hat{\psi}_n^{\mathrm{anti}}$ is decreasing, $A := \bigl\{z \in \R : \hat{\psi}_n^{\mathrm{anti}}(z) < \hat{\psi}_n^{\mathrm{anti}}(z-)\bigr\}$ is countable. Thus, applying Lemma~\ref{lem:subspace-countable} to the linear subspace $W := \bigl\{(X_i^\top\beta)_{i \in I_3} : \beta \in \R^d\bigr\}$ of dimension at most $d$, we have
\begin{equation}
\label{eq:convex-loss-kinks}
\Pr\biggl(\max_{\beta \in \R^d}\,\sum_{i \in I_3} \Ind_{\{Y_i - X_i^\top\beta \in A\}} \leq d \biggm| \mathcal{D}'\biggr) = 1.
\end{equation}
Moreover, $\hat{\beta}_n \in \argmin_{\beta \in \R^d}\hat{L}_n^{\mathrm{sym}}(\beta)$, so writing $\partial\hat{L}_n^{\mathrm{sym}}(\beta)$ for the subdifferential of the convex function $\hat{L}_n^{\mathrm{sym}}$ at $\beta \in \R^d$~\citep[p.~315]{rockafellar97convex}, we have
\[
0 \in \partial\hat{L}_n^{\mathrm{sym}}(\hat{\beta}_n) = \sum_{i \in I_3} X_i\partial\hat{\ell}_n^{\mathrm{sym}}(Y_i - X_i^\top\hat{\beta}_n),
\]
so there exist $a_i \in \partial\hat{L}_n^{\mathrm{sym}}(Y_i - X_i^\top\hat{\beta}_n)$ for $i \in I_3$ such that $\sum_{i \in I_3} X_i a_i = 0$. For each $i \in I_3$, we have $\bigl|a_i - \hat{\psi}_n^{\mathrm{anti}}(Y_i - X_i^\top \hat{\beta}_n)\bigr| \leq 2\norm{\hat{\psi}_n^{\mathrm{anti}}}_\infty \Ind_{\{Y_i - X_i^\top\hat{\beta}_n \in A\}} \leq 2(\alpha_n/\gamma_n)\,\Ind_{\{Y_i - X_i^\top\hat{\beta}_n \in A\}}$, so by~\eqref{eq:convex-loss-kinks},
\[
\Biggl|\,\sum_{i \in I_3}X_i\hat{\psi}_n^{\mathrm{anti}}(Y_i - X_i^\top\hat{\beta}_n)\Biggr| \leq \sum_{i \in I_3}\norm{X_i} \cdot \bigl|a_i - \hat{\psi}_n^{\mathrm{anti}}(Y_i - X_i^\top\hat{\beta}_n)\bigr| \leq 2d\max_{i \in I_3}\norm{X_i}\,\frac{\alpha_n}{\gamma_n}
\]
almost surely.  Assumption~\ref{ass:covariates} ensures that the right-hand side is $o_p(n^{1/2})$ as $n \to \infty$, so \textit{(b)} holds for $\hat{\beta}_n = \hat{\beta}_n^{(1)}$, and the result for $\hat{\beta}_n^\ddagger$ follows similarly by considering $\hat{L}_n^\ddagger$. 
\end{proof}

\begin{proof}[Proof of Theorem~\ref{thm:linreg-score-sym}]
For $j \in \{1,2,3\}$, Lemma~\ref{lem:linreg-sym-consistency}\emph{(b)} implies that
\begin{align}
&\frac{1}{\sqrt{n}}\sum_{i \in I_{j+2}} X_i\hat{\Psi}_{n,j}^{\mathrm{anti}}\bigl(X_i^\top(\hat{\beta}_n^{(j)} - \beta_0)\bigr) + R_{n,j}\bigl(\sqrt{n}(\hat{\beta}_n^{(j)} - \beta_0)\bigr) \notag \\
\label{eq:linreg-sym-estimating}
&\quad= \frac{1}{\sqrt{n}}\sum_{i \in I_{j+2}} X_i\hat{\psi}_{n,j}^{\mathrm{anti}}\bigl(\varepsilon_i - X_i^\top(\hat{\beta}_n^{(j)} - \beta_0)\bigr) = \frac{1}{\sqrt{n}}\sum_{i \in I_{j+2}} X_i\hat{\psi}_{n,j}^{\mathrm{anti}}(Y_i - X_i^\top\hat{\beta}_n^{(j)}) = o_p(1).
\end{align}
and Lemma~\ref{lem:linreg-sym-consistency}\emph{(a)} yields
\[
\biggl\|\frac{1}{\sqrt{n}}\sum_{i \in I_{j+2}} X_i X_i^\top(\hat{\beta}_n^{(j)} - \beta_0)\biggr\|
% \leq \biggl\|\frac{1}{n_1}\sum_{i \in I_{j+2}} X_i X_i^\top\biggr\|_{\mathrm{op}} \, \norm{\sqrt{n}(\hat{\beta}_n^{(j)} - \beta_0)}
\leq \frac{1}{n}\sum_{i \in I_{j+2}} \norm{X_i}^2 \norm{\sqrt{n}(\hat{\beta}_n^{(j)} - \beta_0)} = O_p(1).
\]
Moreover, by~\ref{ass:covariates} and Lemma~\ref{lem:linreg-sym-consistency}\textit{(a)}, $\max_{i \in I_{j+2}}|X_i^\top (\hat{\beta}_n^{(j)} - \beta_0)|\,\alpha_n/\gamma_n = o_p(1)$ as $n \to \infty$, so arguing similarly to~\eqref{eq:linreg-sym-Psi}, we have
\begin{align}
\label{eq:PsiBound}
\frac{1}{\sqrt{n}}\sum_{i \in I_{j+2}} X_i\hat{\Psi}_{n,j}^{\mathrm{anti}}\bigl(X_i^\top(\hat{\beta}_n^{(j)} - \beta_0)\bigr)
= \frac{i^*(p_0)}{3}\,\E(X_1 X_1^\top) \sqrt{n}(\hat{\beta}_n^{(j)} - \beta_0) + o_p(1).
\end{align}
From~\eqref{eq:PsiBound} and~\eqref{eq:linreg-sym-estimating}, followed by Lemma~\ref{lem:linreg-sym-equicontinuity} and Lemma~\ref{lem:linreg-sym-consistency}\textit{(a)}, and then~\eqref{eq:linreg-sym-clt}, we deduce that 
\begin{align*}
\frac{i^*(p_0)}{3}\E(X_1 X_1^\top) \sqrt{n}(\hat{\beta}_n^{(j)} - \beta_0) &= - R_{n,j}\bigl(\sqrt{n}(\hat{\beta}_n^{(j)} - \beta_0)\bigr) + o_p(1) \\
&= -R_{n,j}(0) + o_p(1) = -\frac{1}{\sqrt{n}}\sum_{i \in I_{j+2}} X_i\psi_0^*(\varepsilon_i) + o_p(1)
\end{align*}
for each $j \in \{1,2,3\}$.  Hence, by the central limit theorem,
\begin{align*}
\sqrt{n}(\hat{\beta}_n^\dagger - \beta_0) = \sum_{j=1}^3 \frac{\sqrt{n}}{3}(\hat{\beta}_n^{(j)} - \beta_0) &= -\frac{\{\E(X_1 X_1^\top)\}^{-1}}{i^*(p_0)\sqrt{n}}\sum_{j=1}^3 \sum_{i \in I_{j+2}} X_i\psi_0^*(\varepsilon_i) + o_p(1) \\
&= -\frac{\{\E(X_1 X_1^\top)\}^{-1}}{i^*(p_0)\sqrt{n}}\sum_{i=1}^n X_i\psi_0^*(\varepsilon_i) + o_p(1) \cvd N_d\biggl(0, \frac{\{\E(X_1 X_1^\top)\}^{-1}}{i^*(p_0)}\biggr)
\end{align*}
as $n \to \infty$. By analogous reasoning based on Lemmas~\ref{lem:linreg-sym-equicontinuity} and~\ref{lem:linreg-sym-consistency}, $\sqrt{n}(\hat{\beta}_n^\ddagger - \beta_0)$ has the same limiting distribution; note in particular that $R_n^\ddagger := \sum_{j=1}^3 R_{n,j}^\ddagger$ satisfies $\sup_{\norm{\beta} \in B_M} \norm{R_n^\ddagger(b) - R_n^\ddagger(0)} = o_p(1)$.
\end{proof}

\subsubsection{Proof of Theorem~\ref{thm:linreg-score-intercept}}
\label{subsec:linreg-intercept-proofs}

For $n \in \N$ and $j \in \{1,2,3\}$, let $\check{\psi}_{n,j}(\cdot) := \hat{\psi}_{n,j}(\cdot + \mu_0)$ and $\bar{\delta}_{n,j} := \bar{X}_{n,j}^\top(\bar{\theta}_n^{(j)} - \theta_0)$, and define $\tilde{W}_i := \tilde{X}_i - \bar{X}_{n,j}$ for $i \in I_{j+2}$. The pilot residuals in Step~$3'$ can be written as $\hat{\varepsilon}_i = \varepsilon_i + \mu_0 + X_i^\top b_n$ for $i \in I_{j+1}$, where $b_n := (\theta_0 - \bar{\theta}_n^{(j)}, 0) = O_p(n^{-1/2})$ under the hypotheses of Theorem~\ref{thm:linreg-score-intercept}, so the conclusions of Lemma~\ref{lem:proj-score-consistency} hold with $\mu = \mu_0$ and $\hat{\Psi}_{n,j}(t) = \int_\R \hat{\psi}_{n,j}(z - t)\,p_0(z - \mu_0)\,dz = \int_\R \check{\psi}_{n,j}(z - t)\,p_0(z)\,dz$ for $t \in \R$. In particular, $\int_\R (\check{\psi}_{n,j} - \psi_0^*)^2\,p_0 \cvp 0$ by~\eqref{eq:proj-score-consistency}.

The first two lemmas below are similar to those in the previous subsection, and enable us to prove the limiting distribution~\eqref{eq:thetahat} of our estimate of $\theta_0$ in Step~$4'$, even in the absence of condition~\ref{ass:zeta}. By replacing $X_i$ with $\tilde{W}_i$ and $\hat{\psi}_{n,j}^{\mathrm{anti}}$ with $\check{\psi}_{n,j}$ in the proof of Lemma~\ref{lem:linreg-sym-equicontinuity}, we obtain the following result.

\begin{lemma}
\label{lem:linreg-intercept-equicontinuity}
For $b \equiv (s,a) \in \R^{d-1} \times \R$, $n \in \N$ and $j \in \{1,2,3\}$, define
\[
\tilde{R}_{n,j}(b) \equiv \tilde{R}_{n,j}(s,a) := \frac{1}{\sqrt{n}}\sum_{i \in I_{j+2}} \tilde{W}_i\Bigl\{\check{\psi}_{n,j}\Bigl(\varepsilon_i - \frac{a + \tilde{W}_i^\top s}{\sqrt{n}}\Bigr) - \hat{\Psi}_{n,j}\Bigl(\frac{a + \tilde{W}_i^\top s}{\sqrt{n}}\Bigr)\Bigr\}.
\] 
If the hypotheses of Theorem~\ref{thm:linreg-score-intercept} are satisfied, then for every $j$ and $M > 0$, we have
\[
\sup_{b \in B_M}\norm{\tilde{R}_{n,j}(b) - \tilde{R}_{n,j}(0)} = o_p(1)
\]
as $n \to \infty$, where $B_M = \{b \in \R^d : \norm{b} \leq M\}$.
\end{lemma}

\begin{lemma}
\label{lem:linreg-intercept-consistency}
For $n \in \N$, $j \in \{1,2,3\}$ and $t > 0$, we have
\begin{align}
\label{eq:linreg-intercept-sphere-j}
\bigl\{\norm{\hat{\theta}_n^{(j)} - \theta_0} > t\bigr\} &\subseteq \biggl\{\inf_{u \in S^{d-2}}u^\top \sum_{i \in I_{j+2}} \tilde{W}_i\check{\psi}_{n,j}\bigl(\varepsilon_i - \bar{\delta}_{n,j} - t\tilde{W}_i^\top u\bigr) \leq 0\biggr\}, \\
\label{eq:linreg-intercept-sphere}
\bigl\{\norm{\hat{\theta}_n^\ddagger - \theta_0} > t\bigr\} &\subseteq \biggl\{\inf_{u \in S^{d-2}}u^\top \sum_{j=1}^3 \sum_{i \in I_{j+2}} \tilde{W}_i\check{\psi}_{n,j}\bigl(\varepsilon_i - \bar{\delta}_{n,j} - t\tilde{W}_i^\top u\bigr) \leq 0\biggr\}.
\end{align}
Consequently, under the hypotheses of Lemma~\ref{lem:linreg-intercept-equicontinuity}, we have
\begin{enumerate}[label=(\alph*)]
\item $\sqrt{n}(\hat{\theta}_n^{(j)} - \theta_0) = O_p(1)$ and $\sqrt{n}(\hat{\theta}_n^\ddagger - \theta_0) = O_p(1)$;
\item $\displaystyle\frac{1}{\sqrt{n}}\sum_{i \in I_{j+2}}\!\!\tilde{W}_i\check{\psi}_{n,j}\bigl(\varepsilon_i - \bar{\delta}_{n,j} - \tilde{W}_i^\top(\hat{\theta}_n^{(j)} - \theta_0)\bigr) = o_p(1)$ and \\
$\displaystyle\frac{1}{\sqrt{n}}\sum_{j=1}^3 \sum_{i \in I_{j+2}}\!\!\tilde{W}_i\check{\psi}_{n,j}\bigl(\varepsilon_i - \bar{\delta}_{n,j} - \tilde{W}_i^\top(\hat{\theta}_n^\ddagger - \theta_0)\bigr) = o_p(1)$.
\end{enumerate}
\end{lemma}

\begin{proof}
Similarly to the proof of Lemma~\ref{lem:linreg-sym-consistency}, it suffices to consider $j = 1$, and we write $\check{\psi}_n \equiv \check{\psi}_{n,1}$ and $\hat{\Psi}_n \equiv \hat{\Psi}_{n,1}$. By definition, $\hat{\theta}_n \equiv \hat{\theta}_n^{(1)}$ minimises the convex function 
\[
\theta \mapsto \sum_{i \in I_3} \hat{\ell}_{n,1}\bigl(Y_i - \bar{X}_{n,1}^\top\bar{\theta}_n^{(1)} - \tilde{W}_i^\top\theta\bigr) = \sum_{i \in I_3} \check{\ell}_n\bigl(\varepsilon_i - \bar{\delta}_n - \tilde{W}_i^\top(\theta - \theta_0)\bigr) =: \check{L}_n(\theta) 
\]
over $\R^{d-1}$, where $\check{\ell}_n \equiv \hat{\ell}_{n,1}(\cdot + \mu_0)$ and $\bar{\delta}_n \equiv \bar{\delta}_{n,1}$. Thus, $h \mapsto \check{L}_n\bigl((1 - h)\hat{\theta}_n + h\theta_0\bigr)$ is convex and increasing on $[0,\infty)$, so 
\[
(\hat{\theta}_n - \theta_0)^\top\sum_{i \in I_3} \tilde{W}_i\check{\psi}_n\bigl(\varepsilon_i - \bar{\delta}_n - (1-h)\tilde{W}_i^\top(\hat{\theta}_n - \theta_0)\bigr) \leq 0
\]
for $h \geq 0$. This means that if $\norm{\hat{\theta}_n - \theta_0} > t > 0$, then we can take $h = 1 - t/\norm{\hat{\theta}_n - \theta_0} \in [0,1]$ and argue as in~\eqref{eq:linreg-sym-sphere-j-1} to obtain~\eqref{eq:linreg-intercept-sphere-j}. By similar reasoning based on $\theta \mapsto \sum_{j=1}^3 \sum_{i \in I_{j+2}} \check{\ell}_n\bigl(\varepsilon_i - \bar{\delta}_{n,j} - \tilde{W}_i^\top(\theta - \theta_0)\bigr)$, we obtain~\eqref{eq:linreg-intercept-sphere}.

\medskip
\noindent
\textit{(a)} By~\ref{ass:covariates}, $\Cov(\tilde{X}_1)$ is positive definite, so its minimum eigenvalue $\lambda_{\min}$ is strictly positive. Since $\bar{\theta}_n^{(1)} - \theta_0 = O_p(n^{-1/2})$, it follows from~\ref{ass:covariates} that $\bar{\delta}_n = O_p(n^{-1/2}) = o_p(\gamma_n/\alpha_n)$ and hence $V_n := |\bar{\delta}_n| + Mn^{-1/2}\max_{i \in I_3}\norm{\tilde{W}_i} = o_p(\gamma_n/\alpha_n)$ as $n \to \infty$. Moreover, $\sum_{i \in I_3} \tilde{W}_i = 0$ and $n^{-1}\sum_{i \in I_3}\tilde{W}_i \tilde{W}_i^\top \cvp \Cov(\tilde{X}_1)/3$. Arguing similarly to~\eqref{eq:linreg-sym-Psi}, we deduce from~\eqref{eq:proj-score-consistency-2} and Lemma~\ref{lem:cvp-plugin} that
\begin{align}
\sup_{u \in S^{d-2}}&\biggl\|\,\sum_{i \in I_3} \frac{\tilde{W}_i}{\sqrt{n}}\hat{\Psi}_n\Bigl(\bar{\delta}_n + \frac{M \tilde{W}_i^\top u}{\sqrt{n}}\Bigr) - \frac{Mi^*(p_0)}{3}\Cov(\tilde{X}_1)u\,\biggr\| \notag \\
&\leq \sup_{u \in S^{d-2}}\biggl\|\,\sum_{i \in I_3} \frac{\tilde{W}_i}{\sqrt{n}}\biggl\{\hat{\Psi}_n\Bigl(\bar{\delta}_n + \frac{M \tilde{W}_i^\top u}{\sqrt{n}}\Bigr) - \hat{\Psi}_n(\bar{\delta}_n) - \frac{Mi^*(p_0)\tilde{W}_i^\top u}{\sqrt{n}}\biggr\}\,\biggr\| \notag\\
&\hspace{6.5cm}+ Mi^*(p_0)\,\biggl\|\sum_{i \in I_3}\frac{\tilde{W}_i \tilde{W}_i^\top}{n} - \frac{\Cov(\tilde{X}_1)}{3}\,\biggr\|_{\mathrm{op}} \notag \\
\label{eq:linreg-intercept-Psi}
&\leq \frac{M\sum_{i \in I_3}\norm{\tilde{W}_i}^2}{n} \cdot \Delta_n(V_n) + Mi^*(p_0)\,\biggl\|\sum_{i \in I_3}\frac{\tilde{W}_i \tilde{W}_i^\top}{n} - \frac{\Cov(\tilde{X}_1)}{3}\,\biggr\|_{\mathrm{op}} = o_p(1).
\end{align}
Therefore, by Lemma~\ref{lem:linreg-intercept-equicontinuity},
\begin{align}
\inf_{u \in S^{d-2}}u^\top \sum_{i \in I_3} \frac{\tilde{W}_i}{\sqrt{n}}\check{\psi}_n\Bigl(\varepsilon_i - \bar{\delta}_n - \frac{M \tilde{W}_i^\top u}{\sqrt{n}}\Bigr) &= \inf_{u \in S^{d-2}} u^\top\biggl\{\tilde{R}_{n,1}(Mu,\sqrt{n}\bar{\delta}_n) + \sum_{i \in I_3}\!\frac{\tilde{W}_i}{\sqrt{n}}\hat{\Psi}_n\Bigl(\bar{\delta}_n + \frac{M \tilde{W}_i^\top u}{\sqrt{n}}\Bigr)\biggr\} \notag \\
&= \inf_{u \in S^{d-2}} u^\top\Bigl(\tilde{R}_{n,1}(0,0) + \frac{Mi^*(p_0)}{3}\Cov(\tilde{X}_1)u\Bigr) + o_p(1) \notag \\
\label{eq:linreg-intercept-consistency}
&\geq -\norm{\tilde{R}_{n,1}(0,0)} + \frac{M\lambda_{\min}i^*(p_0)}{3} + o_p(1).
\end{align}
Writing $\mathcal{D}' = \mathcal{D}_1 \cup \mathcal{D}_2 \cup \{X_i : i \in I_3\}$, we have
\[
\E\Bigl(\frac{1}{\sqrt{n}}\sum_{i \in I_3} \tilde{W}_i\bigl(\check{\psi}_n(\varepsilon_i) - \psi_0^*(\varepsilon_i)\bigr) \Bigm| \mathcal{D}'\Bigr) = \frac{1}{\sqrt{n}}\sum_{i \in I_3} \tilde{W}_i \int_\R (\check{\psi}_n - \psi_0^*)\,p_0 = 0,
\]
which together with Lemma~\ref{lem:proj-score-consistency} implies that
\begin{align*}
\E\Bigl(\Bigl\|\frac{1}{\sqrt{n}}\sum_{i \in I_3} \tilde{W}_i\bigl(\check{\psi}_n(\varepsilon_i) - \psi_0^*(\varepsilon_i)\bigr)\Bigr\|^2 \Bigm| \mathcal{D}'\Bigr) &\leq \frac{1}{n}\sum_{i \in I_3}\norm{\tilde{W}_i}^2\,\E\bigl\{(\check{\psi}_n - \psi_0^*)^2(\varepsilon_i) \!\bigm|\! \mathcal{D}'\bigr\} \\
&= \frac{1}{n}\sum_{i \in I_3}\norm{\tilde{W}_i}^2 \int_\R (\check{\psi}_n - \psi_0^*)^2\,p_0 \cvp 0.
\end{align*}
Moreover, by Lemma~\ref{lem:psi0-star}, $n^{-1}\sum_{i \in I_3}\psi_0^*(\varepsilon_i) \cvp \int_\R \psi_0^*\,p_0 = 0$, so writing $\tilde{m} = \E(\tilde{X}_1)$, we deduce by the central limit theorem that
\begin{align}
\tilde{R}_{n,1}(0,0) = \frac{1}{\sqrt{n}}\sum_{i \in I_3} \tilde{W}_i\check{\psi}_n(\varepsilon_i) 
% &= \frac{1}{\sqrt{n}}\sum_{i \in I_3} \tilde{W}_i\psi_0^*(\varepsilon_i) + o_p(1) \notag \\
&= \frac{1}{\sqrt{n}}\sum_{i \in I_3} (\tilde{X}_i - \tilde{m})\psi_0^*(\varepsilon_i) - \sqrt{n}(\bar{X}_{n,1} - \tilde{m}) \sum_{i \in I_3} \frac{\psi_0^*(\varepsilon_i)}{n} + o_p(1) \notag \\
\label{eq:linreg-intercept-clt}
&= \frac{1}{\sqrt{n}}\sum_{i \in I_3} (\tilde{X}_i - \tilde{m})\psi_0^*(\varepsilon_i) + o_p(1) = O_p(1)
\end{align}
as $n \to \infty$. We have $M\lambda_{\min}i^*(p_0) > 0$, so by~\eqref{eq:linreg-intercept-sphere-j} and~\eqref{eq:linreg-intercept-consistency}, $\limsup_{n \to \infty}\Pr(\norm{\hat{\theta}_n - \theta_0} \geq M/\sqrt{n}) \to 0$ as $M \to \infty$, so $\sqrt{n}(\hat{\theta}_n - \theta_0) = O_p(1)$ as $n \to \infty$. The $\sqrt{n}$-consistency of $\hat{\theta}_n^\ddagger$ follows similarly from~\eqref{eq:linreg-intercept-sphere}.

\medskip
\noindent
\textit{(b)} Arguing similarly to the proof of Lemma~\ref{lem:linreg-sym-consistency}, we deduce from Lemma~\ref{lem:subspace-countable} that
\[
\Pr\biggl(\max_{\theta \in \R^{d-1}}\,\sum_{i \in I_3} \Ind_{\{\varepsilon_i - \bar{\delta}_n - \tilde{W}_i^\top(\theta - \theta_0) \in A\}} \leq d - 1 \biggm| \mathcal{D}'\biggr) = 1,
\]
where $A$ denotes the countable set of discontinuities of the decreasing function $\check{\psi}_n$. Thus, by~\ref{ass:covariates} and the fact that $\hat{\theta}_n \in \argmin_{\theta \in \R^{d-1}} \check{L}_n(\theta)$, we have
\[
\Biggl|\,\sum_{i \in I_3}\tilde{W}_i\check{\psi}_n\bigl(\varepsilon_i - \bar{\delta}_n - \tilde{W}_i^\top(\hat{\theta}_n - \theta_0)\bigr)\Biggr| \leq (d - 1)\max_{i \in I_3}\norm{\tilde{W}_i}\,\frac{\alpha_n}{\gamma_n} = o_p(n^{1/2})
\]
as $n \to \infty$. The proof of the analogous conclusion for $\hat{\theta}_n^\ddagger$ is similar.
\end{proof}

To derive the limiting behaviour of the intercept estimate~\eqref{eq:intercept-est}, we require two additional lemmas whose proofs are slightly more straightforward because unlike the antitonic score estimates $\hat{\psi}_{n,j}$ above, the function $\zeta$ is not random. This means that sample splitting is not required here. For $i \in [n]$, let $\breve{W}_i := \tilde{X}_i - \tilde{m}$ and $W_i := (\breve{W}_i,1)$ and note that since $\E(\norm{X_1}^2) < \infty$, we also have $\E(\norm{W_1}^2) < \infty$. By a union bound, $n^{-1}\,\Pr(\max_{i \in [n]} \norm{W_i}^2 > t) \leq \Pr(\norm{W_1}^2 > t)$ for all $t > 0$. It follows from the dominated convergence theorem that
\begin{equation}
\label{eq:expected-max}
\frac{1}{n}\E\Bigl(\max_{i \in [n]}\norm{W_i}^2\Bigr) = \int_0^\infty \frac{1}{n}\Pr\Bigl(\max_{i \in [n]}\norm{W_i}^2 > t\Bigr)\,dt \to 0
\end{equation}
as $n \to \infty$; see also~\citet[Exercise~10.6.4\emph{(a)}]{samworth24modern}. Thus, $\max_{i \in [n]}\norm{W_i} = o_p(n^{1/2})$.

\begin{lemma}
\label{lem:linreg-zeta-equicontinuity}
Suppose that $\zeta \colon \R \to \R$ satisfies~\emph{\ref{ass:zeta}} for some $t_0 > 0$, and that $\E\zeta(\varepsilon_1) = 0$. Define $\Lambda \colon \R \to \R$ by
\begin{equation}
\label{eq:zeta-Lambda}
\Lambda(t) :=
\begin{cases}
\int_\R \zeta(z - t)\,p_0(z)\,dz \quad&\text{if }z \in [-t_0,t_0] \\
0 \quad&\text{otherwise}.
\end{cases}
\end{equation}
For $n \in \N$ and $b \in \R^d$, let
\[
R_n^\zeta(b) \equiv R_n^\zeta(s,a) := \frac{1}{\sqrt{n}}\sum_{i=1}^n \Bigl\{\zeta\Bigl(\varepsilon_i - \frac{a + \breve{W}_i^\top s}{\sqrt{n}}\Bigr) - \Lambda\Bigl(\frac{a + \breve{W}_i^\top s}{\sqrt{n}}\Bigr)\Bigr\}.
\]
Then for each $M > 0$, we have
\[
\sup_{b \in B_M}\norm{R_n^\zeta(b) - R_n^\zeta(0)} = o_p(1).
\]
\end{lemma}
\begin{proof}
This requires only a few minor modifications to the proof of Lemma~\ref{lem:linreg-intercept-equicontinuity}. Let $\mathcal{W} := (W_1,\dotsc,W_n)$, which is independent of $\varepsilon_1,\dotsc,\varepsilon_n$. Define the event $\Omega_n := \{\max_{i \in [n]}\norm{W_i} \leq t_0 n^{1/2}/M\}$, which as noted below~\eqref{eq:expected-max} has probability tending to 1 as $n \to \infty$. By~\ref{ass:zeta}, $\zeta$ satisfies the hypothesis of Lemma~\ref{lem:varPsi-deriv}, so $\Lambda$ is well-defined. Moreover, for each fixed $b \in B_M$, we have $\E\bigl\{\zeta(\varepsilon_i - W_i^\top bn^{-1/2})\!\bigm|\!\mathcal{W}\bigr\} = \Lambda(W_i^\top bn^{-1/2})$ for all $i \in [n]$ on the event $\Omega_n$, so by~\eqref{eq:psi-shift-limit} in Lemma~\ref{lem:varPsi-deriv},
\begin{align}
\E\bigl(\{R_n^\zeta(b) - R_n^\zeta(0)\}^2\!\bigm|\!\mathcal{W}\bigr) \cdot \Ind_{\Omega_n} &= \frac{1}{n}\sum_{i=1}^n \Var\bigl(\zeta(\varepsilon_i - W_i^\top bn^{-1/2}) - \zeta(\varepsilon_i)\!\bigm|\!\mathcal{W}\bigr) \cdot \Ind_{\Omega_n} \notag \\
\label{eq:zeta-shift}
&\leq \max_{i \in [n]}\int_\R\,\bigl(\zeta(z - W_i^\top bn^{-1/2}) - \zeta(z)\bigr)^2\,p_0(z)\,dz \cdot \Ind_{\Omega_n} \cvp 0
\end{align}
as $n \to \infty$. Thus, $\E\bigl(\{R_n^\zeta(b) - R_n^\zeta(0)\}^2\!\bigm|\!\mathcal{W}\bigr) = o_p(1)$ for each $b \in B_M$. Fix $\epsilon \in (0,1)$. As in the proof of Lemma~\ref{lem:linreg-sym-equicontinuity}, we can find a $\epsilon M$-cover $B_{M,\epsilon}$ of $B_M$ with $|B_{M,\epsilon}| \leq 3\epsilon^{-d}$, and deduce that $\max_{b \in B_{M,\epsilon}} \norm{R_n^\zeta(b) - R_n^\zeta(0)} \cvp 0$. For $b' \in B_{M,\epsilon}$, Define
\[
r_n^\zeta(b') := \frac{1}{\sqrt{n}} \sum_{i=1}^n \biggl\{\zeta\biggl(\varepsilon_i - \frac{W_i^\top b' + \epsilon M\norm{W_i}}{\sqrt{n}}\biggr) - \zeta\biggl(\varepsilon_i - \frac{W_i^\top b' - \epsilon M\norm{W_i}}{\sqrt{n}}\biggr)\biggr\}
\]
Similarly to~\eqref{eq:zeta-shift}, it follows from~\eqref{eq:psi-shift-limit} that
\begin{align*}
\Var\bigl(r_n^\zeta(b')\!\bigm|\!\mathcal{W}\bigr) \cdot \Ind_{\Omega_n} \leq \max_{i \in [n]} \int_\R\biggl\{\zeta\biggl(z - \frac{W_i^\top b' + \epsilon M\norm{W_i}}{\sqrt{n}}\biggr) - \zeta\biggl(z - \frac{W_i^\top b' - \epsilon M\norm{W_i}}{\sqrt{n}}\biggr)\biggr\}^2 p_0(z)\,dz \cvp 0.
\end{align*}
Therefore, $\max_{b' \in B_{M,\epsilon}} \bigl\{r_n^\zeta(b') - \E\bigl(r_n^\zeta(b')\!\bigm|\!\mathcal{W}\bigr)\bigr\} = o_p(1)$. Furthermore, by~\ref{ass:zeta} and~\eqref{eq:varPsi-deriv} in Lemma~\ref{lem:varPsi-deriv}, $\Lambda$ is increasing on $[-t_0,t_0]$ with $\Lambda'(0) = -\int_\R p_0\,d\zeta \in (0,\infty)$ and
\begin{equation}
\label{eq:zeta-Lambda-deriv}
\Delta_n^\zeta(v) := \sup_{\substack{s,t \in [-v,v] : \\ s \neq t}}\,\biggl|\frac{\Lambda(t) - \Lambda(s)}{t - s} + \int_\R p_0\,d\zeta\biggr| \to 0
\end{equation}
as $v \to 0$. Consequently, on $\Omega_n$, we have
\begin{align*}
\max_{b' \in B_{M,\epsilon}}\E\bigl(r_n^\zeta(b') \bigm| \mathcal{W}\bigr) &= \max_{b' \in B_{M,\epsilon}}\,\frac{1}{\sqrt{n}}\sum_{i=1}^n \biggl\{\Lambda\biggl(\frac{W_i^\top b' + \epsilon M\norm{W_i}}{\sqrt{n}}\biggr) - \Lambda\biggl(\frac{W_i^\top b' - \epsilon M\norm{W_i}}{\sqrt{n}}\biggr)\biggr\} \\
&\leq \sum_{i=1}^n\frac{2\epsilon M\norm{W_i}}{n}\Bigl\{\Lambda'(0) + \Delta_n^\zeta\Bigl(\frac{2M\max_{i \in [n]}\norm{W_i}}{\sqrt{n}}\Bigr)\Bigr\} \\
&= 2\epsilon M\Lambda'(0)\,\E(\norm{W_1}) + o_p(1),
\end{align*}
and hence $\max_{b' \in B_{M,\epsilon}}\E\bigl(r_n^\zeta(b')\!\bigm|\!\mathcal{W}\bigr) = O_p(1)$. On $\Omega_n$, we conclude as in the proof of Lemma~\ref{lem:linreg-sym-equicontinuity} that
\begin{align*}
&\sup_{b \in B_M}\norm{R_n(b) - R_n(0)} \\
&\quad \leq \max_{b' \in B_{M,\epsilon}} \bigl\{r_n^\zeta(b') - \E\bigl(r_n^\zeta(b') \bigm| \mathcal{D}'\bigr)\bigr\} + 2\max_{b' \in B_{M,\epsilon}}\E\bigl(r_n^\zeta(b') \bigm| \mathcal{D}'\bigr) + \max_{b' \in B_{M,\epsilon}}\norm{R_n^\zeta(b') - R_n^\zeta(0)} \\
&\quad = 2\epsilon M\Lambda'(0)\,\E(\norm{W_1}) + o_p(1).
\end{align*}
Since $\epsilon \in (0,1)$ was arbitrary and $\Pr(\Omega_n) \to 1$, the result follows.
\end{proof}
\begin{lemma}
\label{lem:linreg-zeta-consistency}
Suppose that $\zeta \colon \R \to \R$ satisfies~\emph{\ref{ass:zeta}}. For $n \in \N$, let $\hat{\mu}_n^\zeta$ be the intercept estimate in~\eqref{eq:intercept-est} based on $\hat{\theta}_n \in \{\hat{\theta}_n^\dagger,\hat{\theta}_n^\ddagger\}$ and define $\hat{\delta}_n := \hat{\mu}_n^\zeta - \mu_0 + \tilde{m}^\top(\hat{\theta}_n - \theta_0)$. Then for $t > 0$, we have
\begin{align}
\label{eq:linreg-zeta-event}
\bigl\{|\hat{\delta}_n| > t\bigr\} &\subseteq \biggl\{\min_{u \in \{-1,1\}} u\sum_{i=1}^n \zeta\bigl(\varepsilon_i - \breve{W}_i^\top(\hat{\theta}_n - \theta_0) - tu\bigr) \leq 0\biggr\}.
\end{align}
Moreover, if $\E\zeta(\varepsilon_1) = 0$, then $\sqrt{n}\hat{\delta}_n  = O_p(1)$ and $n^{-1/2}\sum_{i=1}^n \zeta\bigl(\varepsilon_i - \breve{W}_i^\top(\hat{\theta}_n - \theta_0) - \hat{\delta}_n\bigr) = o_p(1)$.
\end{lemma}

\begin{proof}
By~\eqref{eq:intercept-est}, $\hat{\mu}_n^\zeta = \hat{\delta}_n + \mu_0 - \tilde{m}^\top(\hat{\theta}_n - \theta_0)$ minimises
\[
\sum_{i=1}^n L_\zeta(Y_i - \tilde{X}_i^\top\hat{\theta}_n - \mu) = \sum_{i=1}^n L_\zeta\bigl(\varepsilon_i - (\breve{W}_i + \tilde{m})^\top(\hat{\theta}_n - \theta_0) - (\mu - \mu_0)\bigr)
\]
over $\mu \in \R$, so $\hat{\delta}_n$ minimises the convex function 
\begin{equation}
\label{eq:Gn-alpha}
\alpha \mapsto \sum_{i=1}^n L_\zeta\bigl(\varepsilon_i - \breve{W}_i^\top(\hat{\theta}_n - \theta_0) - \alpha\bigr) =: G_n(\alpha)
\end{equation}
over $\R$. For $t > 0$, if $\hat{\delta}_n > t$, then $0 \geq G_n^{(\mathrm{R})}(t) = \sum_{i=1}^n \zeta\bigl(\varepsilon_i - \breve{W}_i^\top(\hat{\theta}_n - \theta_0) - t\bigr)$, whereas if $\hat{\delta}_n < -t$, then $0 \leq G_n^{(\mathrm{R})}(-t) = \sum_{i=1}^n \zeta\bigl(\varepsilon_i - \breve{W}_i^\top(\hat{\theta}_n - \theta_0) + t\bigr)$. This yields~\eqref{eq:linreg-zeta-event}.

Next, fix $M > 0$. By Lemma~\ref{lem:linreg-intercept-consistency}\textit{(a)}, $\sqrt{n}(\hat{\theta}_n - \theta_0) = O_p(1)$, which together with~\eqref{eq:expected-max} implies that $V_n := Mn^{-1/2} + \max_{i \in [n]}\norm{W_i}\,|\hat{\theta}_n - \theta_0| = o_p(1)$. Since $\Lambda(0) = \E\zeta(\varepsilon_1) = 0$ and $n^{-1/2}\sum_{i=1}^n \breve{W}_i = n^{-1/2}\sum_{i=1}^n (\tilde{X}_i - \tilde{m}) = O_p(1)$, we deduce from~\eqref{eq:zeta-Lambda-deriv} and Lemma~\ref{lem:cvp-plugin} that
\begin{align}
% NB: Neyman orthogonality is crucial here!
\max_{u \in \{-1,1\}}\,&\biggl|\sum_{i=1}^n \frac{1}{\sqrt{n}}\Lambda\Bigl(\breve{W}_i^\top(\hat{\theta}_n - \theta_0) + \frac{Mu}{\sqrt{n}}\Bigr) + Mu\int_\R p_0\,d\zeta\biggr| \notag \\
&\leq \max_{u \in \{-1,1\}}\,\sum_{i=1}^n \frac{1}{\sqrt{n}} \,\biggl|\Lambda\Bigl(\breve{W}_i^\top(\hat{\theta}_n - \theta_0) + \frac{Mu}{\sqrt{n}}\Bigr) - \Lambda'(0)\Bigl(\breve{W}_i^\top(\hat{\theta}_n - \theta_0) + \frac{Mu}{\sqrt{n}}\Bigr)\biggr| + O_p(n^{-1/2}) \notag \\
\label{eq:Lambda-deriv-1}
&\leq \sum_{i=1}^n \frac{\sqrt{n}\norm{W_i}\,|\hat{\theta}_n - \theta_0| + M}{n} \cdot \Delta_n^\zeta(V_n) + O_p(n^{-1/2}) = o_p(1).
\end{align}
By Lemma~\ref{lem:linreg-zeta-equicontinuity} and~\eqref{eq:Lambda-deriv-1},
\begin{align*}
\min_{u \in \{-1,1\}} &u\sum_{i=1}^n \zeta\Bigl(\varepsilon_i - \breve{W}_i^\top(\hat{\theta}_n - \theta_0) - \frac{Mu}{\sqrt{n}}\Bigr) \\
&= \min_{u \in \{-1,1\}} u\,\biggl\{R_n^\zeta\bigl(\sqrt{n}(\hat{\theta}_n - \theta_0),Mu\bigr) + \frac{1}{\sqrt{n}}\sum_{i=1}^n \Lambda\Bigl(\breve{W}_i^\top(\hat{\theta}_n - \theta_0) + \frac{Mu}{\sqrt{n}}\Bigr)\biggr\} \\
&= \min_{u \in \{-1,1\}} u\Bigl(R_n^\zeta(0,0) - Mu\int_\R p_0\,d\zeta\Bigr) + o_p(1) = -M\int_\R p_0\,d\zeta - |R_n^\zeta(0,0)| + o_p(1).
\end{align*}
Now $R_n^\zeta(0,0) = n^{-1/2}\sum_{i=1}^n \zeta(\varepsilon_i) = O_p(1)$ and $-\int_\R p_0\,d\zeta \in (0,\infty)$, so by~\eqref{eq:linreg-zeta-event}, $\limsup_{n \to \infty}\Pr(\hat{\delta}_n > M/\sqrt{n}) \to 0$ as $M \to \infty$, i.e.~$\sqrt{n}\hat{\delta}_n = O_p(1)$. 

For the final assertion, we have $\max_{i \in [n]} |\breve{W}_i^\top(\hat{\theta}_n - \theta_0) + \hat{\delta}_n| = o_p(1)$, so letting $\xi(z) := \lim_{z' \nearrow z}\zeta(z') - \zeta(z) = \max\{|a - b| : a,b \in \partial L_\zeta(z)\}$ for $z \in \R$, we deduce that
\[
\max_{i \in [n]}\,\xi\bigl(\varepsilon_i - \breve{W}_i^\top(\hat{\theta}_n - \theta_0) - \hat{\delta}_n\bigr) \leq \max_{i \in [n]}\,\bigl(|\zeta(\varepsilon_i - t_0)| + |\zeta(\varepsilon_i + t_0)|\bigr)
\]
with probability tending to 1 as $n \to \infty$. By~\ref{ass:zeta} and~\eqref{eq:expected-max}, it follows that
\begin{equation}
\label{eq:d-zeta-max}
\max_{i \in [n]}\,\xi\bigl(\varepsilon_i - \breve{W}_i^\top(\hat{\theta}_n - \theta_0) - \hat{\delta}_n\bigr) = o_p(n^{1/2}).
\end{equation}
Moreover, the set $A := \{z \in \R : \xi(z) > 0\}$ of discontinuities of $\zeta$ is countable, and $(\varepsilon_1,\dotsc,\varepsilon_n)$ has a density with respect to Lebesgue measure on $\R^n$, so as in~\eqref{eq:convex-loss-kinks}, we deduce from Lemma~\ref{lem:subspace-countable} that
\[
\Pr\biggl(\max_{\mu \in \R,\,\theta \in \R^{d-1}}\,\sum_{i=1}^n \Ind_{\{\varepsilon_i - \mu - \breve{W}_i^\top\theta \in A\}} \leq d \biggm| \mathcal{W}\biggr) = 1
\]
almost surely. Since $\hat{\delta}_n$ minimises the convex function $\alpha \mapsto G_n(\alpha)$ in~\eqref{eq:Gn-alpha} over $\R$, we must have $0 \in \partial G_n(\hat{\delta}_n) = -\sum_{i=1}^n \partial L_\zeta\bigl(\varepsilon_i - \breve{W}_i^\top(\hat{\theta}_n - \theta_0) - \hat{\delta}_n\bigr)$. Therefore,
\[
\Biggl|\,\sum_{i=1}^n \zeta\bigl(\varepsilon_i - \breve{W}_i^\top(\hat{\theta}_n - \theta_0) - \hat{\delta}_n\bigr)\Biggr| \leq \sum_{i=1}^n \xi\bigl(\varepsilon_i - \breve{W}_i^\top(\hat{\theta}_n - \theta_0) - \hat{\delta}_n\bigr) \leq d\max_{i \in [n]}\,\xi\bigl(\varepsilon_i - \breve{W}_i^\top(\hat{\theta}_n - \theta_0) - \hat{\delta}_n\bigr)
\]
almost surely, so the conclusion follows by~\eqref{eq:d-zeta-max}.
\end{proof}
\begin{proof}[Proof of Theorem~\ref{thm:linreg-score-intercept}]
By Lemma~\ref{lem:p0-star}, $\lim_{z \to -\infty}\psi_0^*(z) > 0 > \lim_{z \to \infty}\psi_0^*(z)$, and for $j \in \{1,2,3\}$, we have $\int_\R (\check{\psi}_{n,j} - \psi_0^*)^2\,p_0 \cvp 0$ by~\eqref{eq:proj-score-consistency}. Thus, with probability tending to 1 as $n \to \infty$, we have $\lim_{z \to -\infty}\check{\psi}_{n,j}(z) > 0 > \lim_{z \to \infty}\check{\psi}_{n,j}(z)$. In this case, there exists $\hat{\theta}_n^{(j)}$ satisfying~\eqref{eq:betahat-intercept} for each $j$, and by Lemma~\ref{lem:linreg-intercept-consistency},
\begin{align}
\frac{1}{\sqrt{n}}\sum_{i \in I_{j+2}} \tilde{W}_i\hat{\Psi}_{n,j}\bigl(\bar{\delta}_{n,j} &+ \tilde{W}_i^\top(\hat{\theta}_n^{(j)} - \theta_0)\bigr) + \tilde{R}_{n,j}\bigl(\sqrt{n}(\hat{\theta}_n^{(j)} - \theta_0), \sqrt{n}\bar{\delta}_{n,j}\bigr) \notag \\
\label{eq:linreg-intercept-efficient}
&\quad= \frac{1}{\sqrt{n}}\sum_{i \in I_{j+2}} \tilde{W}_i\check{\psi}_{n,j}\bigl(\varepsilon_i - \bar{\delta}_{n,j} - \tilde{W}_i^\top(\hat{\theta}_n^{(j)} - \theta_0)\bigr) = o_p(1)
\end{align}
and
\[
\biggl\|\frac{1}{\sqrt{n}}\sum_{i \in I_{j+2}} \tilde{W}_i \tilde{W}_i^\top(\hat{\theta}_n^{(j)} - \theta_0)\biggr\|
\leq \frac{1}{n}\sum_{i \in I_{j+2}} \norm{\tilde{W}_i}^2 \norm{\sqrt{n}(\hat{\theta}_n^{(j)} - \theta_0)} = O_p(1).
\]
Moreover, by~\ref{ass:covariates} and Lemma~\ref{lem:linreg-intercept-consistency}\textit{(a)}, $\bigl(|\bar{\delta}_{n,j}| + \max_{i \in I_{j+2}}|\tilde{W}_i^\top (\hat{\theta}_n^{(j)} - \theta_0)|\bigr)\,\alpha_n/\gamma_n = o_p(1)$ as $n \to \infty$. Hence, using the fact that $\sum_{i \in I_{j+2}}\tilde{W}_i = 0$ and arguing similarly to~\eqref{eq:linreg-intercept-Psi}, we have
\begin{align}
\label{eq:linreg-intercept-psi}
\frac{1}{\sqrt{n}}\sum_{i \in I_{j+2}} \tilde{W}_i\hat{\Psi}_{n,j}\bigl(\bar{\delta}_{n,j} + \tilde{W}_i^\top(\hat{\theta}_n^{(j)} - \theta_0)\bigr)
% &= \frac{i^*(p_0)}{n}\sum_{i \in I_{j+2}} \tilde{W}_i \tilde{W}_i^\top \sqrt{n}(\hat{\theta}_n^{(j)} - \theta_0) + o_p(1) \notag \\
&= \frac{i^*(p_0)}{3}\Sigma\sqrt{n}(\hat{\theta}_n^{(j)} - \theta_0) + o_p(1),
\end{align}
where $\Sigma = \Cov(\tilde{X}_1)$. Therefore, by~\eqref{eq:linreg-intercept-psi} and~\eqref{eq:linreg-intercept-efficient} followed by Lemmas~\ref{lem:linreg-intercept-equicontinuity} and~\ref{lem:linreg-intercept-consistency}\textit{(a)}, and then~\eqref{eq:linreg-intercept-clt},
\begin{align*}
\frac{i^*(p_0)}{3}\Sigma\sqrt{n}(\hat{\theta}_n^{(j)} - \theta_0) &= -\tilde{R}_{n,j}\bigl(\sqrt{n}(\hat{\theta}_n^{(j)} - \theta_0), \sqrt{n}\bar{\delta}_{n,j}\bigr) + o_p(1) \\
&= -\tilde{R}_{n,j}(0,0) + o_p(1) = -\frac{1}{\sqrt{n}}\sum_{i \in I_{j+2}} (\tilde{X}_i - \tilde{m})\psi_0^*(\varepsilon_i) + o_p(1)
\end{align*}
for each $j \in \{1,2,3\}$, where $\tilde{m} = \E(\tilde{X}_1)$. By the central limit theorem, we conclude that
\begin{align}
\sqrt{n}(\hat{\theta}_n^\dagger - \theta_0) = \sum_{j=1}^3 \frac{\sqrt{n}}{3}(\hat{\theta}_n^{(j)} - \theta_0) &= -\frac{\Sigma^{-1}}{i^*(p_0)\sqrt{n}}\sum_{j=1}^3 \sum_{i \in I_{j+2}} (\tilde{X}_i - \tilde{m})\psi_0^*(\varepsilon_i) + o_p(1) \notag \\
\label{eq:theta-asymp-linear}
&= -\frac{\Sigma^{-1}}{i^*(p_0)\sqrt{n}}\sum_{i=1}^n (\tilde{X}_i - \tilde{m})\psi_0^*(\varepsilon_i) + o_p(1) \cvd N_{d-1}\biggl(0,\,\frac{\Sigma^{-1}}{i^*(p_0)}\biggr)
\end{align}
as $n \to \infty$. By similar reasoning based on Lemmas~\ref{lem:linreg-intercept-equicontinuity} and~\ref{lem:linreg-intercept-consistency}, $\sqrt{n}(\hat{\theta}_n^\ddagger - \theta_0)$ has the same limiting distribution.

Next, taking $\hat{\theta}_n \in \{\hat{\theta}_n^\dagger,\hat{\theta}_n^\ddagger\}$, we deduce from Lemma~\ref{lem:linreg-zeta-consistency} that
\begin{align*}
\frac{1}{\sqrt{n}}\sum_{i=1}^n \Lambda\bigl(\breve{W}_i^\top(\hat{\theta}_n - \theta_0) + \hat{\delta}_n\bigr) + R_n^\zeta\bigl(\sqrt{n}(\hat{\theta}_n - \theta_0),\sqrt{n}\hat{\delta}_n\bigr) = \frac{1}{\sqrt{n}}\sum_{i=1}^n \zeta\bigl(\varepsilon_i - \breve{W}_i^\top(\hat{\theta}_n - \theta_0) - \hat{\delta}_n\bigr) = o_p(1)
\end{align*}
and moreover that $|\hat{\delta}_n| + \max_{i \in [n]}|\breve{W}_i^\top(\hat{\theta}_n - \theta_0)| = o_p(1)$ as $n \to \infty$. Therefore, arguing similarly to~\eqref{eq:Lambda-deriv-1}, we have
\begin{align*}
% NB: Neyman orthogonality is crucial here!
\frac{1}{\sqrt{n}}\sum_{i=1}^n \Lambda\bigl(\breve{W}_i^\top(\hat{\theta}_n - \theta_0) + \hat{\delta}_n\bigr) 
% &= \frac{\Lambda'(0)}{n}\sum_{i=1}^n \bigl(\breve{W}_i^\top\sqrt{n}(\hat{\theta}_n - \theta_0) + \sqrt{n}\hat{\delta}_n\bigr) + o_p(1) \notag \\
= -\sqrt{n}\hat{\delta}_n\int_\R p_0\,d\zeta + o_p(1).
\end{align*}
By applying Lemmas~\ref{lem:linreg-zeta-equicontinuity} and~\ref{lem:linreg-zeta-consistency}, we deduce that
\begin{align}
\label{eq:alpha-asymp-linear}
\sqrt{n}\hat{\delta}_n\int_\R p_0\,d\zeta &= R_n^\zeta\bigl(\sqrt{n}(\hat{\theta}_n - \theta_0), \sqrt{n}\hat{\delta}_n\bigr) + o_p(1) = R_n^\zeta(0,0) + o_p(1) = \frac{1}{\sqrt{n}}\sum_{i=1}^n \zeta(\varepsilon_i) + o_p(1).
\end{align}
Since $\tilde{X_1}$ has mean $\tilde{m}$ and is independent of $\varepsilon_1$, we have 
\[
\Cov\bigl(\zeta(\varepsilon_1),(\tilde{X}_1 - \tilde{m})\psi_0^*(\varepsilon_1)\bigr) = \E(\tilde{X}_1 - \tilde{m})\,\E\bigl(\zeta(\varepsilon_1)\psi_0^*(\varepsilon_1)\bigr) = 0.
\]
Thus, since $\upsilon_{p_0} = (\int_\R \zeta^2\,p_0)/(\int_\R p_0\,d\zeta)^2 \in (0,\infty)$ by~\ref{ass:zeta}, we conclude  from~\eqref{eq:theta-asymp-linear} and~\eqref{eq:alpha-asymp-linear} that
\begin{align*}
\sqrt{n}(\hat{\beta}_n - \beta_0) = 
\sqrt{n}\begin{pmatrix}
\hat{\theta}_n - \theta_0 \\
\hat{\delta}_n - \tilde{m}^\top(\hat{\theta}_n - \theta_0)
\end{pmatrix}
&= \begin{pmatrix}
-\displaystyle\frac{1}{\sqrt{n}}\sum_{i=1}^n \frac{\Sigma^{-1}(\tilde{X}_i - \tilde{m})}{i^*(p_0)}\psi_0^*(\varepsilon_i) \\[8pt]
\displaystyle\frac{1}{\sqrt{n}}\sum_{i=1}^n \biggl(\frac{\zeta(\varepsilon_i)}{\int_\R p_0\,d\zeta} + \frac{\tilde{m}^\top\Sigma^{-1}(\tilde{X}_i - \tilde{m})}{i^*(p_0)}\psi_0^*(\varepsilon_i)\biggr)
\end{pmatrix} + o_p(1) \\[3pt]
&\cvd N_d\bigl(0, (\tilde{I}_{\beta_0}^*)^{-1}\bigr),
\end{align*}
where $(\tilde{I}_{\beta_0}^*)^{-1}$ is the limiting covariance matrix  in~\eqref{eq:antitonic-efficient-information-beta}.
\end{proof}

\subsubsection{Semiparametric calculations for Section~\ref{subsec:linreg-intercept}}
\label{subsec:linreg-semiparametric}

The purpose of this subsection is to formalise the semiparametric model underlying Theorem~\ref{thm:linreg-score-intercept}, and derive the efficient score functions and information matrices that underpin our theory and methodology. Suppose that $X_1 = (\tilde{X}_1,1)$ has a density $q_X$ with respect to some $\sigma$-finite measure $\nu$ on $\R^d$. Then for $\beta_0 = (\theta_0,\mu_0) \in \R^{d-1} \times \R$, the independence of $X_1$ and $\varepsilon_1$ means that the joint density of $(X_1,Y_1)$ in the linear model~\eqref{eq:linear-model-intercept} with respect to $\nu \otimes \mathrm{Leb}$ is
\[
(x,y) \mapsto q_{\beta_0,p_0}(x,y) = q_X(x)\,p_0(y - x^\top\beta_0).
\]
Denote by $\mathcal{Q}$ the set of all such joint densities $q_{\beta_0,p_0}$ under which $\E(X_1 X_1^\top)$ is invertible and $i(p_0) < \infty$. For a fixed decreasing, right-continuous function $\zeta \colon \R \to \R$ such that $\inf_{z \in \R}\zeta(z) < 0 < \sup_{z \in \R}\zeta(z)$, define $\mathcal{Q}_\zeta$ to be the subclass of $\mathcal{Q}$ for which the density $p_0$ satisfies condition~\ref{ass:zeta} with $\varphi = \zeta$; this is the model in Section~\ref{subsec:linreg-intercept}.
% i.e.~$\E\zeta(\varepsilon_1) = 0$ and $\upsilon_{p_0} = (\int_\R \zeta^2\,p_0)/(\int_\R p_0\,d\zeta)^2 \in (0,\infty)$
Both $\mathcal{Q}$ and $\mathcal{Q}_\zeta$ are \textit{separated semiparametric models}~\citep[Section~25.4]{vdV1998asymptotic} in which $p_0$ is regarded as an unknown nuisance parameter. In the former, the intercept~$\mu_0$ is unidentifiable and hence also treated as a nuisance parameter.

\begin{proposition}
\label{prop:linreg-semiparametric}
Let $m := \E(\tilde{X}_1)$ and $\Sigma := \Cov(\tilde{X}_1)$.
\begin{enumerate}[label=(\alph*)]
\item In the model $\mathcal{Q}$, the efficient score function for $\theta_0$ is
\begin{equation}
\label{eq:efficient-score-theta}
(x,y) \mapsto \tilde{\ell}_{\theta_0}(x,y) := -(\tilde{x} - \tilde{m})\,\psi_0(y - x^\top\beta_0)
\end{equation}
and the corresponding efficient information matrix is $\tilde{I}_{\theta_0} = i(p_0)\Sigma$.
\item In the model $\mathcal{Q}_\zeta$, the efficient score function for $\beta_0$ is
\begin{equation}
\label{eq:efficient-score-beta}
\tilde{\ell}_{\beta_0}(x,y) = -\bigl(x - \E(X_1)\bigr)\psi_0(y - x^\top\beta_0) + \frac{\E(X_1)\int_\R p_0\,d\zeta}{\int_\R \zeta^2 p_0}\,\zeta(y - x^\top\beta_0),
\end{equation}
and the corresponding efficient information matrix and its inverse are
\begin{equation}
\label{eq:efficient-information-beta}
\tilde{I}_{\beta_0} =
\begin{pmatrix}
i(p_0)\Sigma + \tilde{m}\tilde{m}^\top/\upsilon_{p_0} & \tilde{m}/\upsilon_{p_0} \\[3pt]
\tilde{m}^\top/\upsilon_{p_0} & 1/\upsilon_{p_0}
\end{pmatrix},
\qquad\quad
\tilde{I}_{\beta_0}^{-1} =
\begin{pmatrix}
\dfrac{\Sigma^{-1}}{i(p_0)} & -\dfrac{\Sigma^{-1}\tilde{m}}{i(p_0)} \\[10pt]
-\dfrac{\tilde{m}^\top\Sigma^{-1}}{i(p_0)} & \upsilon_{p_0} + \dfrac{\tilde{m}^\top\Sigma^{-1}\tilde{m}}{i(p_0)}
\end{pmatrix}.
\end{equation}
\end{enumerate}
\end{proposition}

For completeness, we will prove this result in full. The model~$\mathcal{Q}$ is the focus of \citet[Example~3]{bickel1982adaptive}, while~\citet[Section~4.1]{kosorok2008introduction} considers two special cases of $\mathcal{Q}_\zeta$ where the errors either have mean 0 or median 0, corresponding to the functions~$\zeta \colon \R \to \R$ given by $\zeta(z) = -z$ and $\zeta(z) = -\sgn(z)$ respectively. The calculations for general $\zeta$ are similar.

\begin{proof}
Fix $q_{\beta_0,p_0} \in \mathcal{Q}$ for some $\beta_0,p_0$ as above, and write $Q_0$ for the corresponding distribution on $\R^d \times \R$. To define a path in $\mathcal{Q}$, take $p_1$ to be any other Lebesgue density on $\R$ such that $i(p_1) < \infty$, and let $\gamma := p_1 - p_0$. For $b = (s,a) \in \R^{d-1} \times \R$ and $t \in [0,1]$, the function $p_t := p_0 + t\gamma = (1 - t)p_0 + tp$ is a density, so
\[
(x,y) \mapsto q_{\beta_0 + tb, p_t}(x,y) = q_X(x)\,p_t\bigl(y - x^\top(\beta_0 + tb)\bigr) 
\]
is a joint density that belongs to $\mathcal{Q}$. Then
for $(x,y) \in \R^d \times \R$, we have
\begin{align}
\frac{\partial}{\partial t}\log q_{\beta_0 + tb, p_t}(x,y)\Big|_{t=0} &= -b^\top x\psi_0(y - x^\top\beta_0) + \frac{\gamma}{p_0}(y - x^\top\beta_0) \notag \\
\label{eq:score-path}
&= -s^\top\tilde{x} \psi_0(y - x^\top\beta_0) - a\psi_0(y - x^\top\beta_0) + \frac{\gamma}{p_0}(y - x^\top\beta_0),
\end{align}
where here and below, $\tilde{x} \in \R^{d-1}$ denotes the subvector comprising the first $d - 1$ components of $x$. The first two terms on the right-hand side of~\eqref{eq:score-path} indicate that in a parametric submodel where $p_0$ is known, the score function for $\beta_0$ is
\[
(x,y) \mapsto -x\psi_0(y - x^\top\beta_0) =: \dot{\ell}_{\beta_0}(x,y) =
\begin{pmatrix}
\dot{\ell}_{\theta_0}(x,y) \\
\dot{\ell}_{\mu_0}(x,y)
\end{pmatrix},
\]
which decomposes into the parametric scores for $\theta_0$ and $\mu_0$ given by
\[
\dot{\ell}_{\theta_0}(x,y) := -\tilde{x}\psi_0(y - x^\top\beta_0) \quad\text{and}\quad \dot{\ell}_{\mu_0}(x,y) := -\psi_0(y - x^\top\beta_0)
\]
respectively. If $(X_1,Y_1) \sim Q_0$, then $\E\bigl(\norm{\dot{\ell}_{\beta_0}(X_1,Y_1)}^2\bigr) = \E(\norm{X_1}^2)\,i(p_0) < \infty$, so $\dot{\ell}_{\beta_0} \in L^2(Q_0)^d$. The third term in~\eqref{eq:score-path} yields an element $(x,y) \mapsto \Gamma(y - x^\top\beta_0)$ of the \textit{nuisance tangent set} $\mathcal{N}$ for $p_0$~\citep[Section~25.4]{vdV1998asymptotic},
% pages 362 and 369
where $\Gamma := \gamma/p_0$. 

\medskip
\noindent \textit{(a)} In the model $\mathcal{Q}$, the parameter of interest is $\theta_0$ while both $\mu_0$ and $p_0$ are nuisance parameters. We have $\int_\R \Gamma(z)\,p_0(z)\,dz = \int_\R \gamma = 0$ above, so considering the second and third terms in~\eqref{eq:score-path} and arguing as in~\citet[Examples~25.16,~25.24 and~25.28]{vdV1998asymptotic}, we deduce that the full nuisance tangent set is
\[
\mathcal{N} = \bigl\{(x,y) \mapsto a\psi_0(y - x^\top\beta_0) + \Gamma(y - x^\top\beta_0) : a \in \R,\,\Gamma \in L^2(P_0)\bigr\}.
\]
By definition, the efficient score function for $\theta_0$ is the $L^2(Q_0)$ orthogonal projection of $\dot{\ell}_{\theta_0}$ onto the orthogonal complement of $\mathcal{N}$,
% which is again a closed subspace of $L^2(Q_0)$ because the linear span of $(x,y) \mapsto \dot{\ell}_{\mu_0}(x,y) = \psi_0(y - x^\top\beta_0)$ is finite-dimensional
which comprises functions of $y - x^\top\beta_0$ only. By the independence of $X_1$ and $\varepsilon_1$, we have
\begin{align}
\label{eq:score-indep-orthogonal}
\E\bigl\{(\tilde{X}_1 - \tilde{m})\psi_0(\varepsilon_1) \cdot g(\varepsilon_1)\bigr\} = 0
\end{align}
for all $g \in L^2(P_0)$. Moreover, every component of $(x,y) \mapsto \tilde{m}\psi_0(y - x^\top\beta_0) = -\tilde{m}\dot{\ell}_{\mu_0}(x,y)$ belongs to $\mathcal{N}$, so
\[
(x,y) \mapsto \tilde{\ell}_{\theta_0}(x,y) := -(\tilde{x} - \tilde{m})\psi_0(y - x^\top\beta_0)
\]
is indeed the efficient score function for $\theta_0$ in the model $\mathcal{Q}$. Consequently, the efficient information matrix for $\theta_0$ is
\[
\tilde{I}_{\theta_0} := \E\bigl(\tilde{\ell}_{\theta_0}(X_1,Y_1)\,\tilde{\ell}_{\theta_0}(X_1,Y_1)^\top\bigr) = \E\bigl(\psi_0(\varepsilon_1)^2\bigr)\Cov(\tilde{X}_1) = i(p_0)\Sigma.
\]
\textit{(b)} In the model $\mathcal{Q}_\zeta$, the parameter of interest is $\beta_0$ and the nuisance parameter is $p_0$. Returning to~\eqref{eq:score-path}, we begin with $q_{\beta_0,p_0} \in \mathcal{Q}_\zeta$ and must ensure that $q_{\beta_0 + tb,p_t} \in \mathcal{Q}_\zeta$ for all $t \in [0,1]$. This restricts us to consider Lebesgue densities $p_1$ with $i(p_1) < \infty$ that also satisfy $\int_\R \zeta(z)\,p_1(z)\,dz = 0$ and $\upsilon_{p_1} \in (0,\infty)$. In this case, $\gamma = p_1 - p_0$ satisfies $\int_\R \gamma(z)\,dz = \int_\R \zeta(z)\gamma(z)\,dz = 0$, and arguments similar to those for \textit{(a)} show that the full nuisance tangent set is 
\[
\mathcal{N}_\zeta = \bigl\{(x,y) \mapsto \Gamma(y - x^\top\beta_0) : \Gamma \in L^2(P_0),\,\E\bigl(\Gamma(\varepsilon_1)\bigr) = \E\bigl(\zeta(\varepsilon_1)\Gamma(\varepsilon_1)\bigr) = 0\bigr\},
\]
which is a closed subspace of $L^2(Q_0)$. Indeed, $\mathcal{U} := \{c + d\zeta : c,d \in \R\}$ is a finite-dimensional (and hence closed) subspace of $L^2(P_0)$ whose orthogonal complement $\mathcal{U}^\perp$ is also closed, and we can write $\mathcal{N}_\zeta = \bigl\{(x,y) \mapsto \Gamma(y - x^\top\beta_0) : \Gamma \in \mathcal{U}^\perp\}$.

Consider a square-integrable random variable $\phi(X_1,\varepsilon_1)$ such that $\E\bigl(\phi(X_1,\varepsilon_1)\,\Gamma(\varepsilon_1)\bigr) = 0$ for all $\Gamma \in L^2(P_0)$ satisfying $\E\bigl(\Gamma(\varepsilon_1)\bigr) = \E\bigl(\zeta(\varepsilon_1)\Gamma(\varepsilon_1)\bigr) = 0$. Then by the tower property of expectation, $\E\bigl(\E\{\phi(X_1,\varepsilon_1)\,|\,\varepsilon_1\} \cdot \Gamma(\varepsilon_1)\bigr) = 0$ for all $\Gamma \in \mathcal{U}^\perp$, so because $(\mathcal{U}^\perp)^\perp = \mathcal{U}$, there exist $c,d \in \R$ such that
\[
\E\{\phi(X_1,\varepsilon_1)\,|\,\varepsilon_1\} = c + d\zeta(\varepsilon_1).
\]
Moreover,
\[
\E\bigl(\psi_0(\varepsilon_1)\bigr) = \int_\R p_0'(z)\,dz = 0 \quad\text{and}\quad \E\bigl(\zeta(\varepsilon_1)\psi_0(\varepsilon_1)\bigr) = \int_\R \zeta(z)p_0'(z)\,dz = -\int_\R p_0\,d\zeta,
\]
where the final equality follows from Fubini's theorem, similarly to~\eqref{eq:fubini-parts}. Write $(x,y) \mapsto \Gamma^*(y - x^\top\beta_0)$ for the componentwise $L^2(Q_0)$ orthogonal projection of $\dot{\ell}_{\beta_0}$ onto $\mathcal{N}_\zeta$. Then each of the $d$ components of $\dot{\ell}_{\beta_0}(X_1,Y_1) - \Gamma^*(\varepsilon_1)$ is a function of the form $\phi(X_1,\varepsilon_1)$ with the above properties, so
\[
\Gamma^*(\varepsilon_1) = \E\bigl(\dot{\ell}_{\beta_0}(X_1,Y_1)\,|\,\varepsilon_1\bigr)- c^* - d^*\zeta(\varepsilon_1) = -\E(X_1)\psi_0(\varepsilon_1) - c^* - d^*\zeta(\varepsilon_1)
\]
for $c^*,d^* \in \R^d$ satisfying
\begin{align*}
0 = \E\bigl(\Gamma^*(\varepsilon_1)\bigr) = -c^*, \qquad 
0 = \E\bigl(\zeta(\varepsilon_1)\Gamma^*(\varepsilon_1)\bigr) = \E(X_1)\int_\R p_0\,d\zeta - d^*\int_\R \zeta^2 p_0.
\end{align*}
By definition, the efficient score function for $\beta_0$ in $\mathcal{Q}_\zeta$ is the componentwise $L^2(Q_0)$ orthogonal projection $\tilde{\ell}_{\beta_0}$ of $\dot{\ell}_{\beta_0}$ onto the orthogonal complement of $\mathcal{N}_\zeta$, so $\tilde{\ell}_{\beta_0}(x,y) = \dot{\ell}_{\beta_0}(x,y) - \Gamma^*(y - x^\top\beta_0)$ is indeed given by~\eqref{eq:efficient-score-beta}. The efficient information matrix for $\beta_0$ is
\begin{align*}
\tilde{I}_{\beta_0} := \E\bigl(\tilde{\ell}_{\beta_0}(X_1,Y_1)\,\tilde{\ell}_{\beta_0}(X_1,Y_1)^\top\bigr) &= \E\bigl\{\bigl(X_1 - \E(X_1)\bigr)\bigl(X_1 - \E(X_1)\bigr)^\top\bigr\}\,\E\bigl(\psi_0(\varepsilon_1)^2\bigr) + \frac{\E(X_1)\E(X_1)^\top}{\upsilon_{p_0}},
\end{align*}
% actually everything except the final expression holds for a general linear model (not necessarily with an intercept)
which simplifies to the expression in~\eqref{eq:efficient-information-beta}. Therefore, the block matrix inversion formula~\citep[e.g.][Proposition~10.10.2]{samworth24modern} yields the expression for $\tilde{I}_{\beta_0}^{-1}$.
\end{proof}

By Proposition~\ref{prop:linreg-semiparametric}\textit{(b)}, the efficient score equations for estimating $\beta_0$ in the model $\mathcal{Q}_\zeta$ based on $(X_1,Y_1),\dotsc,(X_n,Y_n)$ and a plug-in estimate $\tilde{\psi}_n$ of $\psi_0$ are given by
\begin{align*}
\frac{1}{n}\sum_{i=1}^n (X_i - \bar{X}_n)\,\tilde{\psi}_n(Y_i - X_i^\top\beta) - \frac{\bar{X}_n\int_\R p_0\,d\zeta}{\int_\R \zeta^2 p_0}\sum_{i=1}^n \zeta(Y_i - X_i^\top\beta) = 0
\end{align*}
for $\beta \in \R^d$, where $\bar{X}_n := n^{-1}\sum_{i=1}^n X_i$. Since $X_{id} = 1$ for all $i \in [n]$, this system is equivalent to 
\begin{equation}
\label{eq:efficient-score-eqn-beta}
\frac{1}{n}\sum_{i=1}^n (\tilde{X}_i - \bar{\tilde{X}}_n)\,\tilde{\psi}_n(Y_i - \mu - \tilde{X}_i^\top\theta) = 0 \quad\text{and}\quad \frac{1}{n}\sum_{i=1}^n \zeta(Y_i - \mu - \tilde{X}_i^\top\theta) = 0,
\end{equation}
where $\bar{\tilde{X}}_n := n^{-1}\sum_{i=1}^n \tilde{X}_i$. Our estimator $\hat{\beta}_n = (\hat{\theta}_n,\hat{\mu}_n^\zeta)$ of $\beta = (\theta_0,\mu_0)$ in Theorem~\ref{thm:linreg-score-intercept} solves a variant of~\eqref{eq:efficient-score-eqn-beta} where $\tilde{\psi}_n$ is a kernel-based estimate of the antitonic projected score $\psi_0^*$ instead of $\psi_0$.

\subsubsection{Comparison with composite quantile regression}
\label{subsec:CQR}

\citet{zou2008composite} introduced a robust alternative to least squares called the \textit{composite quantile regression estimator}, which remains $\sqrt{n}$-consistent when the error variance is infinite, but also has asymptotic efficiency at least 70\% that of OLS under suitable conditions on the error density $p_0$. This is achieved by borrowing strength across several quantiles of the conditional distribution of $Y_1$ given $X_1$, rather than targeting just a single quantile (e.g.~the conditional median) using the \textit{quantile loss} $\ell_\tau \colon \R \to \R$ given by $\ell_\tau(z) := (\tau - \Ind_{\{z < 0\}})z$ for $\tau \in (0,1)$. More precisely, for $K \in (0,\infty)$, let $\tau_k := k/(K + 1)$ for $k \in [K]$ and define
\begin{equation}
\label{eq:CQR}
(\hat{\theta}_n^{\mathrm{CQ},K},\hat{\mu}_{n,1},\dotsc,\hat{\mu}_{n,K}) \in \argmin_{(\theta,\mu_1,\dotsc,\mu_K)}\,\sum_{k=1}^K \sum_{i=1}^n \ell_{\tau_k}(Y_i - \mu_k - \tilde{X}_i^\top\theta),
\end{equation}
where the $\argmin$ is taken over all $\theta \in \R^{d-1}$ and $\mu_1,\dotsc,\mu_K \in \R$. Under assumption~(2) of~\citet{zou2008composite} in our random design setting, it follows from their Theorems~2.1 and~3.1 that
\[
\sqrt{n}(\hat{\theta}_n^{\mathrm{CQ},K} - \theta_0) \cvd N_{d-1}\bigl(0,\,V_{p_0,\mathrm{CQ},K} \cdot \Cov(\tilde{X}_1)^{-1}\bigr),
\]
where writing $J_0 = p_0 \circ F_0^{-1}$ for the density quantile function of the errors, we have
\[
V_{p_0,\mathrm{CQ},K} := \frac{\sum_{k,k'=1}^K \tau_{k \wedge k'}(1 - \tau_{k \vee k'})}{\bigl(\sum_{k=1}^K J_0(\tau_k)\bigr)^2} \to \frac{1}{12\,\bigl(\int_0^1 J_0\bigr)^2} =: V_{p_0,\mathrm{CQ}}
\]
as $K \to \infty$. For every $p_0$ satisfying their assumptions,~\citet[Theorem~3.1]{zou2008composite} established that the CQR estimator (in the notional limit $K \to \infty$) has asymptotic relative efficiency
\[
\frac{V_{p_0}(\psi_{\mathrm{OLS}})}{V_{p_0,\mathrm{CQ}}} = 12\biggl(\int_0^1 J_0\biggr)^2 \int_\R z^2\,p_0(z)\,dz > \frac{6}{e\pi} \approx 0.703
\]
relative to OLS, where $\psi_{\mathrm{OLS}}(z) := -z$ for $z \in \R$. Nevertheless, Theorem~\ref{thm:linreg-score-intercept} and the following result imply that the asymptotic covariance of the `limiting' CQR estimator is always at least that of the semiparametric convex $M$-estimator $\hat{\theta}_n$ in our framework. Moreover, the former estimator can have arbitrarily low efficiency relative to the latter, even when $p_0$ is log-concave.

\begin{lemma}
\label{lem:CQR-suboptimal}
For every uniformly continuous density $p_0$, we have $V_{p_0,\mathrm{CQ}} \geq 1/i^*(p_0)$, with equality if and only if either $i^*(p_0) = \infty$ or $p_0$ is a logistic density of the form
\[
p_0(z) = \frac{\lambda e^{-\lambda(z - \mu)}}{(1 + e^{-\lambda(z - \mu)})^2}
\]
for $z \in \R$, where $\mu \in \R$ and $\lambda > 0$. 
% See~\citet[Example~3]{zou2008composite}
Moreover,
\[
\inf_{p_0 \in \mathcal{P}_{\mathrm{LC}} : i(p_0) < \infty}\,\frac{1}{i^*(p_0) \, V_{p_0,\mathrm{CQ}}} = 0.
\]
\end{lemma}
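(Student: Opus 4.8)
The plan is to first establish the inequality $V_{p_0,\mathrm{CQ}} \geq 1/i^*(p_0)$ via the variational characterisation of $i^*(p_0)$, then identify the equality case, and finally construct a sequence of log-concave densities witnessing the infimum $0$. For the inequality, recall that $V_{p_0,\mathrm{CQ}} = 1/\bigl(12(\int_0^1 J_0)^2\bigr)$. By Theorem~\ref{thm:antitonic-score-proj}, $1/i^*(p_0) = V_{p_0}(\psi_0^*) = \min_{\psi \in \Psi_\downarrow(p_0)} V_{p_0}(\psi)$, where $V_{p_0}(\psi) = \int_\R \psi^2\,dP_0 / \bigl(\int_{\mathcal{S}_0} p_0\,d\psi\bigr)^2$. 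I would exhibit a specific feasible $\psi$ in $\Psi_\downarrow(p_0)$ whose ratio equals $V_{p_0,\mathrm{CQ}}$, or show directly that every $\psi$ has $V_{p_0}(\psi) \le V_{p_0,\mathrm{CQ}}$ is false. The natural candidate is $\psi(z) := 1/2 - F_0(z)$, which is decreasing and right-continuous. For this choice, $\int_{\mathcal{S}_0} p_0\,d\psi = -\int_{\mathcal{S}_0} p_0^2 = -\int_0^1 J_0$ (substituting $u = F_0(z)$, so $p_0\,dz = du$ and $p_0 = J_0(u)$), while $\int_\R \psi^2\,dP_0 = \int_0^1 (1/2 - u)^2\,du = 1/12$. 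Hence $V_{p_0}(\psi) = (1/12)/(\int_0^1 J_0)^2 = V_{p_0,\mathrm{CQ}}$, and the minimality of $1/i^*(p_0)$ over $\Psi_\downarrow(p_0)$ gives $1/i^*(p_0) \le V_{p_0,\mathrm{CQ}}$ immediately (with the convention $0/0 = 0$ handling degenerate cases, and the case $i^*(p_0) = \infty$ being vacuous since then $1/i^*(p_0) = 0$).

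For the equality case, Theorem~\ref{thm:antitonic-score-proj}\emph{(c)} says $V_{p_0}(\psi) = 1/i^*(p_0)$ holds if and only if $\psi = \lambda\psi_0^*$ for some $\lambda > 0$ (when $i^*(p_0) < \infty$). So equality in $V_{p_0,\mathrm{CQ}} = 1/i^*(p_0)$ forces $\psi_0^*(z) = \lambda(1/2 - F_0(z))$ for some $\lambda > 0$, i.e.\ the projected score is an affine function of $F_0$. Translating through Lemma~\ref{lem:psi0-star}, this means $\hat{J}_0^{(\mathrm{R})}(u) = \lambda(1/2 - u)$, so $\hat{J}_0(u) = \lambda(u/2 - u^2/2) = (\lambda/2)u(1-u)$; since $p_0^* = e^{-\ell_0^*}$ is the log-concave Fisher projection and here $\psi_0^* = (\log p_0^*)'$ a.e., one checks that the density with $J_0^*(u) = (\lambda/2)u(1-u)$ is precisely a logistic density. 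Working backwards: $\psi_0^* = \lambda(1/2 - F_0^*)$ means $(\log p_0^*)' = \lambda/2 - \lambda F_0^*$, and differentiating, $p_0^{*\prime}/p_0^* $ having this form is the defining ODE of the logistic family $p_0^*(z) = \lambda e^{-\lambda(z-\mu)}/(1+e^{-\lambda(z-\mu)})^2$. For the original $p_0$ to satisfy equality we moreover need $p_0 = p_0^*$ on the relevant set (since $\psi_0^*$ is the antitonic projection and equality in the relevant inequalities forces $\hat J_0 = J_0$, hence $p_0$ itself log-concave and equal to its projection); so $p_0$ must itself be logistic. Conversely, a direct computation for the logistic density confirms $\psi_0 = \psi_0^*$ (logistic is log-concave) and $V_{p_0}(\psi_0) = 1/i(p_0) = 1/i^*(p_0) = V_{p_0,\mathrm{CQ}}$.

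For the final claim that $\inf_{p_0 \in \mathcal{P}_{\mathrm{LC}}, i(p_0)<\infty} 1/(i^*(p_0) V_{p_0,\mathrm{CQ}}) = 0$, note that for log-concave $p_0$ we have $i^*(p_0) = i(p_0)$, so the ratio is $V_{p_0,\mathrm{CQ}}/V_{p_0}(\psi_0) = 12(\int_0^1 J_0)^2 \cdot i(p_0)$; equivalently $1/(i(p_0) V_{p_0,\mathrm{CQ}})$. By affine invariance (Remark~\ref{rem:affine-equivariance}) this quantity is scale-free, so it suffices to produce a sequence of log-concave densities for which $12(\int_0^1 J_0)^2 \, i(p_0) \to 0$. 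The natural choice is a family interpolating towards a density with $\int_0^1 J_0 \to 0$ while $i(p_0)$ stays bounded, or with $i(p_0) \to 0$ faster than $(\int_0^1 J_0)^{-2}$ grows. Concretely I would take $p_\gamma(z) \propto e^{-|z|^\gamma}$ for $\gamma \downarrow 1$ (or a slight variant, such as a density that is flat-topped on a long interval with light exponential tails): as the density spreads out, $\|p_\gamma\|_\infty \to 0$ and $i(p_\gamma) \to 0$, while $\int_0^1 J_\gamma = \int p_\gamma^2 \to 0$ as well, and one verifies the product goes to $0$ — the key being that $i(p_\gamma) = \int (p_\gamma')^2/p_\gamma$ can be made to vanish while the normalisation keeps $\int p_\gamma^2$ from vanishing too slowly.

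\textbf{Main obstacle.} The first inequality is the easy part given the variational characterisation. The genuinely delicate step is the equality characterisation: one must argue that equality in $V_{p_0,\mathrm{CQ}} = 1/i^*(p_0)$ forces not merely $\psi_0^* \propto 1/2 - F_0^*$ but that $p_0$ \emph{itself} is logistic (not just its projection) — this requires tracking when the antitonic projection acts as the identity and carefully solving the resulting ODE/functional equation, and separately handling the boundary case $i^*(p_0) = \infty$. The second potential obstacle is exhibiting a clean log-concave sequence for the infimum-zero claim: one needs an explicit family where both $i(p_0)$ and $\int_0^1 J_0$ can be controlled simultaneously, and verifying $i(p_0) \cdot (\int_0^1 J_0)^2 \to 0$ rather than merely each factor separately; a uniform density on $[0,a]$ with $a \to \infty$ is not log-concave at the boundary in the required sense, so a smoothed version (e.g.\ convolution with a narrow Gaussian, which preserves log-concavity) is the cleanest route and the computation there is routine but needs care.
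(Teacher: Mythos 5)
Your argument for the inequality is correct and takes a genuinely different route from the paper's. You exhibit $\psi := \tfrac{1}{2} - F_0 \in \Psi_\downarrow(p_0)$, compute $\int_\R\psi^2\,dP_0 = \tfrac{1}{12}$ and $\int_{\mathcal{S}_0}p_0\,d\psi = -\int_0^1 J_0$ via the quantile substitution, so $V_{p_0}(\psi) = V_{p_0,\mathrm{CQ}}$, and then apply Theorem~\ref{thm:antitonic-score-proj}\emph{(c)}. The paper instead applies Cauchy--Schwarz directly to $\hat{J}_0^{(\mathrm{R})}$, via $\bigl(\int_0^1 (\tfrac12 - u)\,\hat{J}_0^{(\mathrm{R})}(u)\,du\bigr)^2 \leq \tfrac{1}{12}\int_0^1 \bigl(\hat{J}_0^{(\mathrm{R})}\bigr)^2$, preceded by the step $(\int_0^1 J_0)^2 \leq (\int_0^1 \hat{J}_0)^2$. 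Your route is cleaner but, as you note, buys a little less for the equality case.

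You correctly derive that equality forces $\psi_0^* = \lambda^{-1}(\tfrac12 - F_0)$ for some $\lambda > 0$, hence $\hat{J}_0^{(\mathrm{R})}(u) = \lambda^{-1}(\tfrac12 - u)$ and $\hat{J}_0(u) = u(1-u)/(2\lambda)$. But the step where you assert $J_0 = \hat{J}_0$ is a genuine gap that you flag but do not close. The missing argument: $\hat{J}_0$ is now a strictly concave parabola, so if $J_0(v) < \hat{J}_0(v)$ at any $v \in (0,1)$, Lemma~\ref{lem:lcm-affine} would force $\hat{J}_0$ to be affine near $v$, a contradiction; hence $J_0 = \hat{J}_0$, and the last assertion of Lemma~\ref{lem:density-quantile-reverse} then pins down $p_0$ as a translate of the logistic density. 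In the paper this step comes for free, because their chain of inequalities opens with $(\int_0^1 J_0)^2 \leq (\int_0^1\hat{J}_0)^2$, which with $J_0 \leq \hat{J}_0$ pointwise forces $J_0 = \hat{J}_0$ immediately upon equality.

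Your treatment of the final infimum claim is wrong and would fail if pursued. First, an arithmetic error: $1/(i^*(p_0)V_{p_0,\mathrm{CQ}}) = 12(\int_0^1 J_0)^2/i^*(p_0)$, a \emph{quotient}, not the \emph{product} $12(\int_0^1 J_0)^2\,i(p_0)$ you write. That product is not even affine-invariant (it scales as $a^4$ under $p_0 \mapsto ap_0(a\cdot)$), so your appeal to Remark~\ref{rem:affine-equivariance} fails for the quantity as written; the quotient, by contrast, is scale-free. The correct target therefore requires $i(p_0) \to \infty$ with $\int_\R p_0^2$ controlled, the opposite of the $i(p_0) \to 0$ you aim for. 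Second, even setting aside the sign, your primary suggestion $p_\gamma \propto e^{-|z|^\gamma}$ with $\gamma \downarrow 1$ does not work: $p_\gamma$ converges to the Laplace density, for which $i(p_\gamma) \to 1$ and $\int p_\gamma^2 \to 1/4$, so $12(\int_0^1 J_\gamma)^2/i(p_\gamma) \to 3/4$, not $0$. Sending $\gamma \to \infty$ instead would work ($i(p_\gamma) \sim \gamma$ while $\int p_\gamma^2 \to 1/2$), which is essentially a smoothed uniform and matches your fallback idea — the paper itself uses a piecewise log-affine density with a flat top and exponential tail rate $1/\epsilon \to \infty$. But you must fix the target expression first; as it stands, the reasoning in this part is headed in the wrong direction.
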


\begin{proof}
By~\citet[Corollary~24.2.1]{rockafellar97convex} and~\eqref{eq:J01}, we have $\hat{J}_0(v) = \int_0^v \hat{J}_0^{(\mathrm{R})}$ for all $v \in [0,1]$, so applying Fubini's theorem and the Cauchy--Schwarz inequality yields
\begin{align*}
\biggl(\int_0^1 J_0\biggr)^2 \leq \biggl(\int_0^1 \hat{J}_0\biggr)^2 &= \biggl(\int_0^1 \int_0^1 \hat{J}_0^{(\mathrm{R})}(u)\,\Ind_{\{u \leq v\}}\,du\,dv\biggr)^2 = \biggl(\int_0^1 (1 - u)\,\hat{J}_0^{(\mathrm{R})}(u)\,du\biggr)^2 \\
&= \biggl(\int_0^1 \Bigl(\frac{1}{2} - u\Bigr)\,\hat{J}_0^{(\mathrm{R})}(u)\,du\biggr)^2 \\
&\leq \biggl(\int_0^1 \Bigl(\frac{1}{2} - u\Bigr)^2\,du\biggr)\int_0^1 \bigl(\hat{J}_0^{(\mathrm{R})}\bigr)^2 = \frac{1}{12}\int_0^1 \bigl(\hat{J}_0^{(\mathrm{R})}\bigr)^2,
\end{align*}
where the equality in the second line holds because $\int_0^1 \hat{J}_0^{(\mathrm{R})} = \hat{J}_0(1) = 0$. Thus, by Remark~\ref{rem:fisher-J},
\[
i^*(p_0) = \int_0^1 \bigl(\hat{J}_0^{(\mathrm{R})}\bigr)^2 \geq 12\,\biggl(\int_0^1 J_0\biggr)^2 = \frac{1}{V_{p_0,\mathrm{CQ}}},
\]
and equality holds if and only if $J_0 = \hat{J}_0$ and there exists $\lambda > 0$ such that $\hat{J}_0^{(\mathrm{R})}(u) = \lambda(1 - 2u)$ for $u \in [0,1]$, i.e.~$
J_0(u) = \lambda u(1 - u)$ for all such $u$. Now by direct calculation, the logistic density $q_\lambda \colon \R \to \R$ given by
\[
q_\lambda(z) := \frac{\lambda e^{-\lambda z}}{(1 + e^{-\lambda z})^2}
% \quad\text{and}\quad F_\lambda(z) := \frac{1}{1 + e^{-\lambda z}}
\]
has corresponding quantile function $J_0$. Since $J_0 > 0$ on $(0,1)$, it follows from the last assertion of Lemma~\ref{lem:density-quantile-reverse} that $p_0(\cdot) = q_\lambda(\cdot - \mu)$ for some $\mu \in \R$, as claimed.

Finally, given $\epsilon \in (0,1/2]$, define the log-concave density $p_0 \colon \R \to \R$ by
\[
p_0(z) := \exp\biggl(\frac{(1 - 2\epsilon - 2|z|) \wedge 0}{2\epsilon}\biggr).
\]
Then the corresponding density quantile function $J_0 = p_0 \circ F_0^{-1}$ satisfies
\[
J_0(u) =
\begin{cases}
\min(u/\epsilon, 1) \;&\text{for }u \in [0,1/2] \\
J_0(1 - u) \;&\text{for }u \in [1/2,1],
\end{cases}
\]
so $i(p_0) = i^*(p_0) = \int_0^1 \bigl(\hat{J}_0^{(\mathrm{R})}\bigr)^2 = 2/\epsilon < \infty$ and hence
\[
\frac{1}{i^*(p_0)\,V_{p_0,\mathrm{CQ}}} = \frac{12\,\bigl(\int_0^1 J_0\bigr)^2}{\int_0^1 \bigl(\hat{J}_0^{(\mathrm{R})}\bigr)^2} = 6\epsilon(1 - \epsilon)^2.
\]
By taking $\epsilon$ to be arbitrarily small, we obtain the final assertion of the lemma.
\end{proof}

\subsubsection{Proofs for Section~\ref{subsec:linreg-inference}}

\begin{proof}[Proof of Lemma~\ref{lem:observed-information}]

First consider the setting of Theorem~\ref{thm:linreg-score-sym} and let $j = 1$. Defining the residuals $\check{\varepsilon}_i := \varepsilon_i + X_i^\top b_n$ for $i \in I_3$, we have that $(\check{\varepsilon}_i)_{i \in I_3}$ are conditionally independent given $\mathcal{D}' \equiv \mathcal{D}_n' = \mathcal{D}_1 \cup \mathcal{D}_2 \cup (X_i)_{i \in I_3}$, where $b_n := \beta_0 - \bar{\beta}_n^{(1)} = O_p(n^{-1/2})$ by assumption. Thus, by~\ref{ass:covariates} and the Cauchy--Schwarz inequality, $\max_{i \in I_3}|X_i^\top b_n|\,\alpha_n/\gamma_n = o_p(1)$. Let $\hat{\psi}_n \in \{\hat{\psi}_{n,1},\hat{\psi}_{n,1}^{\mathrm{anti}}\}$. Then 
\begin{align}
\biggl|\biggl(\frac{1}{n}\sum_{i \in I_3} \hat{\psi}_n(\check{\varepsilon}_i)^2\biggr)^{1/2} &- \biggl(\frac{1}{n}\sum_{i \in I_3} \psi_0^*(\varepsilon_i)^2\biggr)^{1/2}\biggr| \leq \biggl(\frac{1}{n}\sum_{i \in I_3} \bigl(\hat{\psi}_n(\check{\varepsilon}_i) - \psi_0^*(\varepsilon_i)\bigr)^2\biggr)^{1/2} \notag \\
\label{eq:observed-information-0}
&\leq \biggl(\frac{1}{n}\sum_{i \in I_3} \bigl(\hat{\psi}_n(\check{\varepsilon}_i) - \hat{\psi}_n(\varepsilon_i)\bigr)^2\biggr)^{1/2} + \biggl(\frac{1}{n}\sum_{i \in I_3} \bigl(\hat{\psi}_n(\varepsilon_i) - \psi_0^*(\varepsilon_i)\bigr)^2\biggr)^{1/2}.
\end{align}
Similarly to~\eqref{eq:cov-Rn-beta}, it follows from~\eqref{eq:proj-score-consistency-1} in Lemma~\ref{lem:proj-score-consistency} together with Corollary~\ref{cor:sym-score-consistency} that
\begin{align}
\label{eq:observed-information-1}
\E\Bigl(\frac{1}{n}\sum_{i \in I_3} \bigl(\hat{\psi}_n(\check{\varepsilon}_i) - \hat{\psi}_n(\varepsilon_i)\bigr)^2 \Bigm| \mathcal{D}'\Bigr) \leq \max_{i \in I_3}\int_\R \bigl(\hat{\psi}_n(z + X_i^\top b_n) - \hat{\psi}_n(z)\bigr)^2\,p_0(z)\,dz \cvp 0
\end{align}
as $n \to \infty$. Moreover, by~\eqref{eq:proj-score-consistency},
\begin{align}
\label{eq:observed-information-2}
\E\Bigl(\frac{1}{n}\sum_{i \in I_3} \bigl(\hat{\psi}_n(\varepsilon_i) - \psi_0^*(\varepsilon_i)\bigr)^2 \Bigm| \mathcal{D}'\Bigr) \leq \int_\R (\hat{\psi}_n - \psi_0^*)^2\,p_0 \cvp 0,
\end{align}
so together with Lemma~\ref{lem:cond-cvg} and the weak law of large numbers, it follows from~\eqref{eq:observed-information-0}--\eqref{eq:observed-information-2} that
\begin{equation}
\label{eq:observed-information-3}
\frac{1}{n}\sum_{i \in I_3} \hat{\psi}_n(\check{\varepsilon}_i)^2 = \frac{1}{n}\sum_{i \in I_3} \psi_0^*(\varepsilon_i)^2 + o_p(1) = \frac{i^*(p_0)}{3} + o_p(1).
\end{equation}
Arguing similarly for $j \in \{2,3\}$, we conclude that $\hat{\imath}_n \cvp i^*(p_0)$ under the hypotheses of Theorem~\ref{thm:linreg-score-sym}.

The proof is similar in the setting of Theorem~\ref{thm:linreg-score-intercept}, except that when $j = 1$ we instead take $\hat{\psi}_n(\cdot) = \hat{\psi}_{n,1}(\cdot + \mu_0)$ and use residuals $\check{\varepsilon}_i := Y_i - \tilde{X}_i^\top\bar{\theta}_n^{(1)} = \varepsilon_i + \mu_0 + X_i^\top b_n$ that are conditionally independent given $\mathcal{D}'$ for $i \in [n]$, where $b_n := (\theta_0 - \bar{\theta}_n^{(1)}, 0) = O_p(n^{-1/2})$ by assumption. As explained at the start of Section~\ref{subsec:linreg-intercept-proofs}, it follows from Lemma~\ref{lem:proj-score-consistency} that~\eqref{eq:observed-information-1},~\eqref{eq:observed-information-2} and hence~\eqref{eq:observed-information-3} still hold for $n^{-1}\sum_{i \in I_3} \hat{\psi}_n(\check{\varepsilon}_i)^2 = n^{-1}\sum_{i \in I_3} \hat{\psi}_{n,1}(Y_i - \tilde{X}_i^\top\bar{\theta}_n^{(1)})$. By analogous reasoning for $j \in \{2,3\}$, we obtain
\[
\hat{\imath}_n = \frac{1}{n}\sum_{j=1}^3 \sum_{i \in I_{j+2}} \hat{\psi}_{n,j}(Y_i - \tilde{X}_i^\top\bar{\theta}_n^{(j)}) = \frac{1}{n}\sum_{i=1}^n \psi_0^*(\varepsilon_i)^2 + o_p(1) \cvp i^*(p_0). \qedhere
\]
\end{proof}

\begin{proof}[Proof of Lemma~\ref{lem:upsilon-estimate}]
First consider $j = 1$. As in the proof of Lemma~\ref{lem:observed-information}, the residuals $\check{\varepsilon}_i = Y_i - X_i^\top\bar{\beta}_n^{(1)} = \varepsilon_i + X_i^\top b_n$ are conditionally independent given $\mathcal{D}'$, where $b_n := \beta_0 - \bar{\beta}_n^{(1)} = O_p(n^{-1/2})$ by assumption and hence $\max_{i \in [n]} |X_i^\top b_n| = o_p(1)$ by~\eqref{eq:expected-max}. We have
\[
\biggl|\biggl(\frac{1}{n}\sum_{i \in I_3} \zeta(\check{\varepsilon}_i)^2\biggr)^{1/2} - \biggl(\frac{1}{n}\sum_{i \in I_3} \zeta(\varepsilon_i)^2\biggr)^{1/2}\biggr| \leq \biggl(\frac{1}{n}\sum_{i \in I_3} \bigl(\zeta(\check{\varepsilon}_i) - \zeta(\varepsilon_i)\bigr)^2\biggr)^{1/2},
\]
and $\zeta$ satisfies~\ref{ass:zeta} under the hypotheses of Theorem~\ref{thm:linreg-score-intercept}, so similarly to~\eqref{eq:observed-information-1}, it follows from~\eqref{eq:psi-shift-limit} that
\begin{align*}
\E\Bigl(\frac{1}{n}\sum_{i \in I_3} \bigl(\zeta(\check{\varepsilon}_i) - \zeta(\varepsilon_i)\bigr)^2 \Bigm| \mathcal{D}'\Bigr) &\leq \max_{i \in I_3} \int_\R\bigl(\zeta(z + X_i^\top b_n) - \zeta(z)\bigr)^2\,p_0(z)\,dz \cvp 0.
\end{align*}
Together with Lemma~\ref{lem:cond-cvg} and the weak law of large numbers, this implies that $n^{-1}\sum_{i \in I_3} \zeta(\check{\varepsilon}_i)^2 = n^{-1}\sum_{i \in I_3} \zeta(\varepsilon_i)^2 + o_p(1) \cvp \E\bigl(\zeta(\varepsilon_1)^2\bigr)/3$, so arguing similarly for $j \in \{2,3\}$, we obtain
\begin{equation}
\label{eq:zeta-squared-avg}
\frac{1}{n}\sum_{i=1}^n \zeta(\check{\varepsilon}_i)^2 \cvp \E\bigl(\zeta(\varepsilon_1)^2\bigr)
\end{equation}
as $n \to \infty$. Next, since $\zeta$ satisfies $\int_\R \zeta(z + t)^2\,p_0(z)\,dz \in \R$ for $t \in \{-t_0,t_0\}$, the same is true of $\zeta_{\mathrm{ac}}$, so by Lemma~\ref{lem:varPsi-deriv}, $\int_\R \zeta_{\mathrm{ac}}'(z + t)\,p_0(z)\,dz \in \R$ for all $t \in [-t_0,t_0]$ and
\[
\int_\R \zeta_{\mathrm{ac}}'(z + t)\,p_0(z)\,dz 
% = \int_\R p_0(z - t)\,d\zeta_{\mathrm{ac}}(z) \to \int_\R p_0\,d\zeta_{\mathrm{ac}} = 
\to \int_\R \zeta_{\mathrm{ac}}'(z)\,p_0(z)\,dz
\]
as $t \to 0$. Moreover, since $-\zeta_{\mathrm{ac}}'$ is non-negative and continuous Lebesgue almost everywhere, it follows from Scheff\'e's lemma~\citep[e.g.][Lemma~2.29]{vdV1998asymptotic} that $\int_\R |\zeta_{\mathrm{ac}}'(z + t) - \zeta_{\mathrm{ac}}'(z)|\,p_0(z)\,dz \to 0$ as $t \to 0$, so
\[
\E\Bigl(\frac{1}{n}\sum_{i \in I_3} |\zeta_{\mathrm{ac}}'(\check{\varepsilon}_i) - \zeta_{\mathrm{ac}}'(\varepsilon_i)| \Bigm| \mathcal{D}'\Bigr) \leq \max_{i \in I_3} \int_\R |\zeta_{\mathrm{ac}}'(z + X_i^\top b_n) - \zeta_{\mathrm{ac}}'(z)|\,p_0(z)\,dz \cvp 0.
\]
By analogous reasoning for $j \in \{2,3\}$, we deduce that
\begin{equation}
\label{eq:zeta-deriv-avg}
\frac{1}{n}\sum_{i=1}^n \zeta_{\mathrm{ac}}'(\check{\varepsilon}_i) = \frac{1}{n}\sum_{i=1}^n \zeta_{\mathrm{ac}}'(\varepsilon_i) + o_p(1) \cvp \E\zeta_{\mathrm{ac}}'(\varepsilon_1).
\end{equation}
Finally, for $m \in [M]$ and $j \in \{1,2,3\}$, define $\tilde{p}_{n,j}(z_m) := |I_{j+2}|^{-1}\sum_{i \in I_{j+2}} K_h(z_m - \check{\varepsilon}_i)$ and first consider $j = 1$. Letting $p_{n,1}(z_m) := \E\bigl(\tilde{p}_{n,1}(z_m) \!\bigm|\! \mathcal{D}_1\bigr)$, recall from~\eqref{eq:kernel-L1-bias} and~\eqref{eq:var-pnh} that by the bounded convergence theorem,
% \norm{p_0}_\infty < \infty
\begin{align*}
|p_{n,1}(z_m) - p_0(z_m)| &\leq \int_{\R^d} \int_\R |p_0(z_m - uh - x^\top b_n) - p_0(z_m)|\,K(u)\,du\,dP_X(x) \cvp 0 \\
\Var\bigl(\tilde{p}_{n,1}(z_m) \!\bigm|\! \mathcal{D}_1\bigr) &\leq \frac{\norm{p_0}_\infty \int_\R K^2}{|I_3| h} \to 0.
\end{align*}
Thus, $\tilde{p}_{n,1}(z_m) \cvp p_0(z_m)$ by Lemma~\ref{lem:cond-cvg}, and similarly $\tilde{p}_{n,j}(z_m) \cvp p_0(z_m)$ for $j \in \{2,3\}$, so $\tilde{p}_n(z_m) = n^{-1}\sum_{j=1}^3 |I_{j+2}|\,\tilde{p}_{n,j}(z_m) \cvp p_0(z_m)$ for all $m \in [M]$. Combining this with~\eqref{eq:zeta-deriv-avg} shows that
\[
\frac{1}{n}\sum_{i=1}^n \zeta_{\mathrm{ac}}'(\check{\varepsilon}_i) - \sum_{m=1}^M \zeta_m\tilde{p}_n(z_m) \cvp \int_\R \zeta_{\mathrm{ac}}'\,p_0 - \sum_{m=1}^M \zeta_m p_0(z_m) = \int_\R p_0\,d\zeta,
\]
which together with~\eqref{eq:zeta-squared-avg} yields
\[
\hat{\upsilon}_n := \frac{n^{-1}\sum_{i=1}^n \zeta(\check{\varepsilon}_i)^2}{\bigl\{n^{-1}\sum_{i=1}^n \zeta_{\mathrm{ac}}'(\check{\varepsilon}_i) - \sum_{m=1}^M \zeta_m\tilde{p}_n(z_m)\bigr\}^2} \cvp \frac{\int_\R \zeta^2\,p_0}{(\int_\R p_0\,d\zeta)^2} = \upsilon_{p_0}. \qedhere
\]
\end{proof}

\subsection{Auxiliary results and proofs}
\label{sec:auxiliary}

For reference, we provide a formal statement of the classical asymptotic result in~\eqref{eq:M-estimator-asymp} for generic convex $M$-estimators. Since the error distributions in this paper have uniformly continuous densities $p_0$, we only require mild assumptions on the convex loss function $L$. 
% \citet[Corollary~3.5]{huber1964robust}
Our condition~\ref{ass:zeta} is simple and neither forces~$L$ to be differentiable everywhere, nor places any explicit restrictions on the growth of $L(z)$ as $|z| \to \infty$.

\begin{proposition}
\label{prop:cvx-M-est-asymp}
Consider a random design linear model~\eqref{eq:linear-model} where $\E(X_1 X_1^\top)$ is invertible and the error density $p_0$ is absolutely continuous with $i(p_0) < \infty$. If $L \colon \R \to \R$ is a convex loss function for which $\varphi = -L^{(\mathrm{R})}$ satisfies~\emph{\ref{ass:zeta}} and $\E\varphi(\varepsilon_1) = 0$, then the following statements hold.
\begin{enumerate}[label=(\alph*)]
\item For each $n$, there always exists $\hat{\beta}_n \in \argmin_{\beta \in \R^d} \sum_{i=1}^n L(Y_i - X_i^\top\beta)$, and this is unique if the design matrix $X = (X_1 \; \cdots \; X_n)^\top$ has full column rank and $\varphi$ is strictly decreasing.
\item $\sqrt{n}(\hat{\beta}_n - \beta_0) \cvd N_d\bigl(0, V_{p_0}(\varphi) \cdot \{\E(X_1 X_1^\top)\}^{-1}\bigr)$, where $V_{p_0}(\varphi) = (\int_\R \varphi^2\,p_0)/(\int_\R p_0\,d\varphi)^2$ is as in~\eqref{eq:Vp0}.
\end{enumerate}
\end{proposition}

A necessary condition for uniqueness in \textit{(a)} is that $X$ has full column rank, which happens with probability tending to 1 as $n \to \infty$ since $\E(X_1 X_1^\top)$ is invertible. The hypothesis that $p_0$ is absolutely continuous with $i(p_0) < \infty$ is only used to prove \textit{(b)}, and can be relaxed provided that the function $\Lambda \colon \R \to \R$ given by~\eqref{eq:zeta-Lambda} satisfies~\eqref{eq:zeta-Lambda-deriv}. In particular, this ensures that $\Lambda'(0) = -\int_\R p_0\,d\zeta \in (0,\infty)$; see Lemmas~\ref{lem:Psi-deriv} and~\ref{lem:varPsi-deriv} below.

\begin{proof}
\textit{(a)} By assumption, $\inf_{z \in \R}\varphi(z) < 0 < \sup_{z \in \R}\varphi(z)$, so $L$ is coercive in the sense that $L(z) \to \infty$ as $|z| \to \infty$. Thus, $\theta \equiv (\theta_1,\dotsc,\theta_n) \mapsto \sum_{i=1}^n L(Y_i - \theta_i) =: \mathcal{L}_n(\theta)$ is convex and coercive on $\mathrm{Im}(X) := \{X\beta : \beta \in \R^d\}$, so $\beta \mapsto \mathcal{L}_n(X\beta)$ attains its minimum on $\R^d$. If $\zeta$ is strictly decreasing, then $\mathcal{L}_n$ is strictly convex on $\mathrm{Im}(X)$, so has a unique minimiser $m \in \mathrm{Im}(X)$. If in addition $X$ has full column rank, then there exists a unique $\hat{\beta}_n$ such that $m = X\hat{\beta}_n$, i.e.~$\hat{\beta}_n = \argmin_{\beta \in \R^d} \sum_{i=1}^n L(Y_i - X_i^\top\beta)$.

\medskip
\noindent
\textit{(b)} This follows from simpler versions of Lemmas~\ref{lem:linreg-sym-equicontinuity} and~\ref{lem:linreg-sym-consistency}, whose key steps we now outline; see also~\citet[Lemma~4.1]{bickel1975one} and~\citet[Theorem~2.1]{he2000parameters}. Define
\[
R_n(b) := \frac{1}{\sqrt{n}}\sum_{i=1}^n X_i\Bigl\{\zeta\Bigl(\varepsilon_i - \frac{X_i^\top b}{\sqrt{n}}\Bigr) - \Lambda\Bigl(\frac{X_i^\top b}{\sqrt{n}}\Bigr)\Bigr\}
\]
for $b \in \R^d$. Then for every $M > 0$, we deduce by almost identical arguments to those in the proof of Lemma~\ref{lem:linreg-zeta-equicontinuity} that $\sup_{b \in B_M} \norm{R_n(b) - R_n(0)} = o_p(1)$ as $n \to \infty$, where $B_M = \{b \in \R^d : \norm{b} \leq M\}$. Consequently, imitating the proofs of Lemmas~\ref{lem:linreg-sym-consistency} and~\ref{lem:linreg-zeta-consistency} yields
\[
\bigl\{\norm{\hat{\beta}_n - \beta_0} > t\bigr\} \subseteq \biggl\{\inf_{u \in S^{d-1}}u^\top \sum_{i=1}^n X_i\zeta(\varepsilon_i - tX_i^\top u) \leq 0\biggr\},
\]
which together with~\eqref{eq:zeta-Lambda-deriv} allows us to conclude that $\sqrt{n}(\hat{\beta}_n - \beta_0) = O_p(1)$ and $n^{-1/2}\sum_{i=1}^n L(Y_i - X_i^\top\hat{\beta}_n) = o_p(1)$. With these ingredients in place, the rest of the proof proceeds very similarly to that of Theorem~\ref{thm:linreg-score-sym}, except with $\zeta$ in place of both $\hat{\psi}_{n,j}^{\mathrm{anti}}$ and $\psi_0^*$, and $\Lambda$ instead of $\hat{\Psi}_{n,j}^{\mathrm{anti}}$.
\end{proof}

\citet[Theorem~3]{huber1964robust} established an equivalent variational characterisation of the Fisher information for information; see~\citet[Theorem~4.2]{huber2009robust} for an alternative proof of the following fact based on Hilbert space theory.

\begin{proposition}
\label{prop:fisher-inf-variational}
For a distribution $P_0$ on $\R$, the following are equivalent:
\begin{enumerate}[label=(\roman*)]
\item $P_0$ has an absolutely continuous density $p_0$ on $\R$ with respect to Lebesgue measure, with $i(p_0) = \int_{\{p_0 > 0\}}(p_0')^2/p_0 < \infty$.
\item $I(P_0) := \displaystyle\sup_\psi \frac{\bigl(\int_\R\psi'\,dP_0\bigr)^2}{\int_\R\psi^2\,dP_0} < \infty$,
where the supremum is taken over all compactly supported, continuously differentiable $\psi \colon \R \to \R$ such that $\int_\R\psi^2\,dP_0 > 0$.
\end{enumerate}
Furthermore, if either (i) or (ii) holds, then $I(P_0) = i(p_0)$.
\end{proposition}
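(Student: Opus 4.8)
The plan is to prove the two implications separately, extracting the matching inequalities $I(P_0) \le i(p_0)$ and $i(p_0) \le I(P_0)$ along the way, and then to combine them for the equality statement, using that each of \emph{(i)} and \emph{(ii)} implies the other.

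First I would establish \emph{(i)}$\Rightarrow$\emph{(ii)} together with $I(P_0) \le i(p_0)$. Assume \emph{(i)} and let $\psi$ be compactly supported and continuously differentiable. Since $p_0$ is absolutely continuous on $\R$ and $\psi$ vanishes outside a compact interval, integration by parts gives $\int_\R \psi'\,dP_0 = \int_\R \psi' p_0 = -\int_\R \psi p_0' = -\int_\R \psi\psi_0\,dP_0$, where we use that $p_0' = 0$ Lebesgue-almost everywhere on $\{p_0 = 0\}$. By the Cauchy--Schwarz inequality in $L^2(P_0)$, $\bigl(\int_\R \psi'\,dP_0\bigr)^2 \le \norm{\psi}_{L^2(P_0)}^2\,i(p_0)$, and taking the supremum over $\psi$ yields $I(P_0) \le i(p_0) < \infty$.

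The substance is the converse. Assuming \emph{(ii)}, the linear functional $L \colon \psi \mapsto \int_\R \psi'\,dP_0$ satisfies $|L(\psi)| \le I(P_0)^{1/2}\,\norm{\psi}_{L^2(P_0)}$ on the subspace of $L^2(P_0)$ consisting of compactly supported $C^1$ functions. Extending $L$ to the closure of this subspace and applying the Riesz representation theorem, I obtain $g \in L^2(P_0)$ with $\norm{g}_{L^2(P_0)}^2 = I(P_0)$ such that $\int_\R \psi'\,dP_0 = \ipr{\psi}{g}_{L^2(P_0)} = \int_\R \psi g\,dP_0$ for every compactly supported $C^1$ function $\psi$. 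Since $P_0$ is a probability measure and $g \in L^2(P_0) \subseteq L^1(P_0)$, the signed measure $\nu(A) := \int_A g\,dP_0$ is finite; setting $G(x) := \nu\bigl((-\infty,x]\bigr)$ (which is bounded and of bounded variation) and integrating by parts (the boundary terms vanish because $\psi$ has compact support and $G$ is bounded with $G(x) \to 0$ as $x \to -\infty$), I get $\int_\R \psi\,d\nu = -\int_\R \psi' G\,dx$, so that $\int_\R \psi'\,d\bigl(P_0 + G\,dx\bigr) = 0$ for all such $\psi$. Hence $P_0 + G\,dx$ has vanishing distributional derivative, so $dP_0 = (c - G)\,dx$ for some constant $c$; in particular $P_0$ has the density $p_0 := c - G$. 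Now $\nu = g p_0\,dx$ with $g p_0 \in L^1(\R)$, since $\int_\R |g|\,p_0\,dx = \int_\R |g|\,dP_0 < \infty$, so $G(x) = \int_{-\infty}^x g p_0$ is absolutely continuous with $G' = g p_0$ almost everywhere; therefore $p_0 = c - G$ is absolutely continuous with $p_0' = -g p_0$ almost everywhere, and $\psi_0 = p_0'/p_0 = -g$ almost everywhere on $\{p_0 > 0\}$. Consequently $i(p_0) = \int_{\{p_0 > 0\}} (p_0')^2/p_0 = \int_{\{p_0 > 0\}} g^2\,dP_0 \le \norm{g}_{L^2(P_0)}^2 = I(P_0) < \infty$, which gives \emph{(i)}. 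Since \emph{(i)} and \emph{(ii)} are now seen to be equivalent, both $I(P_0) \le i(p_0)$ and $i(p_0) \le I(P_0)$ hold whenever either is satisfied, so $I(P_0) = i(p_0)$.

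I expect the main obstacle to be the passage, in the converse direction, from the weak identity $\int_\R \psi'\,dP_0 = \int_\R \psi g\,dP_0$ to the absolute continuity of $P_0$ and the formula $p_0' = -g p_0$: this requires a careful distributional-derivative argument, correct handling of the Stieltjes integration by parts for the signed measure $\nu$ (with attention to the order in which $G$ and then $p_0$ are shown to be absolutely continuous, so as to avoid circularity), and the observation that the Riesz representative inherits precisely the norm $I(P_0)^{1/2}$. The remaining steps are routine once these points are in place.
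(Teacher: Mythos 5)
Your proof is correct, and since the paper does not supply its own argument for this proposition (it simply restates Huber's result and refers to Huber and Ronchetti (2009, Theorem~4.2) for a proof ``based on Hilbert space theory''), your route via the Riesz representation theorem in $L^2(P_0)$ to produce a representative $g$ with $p_0' = -g\,p_0$ is exactly the classical approach the paper has in mind. The one detail worth making explicit is that $I(P_0) < \infty$ is precisely what ensures $\psi \mapsto \int_\R \psi'\,dP_0$ descends to a well-defined functional on $L^2(P_0)$-equivalence classes of compactly supported $C^1$ functions (it must annihilate any such $\psi$ vanishing $P_0$-almost everywhere, which fails for instance when $P_0$ has atoms), and this must be checked before the Riesz theorem can be invoked.
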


The proofs of Lemma~\ref{lem:proj-score-consistency} and the results in Section~\ref{subsec:linreg-intercept-proofs} require the next four lemmas, the first of which is adapted from~\citet[Lemma~7.6 and Example~7.8]{vdV1998asymptotic}.

\begin{lemma}
\label{lem:location-DQM}
Suppose that $p_0$ is an absolutely continuous density on $\R$ with $i(p_0) < \infty$. For $t \in \R$, let $p_t(\cdot) := p_0(\cdot + t)$, so that $p_t'(\cdot) = p_0'(\cdot + t)$. Then the location model $\{p_t : t \in \R\}$ is differentiable in quadratic mean at every $t \in \R$, i.e.
\[
\lim_{h \to 0} \int_\R \,\biggl(\frac{\sqrt{p_{t + h}} - \sqrt{p_t}}{h} + \frac{p_t'}{2\sqrt{p_t}}\biggr)^2 = 0
\]
for every $t \in \R$. In particular, $\int_\R (\sqrt{p_{t + h}} - \sqrt{p_t})^2 = O(h^2)$ as $h \to 0$.
\end{lemma}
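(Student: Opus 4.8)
## Proof Proposal

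\textbf{Approach.} Since differentiability in quadratic mean (DQM) for the location model is translation-invariant---replacing $\theta$ by $0$ is harmless because $p_\theta(\cdot) = p_0(\cdot - \theta)$ and $p_\theta'(\cdot) = p_0'(\cdot-\theta)$ shift uniformly---it suffices to prove the claim at $\theta = 0$. Writing $s := \sqrt{p_0}$, the assertion is that $(p_{h} - p_0)/h$, after taking square roots, converges in $L^2(\mathrm{Leb})$ to $-\tfrac{1}{2}\psi_0 s$, where $\psi_0 = p_0'/p_0$ and $i(p_0) = \int_\R \psi_0^2 p_0 = 4\int_\R (s')^2 < \infty$. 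The standard route (following \citealp[Lemma~7.6, Example~7.8]{vdV1998asymptotic}) is: first show that $s(\cdot - h)$ is differentiable in $L^2$ as a function of $h$, with $L^2$-derivative $-s'(\cdot)$; then invoke a general lemma that $L^2$-differentiability of $h \mapsto s(\cdot - h)$ at $h = 0$, combined with the identity $p_h = s(\cdot - h)^2$ and $\int p_h = 1$ for all $h$, upgrades to DQM of the model $\{p_h\}$.

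\textbf{Key steps.} First I would reduce to $\theta = 0$ and set $g_h := s(\cdot - h)$. Since $p_0$ is absolutely continuous with $i(p_0) < \infty$, one checks that $s = \sqrt{p_0}$ is itself (locally) absolutely continuous with $s' = p_0'/(2\sqrt{p_0})\,\Ind_{\{p_0>0\}} \in L^2(\mathrm{Leb})$; this is exactly where the finiteness of the Fisher information enters, via $\int (s')^2 = i(p_0)/4$. Second, I would establish that $\|(g_h - g_0)/h + s'\|_{L^2} \to 0$ as $h \to 0$. For this, write $(g_h(x) - g_0(x))/h = -\frac{1}{h}\int_{x-h}^{x} s'(t)\,dt$ (valid for a.e.\ $x$ by absolute continuity of $s$ on compacta, with a truncation/tail argument to handle all of $\R$), so that $(g_h - g_0)/h + s' = -\frac{1}{h}\int_0^h (s'(\cdot - u) - s'(\cdot))\,du$ pointwise a.e.; then by Minkowski's integral inequality, $\|(g_h-g_0)/h + s'\|_{L^2} \le \sup_{|u| \le |h|}\|s'(\cdot - u) - s'(\cdot)\|_{L^2}$, which tends to $0$ by continuity of translation in $L^2$ \citep[Proposition~8.5]{folland1999real}. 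Third, I would pass from $L^2$-differentiability of $h \mapsto g_h$ to DQM of $h \mapsto p_h = g_h^2$: since each $g_h \ge 0$ with $\|g_h\|_{L^2}^2 = \int p_h = 1$, the map $h \mapsto g_h$ stays on the unit sphere of $L^2$, and a short computation (expanding $\|g_h/h - g_0/h + s'\|^2$ and using $\langle g_h, g_0\rangle \to 1$, which follows from $\|g_h - g_0\|_{L^2} \to 0$) shows that the DQM remainder $\int_\R \bigl((g_h - g_0)/h + g_0'\bigr)^2$, once one replaces $g_0'$ by $s' = p_0'/(2\sqrt{p_0})$, coincides up to a $o(1)$ term with $\|(g_h - g_0)/h + s'\|_{L^2}^2 \to 0$; unwinding the notation, this is precisely the displayed limit with $\theta = 0$.

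\textbf{Main obstacle.} The only genuinely delicate point is justifying the pointwise representation $(s(x-h) - s(x))/h = -\frac{1}{h}\int_{x-h}^{x} s'(t)\,dt$ for Lebesgue-a.e.\ $x \in \R$ (not just on compacta) and controlling the tails so that Minkowski's integral inequality applies globally; here one leans on the fact that $s'$ is genuinely in $L^2(\R)$, so the translated differences $s'(\cdot - u) - s'(\cdot)$ have globally small $L^2$-norm. Everything else---the reduction to $\theta = 0$, the identification of $s'$, and the sphere/inner-product bookkeeping in the final step---is routine. I would present the tail argument carefully but relegate the remaining computations to a few lines, citing \citet[Lemma~7.6]{vdV1998asymptotic} for the abstract $L^2$-differentiability-to-DQM implication.
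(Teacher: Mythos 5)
Your proposal is correct, but it takes a genuinely different route from the paper's proof, and it also contains a superfluous step.

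The paper's argument is a Scheff\'e-type argument: it establishes (i) pointwise a.e.\ convergence of the difference quotient $(\sqrt{p_{\theta+h}}-\sqrt{p_\theta})/h$ to $q_\theta := -p_\theta'/(2\sqrt{p_\theta})$, and (ii) the uniform bound $\int_\R\bigl((\sqrt{p_{\theta+h}}-\sqrt{p_\theta})/h\bigr)^2 \le \int_\R q_\theta^2 = i(p_0)/4$ (via writing the quotient as $\int_0^1 q_{\theta+th}\,dt$, Cauchy--Schwarz and Fubini), and then invokes \citet[Proposition~2.29]{vdV1998asymptotic} to upgrade pointwise convergence with convergent norms to $L^2$ convergence. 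You instead use the exact pointwise representation
\[
\frac{g_h(x)-g_0(x)}{h} + s'(x) = -\frac{1}{h}\int_0^h \bigl(s'(x-u)-s'(x)\bigr)\,du,
\]
apply Minkowski's integral inequality, and then finish by continuity of translation in $L^2(\R)$. Both approaches are valid. Your Minkowski route is arguably more direct for this specific problem, since it bypasses the Scheff\'e step; the paper's route is the standard textbook treatment and generalises more easily when such an exact representation is unavailable. The reduction to $\theta=0$ is valid in both.

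Two comments on presentation. First, both proofs hinge on the preliminary fact that $\sqrt{p_0}$ is locally absolutely continuous with a.e.\ derivative $p_0'/(2\sqrt{p_0})\Ind_{\{p_0>0\}} \in L^2(\R)$; this is genuinely nontrivial because $\sqrt{\cdot}$ is not Lipschitz near $0$, and the paper establishes it carefully by approximating $z\mapsto z_+^{1/2}$ with Lipschitz truncations $\Phi_n$ and passing to the limit via dominated convergence (with $i(p_0)<\infty$ controlling the integrand). You flag this as something ``one checks'' without detail; in a full write-up this is the step that deserves the most care, not the tail control in Minkowski's inequality, which follows automatically once $s'\in L^2(\R)$ and $s$ is locally absolutely continuous (so the FTC representation holds for every $x$, not merely a.e.). Second, your ``step 3'' — passing from $L^2$-differentiability of $h\mapsto g_h$ to DQM via a sphere/inner-product argument — is vacuous: the displayed quantity in the lemma \emph{is} $\|(g_h-g_0)/h + s'\|_{L^2}^2$ with $g_h=\sqrt{p_h}$ and $g_0'=s'$, so once you have shown $\|(g_h-g_0)/h + s'\|_{L^2}\to 0$ you are already done, and neither the expansion of $\|g_h/h - g_0/h + s'\|^2$ nor \citet[Lemma~7.6]{vdV1998asymptotic} is needed.
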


\begin{proof}
Under our assumptions on $p_0$, we will show first that $\sqrt{p_0}$ is locally absolutely continuous on $\R$ with derivative $p_0'/(2\sqrt{p_0})$ Lebesgue almost everywhere. This essentially follows from~\citet[Theorem~3 and Corollary~8]{serrin1969general}, but we give a direct argument here for completeness. For $n \in \N$, define $\varphi_n, \Phi_n \colon \R \to \R$ by $\varphi_n(z) := 1/(2\sqrt{z})$ for $z > 1/n$ and $\varphi_n(z) := 0$ otherwise, and $\Phi_n(z) := \int_0^z \varphi_n$. Then for every $z \in \R$, we have $\Phi_n(z) \nearrow z_+^{1/2}$ and $\varphi_n(z) \nearrow 1/(2\sqrt{z})\Ind_{\{z > 0\}} =: \varphi(x)$ as $n \to \infty$. Now $p_0$ is absolutely continuous on $\R$ and each $\Phi_n$ is Lipschitz on $\R$ with weak derivative $\varphi_n$, so $\Phi_n \circ p_0$ is absolutely continuous on $\R$ for each $n$, with $(\Phi_n \circ p_0)'(z) = (\varphi_n \circ p_0)(z) \cdot p_0'(z)$ for Lebesgue almost every $z \in \R$. Thus, for all $x,y \in \R$, we have 
\[
(\Phi_n \circ p_0)(y) - (\Phi_n \circ p_0)(x) = \int_x^y (\varphi_n \circ p_0)\,p_0', 
\]
and by the Cauchy--Schwarz inequality, $\bigl(\int_x^y |(\varphi \circ p_0)\,p_0'|\bigr)^2 \leq |y - x|\int_\R \bigl((\varphi \circ p_0)\,p_0'\bigr)^2 = |y - x|\,i(p_0)/4 < \infty$. It follows by the dominated convergence theorem that
\begin{align}
\sqrt{p_0(y)} - \sqrt{p_0(x)} &= \lim_{n \to \infty} \bigl((\Phi_n \circ p_0)(y) - (\Phi_n \circ p_0)(x)\bigr) \notag \\
\label{eq:sqrt-p0-ac}
&= \lim_{n \to \infty} \int_x^y (\varphi_n \circ p_0)\,p_0' = \int_x^y (\varphi \circ p_0)\,p_0' = \int_x^y \frac{p_0'}{2\sqrt{p_0}}
\end{align}
for all $x,y \in \R$, so $\sqrt{p_0}$ is indeed locally absolutely continuous on $\R$. 

Thus, for $t \in \R$ and Lebesgue almost every $z \in \R$, we have
\begin{align}
\label{eq:location-DQM-1}
\frac{\sqrt{p_{t + h}(z)} - \sqrt{p_t(z)}}{h} &= \frac{\sqrt{p_0(z + t + h)} - \sqrt{p_0(z + t)}}{h} \notag \\
&\to \frac{p_0'(z + t)}{2\sqrt{p_0(z + t)}} = \frac{p_t'(z)}{2\sqrt{p_t(z)}} =: q_t(z)
\end{align}
as $h \to 0$. Moreover, for $t \in \R$ and $h \neq 0$, we deduce from~\eqref{eq:sqrt-p0-ac}, the Cauchy--Schwarz inequality and Fubini's theorem that
\begin{align}
\int_\R \,\biggl(\frac{\sqrt{p_{t + h}(z)} - \sqrt{p_t(z)}}{h}\biggr)^2\,dz &= \int_\R \,\biggl(\int_0^1 q_{t + sh}(z)\,ds\biggr)^2 dz \notag \\
&\leq \int_\R \int_0^1 q_{t + sh}(z)^2\,ds\,dz = \int_0^1 \int_\R \frac{p_{t + sh}'(z)^2}{4p_{t + sh}(z)}\,dz\,ds \notag \\
\label{eq:location-DQM-2}
&= \int_0^1 \int_\R \frac{p_t'(z)^2}{4p_t(z)}\,dz\,ds = \int_\R q_t^2(z)\,dz = \frac{i(p_0)}{4} < \infty.
\end{align}
It follows from~\eqref{eq:location-DQM-1},~\eqref{eq:location-DQM-2} and~\citet[Proposition~2.29]{vdV1998asymptotic} that 
\[
\int_\R \,\biggl(\frac{\sqrt{p_{t + h}} - \sqrt{p_t}}{h} - q_t\biggr)^2 \to 0
\]
as $h \to 0$. Therefore,
\[
\int_\R \,\biggl(\frac{\sqrt{p_{t + h}} - \sqrt{p_t}}{h}\biggr)^2 \leq 2\int_\R \,\biggl(\frac{\sqrt{p_{t + h}} - \sqrt{p_t}}{h} - q_t\biggr)^2 + 2\int_\R q_t^2 = O(1)
\]
as $h \to 0$, as required.
\end{proof}

\begin{lemma}
\label{lem:Psi-deriv}
Let $\psi \colon \R \to \R$ be decreasing and right-continuous, and let $g \colon \R \to \R$ be a continuous function of bounded variation such that $\lim_{|z| \to \infty} g(z) = 0$. Suppose that there exists $t_0 > 0$ such that 
\[
\varPsi(t) := \int_\R \psi(z - t)\,g(z)\,dz \quad\text{and}\quad \int_\R g(z - t)\,d\psi(z)
\]
are finite for $t \in \{-t_0, t_0\}$. Then 
% $\varPsi(t) \in \R$ and $\int_\R g(z - t)\,d\psi(z) = -\int_\R \psi(z + t)\,dg(z) \in \R$
the same is true of all $t \in [-t_0,t_0]$, and $t \mapsto \varPsi(t)$ is differentiable at 0 with
\[
\varPsi'(0) = -\int_\R g\,d\psi = -\lim_{t \to 0} \int_\R g(z - t)\,d\psi(z) \in \mathbb{R}.
\]
\end{lemma}

\begin{proof}
Since $\psi$ is decreasing, $|\psi(z - t)| \leq |\psi(z - t_0)| + |\psi(z + t_0)| =: \psi_*(z)$ for all $z \in \R$ and $t \in [-t_0,t_0]$, so because $\int_\R \psi_*(z)\,g(z)\,dz < \infty$, it follows that $\varPsi(t) \in \R$ for all $t \in [-t_0,t_0]$. Moreover, since $\lim_{|z| \to \infty} g(z) = 0$, we have $\int_{(-\infty,t)}dg = -\int_{[t,\infty)}dg = g(t)$ for all $t \in \R$. 

Define $h(z,t) := \Ind_{\{z < t \leq 0\}} - \Ind_{\{0 < t \leq z\}}$ for $z,t \in \R$. Then $\int_\R h(z, t - t_0)\,d\psi(t) = \psi(t_0) - \psi(z + t_0)$ for all $z \in \R$, and $\int_\R h(z, t - t_0)\,dg(z) = g(t - t_0)$ for all $t \in \R$. Since $\int_\R g(t - t_0)\,d\psi(t) \in \R$, it follows from Fubini's theorem that
\begin{align}
-\int_\R g(t - t_0)\,d\psi(t) = -\int_\R \int_\R h(z, t - t_0)\,dg(z)\,d\psi(t) &= \int_\R \bigl(\psi(z + t_0) - \psi(t_0)\bigr)\,dg(z) \notag \\
\label{eq:fubini-parts}
&= \int_\R \psi(z + t_0)\,dg(z).
\end{align}
Similarly, $\int_\R\psi(z - t_0)\,dg(z) = -\int_\R g(t + t_0)\,d\psi(t) \in \R$, so $\int_\R \psi_*\,dg \in \R$. We can find increasing, continuous functions $g_1,g_2$ such that $g = g_1 - g_2$ and $\int_\R \psi_*\,d(g_1 + g_2) < \infty$~\citep[e.g.][p.~94 and 103--104]{folland1999real}. 
% Hahn--Jordan decomposition of the atomless measure $\nu_0 = \nu_0^+ - \nu_0^-$
Therefore, for all $t \in [-t_0,t_0]$, we have
\[
\biggl|\int_\R \psi(z + t)\,dg(z)\biggr| \leq \int_\R |\psi(z + t)|\,d(g_1 + g_2)(z) \leq \int_\R \psi_*\,d(g_1 + g_2) < \infty,
\]
so it follows similarly to~\eqref{eq:fubini-parts} that $\int_\R \psi(z + t)\,dg(z) = -\int_\R g(z - t)\,d\psi(z) \in  \R$. For all $z \in \R$ and $t \in [-t_0,t_0] \setminus \{0\}$, we have $|\psi(z + t)| \vee |t^{-1}\int_z^{z + t} \psi| \leq \psi_*(z)$. Moreover, denoting by $A$ the set of continuity points of $\psi$, we have $\lim_{t \to 0} \psi(z + t) = \psi(z) = \lim_{t \to 0} t^{-1}\int_z^{z + t} \psi$ for all $z \in A$. Since $\R \setminus A$ is countable and $g$ is continuous, it follows from Fubini's theorem and the dominated convergence theorem that for $t \in [-t_0,t_0]$,
\[
-\int_\R g(z - t)\,d\psi(z) = \int_A \psi(z + t)\,dg(z) \to \int_A \psi(z)\,dg(z) = -\int_\R g\,d\psi
\]
as $t \to 0$. For $t \in (0, t_0]$, a further application of Fubini's theorem yields
\begin{align*}
\frac{\varPsi(t) - \varPsi(0)}{t} = \int_\R\frac{g(z + t) - g(z)}{t}\,\psi(z)\,dz &= \int_\R \frac{1}{t}\int_A \Ind_{\{z < s \leq z + t\}}\,dg(s)\,\psi(z)\,dz \\
&= \int_A \frac{1}{t}\int_{s - t}^s \psi(z)\,dz\,dg(s),
\end{align*}
which converges to $\int_\R \psi\,dg = -\int_\R g\,d\psi$ as $t \searrow 0$ by the dominated convergence theorem. Similarly, again for $t \in (0,t_0]$, we have
\[
\frac{\varPsi(0) - \varPsi(-t)}{t} = \int_\R \frac{1}{t}\int_\R \Ind_{\{z - t < s \leq z\}}\,dg(s)\,\psi(z)\,dz = \int_\R\frac{1}{t}\int_s^{s + t}\psi(z)\,dz\,dg(s) \searrow -\int_\R g\,d\psi,
\]
as required.
\end{proof}

\begin{lemma}
\label{lem:varPsi-deriv}
Let $p_0$ be an absolutely continuous density with $i(p_0) < \infty$, and define $p_t(\cdot) := p_0(\cdot + t)$ for $t \in \R$. Suppose that $\psi \colon \R \to \R$ is a decreasing, right-continuous function for which there exists $t_0 > 0$ such that $\int_\R \psi(z - t)^2\,p_0(z)\,dz < \infty$ for $t \in \{-t_0,t_0\}$. Then
\begin{equation}
\label{eq:psi-shift-limit}
\int_\R \psi(z)^2\bigl(\sqrt{p_t(z)} - \sqrt{p_0(z)}\bigr)^2\,dz \to 0 \quad\text{and}\quad \int_\R \bigl(\psi(z - t) - \psi(z)\bigr)^2\,p_0(z)\,dz \to 0
\end{equation}
as $t \to 0$. Moreover, $\varPsi(t) := \int_\R \psi(z - t)\,p_0(z)\,dz$ and $\int_\R p_0(z - t)\,d\psi(z)$ 
% = -\int_\R \psi(z + t)\,p_0'(z)\,dz
are finite for all $t \in [-t_0,t_0]$, and
\begin{equation}
\label{eq:varPsi-deriv}
\lim_{v \to 0}\,\sup_{\substack{s,t \in [-v,v] : \\ s \neq t}}\,\biggl|\frac{\varPsi(t) - \varPsi(s)}{t - s} + \int_\R p_0\,d\psi\biggr| = 0,
\end{equation}
where $-\int_\R p_0\,d\psi = -\lim_{t \to 0} \int_\R p_0(z - t)\,d\psi(z) = \varPsi'(0) \in [0,\infty)$.
\end{lemma}
\begin{proof}
Since $\psi$ is decreasing, it is continuous Lebesgue almost everywhere and $\psi(z - t)^2 \leq \psi(z - t_0)^2 \vee \psi(z + t_0)^2$ for all $z \in \R$ and $t \in [-t_0, t_0]$. Thus, by the dominated convergence theorem, $\int_\R \psi^2\,p_t =  \int_\R \psi(z - t)^2\,p_0(z)\,dz \to \int_\R \psi^2\,p_0$ as $t \to 0$. Hence by the continuity of $p_0$ and a slight generalisation of Scheff\'e's lemma~\citep[Lemma~2.29]{vdV1998asymptotic}, we obtain~\eqref{eq:psi-shift-limit}.

By the Cauchy--Schwarz inequality, $|\varPsi(t)| \leq (\int_\R \psi(z - t)^2\,p_0(z)\,dz)^{1/2} < \infty$ for all $t \in [-t_0,t_0]$, and
\begin{align*}
\bigl\{\varPsi(t) - \varPsi(s) - \bigl(\varPsi(t - s) - \varPsi(0)\bigr)\bigr\}^2 &= \biggl\{\int_\R \bigl(\psi(z - s) - \psi(z)\bigr)\bigl(p_0(z + t - s) - p_0(z)\bigr)\,dz\biggr\}^2 \\
&\leq 2\!\int_\R \bigl(\psi(z - s) - \psi(z)\bigr)^2(p_{t - s} + p_0)(z)\,dz \!\int_\R \bigl(\sqrt{p_{t - s}} - \sqrt{p_0}\bigr)^2
\end{align*}
for $s,t \in [-t_0,t_0]$. 
Moreover, $\int_\R \bigl(\psi(z - s) - \psi(z)\bigr)^2 p_{t - s}(z)\,dz = \int_\R \bigl(\psi(z - t) - \psi(z + s - t)\bigr)^2 p_0(z)\,dz$, so by the second limit in~\eqref{eq:psi-shift-limit} and the final assertion of Lemma~\ref{lem:location-DQM},
\begin{equation}
\label{eq:varPsi-shift}
\lim_{v \to 0}\,\sup_{\substack{s,t \in [-v,v] : \\ s \neq t}}\,\biggl|\frac{\varPsi(t) - \varPsi(s) - \bigl(\varPsi(t - s) - \varPsi(0)\bigr)}{t - s}\biggr| = 0.
\end{equation}
We have $\int_\R |p_0'| \leq i(p_0)^{1/2} < \infty$, so $p_0$ is of bounded variation on $\R$. Moreover, since $p_0$ is a density, we have $\lim_{|z| \to \infty} p_0(z) = 0$, so writing $\psi_0 = p_0'/p_0$, it follows from~\eqref{eq:fubini-parts} and Cauchy--Schwarz that
\begin{align*}
0 \leq -\int_\R p_0(z + t)\,d\psi(z) &= \int_\R \psi(z - t) p_0'(z)\,dz = \int_\R \psi(z - t)\,\psi_0(z)\,p_0(z)\,dz \\
&\leq \biggl(\int_\R \psi(z - t)^2\,p_0(z)\,dz\biggr)^{1/2} i(p_0)^{1/2} < \infty
\end{align*}
for $t \in \{-t_0,t_0\}$. Therefore, by Lemma~\ref{lem:Psi-deriv}, $\int_\R p_0(z + t)\,d\psi(z)$ is finite for all $t \in [-t_0,t_0]$ and $\varPsi'(0) = -\int_\R p_0\,d\psi = -\lim_{t \to 0} \int_\R p_0(z - t)\,d\psi(z) \in [0,\infty)$, so
\[
\lim_{v \to 0}\,\sup_{\substack{s,t \in [-v,v] : \\ s \neq t}}\,\biggl|\frac{\varPsi(t - s) - \varPsi(0)}{t - s} + \int_\R p_0\,d\psi\biggr| = 0,
\]
which together with~\eqref{eq:varPsi-shift} yields~\eqref{eq:varPsi-deriv}.
\end{proof}

\begin{lemma}
\label{lem:L1-kde}
Let $p_0 \colon \R \to [0,\infty)$ be a Lebesgue density. For $h > 0$, define $p_{0,h} \colon \R \to [0,\infty]$ by $p_{0,h}(z) := (2h)^{-1}\int_{z - h}^{z + h} p_0$. Given $\varepsilon_1,\dotsc,\varepsilon_n \iid p_0$ and a kernel $K \colon \R \to \R$ that is supported on $[-1,1]$, define $\hat{p}_{n,h} \colon \R \to [0,\infty)$ by $\hat{p}_{n,h}(z) := n^{-1}\sum_{i=1}^n K_h(z - \varepsilon_i)$, where $K_h(\cdot) = h^{-1}K(\cdot/h)$. Then for $\rho \in [0,1/2]$ and $\delta > \rho/(1 - \rho)$, we have
\begin{align*}
\E\int_\R |\hat{p}_{n,h}(z) - \E\hat{p}_{n,h}(z)|\,dz &\leq \norm{K}_\infty \int_\R\,\min\biggl\{\frac{p_{0,h}^{1/2}}{(nh)^{1/2}}, 2p_{0,h}\biggr\} \leq \frac{2^{1 - 2\rho} \norm{K}_\infty \int_\R p_{0,h}^{1 - \rho}}{(nh)^\rho} \\ 
&\leq \frac{2^{1 - 2\rho} \norm{K}_\infty C_{\delta,\rho}^\rho(1 + h)^{\delta(1 - \rho)}\bigl(1 + \int_\R |z|^\delta\,p_0(z)\,dz\bigr)^{1 - \rho}}{(nh)^\rho},
\end{align*}
where $C_{\rho,\delta} := \int_\R (1 + |z|)^{-\delta(1 - \rho)/\rho}\,dz = 2\rho/\bigl((1 - \rho)\delta - \rho\bigr) \in (0,\infty)$.
\end{lemma}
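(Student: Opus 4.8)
The plan is to establish the three displayed inequalities one after another, each by a standard device: a mean--absolute--deviation estimate for a kernel average, a pointwise interpolation, and a weighted H\"older inequality. For the first inequality I would reduce, via Tonelli's theorem, to bounding $\E|\hat p_{n,h}(z)-\E\hat p_{n,h}(z)|$ for each fixed $z$. On the one hand, Cauchy--Schwarz and independence give $\E|\hat p_{n,h}(z)-\E\hat p_{n,h}(z)|\le\bigl(n^{-1}\Var K_h(z-\varepsilon_1)\bigr)^{1/2}\le\bigl(n^{-1}\E[K_h(z-\varepsilon_1)^2]\bigr)^{1/2}$, and since $K$ is bounded and supported on $[-1,1]$, a change of variables bounds $\E[K_h(z-\varepsilon_1)^2]$ by $h^{-2}\norm{K}_\infty^2\int_{z-h}^{z+h}p_0=2h^{-1}\norm{K}_\infty^2p_{0,h}(z)$, producing a bound of the order of $\norm{K}_\infty(nh)^{-1/2}p_{0,h}(z)^{1/2}$. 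On the other hand, the triangle inequality gives $\E|\hat p_{n,h}(z)-\E\hat p_{n,h}(z)|\le 2\E|\hat p_{n,h}(z)|\le 2h^{-1}\norm{K}_\infty\int_{z-h}^{z+h}p_0$, of the order of $\norm{K}_\infty p_{0,h}(z)$. Taking the pointwise minimum and integrating yields the first inequality.

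For the second inequality I would invoke the elementary bound $\min\{x,y\}\le x^{\theta}y^{1-\theta}$ for $x,y\ge 0$ and $\theta\in[0,1]$. Applying it pointwise in $z$ with $x=(nh)^{-1/2}p_{0,h}(z)^{1/2}$, $y=2p_{0,h}(z)$ and $\theta=2\rho$ (which is admissible exactly because $\rho\in[0,1/2]$) gives $\min\{x,y\}\le 2^{1-2\rho}(nh)^{-\rho}p_{0,h}(z)^{1-\rho}$; multiplying by $\norm{K}_\infty$ and integrating in $z$ gives the second inequality with the stated constant.

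The third inequality is the one demanding care. I would write $p_{0,h}(z)^{1-\rho}=\bigl[(1+|z|)^{\delta}p_{0,h}(z)\bigr]^{1-\rho}(1+|z|)^{-\delta(1-\rho)}$ and apply H\"older with conjugate exponents $1/(1-\rho)$ and $1/\rho$, obtaining
\[
\int_\R p_{0,h}^{1-\rho}\le\Bigl(\int_\R(1+|z|)^{\delta}p_{0,h}(z)\,dz\Bigr)^{1-\rho}\Bigl(\int_\R(1+|z|)^{-\delta(1-\rho)/\rho}\,dz\Bigr)^{\rho},
\]
where the second factor equals $C_{\delta,\rho}^{\rho}$ and is finite because $\delta>\rho/(1-\rho)$ forces the exponent $\delta(1-\rho)/\rho$ to exceed $1$. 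For the first factor I would write $p_{0,h}(z)=(2h)^{-1}\int_{-h}^{h}p_0(z+t)\,dt$, interchange the order of integration by Tonelli, substitute $w=z+t$, and use that $1+|w-t|\le(1+h)(1+|w|)$ for $|t|\le h$ to pull out $(1+h)^{\delta}$; the subadditivity $(1+|w|)^{\delta}\le 1+|w|^{\delta}$ (valid for $\delta\le 1$, the regime needed in the applications of this lemma) then bounds $\int_\R(1+|z|)^{\delta}p_{0,h}(z)\,dz$ by $(1+h)^{\delta}\bigl(1+\int_\R|w|^{\delta}p_0(w)\,dw\bigr)$. Raising to the power $1-\rho$ and combining with the H\"older bound gives the third inequality.

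The genuinely delicate balancing act sits in that last step: the polynomial weight $(1+|z|)^{\delta}$ must be heavy enough that the auxiliary integral $\int_\R(1+|z|)^{-\delta(1-\rho)/\rho}\,dz$ converges --- which is precisely where the hypothesis $\delta>\rho/(1-\rho)$ enters --- while still being light enough that, after the change of variables, the $h$-smoothing can be transferred onto $p_0$ at the sole cost of the clean factor $(1+h)^{\delta(1-\rho)}$ and a unit additive term. Everything else, including the numerical constants, is routine bookkeeping.
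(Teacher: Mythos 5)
Your argument mirrors the paper's proof step for step: the same pointwise $\min\{\Var^{1/2},2\E|\cdot|\}$ bound feeding into the interpolation $\min(a,b)\le a^{2\rho}b^{1-2\rho}$, and the same weighted H\"older inequality with exponents $1/(1-\rho)$, $1/\rho$ followed by the submultiplicativity $1+|w-t|\le(1+h)(1+|w|)$ to shift the smoothing off $p_0$. One small credit: you correctly note that the final subadditivity step $(1+|w|)^\delta\le 1+|w|^\delta$ requires $\delta\le 1$; the paper uses this bound without stating the restriction (it holds in the paper's applications, where $\delta\in(0,1]$), so your explicit flag is a genuine, if minor, improvement on the exposition.
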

% By applying a `local maximal operator' to $p_0$, we obtain the function $p_0^*$, which is guaranteed to be measurable and finite almost everywhere~\citep[Section~4]{goldenshluger2014adaptive}.
% Mention symmetric decreasing rearrangements?

\begin{proof}
For $z \in \R$ and $h > 0$, we have
\[
\E|\hat{p}_{n,h}(z)| = \E |K_h(z - \varepsilon_1)| = \int_\R |K(u)|\,p_0(z - uh)\,du \leq \norm{K}_\infty\,p_{0,h}(z)
\]
and similarly
\[
\Var\hat{p}_{n,h}(z) = \frac{\Var K_h(z - \varepsilon_1)}{n} \leq \frac{\E\{K_h(z - \varepsilon_1)^2\}}{n} = \int_\R \frac{K(u)^2}{nh}\,p_0(z - uh)\,du \leq \frac{\norm{K}_\infty^2\,p_{0,h}(z)}{nh},
\]
so
\[
\E|\hat{p}_{n,h}(z) - \E\hat{p}_{n,h}(z)| \leq \min\bigl\{\Var^{1/2}\hat{p}_{n,h}(z), 2\E|\hat{p}_{n,h}(z)|\bigr\} \leq \norm{K}_\infty \min\biggl\{\frac{p_{0,h}(z)^{1/2}}{(nh)^{1/2}}, 2p_{0,h}(z)\biggr\}.
\]
By Fubini's theorem and the fact that $\min(a,b) \leq a^{2\rho}b^{1 - 2\rho}$ for $a,b \geq 0$ and $\rho \in [0,1/2]$, we have
\begin{align*}
\E\int_\R |\hat{p}_{n,h}(z) - \E\hat{p}_{n,h}(z)|\,dz &\leq \norm{K}_\infty \int_\R\,\min\biggl\{\frac{p_{0,h}(z)^{1/2}}{(nh)^{1/2}}, 2p_{0,h}(z)\biggr\}\,dz \notag \\
&\leq \norm{K}_\infty \int_\R\,\biggl(\frac{p_{0,h}(z)}{nh}\biggr)^\rho \bigl(2p_{0,h}(z)\bigr)^{1 - 2\rho}\,dz
= \frac{2^{1 - 2\rho} \norm{K}_\infty \int_\R p_{0,h}^{1 - \rho}}{(nh)^\rho}.
\end{align*}
Finally, if $U \sim U[-1,1]$ is independent of $\varepsilon_1$, then $hU + \varepsilon_1$ has density $p_{0,h}$, so by H\"older's inequality,
\begin{align*}
\int_\R p_{0,h}^{1 - \rho} &\leq \biggl(\int_\R (1 + |z|)^{-\delta(1 - \rho)/\rho}\,dz\biggr)^\rho \biggl(\int_\R (1 + |z|)^\delta\,p_{0,h}(z)\,dz\biggr)^{1 - \rho} \\
&= C_{p,\delta}^\rho\,\bigl[\E\bigl\{(1 + |hU + \varepsilon_1|)^\delta\bigr\}\bigr]^{1 - \rho} \\
&\leq C_{p,\delta}^\rho\,\bigl[\E\bigl\{(1 + h|U|)^\delta \cdot (1 + |\varepsilon_1|)^\delta\bigr\}\bigr]^{1 - \rho} \leq C_{p,\delta}^\rho(1 + h)^{\delta(1 - \rho)} \biggl(1 + \int_\R |z|^\delta\,p_0(z)\,dz\biggr)^{1 - \rho}.
\end{align*}
This completes the proof.
\end{proof}

As an application of  Lemma~\ref{lem:L1-kde}, when $p_0$ is a Cauchy density and $h > 8/n$, taking $\rho = 1/2 - 1/\log(nh)$ and $\delta = 2\rho = 1 - 2/\log(nh)$ in Lemma~\ref{lem:L1-kde} yields a bound on $\E\int_\R |\hat{p}_{n,h}(z) - \E\hat{p}_{n,h}(z)|\,dz$ of order $\log(nh)/\sqrt{nh}$, which is tight up to a universal constant.
% \[
% \int_\R p_{0,h}^\rho \leq \int_\R p_0\bigl(\max(|z| - h),0\bigr) \, dz = \frac{2h}{\pi} + \frac{2}{\pi^\rho}\int_h^\infty \frac{1}{\{1+(z-h)^2\}^\rho} \, dz = \frac{2h}{\pi} + \frac{\Gamma(\rho-1/2)}{\pi^{\rho-1/2}\Gamma(\rho)}.
% \]

\begin{lemma}
\label{lem:subspace-countable}
Let $W \subseteq \R^n$ be a linear subspace of dimension $d < n$. If $A \subseteq \R$ is countable, then
\[
\mathcal{A} := \Bigl\{(y_1,\dotsc,y_n) \in \R^n : \sum_{i=1}^n \Ind_{\{y_i - w_i \in A\}} \geq d + 1 \text{ for some } (w_1,\dotsc,w_n) \in W\Bigr\}
\]
has Lebesgue measure 0.
% This result is not true for general null sets $A$ (e.g.~the Cantor set).
\end{lemma}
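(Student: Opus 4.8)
\textbf{Proof proposal for Lemma~\ref{lem:subspace-countable}.}

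The plan is to reduce the statement to a countable union of null sets, one for each choice of which $d+1$ coordinates lie in $A$ and which element of $A$ each of those coordinates (minus the corresponding $W$-component) equals. First I would fix a basis for $W$ and write $W = \{Bv : v \in \R^d\}$ for some $B \in \R^{n \times d}$ of full column rank $d$. For a subset $S \subseteq [n]$ with $|S| = d+1$ and a tuple $(a_i)_{i \in S} \in A^S$, consider the set $\mathcal{A}_{S,(a_i)}$ of $y \in \R^n$ for which there exists $v \in \R^d$ with $y_i - (Bv)_i = a_i$ for all $i \in S$. Since $\mathcal{A}$ is contained in the (countable) union of the $\mathcal{A}_{S,(a_i)}$ over all such $S$ and all tuples in $A^S$ (using countability of $A$ and finiteness of the number of $(d+1)$-subsets of $[n]$), it suffices to show each $\mathcal{A}_{S,(a_i)}$ is Lebesgue-null.

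Next I would analyse a single $\mathcal{A}_{S,(a_i)}$. Write $B_S \in \R^{(d+1) \times d}$ for the submatrix of rows of $B$ indexed by $S$, and $y_S \in \R^{d+1}$ for the corresponding subvector of $y$. The condition becomes: $y_S - a_S \in \mathrm{col}(B_S)$, where $a_S := (a_i)_{i \in S}$. Since $B_S$ has at most $d$ columns, $\mathrm{col}(B_S)$ is a proper affine-shifted linear subspace of $\R^{d+1}$ of dimension at most $d$, hence has $(d+1)$-dimensional Lebesgue measure zero; therefore $\{y_S \in \R^{d+1} : y_S \in a_S + \mathrm{col}(B_S)\}$ is a null subset of $\R^{d+1}$. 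The set $\mathcal{A}_{S,(a_i)}$ is precisely the preimage of this null set under the coordinate projection $\R^n \to \R^{d+1}$, $y \mapsto y_S$, so by Fubini's theorem (integrating out the $n - (d+1)$ free coordinates) it has $n$-dimensional Lebesgue measure zero. A countable union of null sets is null, so $\mathcal{A}$ is null, as claimed.

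The only point requiring a little care — and the main (though still routine) obstacle — is verifying that $\mathcal{A}$ really is contained in $\bigcup_{S,(a_i)} \mathcal{A}_{S,(a_i)}$: if $y \in \mathcal{A}$, then there is $w = Bv \in W$ with $\sum_{i=1}^n \Ind_{\{y_i - w_i \in A\}} \geq d+1$, so one can select some $S \subseteq [n]$ with $|S| = d+1$ on which $y_i - w_i \in A$ for every $i \in S$; setting $a_i := y_i - w_i \in A$ for $i \in S$ exhibits $y \in \mathcal{A}_{S,(a_i)}$. Everything else is a standard measure-theoretic argument, and no earlier result from the paper is needed beyond elementary linear algebra and Fubini's theorem.
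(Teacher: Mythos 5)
Your proof is correct and takes essentially the same approach as the paper's: both decompose $\mathcal{A}$ as a countable union indexed by the $(d+1)$-element subsets $S\subseteq[n]$ and tuples in $A^S$, and show each piece is Lebesgue-null. The only cosmetic difference is how nullity of each piece is established — the paper observes that each piece is a translate of $E_S + W$, a proper linear subspace of $\R^n$ of dimension at most $(n-d-1)+d = n-1$, whereas you project to the coordinates in $S$, note that $a_S + \mathrm{col}(B_S)$ is a proper affine subspace of $\R^{d+1}$, and pull back via Fubini; these are equivalent.
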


\begin{proof}
For $I \subseteq [n]$ with $|I| = d + 1$, denote by $E_I$ the linear span of $(\mathsf{e}_i : i \in I^c)$, which has dimension $n - d - 1$. Then $E_I + W = \{z + w : z \in E_I,\,w \in W\} \subseteq \R^n$ is a linear subspace of dimension at most $(n - d - 1) + d = n - 1$, so it is a null set (i.e.~has Lebesgue measure 0). Moreover, $A_I := \{\sum_{i \in I}a_i\mathsf{e}_i : a_i \in A \text{ for all }i \in I\}$ is countable. Therefore,
\[
\mathcal{A} = \bigcup_{\substack{I \subseteq [n] \\ |I| = d+1}}\bigcup_{a \in A_I} (a + E_I + W)
\]
is a countable union of null sets, and hence is also a null set.
\end{proof}

\begin{lemma}[\citealp{chernozhukov2018double},~Lemma~6.1]
\label{lem:cond-cvg}
Suppose that $(X_n)$ is a sequence of random vectors and $(\mathcal{G}_n)$ is a sequence of $\sigma$-algebras. If $\E(\norm{X_n}\,|\,\mathcal{G}_n) = o_p(1)$, then $\norm{X_n} = o_p(1)$ as $n \to \infty$. Similarly, if $\E(\norm{X_n}\,|\,\mathcal{G}_n) = O_p(1)$, then $\norm{X_n} = O_p(1)$.
\end{lemma}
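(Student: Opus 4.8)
The plan is to reduce everything to Markov's inequality applied conditionally on $\mathcal{G}_n$. Set $Y_n := \E(\norm{X_n} \mid \mathcal{G}_n)$, a nonnegative random variable. The conditional Markov inequality gives $\Pr(\norm{X_n} > \epsilon \mid \mathcal{G}_n) \le Y_n/\epsilon$ almost surely for each $\epsilon > 0$, and this conditional probability is trivially bounded by $1$; combining the two bounds and taking expectations yields
\[
\Pr(\norm{X_n} > \epsilon) \le \E\bigl[\min\bigl(1,\, Y_n/\epsilon\bigr)\bigr] \qquad \text{for all } \epsilon > 0,\ n \in \N.
\]
Everything else follows by feeding the hypothesis on $Y_n$ into the right-hand side of this bound.

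For the first claim, assume $Y_n = o_p(1)$, i.e.\ $Y_n \cvp 0$. Fixing $\epsilon > 0$, the sequence $\min(1, Y_n/\epsilon)$ converges to $0$ in probability and is bounded by $1$, so its expectation tends to $0$ by the bounded convergence theorem (if preferred, pass to an almost-surely convergent subsequence and apply dominated convergence, noting that the limit is always $0$, so that the whole sequence of expectations converges). Hence $\Pr(\norm{X_n} > \epsilon) \to 0$ for every $\epsilon > 0$, which is exactly the statement $\norm{X_n} = o_p(1)$.

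For the second claim, assume $Y_n = O_p(1)$ and fix $\eta > 0$. Pick $M > 0$ with $\limsup_n \Pr(Y_n > M) < \eta/2$. Using the pointwise bound $\min(1, Y_n/K) \le \Ind_{\{Y_n > M\}} + M/K$ in the displayed inequality gives $\Pr(\norm{X_n} > K) \le \Pr(Y_n > M) + M/K$, whence $\limsup_n \Pr(\norm{X_n} > K) \le \eta/2 + M/K \le \eta$ once $K \ge 2M/\eta$; since $\eta > 0$ was arbitrary, this proves $\norm{X_n} = O_p(1)$. There is no genuine obstacle in this argument: the only points requiring a little care are the interchange of limit and expectation for the uniformly bounded sequence $\min(1, Y_n/\epsilon)$ in the $o_p$ case, and the ordering of the choices of $\eta$, $M$ and $K$ in the $O_p$ case.
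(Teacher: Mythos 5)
Your proof is correct and takes essentially the same approach as the paper: conditional Markov plus bounded convergence. The only minor variation is in the $O_p$ case, where the paper applies conditional Markov with a diverging deterministic threshold $M_n \to \infty$ (so that the argument runs in exact parallel to the $o_p$ case), while you work directly from the $\forall\eta\,\exists K$ definition via an explicit truncation bound $\min(1, Y_n/K) \le \Ind_{\{Y_n > M\}} + M/K$; both routes are equally short and standard.
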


\begin{proof}
Fix $\varepsilon > 0$. If $\E(\norm{X_n}\,|\,\mathcal{G}_n) = o_p(1)$, then $\Pr(\norm{X_n} > \varepsilon\,|\,\mathcal{G}_n) \leq \varepsilon^{-1}\E(\norm{X_n}\,|\,\mathcal{G}_n) \cvp 0$ by Markov's inequality, so the bounded convergence theorem implies that $\Pr(\norm{X_n} > \varepsilon) = \E\,\Pr(\norm{X_n} > \varepsilon\,|\,\mathcal{G}_n) \to 0$ as $n \to \infty$. Since $\varepsilon > 0$ was arbitrary, we conclude that $\norm{X_n} = o_p(1)$. 

On the other hand, suppose that $\E(\norm{X_n}\,|\,\mathcal{G}_n) = O_p(1)$. Then for any sequence $M_n \to \infty$, we have $\Pr(\norm{X_n} > M_n\,|\,\mathcal{G}_n) \leq M_n^{-1}\E(\norm{X_n}\,|\,\mathcal{G}_n) = o_p(1)$ by Markov's inequality, so similarly $\Pr(\norm{X_n} > M_n) = \E\,\Pr(\norm{X_n} > M_n\,|\,\mathcal{G}_n) \to 0$ as $n \to \infty$. Thus, $\norm{X_n} = O_p(1)$. 
\end{proof}

\begin{lemma}
\label{lem:cvp-plugin}
Let $(\Delta_n)$ be a sequence of random measurable functions\footnote{More precisely, writing $\Omega$ for the underlying probability space, suppose that there exist jointly measurable functions $\tilde{\Delta}_n \colon \R \times \Omega \to \R$ such that $\Delta_n(v)(\cdot) = \tilde{\Delta}_n(v,\cdot) \colon \Omega \to \R$ for every $n \in \N$ and $v \in \R$.} from $\R$ to $\R$. Assume that for some deterministic sequence $(w_n)$ with $w_n \to 0$, we have $\Delta_n(v_n) \cvp 0$ as $n \to \infty$ whenever $(v_n)$ is a deterministic sequence such that $v_n = o(w_n)$. Then for any sequence of random variables $(V_n)$ independent of~$(\Delta_n)$, we have $\Delta_n(V_n) \cvp 0$ whenever $V_n = o_p(w_n)$.
\end{lemma}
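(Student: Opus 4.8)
The plan is to reduce the statement about the random sequence $(V_n)$ to the deterministic hypothesis by conditioning on the independent sequence $(V_n)$ and invoking Lemma~\ref{lem:cond-cvg}. Fix $\varepsilon > 0$; our goal is to show $\Pr\bigl(|\Delta_n(V_n)| > \varepsilon\bigr) \to 0$. Since $(\Delta_n)$ and $(V_n)$ are independent, we may write, using joint measurability of $\tilde{\Delta}_n$,
\[
\Pr\bigl(|\Delta_n(V_n)| > \varepsilon\bigr) = \E\bigl[g_n(V_n)\bigr], \quad\text{where}\quad g_n(v) := \Pr\bigl(|\Delta_n(v)| > \varepsilon\bigr)
\]
is a (deterministic, measurable) function of $v \in \R$. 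So the task becomes showing $\E[g_n(V_n)] \to 0$ given that $V_n = o_p(w_n)$ and that $g_n(v_n) \to 0$ for every deterministic sequence with $v_n = o(w_n)$.

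The first key step is to extract from the hypothesis a uniform rate: I claim there is a deterministic sequence $\eta_n \to 0$ such that, setting $R_n := \eta_n w_n$ (with $R_n > 0$; if $w_n = 0$ for some $n$ one simply notes $V_n = o_p(w_n)$ forces $V_n \cvp 0$ and argues directly), we have $\sup_{|v| \le R_n} g_n(v) \to 0$. To see this, suppose not; then along a subsequence there exist points $v_{n_k}$ with $|v_{n_k}| \le R_{n_k}$ and $g_{n_k}(v_{n_k}) \ge \delta$ for some $\delta > 0$. One can pad this to a full deterministic sequence $(v_n)$ with $v_n = o(w_n)$ (since $|v_{n_k}| \le \eta_{n_k} w_{n_k}$ and $\eta_{n_k} \to 0$), contradicting the assumption $g_n(v_n) \to 0$ — provided $\eta_n$ was chosen to decay slowly enough that every sequence bounded by $\eta_n w_n$ is genuinely $o(w_n)$, which holds automatically as long as $\eta_n \to 0$. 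A cleaner route: the condition "$g_n(v_n) \to 0$ for every $v_n = o(w_n)$" is equivalent to "$\sup_{|v| \le \epsilon_n w_n} g_n(v) \to 0$ for every deterministic $\epsilon_n \to 0$"; then a standard diagonal argument produces a single $\eta_n \to 0$ for which $\sup_{|v| \le \eta_n w_n} g_n(v) \to 0$.

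Given such $\eta_n$ and $R_n = \eta_n w_n$, write $\varepsilon_n^* := \sup_{|v| \le R_n} g_n(v) \to 0$ and split
\[
\E[g_n(V_n)] = \E\bigl[g_n(V_n)\Ind_{\{|V_n| \le R_n\}}\bigr] + \E\bigl[g_n(V_n)\Ind_{\{|V_n| > R_n\}}\bigr] \le \varepsilon_n^* + \Pr\bigl(|V_n| > R_n\bigr).
\]
The first term tends to $0$ by construction. For the second, since $V_n = o_p(w_n)$ and $\eta_n \to 0$, we have $\Pr(|V_n| > \eta_n w_n) = \Pr(|V_n/w_n| > \eta_n) \to 0$ (using $0/0 := 0$ to handle any vanishing $w_n$); this is exactly the definition of $V_n = o_p(w_n)$ combined with $\eta_n \to 0$. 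Hence $\E[g_n(V_n)] \to 0$, which is the desired conclusion.

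The main obstacle is the first step: turning the pointwise/sequential hypothesis into a uniform-over-a-shrinking-window statement $\sup_{|v| \le \eta_n w_n} g_n(v) \to 0$ for a single deterministic $\eta_n \to 0$. This is a routine but slightly fiddly "diagonalisation"/subsequence argument, and one must be careful that the padding of a subsequence to a full sequence preserves the $o(w_n)$ property; everything after that is a one-line conditioning argument via independence together with Lemma~\ref{lem:cond-cvg} (or the elementary split above). Alternatively, one can bypass the explicit $\eta_n$ by arguing by contradiction at the top level: if $\E[g_n(V_n)] \not\to 0$, pass to a subsequence on which it stays $\ge \delta$; on that subsequence $V_n \cvp 0$ still holds, so a further subsequence has $V_n \to 0$ almost surely, and then (using independence to realise $(\Delta_n)$ and $(V_n)$ on a product space) one produces a deterministic sequence $v_n = o(w_n)$ with $g_n(v_n) \ge \delta/2$ along that subsequence — again contradicting the hypothesis. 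I would present whichever of these is shortest in the final writeup.
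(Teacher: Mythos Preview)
Your overall strategy matches the paper's exactly: define $g_n(v) := \Pr(|\Delta_n(v)| > \varepsilon)$, use independence to get $\Pr(|\Delta_n(V_n)| > \varepsilon) = \E[g_n(V_n)]$, and reduce to showing $g_n(V_n) \cvp 0$, after which bounded convergence finishes. The paper simply \emph{asserts} that $g_n(V_n) \cvp 0$ whenever $V_n = o_p(w_n)$ --- treating the passage from deterministic $o(\cdot)$ to stochastic $o_p(\cdot)$ for the deterministic functions $g_n$ as a known $o_p$-calculus fact --- and then applies bounded convergence in one line. You try to make this step explicit, which is fine, but your primary argument contains an error.

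The gap is the sentence ``since $V_n = o_p(w_n)$ and $\eta_n \to 0$, we have $\Pr(|V_n/w_n| > \eta_n) \to 0$; this is exactly the definition of $V_n = o_p(w_n)$ combined with $\eta_n \to 0$.'' That is false: $V_n = o_p(w_n)$ only gives $\Pr(|V_n/w_n| > \epsilon) \to 0$ for each \emph{fixed} $\epsilon > 0$, not for an arbitrary sequence $\eta_n \downarrow 0$. (Take $V_n/w_n \equiv 1/\log n$ and $\eta_n = 1/n$.) The fix is immediate once noticed: you have already (correctly) argued that $\sup_{|v| \le \epsilon_n w_n} g_n(v) \to 0$ for \emph{every} deterministic $\epsilon_n \to 0$, so diagonalise on $V_n$ rather than on $g_n$: choose $\eta_n \to 0$ so that $\Pr(|V_n/w_n| > \eta_n) \to 0$ (a standard diagonal from the definition of $\cvp$), and then apply your ``for every $\epsilon_n$'' statement with this particular $\eta_n$. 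Your alternative subsequence sketch at the end is essentially the correct justification of the paper's asserted step (with ``$V_n \cvp 0$'' a typo for ``$V_n/w_n \cvp 0$''), and is the cleanest route.
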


\begin{proof}
Fix $\epsilon > 0$ and let $g_n(v) := \Pr(|\Delta_n(v)| > \epsilon)$ for $v \in \R$ and $n \in \N$. By assumption, $g_n(v_n) \to 0$ whenever $v_n = o(w_n)$. Since $(V_n)$ and $(\Delta_n)$ are independent, we have $\Pr\bigl(|\Delta_n(V_n)| > \epsilon\,|\,V_n\bigr) = g_n(V_n) \cvp 0$ whenever $V_n = o_p(w_n)$. 
% where $\Delta_n(V_n) \colon \Omega \to \R$ is indeed a measurable function (i.e.~random variable) because $V_n$ and $\tilde{\Delta}_n$ are measurable (with the latter being jointly measurable)
Therefore, by the bounded convergence theorem, $\Pr(|\Delta_n(V_n)| > \epsilon) = \E\,\Pr\bigl(|\Delta_n(V_n)| > \epsilon\,|\,V_n\bigr) \to 0$, as required.
\end{proof}

\subsection{Antitonic projections and least concave majorants}
\label{sec:isoproj}

Denote by $\Psi_\downarrow$ the set of all decreasing functions on $\R$. For an integrable function $f \colon (0,1) \to \R$, recall from Section~\ref{sec:notation} the definitions of $\widehat{\mathcal{M}}_\mathrm{L}f$ and $\widehat{\mathcal{M}}_\mathrm{R}f$ on $[0,1]$.

\begin{proposition}[\protect{\citealp[Exercise~9.24]{samworth24modern}}]
\label{prop:isoproj}
If $\psi \in L^2(P)$ for some Borel probability measure $P$ on $\R$ with distribution function $F$, then
\begin{equation}
\label{eq:isoproj}
\psi_P^* := \widehat{\mathcal{M}}_\mathrm{L}(\psi \circ F^{-1}) \circ F \in \argmin_{g \in \Psi_\downarrow}\,\int_I (g - \psi)^2\,dP =: \Pi_\downarrow(\psi, P).
\end{equation}
We have $\psi_P^* \in L^2(P)$, and moreover $\psi^\star \in \Pi_\downarrow(\psi, P)$ if and only if $\psi^\star = \psi_P^*$ $P$-almost everywhere. Furthermore, if $F$ is continuous, then also $\psi_P^{**} := \widehat{\mathcal{M}}_\mathrm{R}(\psi \circ F^{-1}) \circ F \in \Pi_\downarrow(\psi, P)$.
\end{proposition}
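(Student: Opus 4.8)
\textbf{Proof proposal for Proposition~\ref{prop:isoproj}.}

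The plan is to establish this as a fairly standard $L^2$-projection-onto-a-closed-convex-cone statement, using the least concave majorant to produce the explicit minimiser and the usual variational (obtuse-angle) inequality to characterise it. First I would reduce to a problem on $[0,1]$: pushing forward by $F$, any $g \in \Psi_\downarrow$ corresponds (via $g \circ F^{-1}$, defined $\mathrm{Leb}$-a.e.\ on the range of relevant $u$) to a decreasing function on $(0,1)$, and $\int_I (g-\psi)^2\,dP = \int_0^1 (g\circ F^{-1} - \psi \circ F^{-1})^2$ by the change-of-variables identity already used repeatedly in the excerpt (e.g.\ in the proof of Theorem~\ref{thm:antitonic-score-proj}, via $U \sim U(0,1)$ and $F^{-1}(U) \sim P$). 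One must be a little careful because $F$ need not be continuous, so $F \circ F^{-1}$ may not be the identity; the correct statement is that for $U\sim U(0,1)$, $F^{-1}(U)\sim P$ and $\psi(F^{-1}(U))$ has the right law, which is all that is needed for the integral identity. So it suffices to show that $\widehat{\mathcal{M}}_\mathrm{L}(\psi\circ F^{-1})$ minimises $\int_0^1 (h - \psi\circ F^{-1})^2$ over decreasing $h$ on $(0,1)$, i.e.\ to prove the classical fact that the antitonic $L^2$-projection of an integrable function on $(0,1)$ is the left derivative (reflected appropriately) of the least concave majorant of its antiderivative.

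The core step is then this one-dimensional claim. Writing $f := \psi \circ F^{-1}$, with antiderivative $G(u) := \int_0^u f$ (finite since $\psi \in L^2(P) \subseteq L^1(P)$), the least concave majorant $\hat{G}$ is concave, so $\hat{G}^{(\mathrm{L})}$ (hence its reflection $\widehat{\mathcal{M}}_\mathrm{L}f$) is decreasing; I would verify square-integrability of $\widehat{\mathcal{M}}_\mathrm{L}f$ and hence of $\psi_P^*$ from the hypothesis $\psi \in L^2(P)$ (using Remark~\ref{rem:fisher-J}-style manipulations: $\int (\psi_P^*)^2\,dP = \int_0^1 (\widehat{\mathcal{M}}_\mathrm{L}f)^2$, and a Cauchy--Schwarz bound controlling this by $\int_0^1 f^2$). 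For optimality, the standard argument is: $\Psi_\downarrow$ is a closed convex cone in $L^2$, so $h^*$ is the projection of $f$ iff $\langle f - h^*, g - h^*\rangle_{L^2(0,1)} \le 0$ for all decreasing $g$; equivalently $\langle f - h^*, h^*\rangle = 0$ and $\langle f - h^*, g\rangle \le 0$ for all decreasing $g$. The latter two follow from the geometry of the least concave majorant: $G - \hat{G} \le 0$ with equality at the endpoints and on the "contact set", while $\hat{G}$ is affine (so $\hat{G}^{(\mathrm{L})} = h^*$ is constant) on each complementary interval, exactly as exploited in the proofs of Lemma~\ref{lem:J0-J0hat} and Theorem~\ref{thm:antitonic-score-proj} (the identities $\int_0^u h^* = \hat{G}(u)$, $\hat{G}(0)=G(0)$, $\hat{G}(1)=G(1)$, and $\hat G = G$ on the contact set). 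Integrating $\langle f - h^*, \chi_t\rangle$ against $dg$ via Fubini, as in the derivation of~\eqref{eq:KKT}, gives the inequality; the equality $\langle f-h^*,h^*\rangle=0$ comes from the contact set carrying the Lebesgue--Stieltjes measure of $h^*$. The uniqueness-up-to-$P$-a.e.\ statement is then immediate from strict convexity of $\|\cdot\|_{L^2(P)}^2$ on the cone: the projection is unique in $L^2(P)$, so $\psi \in \Pi_\downarrow(\psi,P)$ iff $\psi = \psi_P^*$ in $L^2(P)$.

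For the final assertion, when $F$ is continuous: here $F \circ F^{-1}$ is genuinely the identity on $(0,1)$, and $\widehat{\mathcal{M}}_\mathrm{R}f$ differs from $\widehat{\mathcal{M}}_\mathrm{L}f$ only at the (at most countably many) points where $\hat G$ has a kink, hence only on a Lebesgue-null subset of $(0,1)$; since $F$ is continuous, the preimage under $F$ of a Lebesgue-null set in $(0,1)$ is $P$-null (because $F_\#P$ restricted to $(0,1)$ is absolutely continuous w.r.t.\ Lebesgue — indeed $F_\# P = \mathrm{Leb}$ on $(0,1)$ when $F$ is continuous and, more carefully, $P(F^{-1}(N)) = \mathrm{Leb}(N)$ for Borel $N\subseteq(0,1)$). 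Hence $\widehat{\mathcal{M}}_\mathrm{R}(\psi \circ F^{-1}) \circ F = \psi_P^*$ $P$-almost everywhere, so it too lies in $\Pi_\downarrow(\psi,P)$.

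The main obstacle I anticipate is handling the discontinuities of $F$ cleanly in the reduction to $[0,1]$ and in defining $\psi \circ F^{-1}$ on the jump set: one must be careful that "decreasing function of $F$" versus "decreasing function of $u$" match up correctly on the range of $F^{-1}$, and that the pushforward integral identity is stated in the robust form $\int_I (g-\psi)^2 dP = \E[(g-\psi)^2(F^{-1}(U))]$ rather than invoking $F\circ F^{-1} = \mathrm{id}$. Everything else (properties of least concave majorants, the cone projection inequality, Fubini as in~\eqref{eq:KKT-Fubini}--\eqref{eq:KKT-integrals}) is routine given the machinery already developed in Section~\ref{sec:score-matching-proj-proofs}, and indeed the argument is essentially the "$P$ with continuous distribution function" specialisation of that machinery with $\psi$ in place of $\psi_0$.
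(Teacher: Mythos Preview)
The paper does not actually prove Proposition~\ref{prop:isoproj}; immediately after the statement it simply records that ``A version of this result appears as Exercise~10.24 in~\citet{samworth24modern}'' and then moves on to use it. So there is no proof in the paper to compare against.

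Your outline is correct and is essentially the standard argument: push forward by $F$ to reduce to the $L^2(0,1)$-antitonic projection of $f = \psi \circ F^{-1}$, identify that projection with the one-sided derivative of the least concave majorant of the antiderivative $G(u)=\int_0^u f$ via the variational (obtuse-angle) characterisation of projection onto the closed convex cone of decreasing functions, and deduce uniqueness from strict convexity of the $L^2$ norm. The key inputs you name --- $\hat G = G$ at the endpoints and on the contact set, $\hat G$ affine off the contact set, and the Fubini/indicator decomposition for $\langle f-h^*, g\rangle$ --- are exactly what the paper itself uses (for the special case $\psi_0$, $P_0$) in the proof of Theorem~\ref{thm:antitonic-score-proj}, so your remark that this is the ``$P$ with continuous $F$'' specialisation of that machinery is apt. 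Your treatment of the final clause (when $F$ is continuous, $\widehat{\mathcal{M}}_\mathrm{L}$ and $\widehat{\mathcal{M}}_\mathrm{R}$ differ only on a countable and hence $P$-null set) is also correct.

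One small point of presentation: the cleanest way to avoid the mild circularity you flag around square-integrability of $\psi_P^*$ is to first invoke existence and uniqueness of the projection onto a closed convex set in a Hilbert space (so some $h^* \in L^2(0,1)\cap\Psi_\downarrow$ attains the minimum), and only then identify $h^*$ with $\widehat{\mathcal{M}}_\mathrm{L} f$ via the variational inequality and the LCM geometry. This is equivalent to what you wrote but sidesteps having to bound $\int_0^1(\widehat{\mathcal{M}}_\mathrm{L} f)^2$ directly.
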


We refer to $\Pi_\downarrow(\psi, P)$ as the \emph{$L^2(P)$ antitonic \emph{(decreasing isotonic)} projection} of $\psi$. Since $\Psi_\downarrow$ is a convex class of functions,
\begin{equation}
\label{eq:iso-ipr}
\int_I (g - \psi_P^*)(\psi - \psi_P^*)\,dP \leq 0 \;\;\text{for all }g \in \Psi_\downarrow \text{ and }\psi_P^* \in \Pi_\downarrow(\psi, P);
\end{equation}
see e.g.~\citet[Theorem~9.36]{samworth24modern}. This can be used to derive the basic inequality~\eqref{eq:L2-proj-ineq} below. For a measurable function $g$ on $\R$, let $\norm{g}_{L^\infty(P)} := \sup\bigl\{\lambda \geq 0 : P(\{z \in \R : |g(z)| \geq \lambda\}) > 0\bigr\}$.

\begin{lemma}
\label{lem:L2-proj-ineq}
Given Borel probability measures $P,Q$ on $\R$ and $\psi \in L^2(P) \cap L^2(Q)$, let $\psi_P^* \in \Pi_\downarrow(\psi, P)$ and $\psi_Q^* \in \Pi_\downarrow(\psi, Q)$. Then
\begin{equation}
\label{eq:L2-proj-ineq}
\norm{\psi_P^* - \psi_Q^*}_{L^2(P)}^2 \leq \int_\R (\psi - \psi_Q^*)(\psi_Q^* - \psi_P^*)\,d(Q - P).
\end{equation}
Moreover, if $\psi_1,\psi_2 \in L^2(P)$ and $\psi_\ell^\star \in \Pi_\downarrow(\psi_\ell,P)$ for $\ell \in \{1,2\}$, then
\begin{align}
\label{eq:L2-proj-contraction}
\norm{\psi_1^\star - \psi_2^\star}_{L^2(P)} &\leq \norm{\psi_1 - \psi_2}_{L^2(P)},
\end{align}
% Easier version: $\norm{\psi^*}_{L^\infty(P)} \leq \norm{\psi}_{L^\infty(P)}$ for all measurable $\psi \colon \R \to \R$
% Indeed, if $M := \norm{\psi}_{L^\infty(P)} < \infty$, then defining $\eta_M(x) := (x \wedge M) \vee (-M)$ for $x \in \R$, we have $\eta_M \circ \psi^* \in \Psi_\downarrow$ and $\norm{\eta_M \circ \psi^* - \psi}_{L^2(P)} = \norm{\eta_M \circ \psi^* - \eta_M \circ \psi}_{L^2(P)} \leq \norm{\psi^* - \psi}_{L^2(P)}$. Thus, by the definition of $\psi^*$, we have $\psi^* = \eta_M \circ \psi$ $P$-almost everywhere, so $\norm{\psi}_{L^\infty(P)} \leq M = \norm{\psi}_{L^\infty(P)}$.
and writing $\psi_\ell^* \equiv (\psi_\ell)_P^*$ and $\psi_\ell^{**} \equiv (\psi_\ell)_P^{**}$ for $\ell \in \{1,2\}$, we have
\begin{align}
\label{eq:Linfty-proj-contraction}
\norm{\psi_1^* - \psi_2^*}_\infty = \norm{\psi_1^{**} - \psi_2^{**}}_\infty \leq \norm{\psi_1 - \psi_2}_{L^\infty(P)}.
\end{align}
\end{lemma}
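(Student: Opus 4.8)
The plan is to prove Lemma~\ref{lem:L2-proj-ineq} in three stages, each building on the variational inequality~\eqref{eq:iso-ipr}, which holds because $\Psi_\downarrow$ is a convex cone. First, to establish~\eqref{eq:L2-proj-ineq}: since $\psi_Q^* \in \Psi_\downarrow$, I would apply~\eqref{eq:iso-ipr} for the measure $P$ with the test function $g = \psi_Q^*$ to get $\int_\R (\psi_Q^* - \psi_P^*)(\psi - \psi_P^*)\,dP \le 0$; likewise, since $\psi_P^* \in \Psi_\downarrow$, applying~\eqref{eq:iso-ipr} for $Q$ with $g = \psi_P^*$ gives $\int_\R (\psi_P^* - \psi_Q^*)(\psi - \psi_Q^*)\,dQ \le 0$. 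Adding these two inequalities and rearranging produces $\int_\R (\psi_P^* - \psi_Q^*)^2\,dP \le \int_\R (\psi - \psi_Q^*)(\psi_Q^* - \psi_P^*)\,d(Q - P)$ after cancelling the common $\int_\R \psi(\psi_P^*-\psi_Q^*)\,dP$-type terms; this is exactly~\eqref{eq:L2-proj-ineq}. One must check integrability of all the cross terms, which follows from $\psi, \psi_P^*, \psi_Q^* \in L^2(P) \cap L^2(Q)$ (the latter two by the ``$\psi_P^* \in L^2(P)$'' assertion of Proposition~\ref{prop:isoproj}) via Cauchy--Schwarz.

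For~\eqref{eq:L2-proj-contraction}, I would take $P = Q$ in the spirit of the above but with two different functions $\psi_1, \psi_2$ rather than one function and two measures. Specifically, $\psi_1^*, \psi_2^* \in \Psi_\downarrow$, so~\eqref{eq:iso-ipr} applied to $\psi_1$ with $g = \psi_2^*$ gives $\int_\R (\psi_2^* - \psi_1^*)(\psi_1 - \psi_1^*)\,dP \le 0$, and applied to $\psi_2$ with $g = \psi_1^*$ gives $\int_\R (\psi_1^* - \psi_2^*)(\psi_2 - \psi_2^*)\,dP \le 0$. Adding and rearranging yields $\norm{\psi_1^* - \psi_2^*}_{L^2(P)}^2 \le \int_\R (\psi_1^* - \psi_2^*)(\psi_1 - \psi_2)\,dP$, and then Cauchy--Schwarz gives $\norm{\psi_1^* - \psi_2^*}_{L^2(P)}^2 \le \norm{\psi_1^* - \psi_2^*}_{L^2(P)}\norm{\psi_1 - \psi_2}_{L^2(P)}$; dividing through (the case $\psi_1^* = \psi_2^*$ $P$-a.e.\ being trivial) gives~\eqref{eq:L2-proj-contraction}.

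The $L^\infty$ bound~\eqref{eq:Linfty-proj-contraction} is a separate matter and, I expect, the main obstacle, since it cannot be deduced from the Hilbert-space inequality~\eqref{eq:iso-ipr} alone. Here I would work directly from the explicit representation $\psi_\ell^* = \widehat{\mathcal{M}}_\mathrm{L}(\psi_\ell \circ F^{-1}) \circ F$ in~\eqref{eq:isoproj}. Writing $c := \norm{\psi_1 - \psi_2}_\infty$, we have $\psi_2 - c \le \psi_1 \le \psi_2 + c$ pointwise on $\R$, hence $\psi_2 \circ F^{-1} - c \le \psi_1 \circ F^{-1} \le \psi_2 \circ F^{-1} + c$ on $(0,1)$. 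The key structural fact is that the operator $\widehat{\mathcal{M}}_\mathrm{L}$ (equivalently $\widehat{\mathcal{M}}_\mathrm{R}$ up to reflection), being built from taking a least concave majorant of an antiderivative and then differentiating, is monotone (order-preserving) and commutes with the addition of a constant: adding $c$ to $f$ adds the linear function $u \mapsto cu$ to its antiderivative $F$, hence adds $cu$ to $\hat F$ (since the least concave majorant of $G + \text{linear}$ is $\hat G + \text{same linear}$), hence adds the constant $c$ to the right/left derivative. Granting these two properties of $\widehat{\mathcal{M}}_\mathrm{L}$, monotonicity applied to the sandwich above gives $\widehat{\mathcal{M}}_\mathrm{L}(\psi_2 \circ F^{-1}) - c \le \widehat{\mathcal{M}}_\mathrm{L}(\psi_1 \circ F^{-1}) \le \widehat{\mathcal{M}}_\mathrm{L}(\psi_2 \circ F^{-1}) + c$ on $[0,1]$, and composing with $F$ yields $|\psi_1^* - \psi_2^*| \le c$ on $\R$, which is~\eqref{eq:Linfty-proj-contraction}. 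I would verify the monotonicity and constant-shift properties of $\widehat{\mathcal{M}}_\mathrm{L}$ as a short preliminary claim (or cite the analogous facts for least concave majorants from~\citet{rockafellar97convex} and Section~\ref{sec:notation}); care is needed at the endpoints $u \in \{0,1\}$ where $\widehat{\mathcal{M}}_\mathrm{L}f$ is defined by a one-sided limit, but this is handled by the continuity of $\hat F^{(\mathrm{R})}$ noted after its definition.
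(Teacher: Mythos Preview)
Your proposal is correct and matches the paper's proof in overall structure. For~\eqref{eq:L2-proj-ineq} the paper does exactly what you describe: it applies~\eqref{eq:iso-ipr} once for $P$ and once for $Q$, then adds $\int_\R (\psi_Q^*-\psi_P^*)(\psi_Q^*-\psi)\,dP$ to both sides. For~\eqref{eq:L2-proj-contraction} the paper takes a slightly different but equivalent route: it defines $D(t):=\bigl\|(\psi_1^*-\psi_2^*)+t(\psi_1-\psi_1^*+\psi_2^*-\psi_2)\bigr\|_{L^2(P)}^2$, uses~\eqref{eq:iso-ipr} to show the linear coefficient is non-negative so that $D$ is non-decreasing on $[0,\infty)$, and concludes $D(0)\le D(1)$. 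Your ``add two variational inequalities then Cauchy--Schwarz'' argument is the standard alternative and is arguably cleaner. For~\eqref{eq:Linfty-proj-contraction} the paper uses precisely the sandwich-plus-monotonicity-plus-constant-shift structure you outline; the only difference is that the paper obtains the monotonicity $\psi_1\le\psi_2$ $P$-a.e.\ $\Rightarrow$ $\psi_1^*\le\psi_2^*$ $P$-a.e.\ by citing the min--max formulae for isotonic projection rather than by working through the least-concave-majorant representation. Your LCM route is fine, but note that $\hat F\le\hat G$ alone (with $\hat F(0)=\hat G(0)$) does \emph{not} give $\hat F^{(\mathrm{R})}\le\hat G^{(\mathrm{R})}$ pointwise; the clean way to get monotonicity of $\widehat{\mathcal{M}}_\mathrm{R}$ is still via a min--max identity $\hat F^{(\mathrm{R})}(u)=\min_{t>u}\max_{s\le u}(t-s)^{-1}\int_s^t f$, from which $f\le g$ immediately yields the claim. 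One small wrinkle in your integrability check: Proposition~\ref{prop:isoproj} gives $\psi_P^*\in L^2(P)$ and $\psi_Q^*\in L^2(Q)$, but not the cross-memberships $\psi_Q^*\in L^2(P)$ or $\psi_P^*\in L^2(Q)$; the paper is equally silent on this, and in its application the projections are bounded anyway, so the point is moot in context.
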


\begin{proof}
By~\eqref{eq:iso-ipr},
\[
\int_\R (\psi_Q^* - \psi_P^*)(\psi - \psi_P^*)\,dP \leq 0 \leq \int_\R (\psi_P^* - \psi_Q^*)(\psi_Q^* - \psi)\,dQ,
\]
and adding $\int_\R \,(\psi_Q^* - \psi_P^*)(\psi_Q^* - \psi)\,dP$ to both sides yields~\eqref{eq:L2-proj-ineq}. 

For the second assertion, define
\begin{align*}
D(t) &:= \bigl\|(1 - t)\psi_1^\star + t\psi_1 - \{(1 - t)\psi_2^\star + t\psi_2\}\bigr\|_{L^2(P)}^2 = \bigl\|(\psi_1^\star - \psi_2^\star) + t(\psi_1 - \psi_1^\star + \psi_2^\star - \psi_2)\bigr\|_{L^2(P)}^2 \\
&\phantom{:}= \norm{\psi_1^\star - \psi_2^\star}_{L^2(P)}^2 + 2t\int_\R (\psi_1^\star - \psi_2^\star)(\psi_1 - \psi_1^\star + \psi_2^\star - \psi_2)\,dP + t^2\norm{\psi_1 - \psi_1^\star + \psi_2^\star - \psi_2}_{L^2(P)}^2,
\end{align*}
which is a quadratic function of $t \in \R$. By~\eqref{eq:iso-ipr}, $\int_\R (\psi_1^\star - \psi_2^\star)(\psi_1 - \psi_1^\star + \psi_2^\star - \psi_2)\,dP \geq 0$, so $D$ is non-decreasing on $[0,\infty)$. Thus, $\norm{\psi_1^\star - \psi_2^\star}_{L^2(P)}^2 = D(0) \leq D(1) = \norm{\psi_1 - \psi_2}_{L^2(P)}^2$, which proves~\eqref{eq:L2-proj-contraction}. 

For $\varphi_1,\varphi_2 \in L^2(P)$, it follows from the min-max formulae for the isotonic projection~\citep[Exercise~9.25\emph{(b)} and Theorem~9.37]{samworth24modern} that,
\begin{equation}
\label{eq:isoproj-ptwise}
\varphi_1 \leq \varphi_2 \;\;P\text{-almost everywhere} \quad \Rightarrow \quad \varphi_1^* \leq \varphi_2^* \;\;P\text{-almost everywhere}.
\end{equation}
Now defining $d := \norm{\psi_1 - \psi_2}_{L^\infty(P)}$, we have $\psi_1 - d \leq \psi_2 \leq \psi_1 + d$ $P$-almost everywhere on $\R$, so~\eqref{eq:isoproj-ptwise} implies that $\psi_1^* - d = (\psi_1 - d)^* \leq \psi_2^* \leq (\psi_1 + d)^* = \psi_1^* + d$ $P$-almost everywhere. In other words, $|\psi_1^* - \psi_2^*| \leq d$ on a set of the form $F^{-1}(E) := \{F^{-1}(u) : u \in E\}$ for some $E \subseteq (0,1)$ with Lebesgue measure 1.
% In other words, $\norm{\psi_1^* - \psi_2^*}_\infty \leq d$, so~\eqref{eq:Linfty-proj-contraction} holds.

It remains to show that $|\psi_1^* - \psi_2^*| \vee |\psi_1^{**} - \psi_2^{**}| \leq d$ on $\R$. For $\ell \in \{1,2\}$, define $\Psi_\ell \colon [0,1] \to \R$ by $\Psi_\ell(u) := \int_0^u \psi_\ell \circ F^{-1}$. Fix $z \in \R$ and suppose first that $u := F(z) \in (0,1]$. Since $E$ is dense in $[0,1]$, we can find a sequence $(u_n)$ in $E$ such that $u_n \nearrow u$ and hence $u_n \leq (F \circ F^{-1})(u_n) \nearrow (F \circ F^{-1})(u) = u$. Therefore, since $\hat{\Psi}_1^{(\mathrm{L})} - \hat{\Psi}_2^{(\mathrm{L})}$ is left-continuous on $(0,1]$ by~\citet[Theorem~24.1]{rockafellar97convex}, we have
\[
|\psi_1^*(z) - \psi_2^*(z)| = \bigl|\hat{\Psi}_1^{(\mathrm{L})}(u) - \hat{\Psi}_2^{(\mathrm{L})}(u)\bigr| = \lim_{n \to \infty} \bigl|\psi_1^*\bigl(F^{-1}(u_n)\bigr) - \psi_2^*\bigl(F^{-1}(u_n)\bigr)\bigr| \leq d.
\]
Next, when $u \in [0,1)$, we can find a sequence $(u_n)$ in $E$ such that $u_n \searrow u$. Let $u' := \lim_{n \to \infty}F\bigl(F^{-1}(u_n)\bigr)$. 
% Since $F$ is right-continuous, $u' \in \Im F$, so $(F \circ F_{-1})(u') = u'$.
If $u' = u$, then by~\citet[Theorem~24.1]{rockafellar97convex},
\begin{align}
|\psi_1^{**}(z) - \psi_2^{**}(z)| = \bigl|\hat{\Psi}_1^{(\mathrm{R})}(u) - \hat{\Psi}_2^{(\mathrm{R})}(u)\bigr| &= \lim_{n \to \infty}\bigl|\hat{\Psi}_1^{(\mathrm{L})}\bigl((F \circ F^{-1})(u_n)\bigr) - \hat{\Psi}_2^{(\mathrm{L})}\bigl((F \circ F^{-1})(u_n)\bigr)\bigr| \notag \\
\label{eq:F-quantile}
&= \lim_{n \to \infty} \bigl|\psi_1^*\bigl(F^{-1}(u_n)\bigr) - \psi_2^*\bigl(F^{-1}(u_n)\bigr)\bigr| \leq d.
\end{align}
Otherwise, if $u' > u$, then $(F \circ F^{-1})(u') = u'$ and $F^{-1}(v) = F^{-1}(u')$ for all $v \in (u,u']$. Thus, for $\ell \in \{1,2\}$, it follows that $\psi_\ell \circ F^{-1}$ is constant and hence $\Psi_\ell,\hat{\Psi}_\ell$ are both affine on $[u,u']$ with $\hat{\Psi}_\ell^{(\mathrm{R})}(u) = \hat{\Psi}_\ell^{(\mathrm{L})}(u')$, so~\eqref{eq:F-quantile} remains valid. Since $|\psi_1^*(z) - \psi_2^*(z)| = |\psi_1^{**}(z) - \psi_2^{**}(z)|$ when $u = F(z) \in \{0,1\}$, the proof is complete.
\end{proof}

% Common fact underpinning the previous paragraph as well as Lemmas~\ref{lem:lcm-affine} and~\ref{lem:lcm-concave-cvx}: if $0 \leq u < v \leq 1$ and there is an affine function $L$ such that $F \leq L \leq \hat{F}$ on $[u,v]$, then $\hat{F}$ is affine on $[u,v]$.
\begin{lemma}
\label{lem:lcm-affine}
For a function $F \colon [0,1] \to \R$, suppose that its least concave majorant $\hat{F}$ on $[0,1]$ satisfies $F(v) < \hat{F}(v) < \infty$ for some $v \in (0,1)$ at which $F$ is continuous. Then there exists $\delta \in \bigl(0,\min(v, 1 - v)\bigr)$ such that $\hat{F}$ is affine on $[v - \delta, v + \delta]$.
\end{lemma}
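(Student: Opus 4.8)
The plan is to argue by a single "chord surgery" on $\hat F$, exploiting the defining minimality of the least concave majorant, and thereby avoid any case analysis of kinks or differentiability at $v$. First I would record some regularity: since $\hat F \ge F$ and $F$ is real-valued, $\hat F > -\infty$ on $[0,1]$; since $\hat F$ is concave and finite at the interior point $v$ (using $\hat F(v)<\infty$), a standard argument shows $\hat F$ is finite on all of $(0,1)$, hence continuous there with finite one-sided derivatives $\hat F^{(\mathrm{L})},\hat F^{(\mathrm{R})}$ at every point of $(0,1)$. Set $g := \hat F(v) - F(v) > 0$. Using continuity of $F$ at $v$ and continuity of $\hat F$ at $v$, I would choose $\eta \in \bigl(0,\min(v,1-v)\bigr)$ so small that for all $u \in (v-\eta,v+\eta)$ one has both $F(u) < \hat F(v) - g/2$ and $\hat F(u) > \hat F(v) - g/4$.

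Next, fix any $a,b$ with $v-\eta < a < v < b < v+\eta$, put $\delta := \min(v-a,\,b-v) \in \bigl(0,\min(v,1-v)\bigr)$, and define $G\colon[0,1]\to\R$ to agree with $\hat F$ on $[0,a]\cup[b,1]$ and to equal the affine chord through $\bigl(a,\hat F(a)\bigr)$ and $\bigl(b,\hat F(b)\bigr)$ on $[a,b]$. I would then verify two things. (i) $G$ is concave on $[0,1]$: it is concave on $[0,a]$ and on $[b,1]$ (where it equals $\hat F$) and affine on $[a,b]$, and the slopes are non-increasing at the junctions, since for the concave function $\hat F$ and $a<b$ the chord slope $m := \bigl(\hat F(b)-\hat F(a)\bigr)/(b-a)$ satisfies $\hat F^{(\mathrm{L})}(a) \ge \hat F^{(\mathrm{R})}(a) \ge m \ge \hat F^{(\mathrm{L})}(b) \ge \hat F^{(\mathrm{R})}(b)$, and a piecewise-concave function with non-increasing slopes across its breakpoints is concave. (ii) $G \ge F$ on $[0,1]$: off $[a,b]$ this is immediate from $G=\hat F \ge F$, while on $[a,b]$ the chord takes values in $\bigl[\min(\hat F(a),\hat F(b)),\,\max(\hat F(a),\hat F(b))\bigr]$, so by the choice of $\eta$, $G(u) \ge \min(\hat F(a),\hat F(b)) > \hat F(v) - g/4 > \hat F(v) - g/2 > F(u)$ there. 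Finally, concavity of $\hat F$ forces the chord to lie below $\hat F$ on $[a,b]$, so $G \le \hat F$ pointwise on $[0,1]$.

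Since $G$ is a concave majorant of $F$, minimality of $\hat F$ gives $G \ge \hat F$, and combined with $G \le \hat F$ this yields $G = \hat F$ on $[0,1]$. In particular $\hat F$ coincides on $[a,b]$ with the affine chord, i.e. $\hat F$ is affine on $[a,b] \supseteq [v-\delta,v+\delta]$, which is the claim. \textbf{Main obstacle.} The only genuinely delicate point is step (ii): because $F$ is assumed continuous \emph{only} at $v$, one cannot control $F$ on the whole interval $[a,b]$ by a naive continuity estimate. This is handled by the two-scale bookkeeping above — the gap $g=\hat F(v)-F(v)$ is a fixed positive number, and near $v$ the values of $F$ sit strictly below $\hat F(v)-g/2$ while the chord values sit strictly above $\hat F(v)-g/4$ — so that $[a,b]\subset(v-\eta,v+\eta)$ forces $G>F$ there regardless of how $F$ behaves away from $v$. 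A secondary care point is checking that $\hat F$ is finite (hence that the chord and the one-sided derivatives at $a,b$ are meaningful), which follows from concavity together with the hypothesis $\hat F(v)<\infty$.
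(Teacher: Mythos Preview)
Your proof is correct and follows essentially the same ``chord surgery'' idea as the paper: choose a small interval around $v$ on which $\hat F$ stays strictly above $F$, replace $\hat F$ there by the chord, and invoke minimality of the least concave majorant. The only presentational differences are that the paper writes the competitor as $\hat F \wedge \ell$ (the pointwise minimum with the full secant line), which is automatically concave and coincides with your piecewise $G$, and the paper chooses $\delta$ directly so that $\inf_{[v-\delta,v+\delta]}\hat F > \sup_{[v-\delta,v+\delta]} F$, rather than your two-tier $g/2$ versus $g/4$ bookkeeping.
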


\begin{proof}
Since $\hat{F}$ is concave on $[0,1]$ with $\hat{F} \geq F > -\infty$ and $\hat{F}(v) \in \R$, \citet[Theorem~10.1]{rockafellar97convex} ensures that $\hat{F}$ is continuous at $v$. Therefore, because $F$ is also continuous at $v$, there exists $\delta \in \bigl(0,\min(v, 1 - v)\bigr)$ such that $\inf_{u \in [v - \delta, v + \delta]} \hat{F}(u) > \sup_{u \in [v - \delta, v + \delta]} F(u)$. Define $\ell \colon [0,1] \rightarrow \R$ to be the affine function that agrees with $\hat{F}$ at $v - \delta$ and $v + \delta$.  Then $\ell > F$ on $[v - \delta, v + \delta]$ and moreover $\ell \geq \hat{F} \geq F$ on $[0,1] \setminus [v - \delta, v + \delta]$, so $\ell \geq F$ on $[0,1]$. It follows from the definition of the least concave majorant that $\hat{F} \wedge \ell = \hat{F}$ and hence that $\hat{F} = \ell$ on $[v - \delta, v + \delta]$, as required.
\end{proof}

\begin{lemma}
\label{lem:lcm-deriv-0}
For a continuous function $F \colon [0,1] \to \R$, we have $\hat{F}(u) = F(u)$ for $u \in \{0,1\}$ and
\[
\sup_{u \in (0,1)}\frac{F(u) - F(0)}{u} = \hat{F}^{(\mathrm{R})}(0), \qquad \inf_{u \in (0,1)}\frac{F(1) - F(u)}{1 - u} = \hat{F}^{(\mathrm{L})}(1).
\]
\end{lemma}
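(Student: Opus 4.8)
The plan is to establish the right-endpoint claims, namely $\hat{F}(0) = F(0)$ and $\hat{F}^{(\mathrm{R})}(0) = M$ where $M := \sup_{u \in (0,1)}\bigl(F(u) - F(0)\bigr)/u \in \R \cup \{+\infty\}$, and then to obtain the left-endpoint claims by reflection. Throughout, I would use that $F$ is continuous on the compact interval $[0,1]$, hence bounded, so that $\hat{F}$ is a \emph{finite} concave function on $[0,1]$ (it lies between $F$ and the constant $\max_{[0,1]}F$), and in particular $\hat{F}^{(\mathrm{R})}(0)$ exists in $(-\infty,\infty]$. The one structural fact I would invoke repeatedly is that, since $\hat{F}$ is the \emph{smallest} concave majorant of $F$ and every affine function is concave, any affine $\ell$ with $\ell \geq F$ on $[0,1]$ satisfies $\hat{F} \leq \ell$ on $[0,1]$.

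First I would show $\hat{F}(0) = F(0)$. The bound $\hat{F}(0) \geq F(0)$ is immediate. For the reverse, fix $\epsilon > 0$ and put $s_\epsilon := \sup_{u \in (0,1]}\bigl(F(u) - F(0) - \epsilon\bigr)/u$. This supremum is finite: the map $u \mapsto \bigl(F(u) - F(0) - \epsilon\bigr)/u$ is continuous on $(0,1]$, and by continuity of $F$ at $0$ its numerator tends to $-\epsilon < 0$ as $u \searrow 0$, so the map tends to $-\infty$ there and is bounded above. Then $\ell_\epsilon(u) := F(0) + \epsilon + s_\epsilon u$ is affine with $\ell_\epsilon \geq F$ on $[0,1]$ (trivially at $u = 0$, and by the definition of $s_\epsilon$ for $u \in (0,1]$), so $\hat{F}(0) \leq \ell_\epsilon(0) = F(0) + \epsilon$. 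Letting $\epsilon \downarrow 0$ gives $\hat{F}(0) \leq F(0)$.

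Next I would compute $\hat{F}^{(\mathrm{R})}(0)$. Since $\hat{F}$ is concave, the difference quotient $u \mapsto \bigl(\hat{F}(u) - \hat{F}(0)\bigr)/u$ is non-increasing on $(0,1]$, so by \citet[Theorem~24.1]{rockafellar97convex} and the previous paragraph, $\hat{F}^{(\mathrm{R})}(0) = \sup_{u \in (0,1]}\bigl(\hat{F}(u) - F(0)\bigr)/u$. Using $\hat{F} \geq F$ yields $\hat{F}^{(\mathrm{R})}(0) \geq \sup_{u \in (0,1)}\bigl(F(u) - F(0)\bigr)/u = M$. For $\hat{F}^{(\mathrm{R})}(0) \leq M$ there is nothing to prove when $M = \infty$; when $M < \infty$, the affine function $H(u) := F(0) + Mu$ satisfies $H \geq F$ on $[0,1]$ (for $u \in (0,1)$ by the definition of $M$, at $u = 0$ trivially, and at $u = 1$ because $\bigl(F(u) - F(0)\bigr)/u \leq M$ for $u \in (0,1)$ forces $F(1) - F(0) \leq M$ on letting $u \nearrow 1$ and using continuity of $F$). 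Hence $\hat{F} \leq H$ on $[0,1]$, i.e.\ $\bigl(\hat{F}(u) - F(0)\bigr)/u \leq M$ for all $u \in (0,1]$, so $\hat{F}^{(\mathrm{R})}(0) \leq M$. For the left endpoint I would apply the two preceding paragraphs to $\tilde{F}(u) := F(1-u)$: since $u \mapsto 1-u$ is an affine bijection of $[0,1]$ preserving concavity and majorization, the least concave majorant of $\tilde{F}$ is $u \mapsto \hat{F}(1-u)$, whence $\hat{F}(1) = \tilde{F}(0) = F(1)$, and a change of variables turns $\widehat{\tilde{F}}^{(\mathrm{R})}(0) = \sup_{u \in (0,1)}\bigl(\tilde{F}(u) - \tilde{F}(0)\bigr)/u = -\hat{F}^{(\mathrm{L})}(1)$ into $\hat{F}^{(\mathrm{L})}(1) = \inf_{u \in (0,1)}\bigl(F(1) - F(u)\bigr)/(1-u)$.

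I expect the only genuinely delicate point to be the finiteness of $s_\epsilon$ in the step $\hat{F}(0) = F(0)$: this is exactly where continuity of $F$ enters (a discontinuous $F$ can have $\hat{F}$ strictly exceeding it at an endpoint), and the same continuity also handles the otherwise awkward inclusion of the excluded endpoint $u = 1$ when comparing $H$ with $F$. Everything else is routine manipulation of concave functions and their one-sided derivatives.
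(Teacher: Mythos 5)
Your proof is correct and follows essentially the same route as the paper's: both arguments pivot on comparing $\hat{F}$ with the affine function of slope $M$ (the paper's $C$) through $(0,F(0))$, and both pass the left-endpoint claims through the reflection $u\mapsto F(1-u)$. The one place where you genuinely diverge — and improve on — the paper is the step $\hat{F}(0) = F(0)$. The paper writes $F(u) \leq F(0) + Cu$ with the convention $0\cdot\infty = 0$ and concludes $\hat F(0) \leq F(0)$ from $\hat F \leq F(0)+Cu$, which when $C=\infty$ implicitly uses the extended-real-valued concave majorant equal to $F(0)$ at $0$ and $+\infty$ on $(0,1]$. Your $\epsilon$-shifted affine majorant $\ell_\epsilon$ replaces this with a comparison against an honest finite affine function, making the role of continuity of $F$ at $0$ (ensuring $s_\epsilon<\infty$) explicit; this is cleaner and pinpoints exactly the hypothesis that would fail for a discontinuous $F$. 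The derivative computation is also organized differently — the paper chains $\hat F^{(\mathrm R)}(0)=\sup_u\hat F^{(\mathrm R)}(u)$, the integral representation of Corollary~24.2.1, and the concavity bound into a circular inequality, while you go directly through the monotonicity of the chord slope $u\mapsto(\hat F(u)-\hat F(0))/u$ — but these are equivalent elementary facts about concave functions and neither is materially simpler than the other.
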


\begin{proof}
Let $C := \sup_{u \in (0,1)}\bigl(F(u) - F(0)\bigr)/u$, so that $F(u) \leq F(0) + Cu$ for all $u \in [0,1]$, where we adopt the convention $0 \times \infty = 0$. Since $\hat{F}$ is the least concave majorant of $F$, we deduce that $\hat{F}(u) \leq F(0) + Cu$ for all $u \in (0,1]$ and $\hat{F}(0) = F(0)$. Thus, $\sup_{u \in (0,1)}\bigl(\hat{F}(u) - \hat{F}(0)\bigr)/u = C$. Moreover, by the concavity of~$\hat{F}$ and~\citet[Corollary~24.2.1]{rockafellar97convex},
\[
\hat{F}^{(\mathrm{R})}(0) = \sup_{u \in (0,1)} \hat{F}^{(\mathrm{R})}(u) \leq \sup_{u \in (0,1)}\frac{\int_0^u \hat{F}^{(\mathrm{R})}}{u} = \sup_{u \in (0,1)}\frac{\hat{F}(u) - \hat{F}(0)}{u} \leq \hat{F}^{(\mathrm{R})}(0),
\]
so
\[
\sup_{u \in (0,1)} \frac{F(u) - F(0)}{u} = \sup_{u \in (0,1)} \frac{\hat{F}(u) - \hat{F}(0)}{u} = \hat{F}^{(\mathrm{R})}(0).
\]
The remaining assertions of the lemma follow similarly by considering $u \mapsto F(1 - u)$ instead.
\end{proof}

\begin{lemma}
\label{lem:lcm-deriv}
If $F \colon [0,1] \to \R$ is differentiable at $v \in (0,1)$ and its least concave majorant $\hat{F}$ is finite at~$v$, then $\hat{F}$ is differentiable at $v$. Moreover, $\hat{F}^{(\mathrm{L})}$ and $\hat{F}^{(\mathrm{R})}$ are both continuous (and coincide) at $v$.
\end{lemma}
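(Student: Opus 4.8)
The plan is to exploit the concavity of $\hat F$ together with the convexity sandwich $F \le \hat F$ near $v$, controlling the gap using differentiability of $F$. First I would fix notation: since $\hat F(v) \in \R$ and $\hat F \ge F > -\infty$ (as $F$ is finite everywhere on $[0,1]$), \citet[Theorem~10.1]{rockafellar97convex} ensures $\hat F$ is finite and continuous on a neighbourhood of $v$ in $(0,1)$, so the one-sided derivatives $\hat F^{(\mathrm L)}(v)$ and $\hat F^{(\mathrm R)}(v)$ exist and are real, with $\hat F^{(\mathrm L)}(v) \ge \hat F^{(\mathrm R)}(v)$ by concavity. The goal is to show equality, i.e.\ that $\hat F$ is differentiable at $v$, and then deduce the continuity of the one-sided derivatives at $v$ from the standard fact that a concave function is differentiable at a point if and only if its derivative is continuous there \citep[Theorem~25.3]{rockafellar97convex}.

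The key step is to rule out a kink of $\hat F$ at $v$. Suppose for contradiction that $\hat F^{(\mathrm L)}(v) > \hat F^{(\mathrm R)}(v)$. I distinguish two cases. If $F(v) < \hat F(v)$, then since $F$ is differentiable (hence continuous) at $v$, Lemma~\ref{lem:lcm-affine} applies and yields a $\delta > 0$ with $\hat F$ affine on $[v-\delta, v+\delta]$; but an affine function has no kink, contradicting $\hat F^{(\mathrm L)}(v) > \hat F^{(\mathrm R)}(v)$. So we must have $F(v) = \hat F(v)$. In this case, for $u$ near $v$ we have $F(u) \le \hat F(u)$ with equality at $v$, so the function $u \mapsto \hat F(u) - F(u)$ is nonnegative with a zero at $v$; since $F$ is differentiable at $v$ this forces $\hat F^{(\mathrm L)}(v) \le F'(v) \le \hat F^{(\mathrm R)}(v)$ (the left incremental quotients of $\hat F$ are $\le$ those of $F$ from the left, and symmetrically from the right, in the limit). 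Combined with the concavity inequality $\hat F^{(\mathrm L)}(v) \ge \hat F^{(\mathrm R)}(v)$, this gives $\hat F^{(\mathrm L)}(v) = F'(v) = \hat F^{(\mathrm R)}(v)$, contradicting the assumed strict inequality. Hence in all cases $\hat F^{(\mathrm L)}(v) = \hat F^{(\mathrm R)}(v)$, so $\hat F$ is differentiable at $v$.

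Finally, the continuity statement: the monotone functions $\hat F^{(\mathrm L)}$ and $\hat F^{(\mathrm R)}$ are decreasing (by concavity) with $\hat F^{(\mathrm R)}(u) \le \hat F^{(\mathrm L)}(u)$ for all $u$, $\hat F^{(\mathrm L)}$ left-continuous and $\hat F^{(\mathrm R)}$ right-continuous, and $\hat F^{(\mathrm L)}(u) = \hat F^{(\mathrm R)}(u^-)$, $\hat F^{(\mathrm R)}(u) = \hat F^{(\mathrm L)}(u^+)$ \citep[Theorem~24.1]{rockafellar97convex}. Differentiability of $\hat F$ at $v$, i.e.\ $\hat F^{(\mathrm L)}(v) = \hat F^{(\mathrm R)}(v)$, squeezes $\lim_{u\nearrow v}\hat F^{(\mathrm R)}(u) = \hat F^{(\mathrm L)}(v) = \hat F^{(\mathrm R)}(v) = \lim_{u\searrow v}\hat F^{(\mathrm L)}(u)$, and combined with one-sided continuity of each derivative this gives that both $\hat F^{(\mathrm L)}$ and $\hat F^{(\mathrm R)}$ are (two-sided) continuous at $v$ and agree there. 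I expect the main obstacle to be the case $F(v)=\hat F(v)$: one must argue carefully that pointwise domination $F \le \hat F$ together with differentiability of $F$ at the contact point $v$ pins the slope of $\hat F$ at $v$ to $F'(v)$ from both sides — handling the left and right incremental quotients separately and passing to the limit — since this is exactly where the hypothesis "differentiable at $v$" (rather than merely continuous) is used.
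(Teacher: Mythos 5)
Your proof is correct and follows essentially the same approach as the paper: split into the cases $F(v) < \hat{F}(v)$ (handled by Lemma~\ref{lem:lcm-affine}, since differentiability of $F$ at $v$ gives continuity there) and $F(v) = \hat{F}(v)$ (handled by comparing one-sided incremental quotients of $F$ and $\hat{F}$ at the contact point, which squeezes $\hat{F}^{(\mathrm{L})}(v) \leq F'(v) \leq \hat{F}^{(\mathrm{R})}(v) \leq \hat{F}^{(\mathrm{L})}(v)$), then invoke the standard one-sided continuity properties of the derivative of a concave function (Theorem~24.1 of Rockafellar). The contradiction framing in your second case is a cosmetic detour — the chain of inequalities already yields $\hat{F}^{(\mathrm{L})}(v) = \hat{F}^{(\mathrm{R})}(v) = F'(v)$ directly, which is how the paper states it — but the content is identical.
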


\begin{proof}
If $F(v) < \hat{F}(v)$, then by Lemma~\ref{lem:lcm-affine}, $\hat{F}$ is affine on some open interval around $v$, so $\hat{F}$ is differentiable at $v$ in this case. On the other hand, if $F(v) = \hat{F}(v)$, then
\begin{align*}
\liminf_{u \nearrow v}\frac{F(v) - F(u)}{v - u} \geq \lim_{u \nearrow v}\frac{\hat{F}(v) - \hat{F}(u)}{v - u} &= \hat{F}^{(\mathrm{L})}(v) \\
&\geq \hat{F}^{(\mathrm{R})}(v) = \lim_{u \searrow v}\frac{\hat{F}(u) - \hat{F}(v)}{u - v} \geq \limsup_{u \searrow v}\frac{F(u) - F(v)}{u - v}.
\end{align*}
Thus, if $F$ is also differentiable at $v$, then $\hat{F}^{(\mathrm{L})}(v) = \hat{F}^{(\mathrm{R})}(v) = F'(v)$, and~\citet[Theorem~24.1]{rockafellar97convex} ensures that $\hat{F}^{(\mathrm{L})}$ and $\hat{F}^{(\mathrm{R})}$ are both continuous at $v$.
\end{proof}

\begin{lemma}
\label{lem:lcm-concave-cvx}
Given $0 \leq u \leq v \leq 1$, suppose that $F \colon [0,1] \to \R$ is convex on $[u,v]$, and concave on both $[0,u]$ and $[v,1]$. Then there exist $u' \in [0,u]$ and $v' \in [v,1]$ such that the least concave majorant $\hat{F}$ is affine on $[u',v']$ and coincides with $F$ on $[0,1] \setminus [u',v']$.
\end{lemma}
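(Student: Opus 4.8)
The plan is to locate the affine piece of $\hat F$ containing $[u,v]$, extend it to a maximal affine interval $[u',v']$, and then identify $\hat F$ with an explicitly built concave majorant of $F$. Throughout I would assume $\hat F$ is finite on $[0,1]$ (otherwise the statement is vacuous), so that $\hat F$ is continuous on $(0,1)$ and, by Lemma~\ref{lem:lcm-deriv-0}, satisfies $\hat F(0)=F(0)$ and $\hat F(1)=F(1)$. \textbf{Step 1: $\hat F$ is affine on $[u,v]$.} I would split into two cases. If $\hat F(x_0)=F(x_0)$ for some interior $x_0\in(u,v)$, a supporting line of the concave $\hat F$ at $x_0$ is an affine $\ell_0$ with $\ell_0\ge F$ on $[u,v]$ and $\ell_0(x_0)=F(x_0)$; since $F$ is convex on $[u,v]$, a short chord argument at $x_0$ forces $F=\ell_0$ on $[u,v]$, and the reverse chord argument (concavity of $\hat F\ge\ell_0$ with equality at the interior point $x_0$) forces $\hat F=\ell_0$ on $[u,v]$. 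Otherwise $\hat F>F$ throughout $(u,v)$, every point of which is a continuity point of the (convex, hence continuous) function $F|_{[u,v]}$, so Lemma~\ref{lem:lcm-affine} makes $\hat F$ locally affine near each such point; since $\hat F$ is concave and $(u,v)$ is connected, $\hat F^{(\mathrm R)}$ is constant there, so $\hat F$ is affine on $(u,v)$ and hence on $[u,v]$ by continuity at its (interior) endpoints. In either case write $\ell$ for the resulting affine function and $m$ for its slope.

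\textbf{Step 2: extend to a maximal affine interval.} Since $\hat F$ is concave and equals the affine $\ell$ on $[u,v]$, $\ell$ is a global supporting line, i.e.\ $\hat F\le\ell$ on $[0,1]$. I would set $u':=\inf\{x\in[0,u]:\hat F(x)=\ell(x)\}$ and $v':=\sup\{x\in[v,1]:\hat F(x)=\ell(x)\}$; both sets contain $u$, resp.\ $v$, so $u'\in[0,u]$ and $v'\in[v,1]$. Using continuity of $\hat F$ on $(0,1)$ together with $\hat F\le\ell$, a sandwich between nearby contact points gives $\hat F=\ell$ on $[u',v']$, and also $\hat F(u')=F(u')$, $\hat F(v')=F(v')$ (if $u'\in(0,u)$ with $\hat F(u')>F(u')$, Lemma~\ref{lem:lcm-affine} would make $\hat F$ affine on a two-sided neighbourhood of $u'$, contradicting the infimum; if $u'=0$ use $\hat F(0)=F(0)$). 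Then, sandwiching $\hat F$ between $F$ (from below, equality at $u'$) and $\ell$ (from above, equality at $u'$, slope $m$) and using concavity of $\hat F$, one reads off the slope inequalities $F^{(\mathrm L)}(u')\ge m\ge F^{(\mathrm R)}(v')$ (for $u'\in[0,u)$ and $v'\in(v,1]$ these live in the concave regions; the cases $u'=u$, $v'=v$ follow directly from Step~1).

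\textbf{Step 3: conclude.} Define $H\colon[0,1]\to\R$ by $H=F$ on $[0,u']\cup[v',1]$ and $H=\ell$ on $[u',v']$. Then $H\ge F$ everywhere ($H=F$ off $[u',v']$, and $H=\ell=\hat F\ge F$ on $[u',v']$), and $H$ is concave: it is concave on $[0,u']$ and on $[v',1]$ (sub-intervals of the concave regions of $F$), affine on $[u',v']$, and the slope inequalities from Step~2 make the one-sided slopes nonincreasing across the junctions $u'$, $v'$, which are also continuity points of $H$ (since $H(u')=F(u')=\ell(u')$ there). Hence $H$ is a concave majorant of $F$, so $\hat F\le H$ by minimality of the least concave majorant; conversely $\hat F\ge F=H$ on $[0,u']\cup[v',1]$ and $\hat F=\ell=H$ on $[u',v']$, so $\hat F\ge H$. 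Therefore $\hat F=H$, which is affine on $[u',v']$ and equals $F$ on $[0,u']\cup[v',1]$, as required.

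\textbf{Main obstacle.} The conceptual core above is short; the real care is in the Step~1 dichotomy and in the bookkeeping at the (at most four) points $0,u,v,1$ where $F$ may be discontinuous: there $\hat F$ need not be continuous and Lemma~\ref{lem:lcm-affine} does not directly apply, so one must argue separately using the one-sided concavity/convexity of $F$ (which controls the one-sided limits of $F$ at $u$ and $v$) together with $\hat F(0)=F(0)$ and $\hat F(1)=F(1)$. I expect verifying $\hat F(u')=F(u')$ and the slope inequality at a junction that happens to coincide with $u$ or with $v$ to be the fiddliest point, and I would isolate that as a short sub-lemma.
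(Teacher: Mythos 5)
Your plan — identify the affine piece of $\hat F$ over $[u,v]$ (the chord $\ell$), extend to a maximal contact interval $[u',v']$, apply Lemma~\ref{lem:lcm-affine} to force $\hat F=F$ at $u'$ and $v'$, then concatenate and sandwich — is exactly the route the paper takes, and the argument is sound. The one place where the paper is noticeably slicker is your Step~1: rather than running a case dichotomy to prove $\hat F$ affine on $[u,v]$, the paper simply defines $\ell$ to be the secant of $\hat F$ between $u$ and $v$, observes $\hat F\le\ell$ on $[0,u]\cup[v,1]$ by concavity and $F\le\ell\le\hat F$ on $[u,v]$ by convexity of $F$, and then notes $\hat F\wedge\ell$ is a concave majorant of $F$ whence $\hat F\le\ell$ globally; $\hat F=\ell$ on $[u,v]$ drops out for free. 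You should also repair the parenthetical in your Step~1 first case: the claim that ``concavity of $\hat F$ with equality at the interior point $x_0$ forces $\hat F=\ell_0$'' does not hold (a nonnegative convex function can vanish at one interior point without vanishing identically, e.g.\ $\ell_0-\hat F$ could behave like $|x-x_0|$). What actually closes that case is the sandwich you have already set up: once you know $F=\ell_0$ on $[u,v]$, the chain $F\le\hat F\le\ell_0$ collapses to $\hat F=\ell_0$ there, with no separate ``reverse chord'' argument needed. Your flagged caveats about continuity at $0,u,v,1$ are fair — the paper's proof is equally terse there, relying on the fact that $F$ is continuous on the open pieces $(0,u)$, $(u,v)$, $(v,1)$ and (in the downstream application) continuous everywhere.
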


\begin{proof}
If $u = v$, then the conclusion holds with $u' = u = v = v'$, so suppose now that $u < v$ and define $\ell \colon [0,1] \to \R$ to be the affine function that agrees with $\hat{F}$ at $u$ and $v$. Then 
% $\hat{F}(w) = \ell(w)$ for $w \in \{u,v\}$, so 
$\ell \geq \hat{F} \geq F$ on $[0,1] \setminus [u,v]$ by the concavity of $\hat{F}$, and moreover since $F$ is convex on $[u,v]$, we have
\[
F(w) \leq \frac{v - w}{v - u}\,F(u) + \frac{w - u}{v - u}\,F(v) \leq \frac{v - w}{v - u}\,\hat{F}(u) + \frac{w - u}{v - u}\,\hat{F}(v) = \ell(w) \leq \hat{F}(w)
\]
for all $w \in [u,v]$. Therefore, $\ell \geq F$ on $[0,1]$, so by the definition of the least concave majorant, $\hat{F} \wedge \ell = \hat{F}$. Let $u' := \inf\{w \in [0,u] : \hat{F}(w) = \ell(w)\}$ and $v' := \sup\{w \in [v,1] : \hat{F}(w) = \ell(w)\}$. Then $\hat{F} = \ell$ on $[u',v']$ but $\hat{F}$ is not locally affine at either $u'$ or $v'$, so by Lemma~\ref{lem:lcm-affine}, $\hat{F}(w) = F(w)$ for $w \in \{u',v'\}$. Since $F \leq \ell$ is concave on both $[0,u']$ and $[v',1]$, we conclude that $\hat{F} = F$ on $[0,1] \setminus [u',v']$, as claimed.
\end{proof}

\subsection{Additional simulation results}

In Section~\ref{sec:experiments}, we analysed the empirical performance of the \textbf{ASM} estimator without constraining the convex loss function to be symmetric. Here, we present simulation studies of \textbf{ASM} where we assume the noise distribution to be symmetric and hence enforce symmetry on the estimated loss function; see Section~\ref{subsec:linreg-sym}. We generate data from the same linear model
\[
Y_i = \mu_0 + \tilde{X}_i^\top \theta_0 + \varepsilon_i \quad \text{for $i \in [n]$}
\]
as in Section~\ref{sec:experiments} and take the noise distribution to be either (i) standard Gaussian, (ii) standard Cauchy, (iii) Gaussian scale mixture, (iv) Gaussian location mixture, or (v) smoothed uniform. We omit the smoothed exponential distribution since it is not symmetric. As the intercept $\mu_0$ is now identifiable, we consider the estimation error of not just the coefficient $\theta_0$ but also the intercept $\mu_0$, that is, we measure $\|\theta_0 - \hat{\theta}\|_2^2 + (\mu_0 - \hat{\mu})^2$. 

We compare the \textbf{ASM} estimator to the same group of competitors that we use in Section~\ref{sec:experiments}. To make the comparison fair, we modify the one-step estimator (1S) and the log-concave MLE (LCMLE) to also incorporate the symmetry assumption on the noise distribution.

\begin{table}[ht]
\centering
\begin{tabular}{|l|r||r|r|r|r|r|r|}
\hline
& Oracle & ASM & Alt & LCMLE & 1S & LAD & OLS \\ 
\hline
Standard Gaussian & 9.84 & 10.15 & 10.04 & 11.00 & 10.67 & 16.04 & $\bf{9.84}$ \\ 
Standard Cauchy & 24.90 & $\bf{25.36}$ & 25.51 & 26.99 & 27.40 & 26.95 & 5471151.67 \\ 
Gaussian scale mixture & 35.32 & $\bf{35.80}$ & 35.93 & 39.75 & 42.51 & 40.49 & 82.59 \\ 
Gaussian location mixture & 0.20 & 0.25 & $\bf{0.20}$ & 1.60 & 19.53 & 886.44 & 21.44 \\ 
Smoothed uniform & 1.16 & 1.49 & $\bf{1.33}$ & 1.50 & 2.40 & 9.70 & 3.42 \\ 
\hline
\end{tabular}
\caption{Squared estimation error ($\times 10^3$) for different estimators, with $n = 600$ and $d=6$.}
\label{tab:MSE-comparison-symm}
\end{table}

\begin{figure}[ht]
\centering
\includegraphics[width=0.47\textwidth]{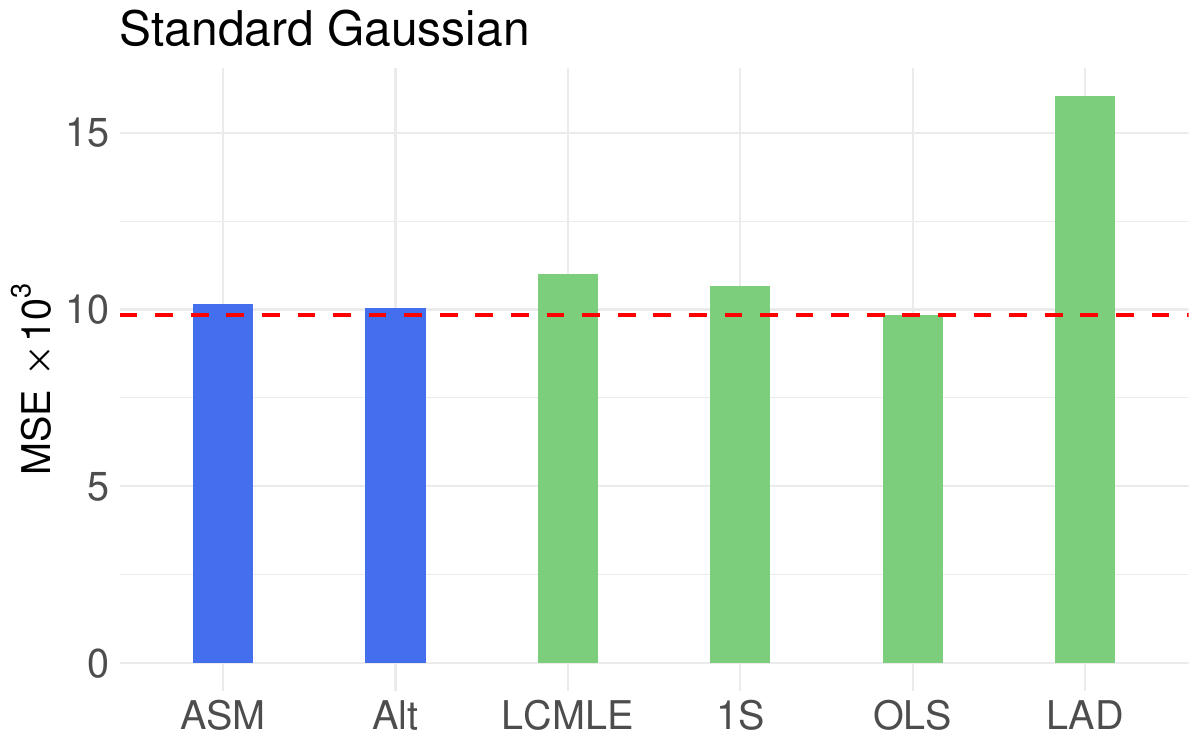}
\hfill
\includegraphics[width=0.47\textwidth]{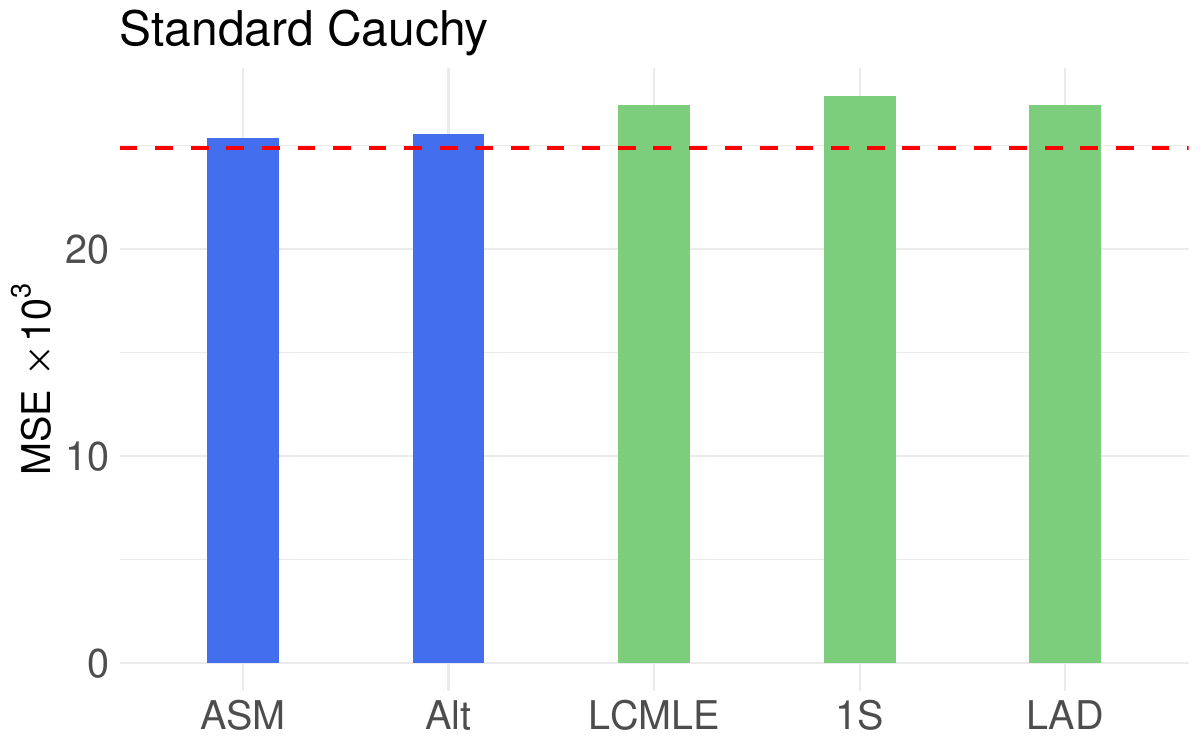}
\includegraphics[width=0.47\textwidth]{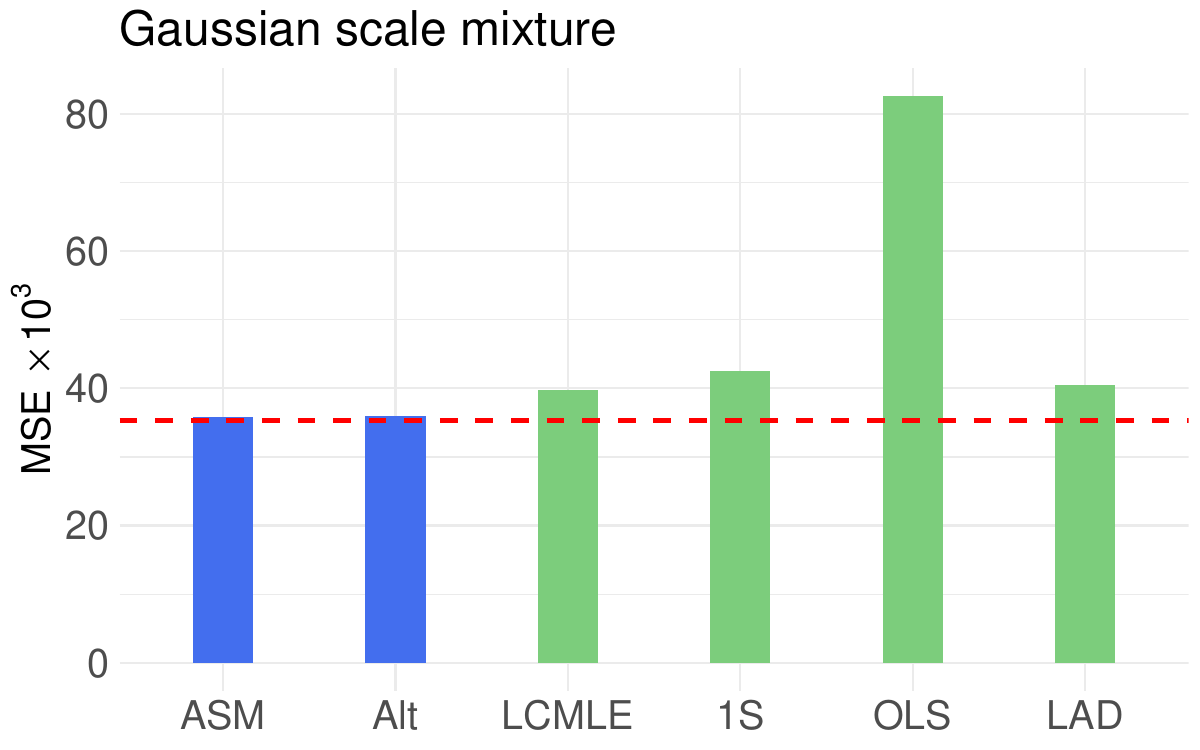}
\hfill
\includegraphics[width=0.47\textwidth]{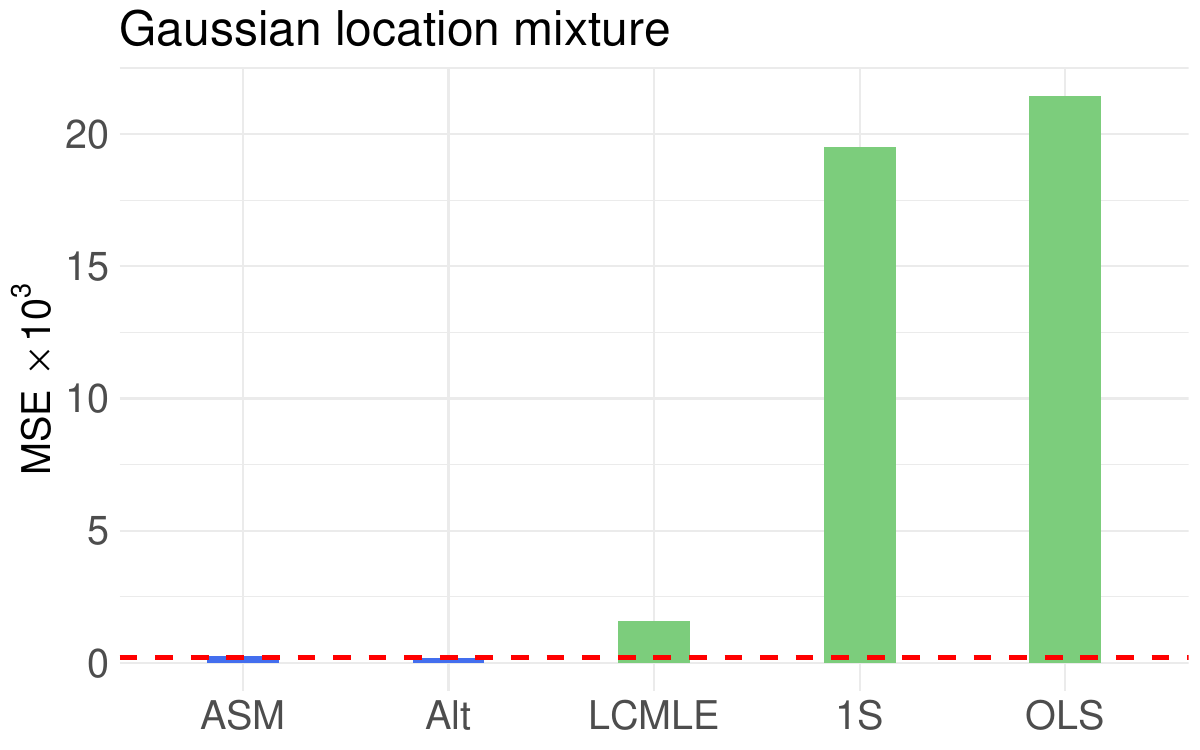}
\includegraphics[width=0.47\textwidth]{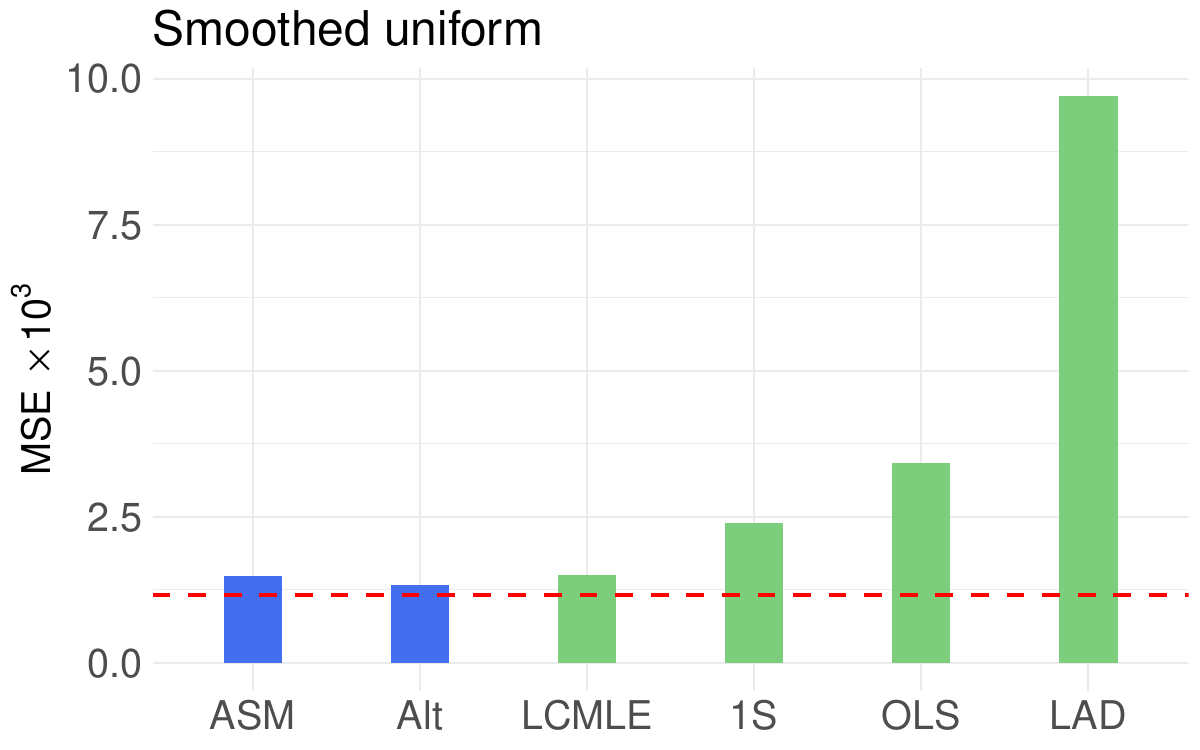}
\hfill 
\caption{Plots of the average squared error loss ($\times 10^3$) of different estimators for noise distributions (i)--(v), with $n = 600$ and $d = 6$. In each plot, the red dashed line indicates the corresponding value for the oracle convex $M$-estimator, and we omit the estimators that have very large estimation error (see Table~\ref{tab:MSE-comparison-symm} for full details).}
\label{fig:MSE-comparison-symm}
\end{figure}
\end{document}